
\documentclass[a4paper]{article}
\setlength{\textwidth}{6.7in}
\setlength{\oddsidemargin}{-0.2in}
\setlength{\topmargin}{-0.52in}
\setlength{\textheight}{9.0in}
\setlength{\footskip}{0.5in}

\usepackage{amssymb}
\usepackage{amsthm}
\usepackage{latexsym}
\usepackage{amsmath}
\usepackage{color}

\usepackage{comment}
\excludecomment{discuss}

\newif\ifcol
\colfalse
\ifcol
\newcommand{\colorr}{\color[rgb]{0.8,0,0}}
\newcommand{\colorg}{\color[rgb]{0,0.5,0}}
\newcommand{\colorb}{\color[rgb]{0,0,0.8}}
\else
\newcommand{\colorr}{\color{black}}
\newcommand{\colorg}{\color{black}}
\newcommand{\colorb}{\color{black}}
\fi

\newtheorem{lemma}{Lemma}
\newtheorem{proposition}{Proposition}
\newtheorem{theorem}{Theorem}
\newtheorem{remark}{Remark}
\newtheorem{example}{Example}
\newtheorem{corollary}{Corollary}

\begin{document}
\title{
Quasi-Likelihood Analysis for Stochastic Regression Models with Nonsynchronous Observations
}
\author{T. Ogihara$^*$ and N. Yoshida$^{**}$\\
$*$ 
\begin{small}Center for the Study of Finance and Insurance, Osaka University\end{small}\\
\begin{small}Japan Science and Technology Agency, CREST\end{small}\\
\begin{small}1-3 Machikaneyama-cho, Toyonaka, Osaka 560-8531, Japan. \end{small}\\
\begin{small}e-mail: ogihara@sigmath.es.osaka-u.ac.jp\end{small}\\ 
$**$ 
\begin{small}Graduate School of Mathematical Sciences, University of Tokyo\end{small}\\
\begin{small}3-8-1 Komaba, Meguro-ku, Tokyo 153, Japan. \end{small}\\
\begin{small}e-mail: nakahiro@ms.u-tokyo.ac.jp\end{small}\\
}
\date{}
\maketitle
\ \\
{\bf Abstract.}
We consider nonsynchronous sampling of {\colorr parameterized stochastic regression models, which contain stochastic differential equations}. 
Constructing a quasi-likelihood function, we prove that 
the quasi-maximum likelihood estimator and the Bayes type estimator are consistent and asymptotically mixed normal 
when the sampling frequency of the nonsynchronous data becomes large. \\
\\
{\bf Keywords.}
nonsynchronous observations, quasi-maximum likelihood estimators, Bayes type estimators, quasi-likelihood analysis,
polynomial type large deviation inequality, asymptotic mixed normality, diffusion processes

\section{Introduction}
Given a probability space $(\Omega, \mathcal{F}, P)$ with a right-continuous filtration $(\mathcal{F}_t)_{t\in [0,T]}$, 
we consider a {\colorb stochastic regression model specified by} the following equation :
\begin{eqnarray}
Y_t=Y_0+\int^t_0\mu_sds+\int^t_0b(X_s,\sigma)dW_s, \quad t \in [0,T], \nonumber
\end{eqnarray}
where {\colorb $\{Y_t\}_{0\leq t\leq T}=\{(Y^1_t,Y^2_t)\}_{0\leq t\leq T}$ is a $2$-dimensional $\mathcal{F}_t$-adapted process,} $\{W_t,\mathcal{F}_t\}_{0\leq t\leq T}$ is a $2$-dimensional standard Wiener process, 
$b=(b^{ij})_{1\leq i,j\leq 2}:\mathbb{R}^{n_2} \times \Lambda \to \mathbb{R}^2\otimes \mathbb{R}^2$ is a Borel function, 
$\{\mu_t\}$ and $\{X_t\}$ are $\mathcal{F}_t$-progressively measurable processes with values in $\mathbb{R}^{2}$ and $\mathbb{R}^{n_2}$, respectively, 
$\sigma \in \Lambda$, and $\Lambda$ is a bounded open subset of $\mathbb{R}^{n_1}$. 
{\colorb In the case that $\mu_t=\mu(t,Y_t)$ and $X_t=(t,Y_t)$, then $\{Y_t\}$ is a time-inhomogeneous diffusion process. }

Our purpose is to estimate the true value $\sigma_{\ast}$ of parameter $\sigma \in \Lambda$ 
by nonsynchronous observations $\{Y^1_{S^i}\}_i$, $\{Y^2_{T^j}\}_j$ and $\{X^k_{\mathcal{T}^j_k}\}_{j,k}$. 
In our setting, $\{\mu_t\}_{0\leq t\leq T}$ is completely unobservable and unknown.

The problem of nonsynchronous observations appears in the analysis of high-frequency financial data.
Recently, as availability of intraday security prices gets increase, 
the analysis of high-frequency data becomes more significant.
In particular, the realized volatility has been studied actively as an estimator of security returns' volatility.

In the study of portfolio risk management of financial assets, 
the quadratic covariation of two security log-prices is also a significant risk measure.
Therefore estimation of quadratic covariation with high-frequency data {\colorr has} also been studied by many authors. 
{\colorb One problem} of estimation is nonsynchronous trading.
The observation times of two different security prices {\colorb do} not necessarily coincide.

{\colorb
If $Y^1$ and $Y^2$ are synchronously observed at some stopping times $\{S^i\}$, 
then the realized covariance between $Y^1$ and $Y^2$ converges to $\langle Y^1,Y^2\rangle_T$ in probability as $\max_i|S^i-S^{i-1}|\to^p 0$.
}
When observation times {\colorb of $Y^1$ and $Y^2$} are nonsynchronous, to calculate the realized covariance, 
we need to synchronize the data by some method.
However, the realized covariance has serious bias 
if we use a simple synchronizing method such as {\it previous-tick} interpolation or linear interpolation.
Epps \cite{epp} first indicated this phenomenon by U.S. stock data analysis, 
and this phenomenon is called the {\it Epps Effect}.

{\colorr To solve this problem, Malliavin and Mancino \cite{mal-man} proposed a {\colorb Fourier analytic method},
and Hayashi and Yoshida \cite{hay-yos01}} {\colorb proposed a estimator based on overlapping of observation intervals.}
In sequent papers, Hayashi and Yoshida \cite{hay-yos02},\cite{hay-yos03} studied 
the asymptotic distribution of estimation error of {\colorb their} estimator
and proved asymptotic mixed normality.
{\colorr
There also exist some works about estimation of the quadratic covariation with nonsynchronous data contaminated 
by market microstructure noise. We refer the reader to Barndorff-Nielsen et al. \cite{bar-etc} for a kernel based method,
Christensen, Kinnebrock and Podolskij \cite{chr-etc} for a pre-averaged Hayashi-Yoshida estimator, 
A${\rm \ddot{\i}}$t-Sahalia, Fan and Xiu \cite{ait-fan} for a {\colorb method with the maximum likelihood estimator
of a model with deterministic diffusion coefficients}, 
Bibinger \cite{bib},\cite{bib2} for a multiscale estimator.
}

With respect to the problem of nonsynchronous observations, {\colorb essentially nonparametric approach} {\colorr has} been studied.
In this work, we use a quasi-likelihood function{\colorb , that approximates} the likelihood function {\colorb in diffusion cases}
and construct a quasi-maximum likelihood estimator and a Bayes type estimator 
for a parametric stochastic regression model with nonsynchronous observations{\colorb . T}he asymptotic behavior of estimators 
{\colorb will be investigated} when time end $T$ is fixed and $\max_{i,j}|S^i-S^{i-1}|\vee |T^j-T^{j-1}| \to 0$ in probability.
{\colorr
Hence our method can be applied not only to estimating the quadratic covariation but also to identifying {\colorb nonlinear} structure of process $Y$.
}

{\colorr
There exist many studies about asymptotic theory of parametric estimation for stochastic differential equations with high-frequency data.
Among many studies in a long history, we refer the reader to Plakasa Rao \cite{pra},\cite{pra2}, Yoshida \cite{yos92},\cite{yos06},\cite{yos05}, 
Kessler \cite{kes} under ergodicity, Shimizu and Yoshida \cite{shi-yos}, Ogihara and Yoshida \cite{ogi-yos} for jump diffusion processes, 
Masuda \cite{mas} for Ornstein-Uhlenbeck processes driven by heavy-tailed symmetric L${\rm \acute{e}}$vy processes,
S$\o$rensen and Uchida \cite{sor-uch}, Uchida \cite{uch03},\cite{uch04} for perturbed diffusions, Dohnal \cite{doh}, Genon-Catalot and Jacod \cite{gen-jac93},\cite{gen-jac94},
Gobet \cite{gob}, Uchida and Yoshida \cite{uch-yos} for the fixed interval case.
}

One of the most useful approaches to study asymptotic behaviors of quasi-maximum likelihood estimators and Bayes type estimators
is the theory of random field of likelihood ratios initiated by Ibragimov and Has'minskii \cite{ibr-has01}-\cite{ibr-has03}.
Their theory enabled to reduce the problem of asymptotic behaviors 
of estimators to more tractable properties of the random field of likelihood ratios.
{\colorb In \cite{ibr-has03}, they applied their theory to  
independent observations and white Gaussian noise models.}
Kutoyants \cite{kut01}, \cite{kut02} developed Ibragimov-Has'minskii's theory for diffusion processes and point processes.
Yoshida \cite{yos05} investigated polynomial type large deviation inequalities to apply Ibragimov-Has'minskii's theory
{\colorb and discussed} consistency and asymptotic normality of quasi-maximum likelihood estimators and Bayes type estimators
for ergodic diffusion processes.
{\colorb This scheme was also} {\colorr applied to jump diffusion processes in Ogihara and Yoshida \cite{ogi-yos}, Ornstein-Ohlenbeck processes driven 
by heavy-tailed symmetric L${\rm \acute{e}}$vy processes in Masuda \cite{mas}, 
and diffusion processes in the fixed interval in Uchida and Yoshida \cite{uch-yos}.}

In this work, we construct a quasi-log-likelihood function for the stochastic regression model with nonsynchronous observations.
Then we will show consistency, asymptotic mixed normality and the convergence of {\colorb moments of} the quasi-maximum likelihood estimator
and the Bayes type estimator with aid of polynomial type large deviation inequalities.
The advantage of our approach is to obtain asymptotic mixed normality, exact representation of asymptotic variance 
and convergence of {\colorb moments of the estimators}.
The convergence of moments of {\colorb the estimators is important, e.g.,} when we investigate the asymptotic expansion and the theory of information criteria.
Moreover, {\colorb our method does not require any synchronization methods.} 

{\colorr
In the case of synchronous and equi-space samplings : $S^i=T^i=iT/n$, Gobet \cite{gob} showed local asymptotic normality of the likelihood function of observations
and obtained the asymptotic lower bound for the variance of estimators. The estimator proposed by Genon-Catalot and Jacod \cite{gen-jac93} attains this bound.
In the case of nonsynchronous observations, we expect that local asymptotic mixed normality of the likelihood function holds and our estimators attain the lower bound 
of the variance of estimators since our quasi-likelihood function seems to be asymptotically equivalent with the 'true' likelihood function and our quasi-likelihood ratio
has a LAMN type limit distribution. However, these {\colorb problems} are not proved in this paper and {\colorb left} as future work.
}

This paper is organized as follows.
In Section $2$, we construct a quasi-log-likelihood function $H_n$ and discuss its non-degeneracy.
Section $3$ {\colorb gives} the asymptotic behavior of $H_n$. Section $3.1$ deals with two equivalent conditions 
of the asymptotic behavior of observation times $\{S^i\}$ and $\{T^j\}$ to control the asymptotic behavior of $H_n$.
In Section $3.2$, we specify the limit of $H_n$ and estimate the rate of convergence.
Section $4$ studies the degree of separation of the limit of $H_n$, 
which is {\colorb necessary} to prove asymptotic properties of the quasi-maximum likelihood estimator and the Bayes type estimator. 
We also introduce {\colorb sufficient conditions} for the condition of separation.
In Section $5$, our main results about asymptotic properties for estimators are stated.
Section $6$ introduces {\colorb easily} tractable sufficient conditions for {\colorb assumptions about the observation times in} main theorems.
Proofs are collected in Section $7$.

\section{Construction of a quasi-likelihood}
In this section, we define a quasi-log-likelihood function $H_n$ 
to construct a quasi-maximum likelihood estimator and a Bayes type estimator.

First, we define some notations. 
For a real number $a$, $[a]$ denote the maximum integer which is not greater than $a$.
For a matrix $A$, $A^{\star}$ denotes transpose of $A$ and $\parallel A \parallel$ represents the norm of $A$ as a linear map. 
We regard a $p$-dimensional vector $v$ as a $p\times 1$ matrix.
$\mathcal{E}_p$ denotes unit matrix of size $p$.
We set $\sup \emptyset = -\infty$, $\inf \emptyset = +\infty$ and $2\mathbb{N}=\{2k;k\in\mathbb{N}\}$.
For $M\in\mathbb{N}$ and $K\subset \mathbb{R}^M$, $\bar{K}$ denotes the closure of $K$.
For a set $K\subset \Omega$, $K^c$ denotes the complementary set of $K$.
For an interval $K\subset [0,T]$ and a stochastic process $\{{\colorg {\sf X}}_t\}_{0\leq t\leq T}$, 
we denote $L(K)=\inf K,R(K)=\sup K$, ${\colorg {\sf X}}(K)={\colorg {\sf X}}_{R(K)}-{\colorg {\sf X}}_{L(K)}$, $K_t=K\cap [0,t)$ and $|K|=R(K)-L(K)$. 
Let $b^i(x,\sigma)=(b^{i1}(x,\sigma),b^{i2}(x,\sigma))^{\star} \ (i=1,2)$.
For a vector $\kappa=(\kappa_1,\cdots, \kappa_M)$, 
we denote $\partial_{\kappa}^k=(\frac{\partial^k}{\partial\kappa_{i_1}\cdots \partial\kappa_{i_k}})_{i_1,\cdots i_k=1}^M$.
We denote $|x|^2=\sum_{i_1,\cdots i_M}|x_{i_1,\cdots i_M}|^2$ for $x=\{x_{i_1,\cdots i_M}\}_{i_1,\cdots i_M}$.

Let $\Lambda$ satisfy Sobolev's inequality, that is, for any $p>n_1$, there exists $C>0$ such that
\begin{equation*}
\sup_{x\in \Lambda}|u(x)|\leq C\sum_{k=0,1}\parallel \partial_x^ku(x)\parallel_p
\end{equation*}
for $u\in C^1(\lambda)$. It is the case if $\Lambda$ has Lipschitz boundary.
See Adams \cite{adams}, Adams and Fournier \cite{adams-fou} for more details.

We recall the definition of stable convergence. {\colorg Given an extension $(\tilde{\Omega},\tilde{\mathcal{F}},\tilde{P})$ of $(\Omega, \mathcal{F},P)$,
let $\{{\colorg {\sf X}}_n\}_{n\in\mathbb{N}}$ and ${\colorg {\sf X}}$ be random variables on $(\tilde{\Omega},\tilde{\mathcal{F}},\tilde{P})$
with values in a metric space $E$.}
Then we say that ${\colorg {\sf X}}_n$ stably converges in law to ${\colorg {\sf X}}$, and write ${\colorg {\sf X}}_n\to^{s\mathchar`-\mathcal{L}}{\colorg {\sf X}}$, if
$E[{\colorg {\sf Y}}f({\colorg {\sf X}}_n)]\to E[{\colorg {\sf Y}}f({\colorg {\sf X}})]$ as $n\to \infty$ for any bounded continuous function $f:E\to \mathbb{R}$
and any bounded variable ${\colorg {\sf Y}}$ on $(\Omega,\mathcal{F})$. See Jacod \cite{jacod} for more details.

For $1\leq k \leq n_2$, let observations $\{S^i\}_i$, $\{T^j\}_j$ and $\{\mathcal{T}^j_k\}_{j,k}$ are strictly increasing with respect to $i$ or $j$ almost surely
and satisfy $S^0=T^0=\mathcal{T}^0_k=0$, $S^i=\inf\{t\geq 0; N^1_t\geq i\}\wedge T$,
$T^j=\inf\{t\geq 0; N^2_t\geq j\}\wedge T$, and $\mathcal{T}^j_k=\inf\{t\geq 0; N^{k+2}_t\geq j\}\wedge T$ for $i,j\geq 1$, 
where $\{N^1_t\}_t$, $\{N^2_t\}_t$ and $\{N^{k+2}_t\}_t$ {\colorg are simple point processes}, 
that is, $\{N^k_t\}$ is a ${\rm c\grave{a}dl\grave{a}g}$ $\mathbb{Z}_+$-valued stochastic process whose jumps are equal to 1 and $N^k_0=0$ $(1\leq k\leq n_2+2)$.
{\colorg These observations and point processes depend on a positive integer $n\in\mathbb{N}$. }
Let $\Pi=\Pi_n=((S^i)_i,(T^j)_j,(\mathcal{T}^j_k)_{j,k})$, $l_n=N^1_{T-}+1, m_n=N^2_{T-}+1,m^k_n=N^{k+2}_{T-}+1$ for $1\leq k\leq n_2$, then $l_n,m_n,\{m_n^k\}_{k=1}^{n_2}$ are observation counts.
We also assume $\{\Pi_n\}_{n\in\mathbb{N}}$ are independent of $\mathcal{F}_T$.
{\colorg Denote} $I^i=[S^{i-1},S^i) \ (1\leq i\leq l_n)$, $J^j=[T^{j-1},T^j) \ (1\leq j\leq m_n)$, 
\begin{equation*}
r_n=\max_{i,j}(|I^i|\vee |J^j|)\vee \max_{1\leq k\leq n_2}\max_{1\leq j\leq m^k_n}|\mathcal{T}^{j}_k-\mathcal{T}^{j-1}_k|,
\end{equation*}
and $\mathcal{T}'_k(K)=\max\{\mathcal{T}^j_k;j\in\mathbb{Z}_+,\mathcal{T}^j_k\leq L(K)\}$ for $1\leq k\leq n_2$ and an interval $K\subset [0,T]$.
Let $\{b_n\}_{n\in\mathbb{N}}$ be a sequence of positive numbers such that $b_n\geq 1 \ (n\in\mathbb{N})$ and $b_n\to \infty$ as $n\to \infty$.
$\{b_n\}$ represents order of observation counts. Conditions for $\{b_n\}$ are given in $[A2\mathchar`-q,\delta]$, $[A3'\mathchar`-q,\eta]$, $[A4\mathchar`-q,\delta]$ later.

For a function $g:\mathbb{R}^{n_2}\times \Lambda\to \mathbb{R}$, let $g_t=g(X_t,\sigma),g_{t,\ast}=g(X_t,\sigma_{\ast})$,
$g_{K,t}=g(\{X^k_{\mathcal{T}'_k(K)\wedge t}\}_k,\sigma)$ and $g_K=g_{K,T}$ for interval $K\subset [0,T]$.
We use the symbol $C$ for a generic positive constant which {\colorg is independent of $n$ and $p$,} and is varying from line to line
without specially stated.

We assume the following conditions.
\begin{description}
\item{[$A1$]} 
\begin{enumerate}
\item The mapping $b: \mathbb{R}^{n_2} \times \Lambda \to \mathbb{R}^2\otimes \mathbb{R}^2$ 
has the continuous derivative $\partial_x^j\partial_{\sigma}^ib$ and $\partial_{\sigma}^ib$ can be continuously extended to $\mathbb{R}^{n_1}\times \bar{\Lambda}$
for $0\leq j\leq 3$ and $0\leq i\leq 4$. 
Moreover, 
\begin{equation*}
\sup_{\sigma \in \Lambda} |\partial_x^j\partial^i_{\sigma}b(x,\sigma)|\leq C(1+|x|)^C
\end{equation*}
for  $0\leq j \leq 3$, $0 \leq i \leq 4$ and $x\in\mathbb{R}^{n_2}$.
\item $bb^{\star}(x,\sigma)$ is elliptic uniformly in $(x,\sigma)\in \mathbb{R}^{n_2}\times \Lambda$, 
that is, there exists $\epsilon>0$ such that $\det bb^{\star}(x,\sigma)\geq \epsilon$ for $(x,\sigma)\in\mathbb{R}^{n_2}\times \Lambda$.
\item $|b(x,\sigma)-b(y,\sigma)| \leq C|x-y|$
for $x,y \in \mathbb{R}^{n_2}$ and $\sigma\in\Lambda$.
\item $Y_0\in \cap_{q>0}L^q(\Omega)$.
\item There exists $\gamma\in (0,1)$ such that 
\begin{equation*}
\sup_{0\leq t\leq T}E[|\mu_t|^q]<\infty \quad {\rm and} \quad  \sup_{0\leq s<t\leq T}\frac{E[|\mu_t-\mu_s|^q]}{|t-s|^{q\gamma}} <\infty
\end{equation*}
for any $q>0$.
\item There exists $n_3\in\mathbb{Z}_+$ such that $\{X_t\}$ can be decomposed as 
\begin{equation*}
X_t=X_0+\int^t_0\tilde{b}^1_sds+\int^t_0\tilde{b}^2_sdW_s+\int^t_0\tilde{b}^3_sd\hat{W}_s,
\end{equation*}
where 
\begin{equation*}
\tilde{b}^i_t=\tilde{b}^i_0+\int^t_0\hat{b}^{i1}_sds+\int^t_0\hat{b}^{i2}_sdW_s+\int^t_0\hat{b}^{i3}_sd\hat{W}_s, \ (i=2,3)
\end{equation*}
$\{\tilde{b}^i_t\}_{0\leq t\leq T} \ (1\leq i\leq 3)$ and $\{\hat{b}^{ij}_t\}_{0\leq t\leq T} \ (2\leq i\leq 3,1\leq j\leq 3)$ are $\mathcal{F}_t$-progressively measurable processes,
$\{\hat{W}_t,\mathcal{F}_t\}_{0\leq t\leq T}$ is an $n_3$-dimensional standard Wiener process independent of $\{W_t\}$ and
$E[\sup_{0\leq t\leq T}(|\hat{b}^{ij}_t|\vee |\tilde{b}^i_t|\vee |X_0|)^p]<\infty$ for any $i,j$ and $p>0$.
We ignore the terms $\tilde{b}^3_t$, $\int^t_0\tilde{b}^3_sd\hat{W}_s$ and $\int^t_0\hat{b}^{i3}_sd\hat{W}_s$ when $n_3=0$.
\end{enumerate}
\end{description}

Our setting contains the case where $\{X_t\}$ or $Y_0$ depends on $\sigma$ and main results hold in this case. 
However, if $\{X_t\}$ or $Y_0$ depends on $\sigma$, our estimator $\hat{\sigma}_n$ may not be the quasi-maximum likelihood estimator
since we need to consider the density of observations $\{X^k_{\mathcal{T}^j_k}\}$ or $Y_0$.
Nevertheless, we use the terms "quasi-maximum likelihood estimator" and "Bayes type estimator" in this case.
If $X_t=(t,Y_t)$ and $Y_0$ does not depend on $\sigma$, we can see $\hat{\sigma}_n$ is the maximum likelihood type estimator.

Under $[A1]$ $2$, there exists $\epsilon>0$ such that 
\begin{equation}\label{detB-nondeg}
\det (b_tb_t^{\star}) = | b^1_t | ^2| b^2_t |^2 (1-\rho^2_t)\geq \epsilon
\end{equation}
for $\rho(x,\sigma)=b^1\cdot b^2|b^1|^{-1}|b^2|^{-1}(x,\sigma)$.
Therefore 
\begin{equation}\label{rho-sup}
\bar{\rho}=\sup_{t\in [0,T],\sigma\in \Lambda}|\rho_t| < 1 \quad a.s.
\end{equation}
by $[A1] \ 1$.

Let us denote
\begin{eqnarray}
S(\sigma)=\left( 
\begin{array}{cc}
{\rm diag}(\{| b^1_I |^2\}_I) & \left\{ b^1_I\cdot b^2_J\frac{|I\cap J|}{\sqrt{|I|}\sqrt{|J|}}\right\}_{IJ} \\
\left\{ b^1_I\cdot b^2_J\frac{|I\cap J|}{\sqrt{|I|}\sqrt{|J|}}\right\}_{JI} & {\rm diag}(\{| b^2_J |^2\}_J)\\
\end{array} 
\right) \nonumber
\end{eqnarray}
and define a quasi-log-likelihood function $H_n=H_n(\sigma)$ of $((Y^1(I)/\sqrt{|I|})^{\star}_I,(Y^2(J)/\sqrt{|J|})^{\star}_J)$ 
{\colorg by} 
\begin{eqnarray}
H_n&=&-\frac{1}{2}\bigg(\bigg(\frac{Y^1(I)}{\sqrt{|I|}}\bigg)^{\star}_I,\bigg(\frac{Y^2(J)}{\sqrt{|J|}}\bigg)^{\star}_J\bigg)S^{-1}\bigg(\bigg(\frac{Y^1(I)}{\sqrt{|I|}}\bigg)^{\star}_I,\bigg(\frac{Y^2(J)}{\sqrt{|J|}}\bigg)^{\star}_J\bigg)^{\star} -\frac{1}{2}\log \det S \nonumber 
\end{eqnarray}
when $\det S >0$.
If $X_t\equiv t$ and $\mu\equiv 0$, $S$ is the covariance matrix for the Euler-Maruyama type approximation $((\tilde{Y}^1(I))_I, (\tilde{Y}^2(J))_J)$ of $((Y^1(I))_I,(Y^2(J))_J)$
defined by 
$\tilde{Y}^1(I)=b^1(L(I))\cdot W(I)$, $Y^2(J)=b^2(L(J))\cdot W(J)$.
Though $H_n$ is the quasi-log-likelihood function for $\mu\equiv 0$, 
we can see that the effect of drift term $\mu$ in a quasi-likelihood function can be ignored asymptotically.
So $H_n$ is applicable for general cases. 

In the case of synchronous observations, we have uniform non-degeneracy of $S$ 
by the condition $[A1] \ 2$.
However, in the case of nonsynchronous observations, the problem becomes more complicated
since the observation times of diffusion coefficients are not the same for $Y^1$ and $Y^2$.
However, the following proposition ensures that $H_n$ is well-defined under $[A1] \ 2.$
\begin{proposition}\label{S-nondeg}
Assume $[A1] \ 2$. Then $\det S(\sigma) >0$ almost surely for any $\sigma\in\Lambda$.
\end{proposition}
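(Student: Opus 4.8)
The plan is to show that the symmetric matrix $S(\sigma)$ is in fact positive definite almost surely; since a symmetric positive definite matrix has positive determinant, this immediately yields $\det S(\sigma)>0$. (Note $S$ is a genuine finite matrix because the observation counts $l_n,m_n$ are a.s. finite.)

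First I would reduce the quadratic form to an $L^2$-norm. Writing a generic vector as $((a_i)_i,(c_j)_j)$ indexed by the intervals $\{I^i\}$ and $\{J^j\}$, I introduce the $\mathbb{R}^2$-valued step functions
\[
f(t)=\sum_i \frac{a_i}{\sqrt{|I^i|}}\,b^1_{I^i}\,\mathbf{1}_{I^i}(t),\qquad
g(t)=\sum_j \frac{c_j}{\sqrt{|J^j|}}\,b^2_{J^j}\,\mathbf{1}_{J^j}(t).
\]
Using that $\{I^i\}_i$ and $\{J^j\}_j$ are each families of disjoint intervals (so the diagonal blocks reproduce $\int_0^T|f|^2\,dt$ and $\int_0^T|g|^2\,dt$), together with $|I^i\cap J^j|=\int_0^T \mathbf{1}_{I^i}(t)\mathbf{1}_{J^j}(t)\,dt$ for the off-diagonal blocks, a direct computation gives $((a_i)_i,(c_j)_j)\,S\,((a_i)_i,(c_j)_j)^\star=\int_0^T|f(t)+g(t)|^2\,dt\ge 0$. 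This is the main structural observation: the off-diagonal blocks of $S$ are designed precisely so that the cross term equals $2\int_0^T f\cdot g\,dt$, hence $S$ is positive semidefinite.

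It then remains to prove definiteness. Suppose the quadratic form vanishes; then $f=-g$ a.e., so the common value $\phi:=f=-g$ is a step function that is constant on each $I^i$ and on each $J^j$. As we cross an endpoint of some $I^i$ that lies interior to a cell $J^j$, the function $g$ (hence $\phi=-g$) does not jump, and symmetrically across interior $J$-endpoints; therefore $\phi$ propagates to a single constant $v_0$ on each block of $[0,T]$ delimited by the (possibly empty) set of coinciding endpoints $\{t^\ast:\,S^i=T^j=t^\ast\}$. At the left end $t^\ast$ of such a block, the interval $I$ and the interval $J$ that start at $t^\ast$ overlap and share the same left endpoint, so $\mathcal{T}'_k(I)=\mathcal{T}'_k(J)$ for all $k$ and thus $b^1_I$ and $b^2_J$ are evaluated at the common point $\{X^k_{\mathcal{T}'_k(I)}\}_k$. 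The relation $\phi\equiv v_0$ forces $v_0$ to be parallel to both $b^1_I$ and $b^2_J$; but by $[A1]\ 2$ and \eqref{detB-nondeg} these two rows of $b$ are linearly independent (their Gram determinant equals $\det(bb^\star)\ge\epsilon>0$), so $v_0=0$. Consequently $\frac{a_i}{\sqrt{|I^i|}}b^1_{I^i}=0$ and $\frac{c_j}{\sqrt{|J^j|}}b^2_{J^j}=0$ on that block, and since $|b^1_{I^i}|,|b^2_{J^j}|>0$ (again by ellipticity) we get $a_i=c_j=0$ there. Running over all blocks shows the vector is $0$, so $S$ is positive definite and $\det S(\sigma)>0$ a.s.

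I expect the main obstacle to be the definiteness step rather than the semidefiniteness identity. The delicate points are to handle coinciding observation times $S^i=T^j$, which split the interval-overlap structure into several connected blocks, and to observe that at the start of each block the two diffusion rows $b^1,b^2$ are sampled at a common $X$-point — this is exactly where the uniform ellipticity $[A1]\ 2$ enters to exclude parallelism and thereby kills the block constant $v_0$.
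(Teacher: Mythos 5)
Your proof is correct, and it reaches the conclusion by a genuinely different (though closely related) route. The paper also realizes the quadratic form as an $L^2$-type quantity, but does so probabilistically: it introduces an auxiliary two-dimensional Wiener process $\tilde{W}$ and the martingale $M_t=\sum_I \frac{u_I}{\sqrt{|I|}}b^1_I\cdot\tilde{W}(I_t)+\sum_J \frac{v_J}{\sqrt{|J|}}b^2_J\cdot\tilde{W}(J_t)$, whose bracket $\langle M\rangle_T$ is exactly your $\int_0^T|f+g|^2\,dt$; vanishing of the form then gives $\langle M\rangle_t\equiv 0$ for all $t$. For strict definiteness the paper argues by contradiction rather than constructively: it takes the interval with the smallest left endpoint among those carrying a nonzero coefficient and inspects $\langle M\rangle_{L(I)+\delta}$ for small $\delta$, splitting into two cases --- either no $J$ with $v_J\neq 0$ starts at $L(I)$, which forces $|b^1_I|=0$, or some $J$ does start there, in which case $b^2_J=b^2_I$ (same sampled $X$-point) and the $2\times 2$ form with matrix $b_Ib_I^{\star}$ vanishes at a nonzero vector; both contradict $[A1]\ 2$. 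Your argument replaces this local, minimal-endpoint contradiction with a global structural one: jump propagation of $\phi=f=-g$ across non-coinciding endpoints, decomposition of $[0,T)$ into blocks delimited by coinciding times $S^i=T^j$, and annihilation of the block constant via linear independence of $b^1,b^2$ at the block's starting point --- which is precisely where the paper's second case invokes ellipticity, since a block always begins at a common endpoint (including $t=0$). As for what each buys: the paper's version is shorter once the martingale device is accepted, since only one interval needs inspection; yours is purely deterministic (no auxiliary probability space or martingale theory), shows directly that every coefficient vanishes, and makes explicit how coinciding observation times, and only those, can break the overlap chain --- the structural point that the paper's case split leaves implicit.
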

\begin{proof}
Fix $\omega\in \Omega$. 
It is sufficient to show that $S$ is positive definite.
Let $((u_I)_I,(v_J)_J)$ be a real vector satisfying
\begin{eqnarray}
((u_I)_I,(v_J)_J)S((u_I)_I,(v_J)_J)^{\star}=0. \nonumber
\end{eqnarray}
We assume that $((u_I)_I,(v_J)_J)$ has a non-zero element and lead to a contradiction.

Let $\{\tilde{W}^1_t\}$ and $\{\tilde{W}^2_t\}$ be two independent Wiener processes on some probability space, and $\{M_t\}_{0\leq t\leq T}$ is a stochastic process on the same probability space, satisfying
\begin{equation*}
M_t=\sum_I\frac{u_I}{\sqrt{|I|}}b^1_I\cdot \tilde{W}(I_t)+\sum_J\frac{v_J}{\sqrt{|J|}}b^2_J\cdot \tilde{W}(J_t).
\end{equation*}
Then $\{M_t\}$ is a martingale satisfying
\begin{eqnarray}
\langle M\rangle_t&=&\sum_I\frac{u_I^2}{|I|}| b^1_I|^2|I_t|+\sum_J\frac{v_J^2}{|J|}| b^2_J|^2|J_t|+2\sum_{I,J}u_Iv_Jb^1_I\cdot b^2_J\frac{|(I\cap J)_t|}{\sqrt{|I||J|}}. \nonumber 
\end{eqnarray}
Since
\begin{eqnarray}
\langle M\rangle_T=((u_I)_I,(v_J)_J)S((u_I)_I,(v_J)_J)^{\star}=0, \nonumber
\end{eqnarray}
it follows that $\langle M\rangle_t=0$ for $0\leq t \leq T$.

We may assume some $I$ satisfies $L(I)=\min\{L(I);u_I\neq 0\} \wedge \min\{L(J);v_J\neq 0\}$ 
without loss of generality. We fix this $I$ below.

First, we consider the case that $L(I)<\min\{L(J);v_J\neq 0\}$. 
Then
\begin{equation*}
\langle M\rangle_{L(I)+\delta}=| b^1_I | ^2u_I^2\delta/|I|=0
\end{equation*}
for sufficiently small $\delta>0$.
Therefore we have $| b^1_I |=0$, which contradicts $[A1] \ 2.$

In the case that $L(I)=L(J)$ for some $J$ with $v_J\neq 0$, we obtain
\begin{eqnarray}\label{M-quad}
\langle M\rangle_{L(I)+\delta}=| b^1_I| ^2\frac{u_I^2}{|I|}\delta+| b^2_J|^2\frac{v_J^2}{|J|}\delta+2\frac{u_Iv_J}{\sqrt{|I||J|}}b^{1}_I\cdot b^{2}_J\delta=0
\end{eqnarray}
for sufficiently small $\delta>0$.
Since $L(I)=L(J)$, we obtain $b^2_J=b^2_I$. Therefore 
\begin{equation*}
\bigg(\frac{u_I}{\sqrt{|I|}},\frac{v_J}{\sqrt{|J|}}\bigg)b_{I}b^{\star}_{I}\bigg(\frac{u_I}{\sqrt{|I|}},\frac{v_J}{\sqrt{|J|}}\bigg)^{\star}=0
\end{equation*}
by (\ref{M-quad}). This contradicts the fact that $b_Ib^{\star}_I$ is positive definite by $[A1] \ 2.$
\end{proof}

Let
\begin{eqnarray}
&&\bar{\rho}_t=\sup_{0\leq s\leq t}|\rho_s|, \quad \rho_{I,J,t}=\frac{b^1_{I,t}\cdot b^2_{J,t}}{| b^1_{I,t}|| b^2_{J,t} |}, \quad 
\tilde{\rho}_n(t)=\sup_{\sigma, I,J;I\cap J \neq \emptyset}|\rho_{I,J,t}|\vee \bar{\rho}_t, \nonumber 
\end{eqnarray}
To discuss asymptotic behavior of the quasi-likelihood, 
we need a more precise estimate for non-degeneracy of $S$. 
To this end, we will estimate $1-\tilde{\rho}_n(t)$ from below.
Assuming $[A1]$ and $r_n\to^p 0 \ (n\to \infty)$, 
we have $\sup_t|\bar{\rho}_t - \tilde{\rho}_n(t)|\to^p 0\ (n\to \infty)$ 
by uniform continuity of $b^1$ and $b^2$ with respect to  $t$ and $\sigma$ for fixed $\omega$.
Therefore $\lim_{n\to \infty}P[\sup_t\tilde{\rho}_n(t)\geq 1]=0$ by (\ref{rho-sup}).

We need a more strong condition for $\tilde{\rho}_n$. For stochastic processes $\{s_n(t)\}_{0\leq t\leq T,n\in\mathbb{N}}$, we consider the following condition:
\begin{description}
\item{[$S$]} 
There exists $M\in\mathbb{N}$, stochastic processes $\{\bar{s}_n(t,x)\}$ and a $\sigma(\Pi_n)$-measurable $\mathbb{R}^M$-valued
random variable ${\colorg {\sf X}}$ such that $s_n(t)=\bar{s}_n(t;{\colorg {\sf X}})$, $\bar{s}_n(t,x)$ is continuous with respect to $(t,x)$ a.s.,
$\bar{s}_n(0,x)\leq 1-|\rho_0|$, $t\mapsto \bar{s}_n(t,x)$ is non-increasing 
and $\{\bar{s}_n(t,x)\}_{0\leq t\leq T}$ is the $[0,1]$-valued $\mathcal{F}_t$-adapted process for $n\in\mathbb{N}$ and $x\in\mathbb{R}^M$.
\end{description}
Let 
\begin{equation*}
\tau_n=\tau(s_n)=\inf\{t\in [0,T];\tilde{\rho}_n(t)\geq 1-s_n(t)\}\wedge T.
\end{equation*} 
We consider the following condition for $q>0$ and $\xi>0$.
\begin{description}
\item{[$S\mathchar`-q,\xi$]} 
$\{s_n\}$ satisfies $[S]$, $P[\tau(s_n)<T]=O(b_n^{-\xi})$ and $\sup_nE[(s_n(T))^{-q}]<\infty$. 
\end{description}

Define $\hat{S}=\hat{S}(\sigma;s_n)$ and $\hat{H}_n=\hat{H}_n(\sigma;s_n)$ similarly to $S$ and $H_n$ respectively, substituting $b^1_{I,\tau_n}$ for $b^1_I$ and $b^2_{J,\tau_n}$ for $b^2_J$ in the definition of $S$.
Under $[S\mathchar`-q,\xi]$, it is easy to see that 
$\sup_{\sigma}|H_n-\hat{H}_n|\to^p 0$ as $n \to \infty$.
To investigate asymptotic properties of estimators, 
it is convenient to use $\hat{H}_n$.

If $\sup_t\tilde{\rho}_n(t)< c$ almost surely for some $0<c<1$, we can set $s_n\equiv 1-c$.
However, in general, we need the following conditions to obtain $\{s_n\}$ for $q>0$ and $\delta\in (0,1)$.
\begin{description}
\item{[$A2$]} $r_n\to^p 0$ as $n\to \infty$.
\end{description}
\begin{description}
\item{[$A2\mathchar`-q,\delta$]} $E[r_n^q]=O(b_n^{-\delta q})$. 
\end{description}
The following lemma gives examples of $\{s_n\}$ in general cases.
\begin{lemma}\label{Sq-est}
Let $P>1, q'>0$, $0<\delta<1$ and $s_n(t)=(1-\bar{\rho}_t)/P$. Assume $[A1],[A2\mathchar`-q',\delta]$. 
Then $[S\mathchar`-q,\xi]$ holds for any $q>0$ and $0<\xi<\delta q'$.
\end{lemma}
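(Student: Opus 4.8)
The plan is to verify the three requirements of $[S\mathchar`-q,\xi]$ for the choice $s_n(t)=(1-\bar{\rho}_t)/P$ one at a time, the probability bound on $\tau(s_n)$ being the crux. First I would check $[S]$, which is essentially free here because $s_n$ does not involve $\Pi_n$: take $M=1$, let $\mathsf{X}$ be any constant (hence $\sigma(\Pi_n)$-measurable), and set $\bar{s}_n(t,x)=(1-\bar{\rho}_t)/P$. Since $\bar{\rho}_t=\sup_{s\leq t,\sigma}|\rho_s|$ is the running supremum of a function jointly continuous in $s$ and over the compact set $\bar{\Lambda}$, it is continuous and non-decreasing in $t$; thus $\bar{s}_n(\cdot,x)$ is continuous, non-increasing, $\mathcal{F}_t$-adapted, and $[0,1]$-valued because $\bar{\rho}_t\leq \bar{\rho}<1$ by (\ref{rho-sup}). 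Finally $\bar{s}_n(0,x)=(1-\bar{\rho}_0)/P\leq 1-\bar{\rho}_0\leq 1-|\rho_0|$ since $P>1$ and $\bar{\rho}_0\geq |\rho_0|$.

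For the negative-moment bound I would note that $s_n(T)=(1-\bar{\rho})/P$ does not depend on $n$, so it suffices to show $E[(1-\bar{\rho})^{-q}]<\infty$ for every $q>0$. Starting from the ellipticity $[A1]\,2$ in the form (\ref{detB-nondeg}), $|b^1_t|^2|b^2_t|^2(1-\rho_t^2)\geq \epsilon$, combined with the polynomial bound $|b^i_t|^2\leq C(1+|X_t|)^C$ from $[A1]\,1$, one gets $1-|\rho_t|\geq \tfrac12(1-\rho_t^2)\geq \epsilon'(1+|X_t|)^{-C'}$ uniformly in $\sigma$, hence $1-\bar{\rho}\geq \epsilon'(1+\sup_{t\leq T}|X_t|)^{-C'}$. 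The negative moments then reduce to $E[(1+\sup_t|X_t|)^{C'q}]<\infty$, which follows from the Burkholder--Davis--Gundy inequality applied to the decomposition of $X$ in $[A1]\,6$, whose coefficients have moments of all orders.

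The main step is $P[\tau(s_n)<T]=O(b_n^{-\xi})$, and its heart is the comparison estimate
\begin{equation*}
\tilde{\rho}_n(t)\leq \bar{\rho}_t+C\Big(1+\sup_{s\leq T}|X_s|\Big)^C\Delta_n,\qquad \Delta_n:=\sup_{|s-s'|\leq Cr_n}|X_s-X_{s'}|,
\end{equation*}
which I expect to be the real obstacle. For $I\cap J\neq\emptyset$ the effective sampling times $\{\mathcal{T}'_k(I)\wedge t\}_k$ and $\{\mathcal{T}'_k(J)\wedge t\}_k$ all lie in a window of length $O(r_n)$ ending at or below $t$, so there is a single $u\leq t$ within $Cr_n$ of each. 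Expressing $\rho_{I,J,t}$ and $\rho(X_u,\sigma)$ through the normalized bilinear map $(x,y)\mapsto b^1(x,\sigma)\cdot b^2(y,\sigma)/(|b^1(x,\sigma)||b^2(y,\sigma)|)$, I would bound its Lipschitz constant on the relevant range by $C(1+\sup|X|)^C$, using the bounds on $b,\partial_x b$ from $[A1]\,1$ for the numerator and the lower bound $|b^i|\geq \epsilon''(1+|X|)^{-C}$, which again follows from (\ref{detB-nondeg}), for the de-normalization. This gives $|\rho_{I,J,t}|\leq |\rho(X_u,\sigma)|+C(1+\sup|X|)^C\Delta_n\leq \bar{\rho}_t+C(1+\sup|X|)^C\Delta_n$ uniformly in $\sigma,I,J$, and since $\bar{\rho}_t$ is itself a term of $\tilde{\rho}_n(t)$ the displayed bound follows. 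The subtleties are the component-wise mismatched sampling times, the overlap geometry, and the de-normalization through the lower bound on $|b^i|$.

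Granting this, on $\{\tau_n<T\}$ some $t$ satisfies $\tilde{\rho}_n(t)\geq 1-s_n(t)=1-(1-\bar{\rho}_t)/P$, whence $C(1+\sup|X|)^C\Delta_n\geq (1-\bar{\rho}_t)(P-1)/P\geq (1-\bar{\rho})(P-1)/P$; equivalently $\Delta_n\geq A$ for the $n$-independent variable $A=(P-1)(1-\bar{\rho})/(PC(1+\sup|X|)^C)$, which by the second paragraph satisfies $A^{-1}\leq C(1+\sup|X|)^C\in\bigcap_p L^p$. Exploiting that $\Pi_n$, hence $r_n$, is independent of $\mathcal{F}_T$ and the Kolmogorov--Chentsov bound $\Delta_n\leq CLr_n^{\alpha}$ with $\alpha<1/2$ and $L\in\bigcap_p L^p$, I would estimate, for any $p$ and conjugate exponents $p_1,p_2$,
\begin{equation*}
P[\tau_n<T]\leq E[(\Delta_n/A)^p]\leq E[\Delta_n^{pp_1}]^{1/p_1}E[A^{-pp_2}]^{1/p_2}\leq C\,E[L^{pp_1}]^{1/p_1}E[r_n^{\alpha pp_1}]^{1/p_1},
\end{equation*}
where $E[A^{-pp_2}]$ and $E[L^{pp_1}]$ are finite and absorbed into $C$. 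Choosing $\alpha pp_1=q'$ and invoking $[A2\mathchar`-q',\delta]$ gives $E[r_n^{q'}]=O(b_n^{-\delta q'})$, so $P[\tau_n<T]=O(b_n^{-\delta q'/p_1})$; taking $p_1>1$ close enough to $1$ that $\delta q'/p_1\geq \xi$, which is possible exactly because $\xi<\delta q'$, finishes the argument.
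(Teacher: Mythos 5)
Your proposal is correct and takes essentially the same approach as the paper: the same routine verification of $[S]$, the same negative-moment bound via the ellipticity estimate (\ref{detB-nondeg}), the same mean-value comparison $\sup_t|\tilde{\rho}_n(t)-\bar{\rho}_t|\leq C\sup_t(1+|X_t|)^C\sup_{|t-s|\leq Cr_n}|X_t-X_s|$, and the same combination of Kolmogorov's continuity criterion with $[A2\mathchar`-q',\delta]$ to control the probability that $\tau(s_n)<T$. The only difference is bookkeeping in the last step: the paper splits $\{\tau_n<T\}$ by a threshold $b_n^{-\eta}$ into three events, each handled by a separate Markov bound, whereas you decouple the $n$-independent random variable from $r_n$ via Markov plus H\"older's inequality and the independence of $\Pi_n$ from $\mathcal{F}_T$ — an equivalent and equally valid organization.
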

\begin{proof}
It is clear that $\{s_n\}$ are continuous $[0,1]$-valued $\mathcal{F}_t$-adapted processes 
and the mapping $t\mapsto s_n(t)$ is non-increasing for $n\in\mathbb{N}$.
Since $1/(1-|\rho_t|)\leq 2|b^1_t|^2|b^2_t|^2/\epsilon$ for any $q>0$ by (\ref{detB-nondeg}),
we obtain $\sup_nE[(s_n(T))^{-q}]<\infty$ by $[A1]$.

Moreover, let $\eta=(\delta-\xi/q')/9$ and $q_0\geq \xi/\eta$, then by $[A1]$ and the mean value theorem, we obtain
\begin{equation*}
\sup_{0\leq t\leq T}\bigg|\tilde{\rho}_n(t)-\bar{\rho}_t\bigg|\leq C\sup_t(1+|X_t|)^C\times \sup_{|t-s|\leq r_n}|X_t-X_s|.
\end{equation*}
For $t\in (0,T)$, we have 
\begin{eqnarray}
1-\tilde{\rho}_n(t)\leq s_n(t) &\Rightarrow & \frac{1-\tilde{\rho}_n(t)}{1-\rho_t}\leq \frac{1}{P} \Rightarrow 1-\frac{1}{P}\leq \frac{\tilde{\rho}_n(t)-\bar{\rho}_t}{1-\bar{\rho}_t} \nonumber \\
&\Rightarrow & 1-\bar{\rho}_t\leq b_n^{-\eta}\quad {\rm or} \quad b_n^{-\eta}(1-1/P)\leq (\tilde{\rho}_n(t)-\bar{\rho}_t). \nonumber
\end{eqnarray}
From this relation, we see that
\begin{eqnarray}
P[\tau_n<T]&=&P[{\rm There \ exists} \ t\in [0,T) \ {\rm s.t.} \ 1-\tilde{\rho}_n(t)\leq s_n(t)] \nonumber \\
&\leq &b_n^{-q_0\eta}E[1/(1-\bar{\rho}_T)^{q_0}]+P[b_n^{-\eta}(1-1/P) \leq b_n^{2\eta} r_n^{1/3}] \nonumber \\
&&+P\bigg[C\sup_t(1+|X_t|)^C\vee \sup_{s\neq t}\frac{|X_t-X_s|}{|t-s|^{1/3}}\geq b_n^{\eta}\bigg]. \nonumber
\end{eqnarray}
Then by $[A1]$, $[A2\mathchar`-q',\delta]$ and Kolmogorov criterion(\cite{Rev-Yor} Chapter I, Theorem (2.1)), we obtain
\begin{eqnarray}
P[\tau_n<T]&\leq & E[(b_n^{3\eta}(1-1/P)^{-1}r_n^{1/3})^{3q'}]+O(b_n^{-\xi})= O(b_n^{-\xi}). \nonumber 
\end{eqnarray}
\end{proof}
From now on, we fix $\{s_n\}$ which satisfy $[S\mathchar`-q,\xi]$ for some $q>0$ and $\xi>0$ unless otherwise indicated.

Next, we expand $\hat{H}_n$. We denote
\begin{eqnarray}
D&=&{\rm diag}(\{| b^1_{I,\tau_n}|\}_I,\{| b^2_{J,\tau_n}|\}_J), \quad L=\bigg\{\rho_{I,J,\tau_n}\frac{|I\cap J|}{\sqrt{|I||J|}}\bigg\}_{I,J}, \nonumber \\
\tilde{L}&=&\left(
\begin{array}{ll}
0 & L \\
L^{\star} & 0 \\
\end{array}
\right), \quad Z=\bigg(\bigg(\frac{Y^1(I)}{| b^1_{I,\tau_n}| \sqrt{|I|}}\bigg)_I^{\star}, \bigg(\frac{Y^2(J)}{| b^2_{J,\tau_n}| \sqrt{|J|}}\bigg)_J^{\star}\bigg)^{\star}. \nonumber
\end{eqnarray}
Since $\hat{S}=D(\mathcal{E}_{l_n+m_n}+\tilde{L})D$,   
\begin{equation*}
\hat{H}_n=-\frac{1}{2}Z^{\star}MZ-\log\det D+\frac{1}{2}\log\det M
\end{equation*}
for $M=(\mathcal{E}_{l_n+m_n}+\tilde{L})^{-1}$.
Moreover, for $G=\{ |I\cap J|/\sqrt{|I||J|} \}_{IJ}$, we obtain
\begin{equation*}
\parallel \tilde{L} \parallel^2= \parallel \{\rho_{I,J,\tau_n}G_{IJ}\}_{IJ}\parallel^2 \vee \parallel \{\rho_{I,J,\tau_n}G_{IJ}\}_{JI} \parallel^2 \leq  (1-s_n(T))^2(\parallel G\parallel ^2 \vee \parallel G^{\star}\parallel^2).
\end{equation*}
\begin{lemma}\label{GGT-lambda}
For any $n\in \mathbb{N}$ and deterministic partitions $\{I\},\{J\}$ of $[0,T]$, all the eigenvalues of the symmetric matrices $GG^{\star},G^{\star}G$ are in $[0,1]$. 
In particular, $\parallel G\parallel \vee \parallel G^{\star}\parallel \leq 1$.
\end{lemma}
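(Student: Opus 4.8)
The plan is to recognize $G$ as the cross-Gram matrix of two orthonormal systems in $L^2([0,T])$ and then to apply Bessel's inequality. First I would introduce the Hilbert space $H=L^2([0,T])$ equipped with Lebesgue measure, and to each interval $I$ of the partition $\{I\}$ (resp.\ each $J$ of $\{J\}$) associate the normalized indicator
\[
e_I=\frac{\mathbf{1}_I}{\sqrt{|I|}},\qquad e_J=\frac{\mathbf{1}_J}{\sqrt{|J|}},
\]
where $\mathbf{1}_I$ is the indicator function of $I$. A direct computation gives $\langle e_I,e_J\rangle_H=|I\cap J|/(\sqrt{|I|}\sqrt{|J|})=G_{IJ}$, so $G$ is exactly the matrix of pairwise inner products between the two families. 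Since $\{I\}$ is a partition of $[0,T]$ into intervals with disjoint interiors, the functions $\{e_I\}_I$ have mutually disjoint supports and unit norm, hence form an orthonormal system in $H$; the same holds for $\{e_J\}_J$. This is the only conceptual point in the argument, and I regard identifying this cross-Gram structure as the main (and essentially the sole non-routine) step; once it is in place, everything is standard.

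Next I would convert orthonormality into a norm bound. For an arbitrary real vector $v=(v_J)_J$, set $w=\sum_J v_J e_J\in H$; orthonormality of $\{e_J\}_J$ gives $\parallel w\parallel_H^2=\sum_J v_J^2=|v|^2$. The $I$-th component of $Gv$ equals $\sum_J G_{IJ}v_J=\langle e_I,w\rangle_H$, so Bessel's inequality applied to the orthonormal system $\{e_I\}_I$ yields
\[
|Gv|^2=\sum_I\langle e_I,w\rangle_H^2\leq \parallel w\parallel_H^2=|v|^2.
\]
Therefore $\parallel G\parallel\leq 1$, and since $\parallel G^\star\parallel=\parallel G\parallel$ (or by exchanging the roles of $\{I\}$ and $\{J\}$) also $\parallel G^\star\parallel\leq 1$.

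Finally I would translate this into the eigenvalue statement. The matrices $GG^\star$ and $G^\star G$ are symmetric and positive semidefinite, so their eigenvalues are nonnegative; moreover these eigenvalues are precisely the squared singular values of $G$, each bounded by $\parallel G\parallel^2\leq 1$. Hence every eigenvalue of $GG^\star$ and of $G^\star G$ lies in $[0,1]$, which is the assertion, and the bound $\parallel G\parallel\vee\parallel G^\star\parallel\leq 1$ is then immediate. I do not anticipate any genuine technical obstacle: the hypothesis that $\{I\}$ and $\{J\}$ are partitions is used exactly to guarantee orthonormality of the two families, and Bessel's inequality supplies the rest.
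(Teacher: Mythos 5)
Your proof is correct. The identification of $G_{IJ}=\langle \mathbf{1}_I/\sqrt{|I|},\,\mathbf{1}_J/\sqrt{|J|}\rangle_{L^2}$ as a cross-Gram matrix of two orthonormal systems is valid (the partitions make the supports disjoint), Bessel's inequality then gives $|Gv|\leq |v|$, and passing from $\parallel G\parallel\vee\parallel G^{\star}\parallel\leq 1$ to the eigenvalue statement via singular values is standard. The paper takes a different, probabilistic route: it introduces a one-dimensional Wiener process $\tilde{W}$ and observes that
\begin{equation*}
\Sigma_1=\left(\begin{array}{cc}\mathcal{E}_{l_n} & G \\ G^{\star} & \mathcal{E}_{m_n}\end{array}\right)
\end{equation*}
is the covariance matrix of the Gaussian vector $((\tilde{W}(I)/\sqrt{|I|})_I,(\tilde{W}(J)/\sqrt{|J|})_J)$, hence non-negative definite; a congruence transformation (a Schur-complement elimination by the matrix $\Sigma_2$) then shows $\mathcal{E}_{l_n}-GG^{\star}\geq 0$, which bounds the eigenvalues by $1$. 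The two arguments are really the same Hilbert-space geometry in different clothing: the covariance identity the paper uses is the It\^o isometry applied to your functions $e_I,e_J$, so the paper's Gaussian construction is an isometric embedding of your $L^2$ picture into $L^2(\Omega)$. What your version buys is economy — no auxiliary probability space or process, and the contraction property of $G$ is exhibited directly rather than extracted from positive semi-definiteness of a block matrix. What the paper's version buys is thematic fit: $\Sigma_1$ is literally the covariance structure of normalized Brownian increments over the two observation grids, which is the object the quasi-likelihood $H_n$ is built from, so the lemma's statement and its role in bounding $\parallel\tilde{L}\parallel$ read off naturally from that construction.
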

\begin{proof}
We denote by $\{\lambda_i\}_{i=1}^{l_n}$ the eigenvalues of $GG^{\star}$. 
Obviously, $0 \leq \lambda_i\ (1 \leq i \leq l)$.
Let $\{\tilde{W}_t\}_t$ be a one-dimensional standard Wiener process and  
\begin{eqnarray}
\Sigma_1=\left( 
\begin{array}{cc}
\mathcal{E}_{l_n} & G \\
G^{\star} & \mathcal{E}_{m_n} \\
\end{array} 
\right),\quad \Sigma_2=\left( 
\begin{array}{cc}
\mathcal{E}_{l_n} & -G \\
0 & \mathcal{E}_{m_n} \\
\end{array} 
\right), \nonumber
\end{eqnarray}
then $\Sigma_1$ is the covariance matrix of $((\tilde{W}(I)/\sqrt{|I|})_I,((\tilde{W}(J)/\sqrt{|J|})_J)$ and 
\begin{eqnarray}
\Sigma_2\Sigma_1 \Sigma_2^{\star}=\left( 
\begin{array}{cc}
\mathcal{E}_{l_n}-GG^{\star} & 0 \\
0 & \mathcal{E}_{m_n} \\
\end{array} 
\right). \nonumber
\end{eqnarray}
Since $\Sigma_1$ is non-negative definite, $\mathcal{E}_{l_n}-GG^{\star}$ is also non-negative definite,
and hence $1-\lambda_i \geq 0 \ (1 \leq i \leq l)$.
Therefore we conclude $0\leq \lambda_i\leq 1 \ (1 \leq i \leq l)$.

In particular, we have $\parallel G\parallel^2=\parallel G^{\star}G\parallel=\sup_i\lambda_i\leq 1$. 
The same conclusion can be drawn for $G^{\star}G$ and $\parallel G^{\star}\parallel$.
\end{proof}

Since $\parallel \tilde{L} \parallel \leq 1-s_n(T)$ by Lemma \ref{GGT-lambda}, 
$\sum_{p=0}^{\infty}(-1)^p\tilde{L}^p$ exists almost surely and this gives $M=\sum_{p=0}^{\infty}(-1)^p\tilde{L}^p$, 
under $[S\mathchar`-q,\xi]$. 
Moreover, we obtain
\begin{equation*}
\max_{1\leq k \leq l+m}|\eta_k|=\parallel \tilde{L}\parallel \leq 1-s_n(T),
\end{equation*}
where $\{\eta_k\}_{k=1}^{l+m}$ be the eigenvalues of $\tilde{L}$.
Hence
\begin{equation*}
\log \det (\mathcal{E}_{l_n+m_n}+\tilde{L})=\sum_{k=1}^{l+m}\log (1+\eta_k) =\sum_{k=1}^{l+m}\sum_{p=1}^{\infty}\frac{(-1)^{p+1}\eta_k^p}{p}=\sum_{p=1}^{\infty}\frac{(-1)^{p+1}}{p}{\rm tr}(\tilde{L}^p)
\end{equation*}
almost surely. Therefore
\begin{eqnarray}\label{Hn2}
\hat{H}_n&=&-\frac{1}{2}Z^{\star}\left(\sum_{p=0}^{\infty}(-1)^p\tilde{L}^p\right)Z-\log\det D+\frac{1}{2}\sum_{p=1}^{\infty}\frac{(-1)^p}{p}{\rm tr}(\tilde{L}^p) \nonumber \\
&=&-\frac{1}{2}Z^{\star}\sum_{p=0}^{\infty}\left(
\begin{array}{ll}
(LL^{\star})^p & -(LL^{\star})^pL \\
-(L^{\star}L)^pL^{\star} & (L^{\star}L)^p \\
\end{array}
\right)Z-\log\det D+\frac{1}{2}\sum_{p=1}^{\infty}\frac{(-1)^p}{p}{\rm tr}(\tilde{L}^p) 
\end{eqnarray}
almost surely.

\section{The limit of $H_n$ and observation times}
In this section, we investigate the asymptotic behavior of $H_n$ and $\hat{H}_n$
to apply Ibragimov-Has'minskii's theory. 
To obtain these estimates, we need some convergence conditions
($[A3],[A3']$ and $[A3'\mathchar`-q,\eta]$ given in Section $3.1$) for the observation times.
Proposition \ref{Hn-lim} in Section $3.2$ will give asymptotic properties of $H_n$ and $\hat{H}_n$.

\subsection{Convergence conditions of functions of the observation times}
{\colorb
Since $M$ is a functional of $\{\rho_{I^i,J^j,\tau_n}\}_{i,j}$, we can write
\begin{equation*}
M=\left(
\begin{array}{ll}
M^{11}(\{\rho_{I^i,J^j,\tau_n}\}) & M^{12}(\{\rho_{I^i,J^j,\tau_n}\}) \\
(M^{12}(\{\rho_{I^i,J^j,\tau_n}\}))^{\star} & M^{22}(\{\rho_{I^i,J^j,\tau_n}\}) \\
\end{array}
\right).
\end{equation*}
Let $A_n$ be defined as 
\begin{equation*}
A_n(\mathcal{C}^1,\mathcal{C}^2,\mathcal{C}^3,\mathcal{C}^4)={\rm tr}(M^{11}(\mathcal{C}^4)\mathcal{C}^1)+2{\rm tr}((M^{12}(\mathcal{C}^4))^{\star}\mathcal{C}^3)+{\rm tr}(M^{22}(\mathcal{C}^4)\mathcal{C}^2),
\end{equation*}
where $\mathcal{C}^1,\mathcal{C}^2,\mathcal{C}^3,\mathcal{C}^4$ are complex matrices of size $l_n\times l_n,m_n\times m_n,l_n\times m_n$ and $l_n\times m_n$, respectively, 
and the absolute value of each element of $\mathcal{C}^4$ is less than $1$. Then}
we see $Z^{\star}MZ$ can be rewritten as 
\begin{equation*}
Z^{\star}MZ=A_n(\{Z_iZ_{i'}\}_{i,i'=1}^{l_n},\{Z_{j+l_n}Z_{j'+l_n}\}_{j,j'=1}^{m_n},\{Z_iZ_{j+l_n}\}_{i,j},\{\rho_{I^i,J^j,\tau_n}\}_{i,j}).
\end{equation*}
Let ${\bf 1}$ denote an $l_n\times m_n$ matrix with all elements equal $1$, 
$\{\nu_n^{p,i}\}_{n\in\mathbb{N},p\in\mathbb{Z}_+,i=1,2}$ be random measures on $[0,T)$ which satisfy
\begin{equation*}
\nu_n^{p,1}([0,t))=b_n^{-1}\sum_I((GG^{\star})^p)_{II}1_{\{L(I)\in [0,t)\} },\ \nu_n^{p,2}([0,t))=b_n^{-1}\sum_J((G^{\star}G)^p)_{JJ}1_{\{L(J)\in [0,t)\} },
\end{equation*}
and 
\begin{equation*}
\mathcal{E}^1(t)=\{\delta_{i,i'}1_{\{I^i\cap [0,t)\neq \emptyset \} }\}_{i,i'=1}^{l_n}, \quad \mathcal{E}^2(t)=\{\delta_{j,j'}1_{\{J^j\cap [0,t)\neq \emptyset \} }\}_{j,j'=1}^{m_n},
\end{equation*}
where $\delta$ denotes the Kronecker delta function.
Moreover, for $p\in\mathbb{Z}_+$ and $i=1,2$, let
\begin{eqnarray}
\Psi^{p,i}(f,g)&=&\Psi^{p,i,n}(f,g)=\int^T_0f(s)\nu^{p,i}_n(ds)-\int^T_0f(s)g(s)ds, \nonumber 
\end{eqnarray}
for $\mathbb{R}$-valued functions $f,g$ on $[0,T]$ such that $f$ is  ${\rm c\grave{a}dl\grave{a}g}$ and $g$ is Lebesgue integrable.
Note that $b_n^{-1}A_n(\mathcal{E}^1(t),{\bf 0},{\bf 0},z {\bf 1})=\sum_{p=0}^{\infty}z^{2p}\nu^{p,1}_n([0,t))$,
$b_n^{-1}A_n({\bf 0},\mathcal{E}^2(t),{\bf 0},z {\bf 1})=\sum_{p=0}^{\infty}z^{2p}\nu^{p,2}_n([0,t))$,
$\Psi^{0,1}(1_{[0,t)},g)=b_n^{-1}\sum_I1_{\{L(I)\in [0,t)\} }-\int^t_0g(s)ds$
and $\Psi^{0,2}(1_{[0,t)},g)=b_n^{-1}\sum_J1_{\{L(J)\in [0,t)\} }-\int^t_0g(s)ds$  
for $z\in\mathbb{C}$, $|z|<1$ and $t\in (0,T]$.

To obtain convergence of $H_n$, we consider the following condition. 
\begin{description}
\item{[$A3$]} There exist $\sigma(\{\Pi_n\}_n)$-measurable left-continuous processes $a_0(t)$ and $c_0(t)$ such that $\int^T_0a_0(t)dt\vee \int^T_0c_0(t)dt<\infty$ almost surely and 
\begin{equation}\label{a0}
\Psi^{0,1}(1_{[0,t)},a_0)\vee \Psi^{0,2}(1_{[0,t)},c_0)\to^p 0 \quad {\rm as} \quad n \to \infty 
\end{equation}
for any $t\in (0,T]$.
Moreover, at least one of the following conditions holds true.
\begin{enumerate}
\item There exist $\eta \in (0,1)$ and a $\sigma(\{\Pi_n\}_n)$-measurable left-continuous process $a(z, t)$ such that $a$ is continuous with respect to $z$, $\int^T_0a(z,t)dt<\infty$ and
$b_n^{-1}A_n(\mathcal{E}^1(t),{\bf 0},{\bf 0},z {\bf 1}) \to^p \int^t_0a(z, s)ds$ as $n \to \infty$ for $z\in \mathbb{C}, |z|<\eta$ and $t\in(0,T]$.
\item There exist $\eta \in (0,1)$ and a $\sigma(\{\Pi_n\}_n)$-measurable left-continuous process $c(z, t)$ such that $c$ is continuous with respect to $z$, $\int^T_0c(z,t)dt<\infty$ and 
$b_n^{-1}A_n({\bf 0},\mathcal{E}^2(t),{\bf 0},z {\bf 1}) \to^p \int^t_0c(z, s)ds$ as $n \to \infty$ for $z\in \mathbb{C}, |z|<\eta$ and $t\in(0,T]$.
\end{enumerate}
\end{description}
In particular, $\{l_n/b_n\}_n$ and $\{m_n/b_n\}_n$ are tight under $(\ref{a0})$.

$A_n(\mathcal{E}^1(T),{\bf 0},{\bf 0},z{\bf 1})$ and $A_n({\bf 0},\mathcal{E}^2(T),{\bf 0},z{\bf 1})$ appear 
in an asymptotically equivalent representation of $H_n$ when $b(x,\sigma)$ does not depend on $x$ and $\mu_t\equiv 0$.
Therefore convergence condition for observation times like $[A3]$ $1$ and $2$ are natural conditions to specify the limit of $H_n$.

\begin{description}
\item{[$A3'$]} There exist $\sigma(\{\Pi_n\}_n)$-measurable left-continuous processes $a_0(t)$ and $c_0(t)$ 
such that $\int^T_0a_0(t)dt\vee \int^T_0c_0(t)dt<\infty$ almost surely and 
(\ref{a0}) holds for any $t\in(0,T]$. Moreover, at least one of the following conditions holds true.
\begin{enumerate}
\item For any $p\in \mathbb{N}$, there exists a $\sigma(\{\Pi_n\}_n)$-measurable left-continuous process $a_p(t)$ such that 
$\int^T_0a_p(t)dt<\infty$ a.s. and for any $t\in(0,T]$, $\Psi^{p,1}(1_{[0,t)},a_p)\to^p 0$ as $n\to \infty$. 
\item For any $p\in \mathbb{N}$, there exists a $\sigma(\{\Pi_n\}_n)$-measurable left-continuous process $c_p(t)$ such that 
$\int^T_0c_p(t)dt<\infty$ a.s. and  for any $t\in(0,T]$, $\Psi^{p,2}(1_{[0,t)},c_p)\to^p 0$ as $n\to \infty$.
\end{enumerate}
\end{description}
As we will show later in Proposition \ref{A3eq}, $[A3]$ and $[A3']$ are equivalent under $[A2]$.

Let $q>2$ and $\eta\in (0,1)$.
For $\alpha\in (0,1/2)$ and $f:[0,T]\to \mathbb{R}$, $\alpha-$H${\rm \ddot{o}}$lder continuous, 
we denote $\omega_{\alpha}(f)=\sup_{t\neq s}|f_t-f_s|/|t-s|^{\alpha}$.

\begin{description}
\item{[$A3'\mathchar`-q,\eta$]} There exist $n_0\in\mathbb{N}$, $\alpha\in(0,1/2-1/q)$ 
and $\sigma(\{\Pi_n\}_n)$-measurable left-continuous processes $\{a_0(t)\}$, $\{c_0(t)\}$, $\{a_p(t)\}_{p\in\mathbb{N}}$
such that $\int^T_0(c_0\vee a_p)(t)dt \in L^q(\Omega)$ for $p\in\mathbb{Z}_+$, $E[(l_n+m_n)^q]<\infty$ for $n\in\mathbb{N}$ and 
\begin{eqnarray}
\sup_{n\geq n_0}E[(b_n^{\eta}|\Psi^{0,1}(f,a_0)|)^q]\vee E[(b_n^{\eta}|\Psi^{0,2}(f,c_0)|)^q]&\leq & C\left(\sup_t|f_t|^q+\omega_{\alpha}(f)^q\right), \nonumber \\
\max_{i=1,2}\sup_{p\in\mathbb{N}}\sup_{n\geq n_0}E[(b_n^{\eta}|\Psi^{p,i}(f,a_p)|)^q]/(1+p)^C&\leq & C\left(\sup_t|f_t|^q+\omega_{\alpha}(f)^q\right) \nonumber
\end{eqnarray}
for any $\alpha$-H${\rm \ddot{o}}$lder continuous function $f$ on $[0,T]$.
\end{description}
For $q>2$ and $\eta\in (0,1)$, it can be shown that $[A3'\mathchar`-q,\eta]$ implies $[A3']$.

The following lemma is easy to check. 
\begin{lemma}\label{sum-conv}
Let $\{\alpha_p\}_{p\in\mathbb{N}}\subset \mathbb{C}$ with $\sum_{p=1}^{\infty}|\alpha_p|<\infty$ and $\{\xi^n_p\}_{n,p\in\mathbb{N}}$ and $\{F_n\}_{n\in\mathbb{N}}$ be random variables satisfying
$\xi^n_p\to^p 0 \ (n\to \infty)$ for $p\in\mathbb{N}$, $\{F_n\}_{n\in\mathbb{N}}$ are tight, and $|\xi^n_p|\leq F_n, \ (n,p\in\mathbb{N})$. Then
$\sum_{p=1}^{\infty}\alpha_p\xi^n_p\to^p 0$ as $n\to \infty$.
\end{lemma}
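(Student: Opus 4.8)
The plan is to split the infinite sum into a finite head and an infinite tail, controlling the head by the termwise convergence $\xi^n_p\to^p 0$ and the tail uniformly in $n$ by combining the summability of $\{\alpha_p\}$ with the uniform domination $|\xi^n_p|\leq F_n$ and the tightness of $\{F_n\}$. This is the standard ``dominated convergence in probability'' decomposition adapted to an index $p$ rather than a continuum.

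First I would fix $\epsilon>0$ and a confidence level $\eta>0$ and exploit tightness: there exists $R>0$ with $\sup_{n}P[F_n>R]<\eta/2$. Since $\sum_{p=1}^{\infty}|\alpha_p|<\infty$, I would then choose $P\in\mathbb{N}$ so large that $\sum_{p>P}|\alpha_p|<\epsilon/(2R)$. On the event $\{F_n\leq R\}$ the tail satisfies $\big|\sum_{p>P}\alpha_p\xi^n_p\big|\leq R\sum_{p>P}|\alpha_p|<\epsilon/2$, so that $\{\,|\sum_{p>P}\alpha_p\xi^n_p|>\epsilon/2\,\}\subset\{F_n>R\}$ and hence has probability $<\eta/2$ for every $n$ simultaneously. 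For the head $\sum_{p=1}^{P}\alpha_p\xi^n_p$, which is a finite linear combination of the $\xi^n_p$, I would use that each $\xi^n_p\to^p 0$; a finite sum of sequences converging to $0$ in probability converges to $0$ in probability, so there is $N$ such that $P[\,|\sum_{p\leq P}\alpha_p\xi^n_p|>\epsilon/2\,]<\eta/2$ for $n\geq N$. Combining the two estimates by the union bound yields $P[\,|\sum_{p}\alpha_p\xi^n_p|>\epsilon\,]<\eta$ for $n\geq N$, and since $\epsilon,\eta$ are arbitrary this is the claim.

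There is no serious obstacle here, as the authors indicate by calling the lemma easy to check; the lemma is elementary. The one point that must not be skipped is that the tail bound be uniform in $n$, and this is precisely where tightness of the whole family $\{F_n\}$ (rather than mere boundedness in probability of each $F_n$ separately) is used: it allows a single radius $R$ to control all the dominating variables at once, so that the choice of truncation level $P$ can be made independently of $n$ before letting $n\to\infty$ in the finite head.
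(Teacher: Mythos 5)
Your proof is correct: the head/tail truncation, with tightness supplying a single radius $R$ that makes the tail bound uniform in $n$ and termwise convergence in probability handling the finite head, is exactly the standard argument this lemma calls for. The paper itself offers no proof (it merely declares the lemma "easy to check"), and your write-up is precisely the argument it implicitly has in mind, with the role of tightness correctly identified as the one non-trivial point.
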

The equivalence of $[A3] \ 1$ and $[A3] \ 2$ is established by our next lemma.
\begin{lemma}\label{A3equiv}
Assume $[A2]$ and that there exist stochastic processes $a_0(t)$ and $c_0(t)$ such that $\int^T_0a_0(t)dt\vee \int^T_0c_0(t)dt<\infty$ a.s. and $(\ref{a0})$ holds for $t\in(0,T]$.
Then $\nu^{p,1}_n([0,t)) -\nu^{p,2}_n([0,t)) \to^p 0$ for $t\in [0,T], p\geq 1$ as $n \to \infty$ and 
\begin{equation*}
b_n^{-1}A_n(\mathcal{E}^1(t),{\bf 0},{\bf 0},z {\bf 1})-b_n^{-1}A_n({\bf 0},\mathcal{E}^2(t),{\bf 0},z {\bf 1})\to^p \int^t_0(a_0-c_0)(s)ds
\end{equation*}
as $n \to \infty$ for $z\in \mathbb{C},|z|<1$ and $t\in [0,T]$. 

In particular, $[A3] \ 1\Longleftrightarrow [A3] \ 2$, $[A3'] \ 1\Longleftrightarrow [A3'] \ 2$ 
and $a_p\equiv c_p \ dt\times P$-a.e. $(t,\omega)$ for $p\geq 1$ under the assumptions above.
\end{lemma}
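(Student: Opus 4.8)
The plan is to reduce everything to two elementary matrix facts about $G$, to localize the discrepancy between $\nu^{p,1}_n$ and $\nu^{p,2}_n$ to a boundary region near $t$, and then to sum over $p$ by means of Lemma \ref{sum-conv}. Writing $A=GG^{\star}$ and $B=G^{\star}G$, I would first record that $A^p=GB^{p-1}G^{\star}$ for $p\geq1$, whence the trace identity ${\rm tr}(A^p)={\rm tr}(B^p)$, and that by Lemma \ref{GGT-lambda} all eigenvalues of $A$ and $B$ lie in $[0,1]$, so that $(A^p)_{II},(B^p)_{JJ}\in[0,1]$ and ${\rm tr}(A^p)\leq{\rm tr}(A)\leq l_n$, ${\rm tr}(B^p)\leq m_n$. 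Since $b_n\nu^{p,1}_n([0,t))=\sum_I(A^p)_{II}1_{\{L(I)<t\}}$ and likewise for $\nu^{p,2}_n$, expanding $A^p=GB^{p-1}G^{\star}$ in the first sum, $B=G^{\star}G$ in the second, and using the symmetry of $B^{p-1}$, I obtain the representation
\begin{equation*}
b_n\big(\nu^{p,1}_n([0,t))-\nu^{p,2}_n([0,t))\big)=\sum_{I,J,J'}\big(1_{\{L(I)<t\}}-1_{\{L(J)<t\}}\big)G_{IJ}G_{IJ'}(B^{p-1})_{JJ'}=:D_p(t).
\end{equation*}

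Next I would localize $D_p(t)$. The factor $G_{IJ}$ vanishes unless $I\cap J\neq\emptyset$, in which case $|L(I)-L(J)|<r_n$; together with the indicator difference being nonzero only when $L(I),L(J)$ straddle $t$, this forces the interval containing $t$ to be one of $I,J$ and confines the other to start in a window of length $\leq r_n$ abutting $t$. Bounding the inner sum by Cauchy--Schwarz, $\sum_{J'}|G_{IJ'}||(B^{p-1})_{JJ'}|\leq (GG^{\star})_{II}^{1/2}(B^{2p-2})_{JJ}^{1/2}\leq1$, reduces matters to $|D_p(t)|\leq\sum_{I,J}|1_{\{L(I)<t\}}-1_{\{L(J)<t\}}|\,|G_{IJ}|$. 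For the interval $I^{\ast}$ containing $t$, using $|I^{\ast}\cap J|\leq|J|$ to get $|G_{I^{\ast}J}|\leq(|I^{\ast}\cap J|/|I^{\ast}|)^{1/2}$ and then Cauchy--Schwarz over the at most $m_n$ relevant $J$ together with $\sum_J|I^{\ast}\cap J|=|I^{\ast}|$, I get $\sum_J|G_{I^{\ast}J}|\leq\sqrt{m_n}$, and symmetrically the opposite orientation contributes at most $\sqrt{l_n}$. Hence $|D_p(t)|\leq\sqrt{l_n}+\sqrt{m_n}$, and since $\{l_n/b_n\},\{m_n/b_n\}$ are tight by (\ref{a0}) while $b_n\to\infty$, dividing by $b_n$ yields $\nu^{p,1}_n([0,t))-\nu^{p,2}_n([0,t))\to^p0$ for every $p\geq1$.

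For the second assertion I would use $b_n^{-1}A_n(\mathcal{E}^1(t),{\bf 0},{\bf 0},z{\bf 1})-b_n^{-1}A_n({\bf 0},\mathcal{E}^2(t),{\bf 0},z{\bf 1})=\sum_{p=0}^{\infty}z^{2p}\big(\nu^{p,1}_n([0,t))-\nu^{p,2}_n([0,t))\big)$. The $p=0$ term converges in probability to $\int^t_0(a_0-c_0)(s)ds$ by (\ref{a0}), and the tail is handled by Lemma \ref{sum-conv} with $\alpha_p=z^{2p}$ (summable as $|z|<1$), $\xi^n_p=\nu^{p,1}_n([0,t))-\nu^{p,2}_n([0,t))\to^p0$, and the uniform bound $|\xi^n_p|\leq\nu^{p,1}_n([0,T))+\nu^{p,2}_n([0,T))\leq(l_n+m_n)/b_n=:F_n$, the family $\{F_n\}$ being tight. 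This proves the stated limit. The ``in particular'' claims then follow immediately: the equivalence $[A3] \ 1\Longleftrightarrow[A3] \ 2$ is read off from the second assertion (the two limits differ by $\int^t_0(a_0-c_0)$), while $[A3'] \ 1\Longleftrightarrow[A3'] \ 2$ and $a_p\equiv c_p$ follow from the first assertion together with uniqueness of limits in probability.

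The hard part will be the second paragraph: localizing $D_p(t)$ and, above all, controlling the possibly large number of very short intervals clustering near $t$. The decisive point is that the Cauchy--Schwarz estimate produces only $\sqrt{l_n}+\sqrt{m_n}$ growth rather than $O(l_n+m_n)$, which is exactly what is needed for the division by $b_n$ to vanish; a cruder count of the boundary intervals would not suffice.
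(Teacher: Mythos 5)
Your proof is correct, and its skeleton coincides with the paper's: both expand $(GG^{\star})^p$ and $(G^{\star}G)^p$ cyclically (in your notation $A^p=GB^{p-1}G^{\star}$) so that the difference $\nu^{p,1}_n([0,t))-\nu^{p,2}_n([0,t))$ is supported on pairs $(I,J)$ with $I\cap J\neq\emptyset$ whose left endpoints straddle $t$, and both finish the series argument with tightness of $\{b_n^{-1}(l_n+m_n)\}_n$ and Lemma \ref{sum-conv}. Where you genuinely diverge is the estimate of this boundary term. The paper bounds the discrepancy by $2b_n^{-1}\#\{I:\,t-r_n\leq L(I)\leq t+r_n\}$ and makes it vanish by combining $[A2]$ (so the window shrinks) with (\ref{a0}) applied to a fixed small window around $t$ — a step it compresses into ``by $[A2]$''. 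You instead apply Cauchy--Schwarz twice to obtain the bound $b_n^{-1}(\sqrt{l_n}+\sqrt{m_n})$, which tends to $0$ in probability purely because $b_n\to\infty$ while $l_n/b_n$ and $m_n/b_n$ are tight. Your estimate is uniform in $p$ and $t$, gives an explicit $O_p(b_n^{-1/2})$ rate, and — notably — never invokes $[A2]$, so the two displayed convergences hold under (\ref{a0}) alone; the cost is the somewhat less transparent double Cauchy--Schwarz, whereas the paper's window count is the more elementary bound and is the form of the argument that gets recycled later in the paper (e.g., for the restricted matrices $G_{[s,t)}$ in the proof of Lemma \ref{sep-lem}). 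Both proofs treat the ``in particular'' claims at the same level of detail: reading off $c(z,t)=a(z,t)-a_0(t)+c_0(t)$ (resp.\ $c_p=a_p$) and using uniqueness of limits in probability together with left-continuity for the a.e.\ identification.
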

\begin{proof}
Since $\parallel G \parallel \vee \parallel G^{\star} \parallel \leq 1$ by Lemma \ref{GGT-lambda}, we have $|((GG^{\star})^p)_{II'}|\leq 1$, $|G_{IJ}|\leq 1$ for any $I,I',J$ and $p\in \mathbb{Z}_+$.
Then since $G_{IJ}\neq 0$ implies $I\cap J\neq \emptyset$, we obtain 
\begin{eqnarray}
|\nu^{p,1}_n([0,t))-\nu^{p,2}_n([0,t))| &=&b_n^{-1}\bigg|\sum_{I;L(I)\in [0,t)}\sum_{I'}\sum_J((GG^{\star})^{p-1})_{II'}G_{I'J}G^{\star}_{JI} \nonumber \\
&&-\sum_{J;L(J)\in [0,t)}\sum_I\sum_{I'}G^{\star}_{JI}((GG^{\star})^{p-1})_{II'}G_{I'J}\bigg| \nonumber \\
&\leq & 2b_n^{-1}\sum_{t-r_n\leq L(I)\leq t+r_n}1\to^p 0 \nonumber
\end{eqnarray}
as $n\to \infty$ for $p\geq 1$ by $[A2]$. 

Since $|\nu^{p,1}_n([0,t))-\nu^{p,2}_n([0,t))|\leq b_n^{-1}(l_n+m_n)$,
the desired conclusions are given by tightness of $\{b_n^{-1}(l_n+m_n)\}_n$ and Lemma \ref{sum-conv}.
\end{proof}
\begin{proposition}\label{A3eq}
$[A3]$ and $[A3']$ are equivalent under $[A2]$. Moreover, under $[A2]$ and $[A3]$, 
$a(\rho,t)=\sum_{p=0}^{\infty}a_p(t)\rho^{2p}$, $c(\rho,t)=\sum_{p=0}^{\infty}c_p(t)\rho^{2p}$ and 
\begin{equation*}
b_n^{-1}A_n(x^2\mathcal{E}^1(t),y^2\mathcal{E}^2(t),xy\rho_{\ast}\mathcal{E}^1(t)G,\rho{\bf 1})\to^p \int^t_0A(x,y,\rho,\rho_{\ast},s)ds
\end{equation*}
as $n\to \infty$ for $x,y\in\mathbb{R},\rho,\rho_{\ast}\in (-1,1), t\in (0,T]$,
where 
\begin{equation*}
A(x,y,\rho,\rho_{\ast},t)=x^2a(\rho,t)+y^2c(\rho,t)-2xy(a(\rho,t)-a_0(t))\rho_{\ast}/\rho1_{\{\rho\neq 0\} }.
\end{equation*}
\end{proposition}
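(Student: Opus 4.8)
The plan is to work throughout with the first alternatives $[A3]$ $1$ and $[A3']$ $1$; by Lemma~\ref{A3equiv} (which already gives $[A3]\,1\Leftrightarrow[A3]\,2$, $[A3']\,1\Leftrightarrow[A3']\,2$ and $a_p\equiv c_p$ for $p\ge1$ under $[A2]$) this costs no generality. The computational backbone is the block expansion read off from (\ref{Hn2}): when the fourth argument of $A_n$ is $\rho{\bf 1}$ we have $L=\rho G$, so $M^{11}(\rho{\bf 1})=\sum_{p\ge0}\rho^{2p}(GG^{\star})^p$, $M^{22}(\rho{\bf 1})=\sum_{p\ge0}\rho^{2p}(G^{\star}G)^p$ and $M^{12}(\rho{\bf 1})=-\sum_{p\ge0}\rho^{2p+1}(GG^{\star})^pG$. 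Consequently $b_n^{-1}A_n(\mathcal{E}^1(t),{\bf 0},{\bf 0},z{\bf 1})=\sum_{p\ge0}z^{2p}\nu_n^{p,1}([0,t))$ and likewise for the second block. By Lemma~\ref{GGT-lambda} every diagonal entry of $(GG^{\star})^p$ lies in $[0,1]$, so $0\le\nu_n^{p,1}([0,t))\le l_n/b_n$, and (\ref{a0}) makes $\{l_n/b_n\}$ tight; these bounds, uniform in $p$, are what let me move limits through the series.

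For $[A3']\Rightarrow[A3]$ I would simply sum the individual convergences. Under $[A3']$ $1$, $\nu_n^{p,1}([0,t))\to^p\int_0^t a_p(s)\,ds=:\gamma_p$ for each $p$, with $\gamma_p\le\int_0^T a_0$, so $|\nu_n^{p,1}([0,t))-\gamma_p|\le l_n/b_n+\int_0^T a_0$ for all $p$. Lemma~\ref{sum-conv} with $\alpha_p=z^{2p}$ then gives $\sum_p z^{2p}\nu_n^{p,1}([0,t))\to^p\int_0^t\sum_p a_p(s)z^{2p}\,ds$ for every $|z|<1$, and setting $a(z,t)=\sum_p a_p(t)z^{2p}$ (a series whose $dt$-integral is dominated by $(1-|z|^2)^{-1}\int_0^T a_0$) yields $[A3]$ $1$ together with the representation $a(\rho,t)=\sum_p a_p(t)\rho^{2p}$; the same for $c$.

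The converse $[A3]\Rightarrow[A3']$ is the crux and the step I expect to be hardest. Here $[A3]$ $1$ gives, for complex $z$ with $|z|<\eta$, convergence of the generating function $h_n(z)=\sum_p z^{2p}\nu_n^{p,1}([0,t))\to^p g(z)=\int_0^t a(z,s)\,ds$, and I must recover the individual coefficients. Regarding $h_n$ as the analytic function $\tilde h_n(w)=\sum_p w^p\nu_n^{p,1}([0,t))$ of $w=z^2$ on $|w|<\eta^2$, Cauchy's formula gives, for $0<r<\eta^2$, the bound $|\nu_n^{p,1}([0,t))-\gamma_p(t)|\le(2\pi r^p)^{-1}\int_0^{2\pi}|\tilde h_n-\tilde g|(re^{i\theta})\,d\theta$, where $\gamma_p(t)$ is the $p$-th Taylor coefficient of $\tilde g(\cdot)=\int_0^t a(\sqrt{\cdot},s)\,ds$. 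Since $\tilde h_n-\tilde g\to^p0$ on $|w|=r$ and is dominated there, uniformly in $\theta$, by the tight quantity $(1-r)^{-1}(l_n/b_n+\int_0^T a_0)$, a dominated-convergence-in-probability argument (Fubini applied to $|\tilde h_n-\tilde g|/(1+|\tilde h_n-\tilde g|)$, then removing the truncation via the tight bound) forces the $\theta$-integral to $0$ in probability, whence $\nu_n^{p,1}([0,t))\to^p\gamma_p(t)$. To finish I would produce the process $a_p$: $\gamma_p(t)$ is nondecreasing in $t$ and $d\gamma_p\le d\gamma_0=a_0\,dt$ (because $\nu_n^{p,1}([0,t))\le\nu_n^{0,1}([0,t))$), so $\gamma_p$ is absolutely continuous with density $a_p\le a_0$, of which a left-continuous, $\sigma(\{\Pi_n\})$-measurable version furnishes $[A3']$ $1$. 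The delicate points are the complex convergence of $\tilde h_n$ over the whole circle and the measurable, left-continuous choice of density.

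For the mixed limit I would combine the three blocks. The diagonal blocks give $b_n^{-1}{\rm tr}(M^{11}(\rho{\bf 1})x^2\mathcal{E}^1(t))=x^2\sum_p\rho^{2p}\nu_n^{p,1}([0,t))\to^p x^2\int_0^t a(\rho,s)\,ds$ and, analogously, $y^2\int_0^t c(\rho,s)\,ds$, now for all $|\rho|<1$ by Lemma~\ref{sum-conv} applied to the individual convergences just established. For the off-diagonal block the key algebraic identity is $G(G^{\star}G)^pG^{\star}=(GG^{\star})^{p+1}$: with cyclic invariance of the trace it gives ${\rm tr}((M^{12}(\rho{\bf 1}))^{\star}\mathcal{E}^1(t)G)=-\sum_p\rho^{2p+1}{\rm tr}((GG^{\star})^{p+1}\mathcal{E}^1(t))=-\rho^{-1}b_n\sum_{p\ge1}\rho^{2p}\nu_n^{p,1}([0,t))$, so that $b_n^{-1}$ times the cross term $2xy\rho_{\ast}\,{\rm tr}((M^{12}(\rho{\bf 1}))^{\star}\mathcal{E}^1(t)G)$ converges to $-2xy\rho_{\ast}\rho^{-1}\int_0^t(a(\rho,s)-a_0(s))\,ds$ for $\rho\ne0$, while it vanishes at $\rho=0$ since $M^{12}({\bf 0})=0$. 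Adding the three limits gives exactly $\int_0^t A(x,y,\rho,\rho_{\ast},s)\,ds$, the indicator $1_{\{\rho\ne0\}}$ recording the $\rho=0$ case.
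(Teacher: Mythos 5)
Your proposal is correct in substance and its skeleton coincides with the paper's: both directions are run through the generating function $f_n(z)=\sum_{p}z^{2p}\nu_n^{p,1}([0,t))$, your easy direction $[A3']\Rightarrow[A3]$ is exactly the paper's application of Lemma \ref{sum-conv}, and your final block computation reduces $b_n^{-1}A_n(x^2\mathcal{E}^1(t),y^2\mathcal{E}^2(t),xy\rho_{\ast}\mathcal{E}^1(t)G,\rho{\bf 1})$ to $\sum_{p}\rho^{2p}\{x^2\nu_n^{p,1}([0,t))+y^2\nu_n^{p,2}([0,t))-2xy\rho_{\ast}\rho\,\nu_n^{p+1,1}([0,t))\}$, which is literally the paper's closing display. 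Where you genuinely diverge is the hard direction $[A3]\Rightarrow[A3']$. The paper upgrades the pointwise convergence $f_n(z)\to^p f(z)$ to convergence in probability in $C(\{|z|\le\eta/2\})$: it derives the Lipschitz bound $|f_n(z_1)-f_n(z_2)|\le Cb_n^{-1}l_n|z_1-z_2|$ from the Cauchy formula, invokes a Billingsley-type tightness criterion for $\{f_n-f\}$, and only then passes the (now uniform) convergence through the coefficient integral. You avoid functional tightness entirely: you dominate $|\tilde{h}_n-\tilde{g}|$ on the circle $|w|=r$ by the tight, $\theta$-independent variable $(1-r)^{-1}(l_n/b_n+\int_0^Ta_0\,dt)$, and push the pointwise convergence through the contour integral via the truncation $x\mapsto x/(1+x)$, Fubini, and bounded convergence, removing the truncation on the event where the dominating variable is bounded. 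This is sound and arguably more elementary — it trades the equicontinuity/tightness package for a short dominated-convergence-in-probability argument — at the modest price of being tied to a fixed contour rather than yielding locally uniform convergence as a by-product.

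The one place where your write-up falls short of a complete proof is the production of the processes $a_p$ at the very end. You define $\gamma_p(t)$ as the $p$-th Taylor coefficient of the limit, show $\gamma_p$ is nondecreasing with $d\gamma_p\le a_0\,dt$, hence absolutely continuous, and then take ``a left-continuous, $\sigma(\{\Pi_n\}_n)$-measurable version'' of its density. But an integrable density has, in general, no left-continuous version (consider the indicator of a fat Cantor set), so the existence of such a version cannot be extracted from Radon--Nikodym and monotonicity alone. The paper closes exactly this point: applying Fubini to the coefficient formula gives the explicit density
\begin{equation*}
a_p(s)=\frac{1}{2\pi i}\int_{|z|=\eta/3}\frac{a(z,s)}{z^{2p+1}}\,dz,
\end{equation*}
which inherits left-continuity in $s$ and $\sigma(\{\Pi_n\}_n)$-measurability directly from the corresponding assumptions on $a(z,\cdot)$ in $[A3]$. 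Substituting this formula for your abstract density argument — you already have every ingredient needed for it — makes your proof complete.
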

The convergent sequence which appears in Proposition \ref{A3eq} is asymptotically equivalent representation of $b_n^{-1}Z^{\star}MZ$
if $\mu_t\equiv 0$ and $b(x,\sigma)$ does not depend on $x$.
Therefore, the convergence result in Proposition \ref{A3eq} is the convergence result of $b_n^{-1}Z^{\star}MZ$ with ignoring the structure of diffusion coefficients $(b_t^1,b_t^2)$.

\subsection{The limit of $H_n$}
We discuss asymptotic behavior of $H_n$ under $[A3],[A3'\mathchar`-q,\eta]$.

First, we assume one more condition. 
Let $\mathcal{I}$ be a set of intervals defined by
\begin{equation*}
\mathcal{I}=\{I^i\}_{i=1}^{l_n}\cup \{J^j\}_{j=1}^{m_n}\cup \{[\mathcal{T}^{j-1}_k,\mathcal{T}^j_k); 1\leq k\leq n_2, 1\leq j \leq m^k_n\}.
\end{equation*}
Let $\theta_{0,k}=I^k$ for $1\leq k\leq l_n$, $\theta_{0,k}=J^{k-l_n}$ for $l_n<k \leq l_n+m_n$, and
\begin{equation*}
\theta_{p,k}=\cup \{K_{2p};K_1,\cdots,K_{2p}\in \mathcal{I}, K_1\cap \theta_{0,k}\neq \emptyset,K_j\cap K_{j-1}\neq \emptyset \ (1\leq j\leq 2p)\}
\end{equation*}
for $p\in\mathbb{N}$ and $1\leq k\leq l_n+m_n$.
Moreover, let $\Phi_{p,i}=\sum_k|\theta_{p,k}|^i$, $\bar{\Phi}_{p_1,p_2}=\sum_{k_1,k_2}|\theta_{p_1,k_1}\cap \theta_{p_2,k_2}|$ for $i\in \{1,2\}$ and $p,p_1,p_2\in\mathbb{Z}_+$. 
For $q\geq 2$ and $\delta\geq 1$, we consider the following conditions. 
\begin{description}
\item{[$A4$]} There exist $\delta'\geq 1$ such that
\begin{equation*}
\bigg\{(b_n^{-1}\vee r_n^2)\sum_{p=0}^{\infty}\frac{(\Phi_{2p+2,1})^2}{(p+1)^{2\delta'}}\bigg\}\vee \bigg\{b_n^{-1}\sum_{p_1,p_2=0}^{\infty}\frac{\bar{\Phi}_{2p_1+3,2p_2+3}}{(p_1+1)^{\delta'}(p_2+1)^{\delta'}}\bigg\}\to^p 0
\end{equation*}
as $n\to\infty$.
\item{[$A4\mathchar`-q,\delta$]} 
\begin{enumerate}
\item \begin{equation*}
\lim_{n\to\infty}E\bigg[(b_n^{-\frac{q}{2}}\vee r_n^q)\sum_{p=0}^{\infty}\frac{(\Phi_{2p+2,1})^q}{(p+1)^{q\delta}}\bigg]= 0.
\end{equation*}
\item 
\begin{equation*}
\lim_{n\to\infty}E\bigg[\bigg(b_n^{-1}\sum_{p_1,p_2=0}^{\infty}\frac{\bar{\Phi}_{2p_1+3,2p_2+3}}{(p_1+1)^{\delta}(p_2+1)^{\delta}}\bigg)^{\frac{q}{2}}\bigg]=0.
\end{equation*}
\end{enumerate}
\end{description}
We can see that $[A4\mathchar`-q,\delta]$ implies $[A4]$ for any $q\geq 2$ and $\delta\geq 1$ by Jensen's inequality.
Moreover, we can use the following condition instead of $[A4]$.

\begin{description}
\item{[$A4'$]} There exist positive constants $\delta_1,\delta_2,\delta_3$ such that $(3\delta_1+2\delta_3)\vee (\delta_1+\delta_2)<1$ 
and the following two conditions hold:
\begin{enumerate}
\item $\lim_{n\to\infty}P[r_n\geq b_n^{-1+\delta_1}]=0$.
\item 
\begin{eqnarray}
\lim_{n\to\infty}b_n^2\sup_{j_1,j_2\in\mathbb{N}, |j_1-j_2|\geq b_n^{\delta_2}}P\bigg[l_n\geq j_1\vee j_2  \ {\rm and} \ \frac{|S^{j_2}-S^{j_1}|}{|j_2-j_1|}\leq b_n^{-1-\delta_3}\bigg]&=&0, \nonumber \\
\lim_{n\to\infty}b_n^2\sup_{j_1,j_2\in\mathbb{N}, |j_1-j_2|\geq b_n^{\delta_2}}P\bigg[m_n\geq j_1\vee j_2  \ {\rm and} \ \frac{|T^{j_2}-T^{j_1}|}{|j_2-j_1|}\leq b_n^{-1-\delta_3}\bigg]&=&0. \nonumber
\end{eqnarray}
\end{enumerate}
\end{description}
\begin{lemma}
Assume $[A4']$ and that $\{(l_n+m_n)/b_n\}_{n\in\mathbb{N}}$ is tight. Then $[A4]$ holds.
\end{lemma}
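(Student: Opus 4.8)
The plan is to reduce both quantities in $[A4]$ to a deterministic power $b_n^{-c}$ ($c>0$) on a high-probability good event, and then let the probability of its complement vanish. Fix $\eta>0$ and use the assumed tightness of $\{(l_n+m_n)/b_n\}$ to pick $C_0=C_0(\eta)$ with $\sup_nP[l_n+m_n>C_0b_n]\leq\eta$. I would work with a fixed large exponent, say $\delta'=3$, and set $\Omega_n=\{r_n<b_n^{-1+\delta_1}\}\cap\{l_n+m_n\leq C_0b_n\}\cap\Omega_n^{\mathrm{sep}}$, where $\Omega_n^{\mathrm{sep}}$ is the event that no index pair violates the separation in $[A4']\ 2$. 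By $[A4']\ 1$ the first event has probability tending to $1$, the second has probability $\geq 1-\eta$, and I will bound $P[(\Omega_n^{\mathrm{sep}})^c]$ below.

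Two geometric estimates drive the argument. First, since every $K\in\mathcal{I}$ has $|K|\leq r_n$ and $\theta_{p,k}$ is the union of terminal intervals of chains of $2p$ members of $\mathcal{I}$ emanating from $\theta_{0,k}$, each such chain is a connected set of total length at most $(2p+1)r_n$; hence $\theta_{p,k}\subset[L(\theta_{0,k})-(2p+1)r_n,\,R(\theta_{0,k})+(2p+1)r_n]$ and $|\theta_{p,k}|\leq C(p+1)r_n$. Thus $\Phi_{p,1}=\sum_k|\theta_{p,k}|\leq(l_n+m_n)C(p+1)r_n$, which on $\Omega_n$ is at most $C(p+1)b_n^{\delta_1}$. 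Second, putting $N_p(t)=\sum_k 1_{\theta_{p,k}}(t)$, the relation $t\in\theta_{p,k}$ forces the base interval $\theta_{0,k}$ (an $I^i$ or a $J^j$) to meet $[t-(2p+1)r_n,\,t+(2p+1)r_n]$. The $I$-intervals meeting this window carry a contiguous block of indices $i_-,\dots,i_+$ all lying in a time interval of length $\leq C(p+1)r_n$, so on $\Omega_n^{\mathrm{sep}}$ the separation condition (applied to $S^{i_-},S^{i_+}$) gives $i_+-i_-\leq\max(b_n^{\delta_2},\,Cb_n^{1+\delta_3}(p+1)r_n)$, which on $\Omega_n$ is $\leq C(b_n^{\delta_2}+(p+1)b_n^{\delta_1+\delta_3})$; the same holds for the $J$-indices via $T$. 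Note the $\mathcal{T}^j_k$-intervals enter only the chains (through $r_n$), which is why $[A4']\ 2$, controlling only $S$ and $T$, suffices. Hence $\sup_tN_p(t)\leq C(b_n^{\delta_2}+(p+1)b_n^{\delta_1+\delta_3})$ on $\Omega_n$.

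The separation event is handled by a union bound truncated by tightness: restricting to $j_1\vee j_2\leq C_0b_n$ leaves at most $(C_0b_n)^2$ relevant pairs, each of probability $o(b_n^{-2})$ by $[A4']\ 2$, so $P[(\Omega_n^{\mathrm{sep}})^c]\leq P[l_n+m_n>C_0b_n]+2C_0^2b_n^2\cdot o(b_n^{-2})\leq\eta+o(1)$. Feeding the two estimates into $[A4]$ is then routine. For the first quantity, $(\Phi_{2p+2,1})^2\leq C(p+1)^2b_n^{2\delta_1}$ and $b_n^{-1}\vee r_n^2=b_n^{-1}$ on $\Omega_n$ (as $2\delta_1<1$), so summing the convergent series $\sum_p(p+1)^{2-2\delta'}$ yields a bound $Cb_n^{-1+2\delta_1}\to0$ since $3\delta_1<1$. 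For the second, the identity $\bar\Phi_{p_1,p_2}=\int_0^TN_{p_1}(t)N_{p_2}(t)\,dt\leq\big(\sup_tN_{p_2}(t)\big)\Phi_{p_1,1}$ gives $\bar\Phi_{2p_1+3,2p_2+3}\leq C(b_n^{\delta_2}+(p_2+1)b_n^{\delta_1+\delta_3})(p_1+1)b_n^{\delta_1}$; dividing by $(p_1+1)^{\delta'}(p_2+1)^{\delta'}$ and summing (convergent for $\delta'=3$) produces $Cb_n^{-1}b_n^{\delta_1}(b_n^{\delta_2}+b_n^{\delta_1+\delta_3})=C(b_n^{-1+\delta_1+\delta_2}+b_n^{-1+2\delta_1+\delta_3})$, whose exponents are both negative because $\delta_1+\delta_2<1$ is assumed and $2\delta_1+\delta_3<3\delta_1+2\delta_3<1$. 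On $\Omega_n$ both quantities are therefore $\leq Cb_n^{-c}$, so for large $n$ the event that either exceeds a threshold is contained in $\Omega_n^c$; since $\limsup_nP[\Omega_n^c]\leq\eta$ and $\eta$ is arbitrary, both converge to $0$ in probability, which is $[A4]$.

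I expect the main obstacle to be the multiplicity bound on $N_p(t)$: turning the purely geometric reach of a length-$2p$ chain into a bound on the number of base observation intervals inside a short window is exactly where $[A4']\ 2$ is needed, and the bound must be uniform in $p$ (up to the factor $p+1$) so that it survives summation against $(p+1)^{-\delta'}$. The accompanying subtlety is the truncated union bound: because $l_n,m_n$ are random, $\Omega_n^{\mathrm{sep}}$ can only be controlled after cutting at the tightness level $C_0b_n$, so the tightness hypothesis is indispensable, entering both the length estimate and the probability estimate.
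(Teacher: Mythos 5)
Your proof is correct, and while it shares the paper's overall skeleton (good event combining the $r_n$ bound from $[A4']\ 1$, a tightness cutoff $l_n+m_n\leq C_0b_n$, and the separation events, controlled by the same truncated union bound over $O(b_n^2)$ index pairs), the core counting step for $\bar{\Phi}_{p_1,p_2}$ is genuinely different. The paper fixes a pair of base intervals $(k_1,k_2)$ with $k_1\leq l_n<k_2$, introduces the bridging index $\phi(k_1)$ (the first $J$-interval meeting $I^{k_1}$), and converts the event $\theta_{p_1,k_1}\cap\theta_{p_2,k_2}\neq\emptyset$ into the index-gap inequality $|\phi(k_1)-k_2|\,b_n^{-\delta_1-\delta_3}<2(p_1+1)(p_2+1)$; it then sums the resulting $|\phi(k_1)-k_2|^{-2}$ decay over $k_2$, repeating the argument for each of the remaining combinations of $k_1,k_2$. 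You instead prove a uniform multiplicity bound $\sup_t N_p(t)\leq C(b_n^{\delta_2}+(p+1)b_n^{\delta_1+\delta_3})$ for the occupation number $N_p(t)=\sum_k 1_{\theta_{p,k}}(t)$ — separation caps how many base intervals can fit in a window of width $C(p+1)r_n$ — and then use the identity $\bar{\Phi}_{p_1,p_2}=\int_0^T N_{p_1}N_{p_2}\,dt\leq\bigl(\sup_tN_{p_2}\bigr)\Phi_{p_1,1}$. Your route treats all index combinations symmetrically (no $\phi(k)$, no case analysis), works with $\delta'=3$ where the paper takes $\delta'=5$, and in fact gives the slightly sharper exponent $-1+(\delta_1+\delta_2)\vee(2\delta_1+\delta_3)$ versus the paper's $-1+(\delta_1+\delta_2)\vee(3\delta_1+2\delta_3)$ (both negative under $[A4']$, so nothing is gained in the conclusion); the paper's pairwise argument, on the other hand, invokes separation only at a single index pair per term and so maps more directly onto the form in which $[A4']\ 2$ is stated. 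Both approaches handle the $\Phi_{2p+2,1}$ term the same elementary way, which the paper leaves as "easier."
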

\begin{proof}
Let $\epsilon>0$. Then there exists a positive constant $K$ such that $P[(l_n+m_n)/b_n>K]<\epsilon$ for any $n\in\mathbb{N}$ by tightness of $\{(l_n+m_n)/b_n\}_n$.

Let $\phi(k)$ denote minimal $k'>l_n$ which satisfy $I^k\cap J^{k'-l_n}\neq \emptyset$ for $1\leq k\leq l_n$,
\begin{equation*}
\mathcal{U}^n_{j_1,j_2}=\bigg\{l_n\geq j_1\vee j_2 \ {\rm and} \ \frac{|S^{j_2}-S^{j_1}|}{|j_2-j_1|}\leq b_n^{-1-\delta_3}\bigg\}^c\bigcap \bigg\{m_n\geq j_1\vee j_2 \ {\rm and} \  \frac{|T^{j_2}-T^{j_1}|}{|j_2-j_1|}\leq b_n^{-1-\delta_3}\bigg\}^c
\end{equation*}
for $j_1,j_2\in\mathbb{N}$ and 
\begin{equation*}
\bar{\mathcal{U}}_n=\{r_n<b_n^{-1+\delta_1}\}\cap \cap_{j_1,j_2\in\mathbb{N},|j_2-j_1|\geq b_n^{\delta_2}}\mathcal{U}^n_{j_1,j_2}.
\end{equation*}
Then on $\bar{\mathcal{U}}_n$, for $k_1\leq l_n$, $l_n<k_2\leq l_n+m_n$ and $p_1,p_2\in\mathbb{Z}_+$ which satisfy $|\phi(k_1)-k_2|\geq b_n^{\delta_2}$ and $\theta_{p_1,k_1}\cap \theta_{p_2,k_2}\neq \emptyset$, we have
\begin{equation*}
|\phi(k_1)-k_2|b_n^{-1-\delta_3}<|T^{\phi(k_1)}-T^{k_2}|\leq (2p_1+2p_2+2)r_n< (2p_1+2p_2+2)b_n^{-1+\delta_1}.
\end{equation*}
Therefore $|\phi(k_1)-k_2|b_n^{-\delta_1-\delta_3}<2(p_1+1)(p_2+1)$.

Then by using the relation $|\theta_{p_1,k_1}\cap \theta_{p_2,k_2}|\leq \{(4p_1+1)\wedge (4p_2+1)\}r_n$, we obtain
\begin{eqnarray}
&&\sum_{p_1,p_2=0}^{\infty}\sum_{k_1\leq l_n,k_2>l_n}\frac{|\theta_{p_1,k_1}\cap \theta_{p_2,k_2}|}{(p_1+1)^5(p_2+1)^5} \nonumber \\
&\leq &C\sum_{k_1\leq l_n}\bigg\{\sum_{k_2>l_n,|\phi(k_1)-k_2|\geq b_n^{\delta_2}}\sum_{p_1,p_2}\frac{\{(4p_1+1)\wedge (4p_2+1)\}b_n^{-1+\delta_1}}{(p_1+1)^3(p_2+1)^3|\phi(k_1)-k_2|^2b_n^{-2\delta_1-2\delta_3}}+b_n^{\delta_2}b_n^{-1+\delta_1}\bigg\} \nonumber \\
&\leq & Cb_n^{-1+(3\delta_1+2\delta_3)\vee (\delta_1+\delta_2)}\sum_{k_1\leq l_n}\bigg(\sum_{k_2>l_n,|\phi(k_1)-k_2|\geq b_n^{\delta_2}}\frac{1}{|\phi(k_1)-k_2|^2}+1\bigg) \nonumber \\
&\leq & Cl_nb_n^{-1+(3\delta_1+2\delta_3)\vee (\delta_1+\delta_2)} \nonumber
\end{eqnarray}
on $\bar{\mathcal{U}}_n$. Similar arguements for other combinations of $k_1$ and $k_2$ yield 
\begin{equation*}
\sum_{p_1,p_2=0}^{\infty}\frac{\bar{\Phi}_{2p_1+3,2p_2+3}}{(p_1+1)^5(p_2+1)^5}\leq C(l_n+m_n)b_n^{-1+(3\delta_1+2\delta_3)\vee (\delta_1+\delta_2)} \quad {\rm on } \quad \bar{\mathcal{U}}_n.
\end{equation*}
On the other hand, since
\begin{equation*}
P[\bar{\mathcal{U}}_n^c]\leq P[r_n\geq b_n^{-1+\delta_1}]+P[(l_n+m_n)/b_n>K]+\sum_{1\leq j_1,j_2\leq [Kb_n], |j_2-j_1|\geq b_n^{\delta_2}}P[(\mathcal{U}^n_{j_1,j_2})^c],
\end{equation*}
we obtain $\limsup_{n\to \infty}P[\bar{\mathcal{U}}_n^c]\leq \epsilon$ by $[A4']$. Since $\epsilon>0$ is arbitrary, we have 
\begin{equation*}
b_n^{-1}\sum_{p_1,p_2=0}^{\infty}\frac{\bar{\Phi}_{2p_1+3,2p_2+3}}{(p_1+1)^5(p_2+1)^5}\to^p 0.
\end{equation*}
It is easier to prove the convergence about $\Phi_{2p+2,1}$.
\end{proof}

Let $B^i(x,\sigma)=| b^i(x,\sigma_{\ast})|/| b^i(x,\sigma)| \ (i=1,2)$,
\begin{equation*}
C(\rho,t)=\sum_{p=1}^{\infty}\frac{a_p(t)}{p}\rho^{2p}=\sum_{p=1}^{\infty}\frac{c_p(t)}{p}\rho^{2p},
\end{equation*}
and
\begin{equation}\label{h8}
h_t^{\infty}(\sigma)= -\frac{1}{2}A(B^1_t,B^2_t,\rho_t,\rho_{t,\ast},t)-a_0\log | b^1_t| -c_0\log | b^2_t| -\frac{1}{2}C(\rho_t,t)
\end{equation}
for $t\in [0,T],\rho\in (-1,1)$.
\begin{proposition}\label{Hn-lim}
\begin{enumerate}
\item Let $0\leq v\leq 3$. Assume $[A1]-[A3]$. Then
\begin{equation*}
\sup_{\sigma\in \Lambda}\bigg|b_n^{-1}\partial_{\sigma}^vH_n(\sigma)- \int^T_0\partial_{\sigma}^vh^{\infty}_t(\sigma)dt\bigg| \to^p 0
\end{equation*}
as $n\to \infty$. 
\item Let $0\leq v\leq 3$, $q\in 2\mathbb{N}$, $q> 2\vee n_1$, $\delta>1$, $\xi>0$, $\eta \in (0,1)$ and $\{s_n\}_{n\in\mathbb{N}}$ be stochastic processes.
Assume $[A1],[A2],[A3'\mathchar`-q,\eta],[A4\mathchar`-(2q),\delta],[S\mathchar`-((2v+2[\delta]+12)q),\xi]$ for $\{s_n\}$, and that
$\sup_nE[b_n^{-2q}(l_n+m_n)^{2q}]<\infty$.
Then 
\begin{equation*}
\sup_nE\left[\left(\sup_{\sigma\in\Lambda}b_n^{\eta'}\bigg|b_n^{-1}\partial_{\sigma}^v\hat{H}_n(\sigma;s_n)-\int^T_0\partial_{\sigma}^vh^{\infty}_t(\sigma)dt\bigg|\right)^q\right] < \infty
\end{equation*}
for $\eta'\leq \eta\wedge (1/2)\wedge (\xi/2q)$.
\end{enumerate}
\end{proposition}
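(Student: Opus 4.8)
The plan is to analyse separately the three pieces of the series representation (\ref{Hn2}) of $\hat H_n$ — the quadratic form $-\frac12 Z^\star M Z$, the determinant term $-\log\det D=-\sum_I\log| b^1_{I,\tau_n}|-\sum_J\log| b^2_{J,\tau_n}|$, and the correction $\frac12\log\det M=\sum_{p=1}^{\infty}\frac{1}{2p}\mathrm{tr}((LL^\star)^p)$ — and to show that each, after multiplication by $b_n^{-1}$ and $v$-fold differentiation in $\sigma$, converges to the corresponding summand of $\int_0^T\partial_\sigma^v h^\infty_t\,dt$. The smoothness and polynomial growth of $\partial_\sigma^i b$ up to $i=4$ in $[A1]\ 1$, together with the ellipticity $[A1]\ 2$ that keeps $| b^i_t|$ bounded away from $0$, guarantee that differentiation preserves the structure of these three pieces. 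For Part~1 I would first pass from $H_n$ to $\hat H_n$: by $[A1]$ and $[A2]$ one has $\sup_t|\tilde\rho_n(t)-\bar\rho_t|\to^p0$, and $\bar\rho_T<1$ a.s.\ by (\ref{rho-sup}), so for the $\{s_n\}$ of Lemma~\ref{Sq-est} one has $P[\tau_n<T]\to0$; on $\{\tau_n=T\}$ the expansion is valid and $\sup_\sigma| H_n-\hat H_n|\to^p0$.

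The two determinant pieces are linear functionals of the empirical measures $\nu^{0,i}_n$ and $\nu^{p,i}_n$ and are the easier ones. For $\log\det D$, after replacing $X_{\mathcal T'_k(I)\wedge\tau_n}$ by $X_{L(I)}$ — an error of order $r_n$ controlled by the Lipschitz bound $[A1]\ 3$, the growth bound $[A1]\ 1$ and the regularity of $X$ in $[A1]\ 6$ — one is left with $-\int_0^T\partial_\sigma^v\log| b^1_s|\,\nu^{0,1}_n(ds)$, whose convergence to $-\int_0^T a_0\,\partial_\sigma^v\log| b^1_s|\,ds$ follows from (\ref{a0}) by approximating the continuous integrand by step functions. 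For $\log\det M$, writing $\mathrm{tr}((LL^\star)^p)$ as a $\rho$-weighted sum of $((GG^\star)^p)_{II}$ and freezing $\rho_{I,J,\tau_n}\approx\rho_{L(I)}$ reduces the $p$-th term to $\Psi^{p,1}(\cdot,a_p)$, which vanishes by $[A3']$ (equivalent to $[A3]$ under $[A2]$ by Proposition~\ref{A3eq}); the summation over $p$ is justified by Lemma~\ref{sum-conv} and the bound $\parallel\tilde L\parallel\le1-s_n(T)$ from Lemma~\ref{GGT-lambda}, and produces the term $-\frac12 C(\rho_t,t)$. For Part~2 the same decomposition is used but the deviations are measured in $L^q$: the determinant errors are bounded directly by $[A3'\mathchar`-q,\eta]$, the factor $(1+p)^C$ in its $p$-th estimate being summable against the geometric decay of $\parallel\tilde L\parallel^p$ supplied by the stopping, which yields the rate $b_n^{\eta'}$ with $\eta'\le\eta$.

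The quadratic form is the crux. Writing $Z^\star M Z=A_n(\{Z_iZ_{i'}\},\{Z_{j+l_n}Z_{j'+l_n}\},\{Z_iZ_{j+l_n}\},\{\rho_{I^i,J^j,\tau_n}\})$ and using the Euler approximations $Y^1(I)\approx b^1_{\ast,L(I)}\cdot W(I)$, $Y^2(J)\approx b^2_{\ast,L(J)}\cdot W(J)$ (whose drift and coefficient-variation errors are $o_p(1)$ in Part~1 by $[A1]\ 5,6$ and are controlled in $L^q$ through the $r_n$-moments built into $[A4\mathchar`-(2q),\delta]$ in Part~2), the relevant conditional means are $E[Z_iZ_{i'}]\approx\delta_{ii'}(B^1_{L(I^i)})^2$, $E[Z_{j+l_n}Z_{j'+l_n}]\approx\delta_{jj'}(B^2_{L(J^j)})^2$ and $E[Z_iZ_{j+l_n}]\approx B^1_{L(I^i)}B^2_{L(J^j)}\rho_{\ast}G_{IJ}$. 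Substituting these into $A_n$ and invoking Proposition~\ref{A3eq}, after a time-localization that freezes $B^1,B^2,\rho,\rho_\ast$ on a fine partition (legitimate by the continuity of these coefficients in $t$ and the measure convergence in $[A3]$), produces $\int_0^T A(B^1_t,B^2_t,\rho_t,\rho_{t,\ast},t)\,dt$, i.e.\ the term $-\frac12 A(\cdots)$ of $h^\infty$.

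It remains to show that the centred part $Z^\star M Z-E[Z^\star M Z\mid\cdots]$ is negligible, and this is the \emph{main obstacle}. Expanding $M=\sum_p(-1)^p\tilde L^p$, I would bound the $L^q$-norm of each $b_n^{-1}(Z^\star\tilde L^pZ-E[\,\cdot\,])$ by a Burkholder/martingale argument in which the second- and higher-order moments of the fluctuations are governed precisely by the overlap functionals $\Phi_{2p+2,1}$ and $\bar\Phi_{2p_1+3,2p_2+3}$ attached to the chains $\theta_{p,k}$ of overlapping intervals; condition $[A4\mathchar`-(2q),\delta]$ is exactly what forces these to vanish at rate $b_n^{\eta'}$ after the weighting by $(p+1)^{-\delta}$, while the negative-moment bound and $P[\tau_n<T]=O(b_n^{-\xi})$ from $[S\mathchar`-((2v+2[\delta]+12)q),\xi]$ let one sum the series in $p$ and transfer from $H_n$ to $\hat H_n$ with rate $b_n^{\xi/2q}$. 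The hypothesis $\sup_nE[b_n^{-2q}(l_n+m_n)^{2q}]<\infty$ bounds the total number of increments in the remainder estimates, and finally the passage from the pointwise-in-$\sigma$ $L^q$ bounds to the uniform bound $\sup_{\sigma}$ is made by Sobolev's inequality applied to the $(v+1)$-fold $\sigma$-derivatives, for which $q>n_1$ is precisely the required integrability. Part~1 is the convergence-in-probability version of the same fluctuation estimate, with $[A4]$ in place of $[A4\mathchar`-(2q),\delta]$ and no rate required.
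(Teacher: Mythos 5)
Your proposal is correct and takes essentially the same route as the paper: the paper formalizes your three-piece analysis as a chain of intermediate functionals $\tilde H^1_n$ (frozen-coefficient, drift-free approximation of $Z$ and $M$), $\tilde H^2_n$ (quadratic form replaced by ${\rm tr}(\tilde M\langle\hat Z\rangle)$) and $\tilde H^3_n$ (integrals against $\nu^{p,i}_n$), with the Burkholder-type fluctuation bounds against $\Phi_{2p+2,1}$, $\bar\Phi_{2p_1+3,2p_2+3}$ (Lemmas \ref{Burk3}, \ref{H1-diff}, \ref{Hn-H2n-diff}), the empirical-measure limits via $[A3']$ and $[A3'\mathchar`-q,\eta]$ (Lemma \ref{A3''}), and Sobolev's inequality for the supremum in $\sigma$, exactly as you outline. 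Only two details are slightly off and easily repaired: in Part 1 you invoke Lemma \ref{Sq-est}, which requires $[A2\mathchar`-q',\delta]$, whereas under plain $[A2]$ the paper instead takes $s_n=r_n^{1/42}\wedge((1-|\rho_0|)/2)$; and in Part 2 the $[S]$ condition is used to control the $\tau_n$-truncation inside $\tilde H^3_n$ (the difference $\mathcal{D}^i_p(t\wedge\tau_n,t)-\mathcal{D}^i_p(t,t)$), not a transfer from $H_n$ to $\hat H_n$, which is not needed there.
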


\section{Separation of the limit of $H_n$}
We deal with Condition $[H]$ about separation of the limit of $H_n$ which is necessary to apply Ibragimov-Has'minskii's theory.
In the case of synchronous and equi-space samplings : $S^i=T^i=[b_n]^{-1}iT \ (0\leq i\leq [b_n])$, 
Uchida and Yoshida \cite{uch-yos} discussed tractable sufficient conditions for Condition $[H0]$ of separation.
In this section, we will confirm that $[H0]$ implies $[H]$ under certain conditions.

Under $[A1]-[A3]$, we define 
$\mathcal{Y}_n(\sigma;\sigma_{\ast})=b_n^{-1}(H_n(\sigma)-H_n(\sigma_{\ast}))$, and $\mathcal{Y}(\sigma;\sigma_{\ast})$ denotes the probability limit of $\mathcal{Y}_n(\sigma;\sigma_{\ast})$.
By Proposition \ref{Hn-lim}, we obtain
$\mathcal{Y}(\sigma;\sigma_{\ast})=\int^T_0(h^{\infty}_t(\sigma)-h^{\infty}_{t}(\sigma_{\ast}))dt$.

Moreover, the equation (\ref{h8}) can be rewritten as
\begin{eqnarray}\label{h-inf-A}
h^{\infty}_t(\sigma)&=&-\frac{1}{2}(B^1_t)^2(a_0+\mathcal{A}(\rho_t))-\frac{1}{2}(B^2_t)^2(c_0+\mathcal{A}(\rho_t))+B^1_tB^2_t\mathcal{A}(\rho_t)\frac{\rho_{t,\ast}}{\rho_t} \nonumber \\
&&-a_0\log | b^1_t| -c_0\log| b^2_t| +\int^{\rho_t}_0\frac{\mathcal{A}(\rho)}{\rho}d\rho,
\end{eqnarray}
where $\mathcal{A}(\rho)=\mathcal{A}(\rho,t)=a(\rho,t)-a_0(t)=c(\rho,t)-c_0(t)$.
Since $B^1_{t,\ast}=B^2_{t,\ast}=1$,
\begin{eqnarray}
h^{\infty}_t(\sigma_{\ast})&=&-\frac{1}{2}a_0-\frac{1}{2}c_0-a_0\log | b^1_{t,\ast}| -c_0\log| b^2_{t,\ast}| +\int^{\rho_{t,\ast}}_0\frac{\mathcal{A}(\rho)}{\rho}d\rho. \nonumber
\end{eqnarray}
Therefore for $y_t(\sigma)=h^{\infty}_t(\sigma)-h^{\infty}_t(\sigma_{\ast})$, it follows that
\begin{eqnarray}\label{y-rep}
y_t(\sigma)&=&-\frac{1}{2}(B^1_t)^2(a_0+\mathcal{A})-\frac{1}{2}(B^2_t)^2(c_0+\mathcal{A})+B^1_tB^2_t\mathcal{A}\frac{\rho_{t,\ast}}{\rho_t}+\frac{a_0}{2}+\frac{c_0}{2} \nonumber \\
&&+a_0\log B^1_t+c_0\log B^2_t+\int^{\rho_t}_{\rho_{t,\ast}}\frac{\mathcal{A}}{\rho}d\rho \nonumber \\
&=&-\frac{1}{2}(a_0+\mathcal{A})(B^1_t-B^2_t)^2+a_0+a_0\log B^1_tB^2_t + \int^{\rho_t}_{\rho_{t,\ast}}\frac{\mathcal{A}}{\rho}d\rho \nonumber \\
&&+ \frac{c_0-a_0}{2}(1-(B^2_t)^2+\log (B^2_t)^2)+B^1_tB^2_t(\mathcal{A}\rho_{t,\ast}/\rho_t-\mathcal{A}-a_0) \nonumber \\
&=&-\frac{1}{2}(c_0+\mathcal{A})(B^1_t-B^2_t)^2+c_0+c_0\log B^1_tB^2_t + \int^{\rho_t}_{\rho_{t,\ast}}\frac{\mathcal{A}}{\rho}d\rho \nonumber \\
&&+ \frac{a_0-c_0}{2}(1-(B^1_t)^2+\log (B^1_t)^2)+B^1_tB^2_t(\mathcal{A}\rho_{t,\ast}/\rho_t-\mathcal{A}-c_0). 
\end{eqnarray}

Let $F(x)=1-x+\log x \ (x>0)$. 
\begin{lemma}\label{Fx-est}
Let $\epsilon_1\in (0, e^{-1}]$, $\epsilon_2\geq 1$. Then 
\begin{equation*}
-\log(1/\epsilon_1)(x-1)^2\leq F(x)\leq -(x-1)^2/4\epsilon_2^2,
\end{equation*}
for $x\in [\epsilon_1,1+\epsilon_2]$.
\end{lemma}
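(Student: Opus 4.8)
The plan is to divide both inequalities by $(x-1)^2$ and reduce everything to the single auxiliary function
\[
\phi(x)=\frac{F(x)}{(x-1)^2}=\frac{1-x+\log x}{(x-1)^2},\qquad x>0,
\]
extended continuously to $x=1$ by $\phi(1)=-1/2$, which follows from the Taylor expansion $F(x)=-\tfrac12(x-1)^2+O((x-1)^3)$. Since $(x-1)^2\ge 0$, the asserted bound $-\log(1/\epsilon_1)(x-1)^2\le F(x)\le -(x-1)^2/(4\epsilon_2^2)$ is equivalent to $\log\epsilon_1\le \phi(x)\le -1/(4\epsilon_2^2)$ for $x\in[\epsilon_1,1+\epsilon_2]$, so the whole statement becomes a question about the range of $\phi$ on that interval.

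The central step is to show that $\phi$ is strictly increasing on $(0,\infty)$. First I would differentiate, factor out one power of $(x-1)$, and reduce the sign of $\phi'$ to that of $N(x):=x-x^{-1}-2\log x$, obtaining $\phi'(x)=N(x)/(x-1)^3$. Since $N'(x)=1+x^{-2}-2x^{-1}=(x-1)^2/x^2\ge 0$ and $N(1)=0$, one gets $N(x)<0$ for $x<1$ and $N(x)>0$ for $x>1$; in both cases $N(x)$ shares the sign of $(x-1)^3$, so $\phi'(x)>0$ away from $1$. Together with continuity at $1$ this yields strict monotonicity of $\phi$ on all of $(0,\infty)$. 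This monotonicity is essentially the only structural input, and I expect establishing it cleanly to be the main (if modest) obstacle; the rest is endpoint bookkeeping.

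Given monotonicity, on $[\epsilon_1,1+\epsilon_2]$ we have $\phi(\epsilon_1)\le\phi(x)\le\phi(1+\epsilon_2)$. For the upper bound I would evaluate $\phi(1+\epsilon_2)=(\log(1+\epsilon_2)-\epsilon_2)/\epsilon_2^2$ and reduce the desired $\phi(1+\epsilon_2)\le -1/(4\epsilon_2^2)$ to the elementary inequality $\log(1+\epsilon_2)\le \epsilon_2-1/4$ for $\epsilon_2\ge 1$, verified by checking the case $\epsilon_2=1$ and noting that the difference has positive derivative $\epsilon_2/(1+\epsilon_2)$. For the lower bound I would reduce $\phi(\epsilon_1)\ge \log\epsilon_1$ (after clearing the positive denominator) to nonnegativity of the numerator $M(\epsilon):=(1-\epsilon)+\epsilon(2-\epsilon)\log\epsilon$, and show $M$ is decreasing on $(0,e^{-1}]$ via $M'(\epsilon)=(1-\epsilon)(1+2\log\epsilon)<0$ (here $\epsilon\le e^{-1}$ forces $1+2\log\epsilon\le -1$), so that $M$ attains its minimum at $\epsilon=e^{-1}$, where $M(e^{-1})=1-3e^{-1}+e^{-2}>0$. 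Multiplying the resulting bounds on $\phi$ back by $(x-1)^2\ge 0$ then recovers the two stated inequalities, both hypotheses $\epsilon_1\le e^{-1}$ and $\epsilon_2\ge 1$ being used exactly once.
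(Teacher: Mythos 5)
Your proof is correct, and it takes a genuinely different route from the paper's. The paper handles the two bounds by two separate arguments: for the upper bound it invokes the elementary inequality $F(x)\leq -((x-1)^2\wedge 1)/4$ (valid for all $x>0$) and then uses $|x-1|\leq \epsilon_2$ to pass to $-(x-1)^2/4\epsilon_2^2$; for the lower bound it studies the penalized function $f(x)=F(x)+\log(1/\epsilon_1)(x-1)^2$, whose derivative $(x-1)(2\log(1/\epsilon_1)-1/x)$ yields $f\geq f(1)\wedge f(\epsilon_1)$, and then asserts $f(\epsilon_1)\geq 0$ without further detail. You instead prove a single structural fact --- the ratio $\phi(x)=F(x)/(x-1)^2$ is strictly increasing on $(0,\infty)$, via $\phi'(x)=N(x)/(x-1)^3$ with $N(x)=x-x^{-1}-2\log x$, $N(1)=0$, $N'(x)=(x-1)^2/x^2\geq 0$ --- after which both constants fall out of the endpoint evaluations $\phi(\epsilon_1)$ and $\phi(1+\epsilon_2)$. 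Your unified approach buys two things: sharpness (the extreme values of $F(x)/(x-1)^2$ over $[\epsilon_1,1+\epsilon_2]$ are exactly the endpoint values, so the lemma's constants are visibly just convenient weakenings of the optimal ones), and completeness (your verification $M(\epsilon)=(1-\epsilon)+\epsilon(2-\epsilon)\log\epsilon\geq M(e^{-1})=1-3e^{-1}+e^{-2}>0$ for $\epsilon\in(0,e^{-1}]$ is precisely the computation that the paper's claim $f(\epsilon_1)\geq 0$ requires but omits). The paper's argument, in exchange, is shorter and avoids differentiating the quotient. In both proofs the hypotheses enter exactly where you say: $\epsilon_2\geq 1$ through $\log(1+\epsilon_2)\leq \epsilon_2-1/4$ (or through $(x-1)^2\wedge 1\geq (x-1)^2/\epsilon_2^2$ in the paper), and $\epsilon_1\leq e^{-1}$ through the sign of $1+2\log\epsilon$ (respectively through the location of the critical point of $f$ and the positivity of $f(\epsilon_1)$).
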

\begin{proof}
For $0< x\leq 1+\epsilon_2$, since $-1/\epsilon_2< (x-1)/\epsilon_2\leq 1$, it follows that
\begin{equation*}
F(x)\leq -((x-1)^2\wedge 1)/4\leq -(x-1)^2/4\epsilon_2^2.
\end{equation*}
On the other hand, for $x\geq \epsilon_1$, let $f(x)=F(x)+\log (1/\epsilon_1)(x-1)^2$ then 
since $f'(x)=(x-1)(2\log (1/\epsilon_1)-1/x)$, we have $f(x)\geq f(1)\wedge f(\epsilon_1)\geq 0$.
\end{proof}
Let
\begin{eqnarray}
f_1(t,x,\rho,\rho_{\ast})&=&a_0+a_0\log x+\int^{\rho}_{\rho_{\ast}}\mathcal{A}(\rho')/\rho' d\rho' + x(\mathcal{A}\rho_{\ast}/\rho-\mathcal{A}-a_0), \nonumber \\
f_2(t,x,\rho,\rho_{\ast})&=&c_0+c_0\log x+\int^{\rho}_{\rho_{\ast}}\mathcal{A}(\rho')/\rho' d\rho' + x(\mathcal{A}\rho_{\ast}/\rho-\mathcal{A}-c_0), \nonumber 
\end{eqnarray}
and $C_1=(1-\bar{\rho}_T^2)^2/(12R^8)$. 

\begin{lemma}\label{sep-lem}
Assume $[A1]-[A3]$. 
Then 
\begin{equation*}
(f_1\vee f_2)(t,B^1_tB^2_t,\rho_t,\rho_{t,\ast})\leq -C_1\left\{a_1(t)(\rho_t-\rho_{t,\ast})^2+(a_0\wedge c_0)(t)(B^1_tB^2_t-1)^2\right\}
\end{equation*}
for $dt\times P$-a.e. $(t,\omega)\in [0,T]\times \Omega$.
\end{lemma}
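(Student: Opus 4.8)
The plan is to prove the two one-sided estimates
\begin{equation*}
f_1(t,B^1_tB^2_t,\rho_t,\rho_{t,\ast})\le -C_1\{a_1(\rho_t-\rho_{t,\ast})^2+a_0(B^1_tB^2_t-1)^2\}
\end{equation*}
and its analogue for $f_2$ with $a_0$ replaced by $c_0$; since $a_p\equiv c_p$ for $p\ge1$ by Lemma \ref{A3equiv} the two are symmetric, and taking the maximum (using $a_0\wedge c_0\le a_0$ and $\le c_0$) gives the claim. I abbreviate $x=B^1_tB^2_t$, $\rho=\rho_t$, $\rho_\ast=\rho_{t,\ast}$, and first record the sign structure that drives everything: writing $a_p$ as the limit of $\nu^{p,1}_n([0,t))$ and using that $GG^\star$ has eigenvalues in $[0,1]$ (Lemma \ref{GGT-lambda}), we have $(GG^\star)^p\preceq GG^\star\preceq\mathcal{E}$ in the positive-definite order, so $((GG^\star)^p)_{II}\le(GG^\star)_{II}\le1$; passing to the limit yields $0\le a_p\le a_1\le a_0$ for $p\ge1$.

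I would then recast $f_1$. Using $F(x)=1-x+\log x$ and $\int_{\rho_\ast}^{\rho}\mathcal{A}(\rho')/\rho'\,d\rho'=\tfrac12(C(\rho)-C(\rho_\ast))$, a direct rearrangement gives
\begin{equation*}
f_1=a_0F(x)+\phi(\rho)+(x-1)m(\rho),\qquad m(\rho)=(\rho_\ast-\rho)\frac{\mathcal{A}(\rho)}{\rho},\quad \phi(\rho)=m(\rho)+\tfrac12(C(\rho)-C(\rho_\ast)).
\end{equation*}
The three pieces are then controlled separately: $a_0F(x)$ gives $(x-1)^2$-nondegeneracy, $\phi(\rho)$ gives $(\rho-\rho_\ast)^2$-nondegeneracy, and $(x-1)m(\rho)$ is a cross term to be absorbed. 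For the $\rho$-estimate, differentiating and using $\tfrac12C'(\rho)=\mathcal{A}(\rho)/\rho$ one finds $\phi'(\rho)=(\rho_\ast-\rho)\psi(\rho)/\rho$ with $\psi(\rho)/\rho=\sum_{p\ge1}(2p-1)a_p\rho^{2p-2}\ge a_1\ge0$; hence $\phi$ has its maximum $0$ at $\rho_\ast$ and
\begin{equation*}
\phi(\rho)=\int_{\rho_\ast}^{\rho}(\rho_\ast-s)\frac{\psi(s)}{s}\,ds\le-\frac{a_1}{2}(\rho-\rho_\ast)^2.
\end{equation*}
For the $x$-estimate I would apply Lemma \ref{Fx-est} with $\epsilon_2=R^2$ (legitimate since $x=B^1_tB^2_t\le R^2\le1+R^2$), obtaining $a_0F(x)\le-\tfrac{a_0}{4R^4}(x-1)^2$.

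To handle the cross term I would prove a sharp modal comparison. Writing $\phi=\sum_pa_p\phi_p$ and $m=\sum_pa_pm_p$ with $m_p=(\rho_\ast-\rho)\rho^{2p-1}$, each $\phi_p\le0$, and factoring $\rho^{2p}-\rho_\ast^{2p}$ twice gives $|\phi_p|\ge\tfrac12(\rho-\rho_\ast)^2\rho^{2p-2}$ and $m_p^2\le2\rho^{2p}|\phi_p|$; by Cauchy--Schwarz over the modes this yields $m(\rho)^2\le2\mathcal{A}(\rho)\,|\phi(\rho)|$. The main obstacle is the final combination that produces the explicit constant $C_1=(1-\bar\rho_T^2)^2/(12R^8)$: a plain Young splitting of $(x-1)m(\rho)$ succeeds only while $\mathcal{A}(\rho)$ is small relative to $a_0$, because $m(\rho)^2\le2\mathcal{A}(\rho)|\phi(\rho)|$ is essentially sharp and $\mathcal{A}(\rho)$ can be as large as $a_0\bar\rho_T^2/(1-\bar\rho_T^2)$, so the naive bound $|\mathcal{A}(\rho)/\rho|\le a_1/(1-\bar\rho_T^2)$ loses too much. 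I would therefore split into two regimes. When $\mathcal{A}(\rho)\le a_0/(8R^4)$, Young's inequality with weight $w=2\mathcal{A}(\rho)$ combined with $m^2\le2\mathcal{A}|\phi|$ gives $(x-1)m\le\mathcal{A}(x-1)^2+\tfrac12|\phi|$, whence $f_1\le-\tfrac12|\phi|-\big(\tfrac{a_0}{4R^4}-\mathcal{A}\big)(x-1)^2\le-\tfrac14a_1(\rho-\rho_\ast)^2-\tfrac{a_0}{8R^4}(x-1)^2$, which dominates the target since $C_1\le1/4$ and $C_1\le1/(8R^4)$.

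When $\mathcal{A}(\rho)>a_0/(8R^4)$ the correlation is large and the nondegeneracy of $\phi$ is correspondingly strong; here one must exploit that each modal ratio satisfies $m_p^2/|\phi_p|\le2\rho^{2p}\le2\bar\rho_T^2$ with $\bar\rho_T<1$, so that the matching growth of $|\phi|$ and $m^2$ leaves a margin proportional to $(1-\bar\rho_T^2)$, which is the source of the factor $(1-\bar\rho_T^2)^2$ in $C_1$. Carrying the constants through this case (and disposing of the degenerate situations $\rho=0$, where $m\equiv0$, and $a_1=0$, where $\mathcal{A}\equiv\phi\equiv0$ and $f_1=a_0F(x)\le0$, by continuity) is the only genuinely delicate computation. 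The symmetric treatment of $f_2$ with $c_0$ in place of $a_0$, followed by taking the maximum, then gives the stated inequality for $dt\times P$-a.e.\ $(t,\omega)$.
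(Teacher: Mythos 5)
Your algebra up through Regime 1 is sound: the decomposition $f_1=a_0F(x)+\phi(\rho)+(x-1)m(\rho)$ is correct, $\phi'(\rho)=(\rho_\ast-\rho)\sum_{p\ge1}(2p-1)a_p\rho^{2p-2}$ does give $\phi\le-\tfrac{a_1}{2}(\rho-\rho_\ast)^2$, the modal inequalities $\phi_p\le0$ and $m_p^2\le2\rho^{2p}|\phi_p|$ are true, and the Young splitting works when $\mathcal{A}$ is small against $a_0$ (modulo one slip: under $[A1]$ one only has $B^1_tB^2_t\le R^4$, not $R^2$, so Lemma \ref{Fx-est} must be applied with $\epsilon_2=R^4$, giving $a_0F(x)\le-a_0(x-1)^2/(4R^8)$). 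The genuine gap is Regime 2, which you defer as ``the only genuinely delicate computation'': that regime is the whole content of the lemma --- it is where the factor $(1-\bar\rho_T^2)^2$ in $C_1$ must be produced --- and the idea you sketch provably cannot close it. After any weighted Young/Cauchy--Schwarz splitting, your three aggregate facts (the quadratic bound on $F$, the lower bound $|\phi|\ge\tfrac{\mathcal{A}}{2\rho^2}(\rho-\rho_\ast)^2$, and $|m|=|\rho-\rho_\ast|\mathcal{A}(\rho)/|\rho|$) reduce the claim to negative semidefiniteness of a quadratic form in $\bigl(|\rho-\rho_\ast|\sqrt{\mathcal{A}}/|\rho|,\,|x-1|\bigr)$, and that holds if and only if $\mathcal{A}(\rho)<a_0/(2R^8)$. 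This condition is scale invariant, so the per-mode margin $m_p^2/|\phi_p|\le2\rho^{2p}\le2\bar\rho_T^2$ does not relax it: running Cauchy--Schwarz with the factor $\rho^{2p}$ pulled out either reproduces $m^2\le2\mathcal{A}|\phi|$ or introduces the possibly divergent quantity $\sum_p a_p$. For $\bar\rho_T$ near $1$ one has $\mathcal{A}\gg a_0/(2R^8)$, and then no combination of your inequalities yields a negative bound; the lemma survives only because of information you have discarded, namely the non-quadratic decay of $F$ on the feasible range $x\in[R^{-4},R^4]$ and, crucially, the fact that $(a_p)_{p\ge0}$ is a moment sequence (e.g.\ $a_1^2\le a_0a_2$), which excludes exactly the coefficient configurations on which the quadratic form fails.

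That moment structure is what the paper's proof exploits, and it shows the cleanest repair. The paper writes $\int_s^ta_p(u)\,du$ as the limit in probability of $b_n^{-1}\sum_i(\lambda_i')^p$, where $\lambda_i'$ are the eigenvalues of $G_{[s,t)}G_{[s,t)}^\star$, exchanges the sum over $p$ with the spectral sum, and resums the series per eigenvalue in closed form: with $g_i=\sqrt{1-\lambda_i'\rho^2}$ and $g_{i,\ast}=\sqrt{1-\lambda_i'\rho_\ast^2}$, each eigenvalue contributes exactly
\begin{equation*}
B^1B^2g_i^{-2}(\lambda_i'\rho\rho_\ast-1)+B^1B^2g_{i,\ast}g_i^{-1}+F\bigl(B^1B^2g_{i,\ast}g_i^{-1}\bigr),
\end{equation*}
and the first two terms combine algebraically to $-B^1B^2\lambda_i'(\rho-\rho_\ast)^2/\{g_i^2(1-\lambda_i'\rho\rho_\ast+g_{i,\ast}g_i)\}\le-B^1B^2\lambda_i'(\rho-\rho_\ast)^2/3$. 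There is thus no cross term to absorb at all: Lemma \ref{Fx-est} applied per eigenvalue with $\epsilon_2=R^4/\sqrt{1-\bar\rho_T^2}$ produces the $(B^1B^2-1)^2$ term, the $(\rho-\rho_\ast)^2$ loss it creates is re-absorbed by the $-\lambda_i'(\rho-\rho_\ast)^2/3$ term, and summing over eigenvalues and letting $s\uparrow t$ gives the $dt\times P$-a.e.\ statement. If you wish to keep your $p$-mode decomposition, you would have to both (a) handle $|x-1|$ of order one using the true logarithmic/linear behaviour of $F$ rather than its quadratic majorant, and (b) import the spectral constraints on $(a_p)$; both steps amount to re-deriving what the paper's change of summation gives for free, so I would recommend switching to the per-eigenvalue route.
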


We write $\mathcal{Y}_0$ for $\mathcal{Y}$ defined by using the same processes $X,Y$ 
and synchronous equi-spaced samplings $S^i=T^i=\mathcal{T}^i_k=[b_n]^{-1}iT \ (0\leq i\leq [b_n],1\leq k\leq n_2)$. Let
\begin{equation*}
\chi(\sigma_{\ast})=\inf_{\sigma\neq \sigma_{\ast}}\frac{-\mathcal{Y}(\sigma;\sigma_{\ast})}{|\sigma-\sigma_{\ast}|^2}, \quad \chi_0(\sigma_{\ast})=\inf_{\sigma\neq \sigma_{\ast}}\frac{-\mathcal{Y}_0(\sigma;\sigma_{\ast})}{|\sigma-\sigma_{\ast}|^2}.
\end{equation*}
Moreover, we consider the following conditions.
\begin{description}
\item{$[H]$} For every $L>0$, there exists $c_L>0$ such that for all $r>0$, 
$P[\chi\leq r^{-1}]\leq c_L/r^L$.
\item{$[H0]$} For every $L>0$, there exists $c_L>0$ such that for all $r>0$, 
$P[\chi_0\leq r^{-1}]\leq c_L/r^L$.
\item{$[H']$} $\chi>0$ almost surely.
\end{description}
Obviously, $[H]$ implies $[H']$.

\begin{lemma}\label{y0-est}
Assume $[A1]$. Then there exists a positive random variable $\mathcal{R}'$ 
which does not depend on $\sigma,\sigma_{\ast}$, such that $E[(\mathcal{R}')^q]<\infty$ for any $q>0$ and
\begin{equation*}
\mathcal{Y}_0(\sigma;\sigma_{\ast})\geq -\mathcal{R}'\int^T_0\left\{(B^1_t-B^2_t)^2+(B^1_tB^2_t-1)^2 +(\rho_t-\rho_{t,\ast})^2\right\}dt
\end{equation*}
for any $\sigma, \sigma_{\ast}\in \Lambda$.
\end{lemma}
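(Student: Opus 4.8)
The plan is to establish the lower bound by working from the representation of $y_t(\sigma)=h^\infty_t(\sigma)-h^\infty_t(\sigma_*)$ given in (\ref{y-rep}), specialized to the synchronous equi-spaced sampling that defines $\mathcal{Y}_0$. In that synchronous case the observation intervals for $Y^1$ and $Y^2$ coincide, so the limiting densities simplify; in particular one expects $a_0=c_0$ (equal observation rates) and the functions $a_p,c_p$ to be determined by the common partition. I would first compute what $a_0$, $c_0$, and $\mathcal{A}(\rho,t)$ become in this synchronous setting, so that the three lines of (\ref{y-rep}) collapse into something controlled purely by the three discrepancy quantities $(B^1_t-B^2_t)^2$, $(B^1_tB^2_t-1)^2$, and $(\rho_t-\rho_{t,*})^2$.

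Next I would bound each piece of the integrand $y_t(\sigma)$ from below. The key analytic tool is Lemma \ref{Fx-est} applied to $F(x)=1-x+\log x$, which lets me control the logarithmic terms $a_0\log B^1_t B^2_t$ (and the terms $1-(B^i_t)^2+\log(B^i_t)^2$ appearing in the alternative forms of (\ref{y-rep})) by quadratic expressions in $B^i_t-1$, hence in $B^1_tB^2_t-1$ and $B^1_t-B^2_t$. Under $[A1]$, ellipticity (\ref{detB-nondeg}) and the growth and Lipschitz bounds guarantee that $B^1_t,B^2_t$ and $1/B^i_t$ lie in a random bounded range $[\epsilon_1,1+\epsilon_2]$ with $\epsilon_1,\epsilon_2$ possessing moments of all orders, so Lemma \ref{Fx-est} applies with random constants $\mathcal{R}'$ having finite $L^q$-norms for every $q$. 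The integral term $\int_{\rho_{t,*}}^{\rho_t}\mathcal{A}(\rho)/\rho\,d\rho$ I would handle by noting $\mathcal{A}(\rho,t)/\rho$ is smooth near the origin (since $\mathcal{A}(\rho)=\sum_{p\ge1}a_p\rho^{2p}$ starts at order $\rho^2$) and bounded on the relevant compact $\rho$-range, yielding a bound of order $(\rho_t-\rho_{t,*})^2$ via the mean value theorem, again with a coefficient controlled by a random variable with all moments. Lemma \ref{sep-lem} already packages exactly this kind of quadratic upper bound on $f_1\vee f_2$, so I would invoke it directly to dispatch the mixed $B^1_tB^2_t(\cdots)$ terms and the logarithmic contributions simultaneously.

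Assembling these, $\mathcal{Y}_0(\sigma;\sigma_*)=\int_0^T y_t(\sigma)\,dt$ becomes bounded below by $-\mathcal{R}'$ times the integral of the three squared discrepancies, where $\mathcal{R}'$ is a single positive random variable independent of $\sigma,\sigma_*$, formed as a maximum (or sum) of the various random constants produced above. Because each constituent—the ellipticity bound $\epsilon^{-1}$, the sup-norms of $b^i$ and their derivatives, and the sup of $X_t$—has finite moments of every order by $[A1]$, so does $\mathcal{R}'$; I would verify the $\sigma$-uniformity carefully, taking suprema over $\sigma\in\Lambda$ before forming $\mathcal{R}'$.

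The main obstacle I anticipate is making the lower bounds genuinely uniform in $\sigma$ (and $\sigma_*$) while keeping $\mathcal{R}'$ free of these parameters, since $B^1_t,B^2_t,\rho_t$ all depend on $\sigma$ and the admissible range $[\epsilon_1,1+\epsilon_2]$ in Lemma \ref{Fx-est} must be chosen once and for all. This requires exploiting the global uniform ellipticity and the growth bounds in $[A1]$ (rather than pointwise-in-$\sigma$ estimates) to produce $\sigma$-independent $\epsilon_1,\epsilon_2$, and then checking that the resulting $\mathcal{R}'$ genuinely has moments of all orders despite involving $\sup_{0\le t\le T}(1+|X_t|)^C$ and $(1-\bar\rho_T^2)^{-1}$, both of which are integrable to every order under $[A1]$ combined with (\ref{detB-nondeg})–(\ref{rho-sup}).
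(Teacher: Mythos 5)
Your overall skeleton does match the paper's proof: specialize (\ref{y-rep}) to the synchronous equi-spaced case (where $a_0\equiv c_0\equiv a_1\equiv 1$ and $\mathcal{A}(\rho)=\rho^2/(1-\rho^2)$), apply the lower-bound half of Lemma \ref{Fx-est} to the logarithmic part, and control all random constants via $[A1]$, (\ref{detB-nondeg}) and (\ref{rho-sup}). But your treatment of the $\rho$-dependent terms contains two errors that break the argument. First, in the synchronous case the integral term is $\int_{\rho_{t,\ast}}^{\rho_t}\mathcal{A}(\rho')/\rho'\,d\rho'=\frac{1}{2}\log\frac{1-\rho_{t,\ast}^2}{1-\rho_t^2}$, and the mean value theorem applied to it in isolation gives a bound of order $|\rho_t-\rho_{t,\ast}|$, i.e.\ \emph{first} order, not $(\rho_t-\rho_{t,\ast})^2$: the integrand $\rho'/(1-\rho'^2)$ does not vanish at $\rho'=\rho_{t,\ast}$, so the integral genuinely carries a nonzero linear term $\frac{\rho_{t,\ast}}{1-\rho_{t,\ast}^2}(\rho_t-\rho_{t,\ast})$. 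Second, Lemma \ref{sep-lem} is an \emph{upper} bound, $(f_1\vee f_2)\leq -C_1\{\cdots\}$, proved under $[A1]$--$[A3]$; it cannot "dispatch" anything here, where you need $y_{t,0}$ (hence $f_1$) bounded from \emph{below} by a negative quadratic. Invoking it points the inequality in the wrong direction.

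The missing idea is the cancellation of the first-order terms in $\rho_t-\rho_{t,\ast}$, which is exactly what the paper's grouping produces. Writing $g_t=\sqrt{(1-\rho_{t,\ast}^2)/(1-\rho_t^2)}$, one has $1+\log(B^1_tB^2_tg_t)=F(B^1_tB^2_tg_t)+B^1_tB^2_tg_t$, so the log and integral terms are absorbed into a single $F$, and the leftover linear term combines with the mixed term into $B^1_tB^2_t\big(\frac{\rho_t\rho_{t,\ast}-1}{1-\rho_t^2}+g_t\big)$. By the identity $(1-\rho_t\rho_{t,\ast})^2-(1-\rho_t^2)(1-\rho_{t,\ast}^2)=(\rho_t-\rho_{t,\ast})^2$ (the synchronous analogue of (\ref{y-second-est})), this combination equals $-(\rho_t-\rho_{t,\ast})^2$ divided by a quantity bounded below in terms of $(1-\bar\rho_T^2)^2$, hence is genuinely second order and bounded below by $-R^4(\rho_t-\rho_{t,\ast})^2/(1-\bar\rho_T^2)^2$. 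Only after this cancellation do the lower half of Lemma \ref{Fx-est}, applied to $F(B^1_tB^2_tg_t)$ with the $\sigma$-uniform range coming from $B^1_tB^2_tg_t\geq R^{-4}\sqrt{1-\bar\rho_T^2}$, and the elementary estimate $(B^1_tB^2_tg_t-1)^2\leq C\{(B^1_tB^2_t-1)^2+(\rho_t-\rho_{t,\ast})^2\}/(1-\bar\rho_T^2)^2$ yield the claimed inequality, with $\mathcal{R}'$ built from $R$ and $(1-\bar\rho_T^2)^{-1}$, both of which have all moments under $[A1]$. As written, your proposal leaves uncancelled first-order terms and therefore cannot produce the quadratic lower bound.
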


The following proposition ensures that to prove $[H]$, 
it is enough to prove $[H0]$ which is the condition of separation for synchronous and equi-spaced observations.

\begin{proposition}\label{sep-cond}
Assume $[A1]-[A3]$. Then there exist positive random variables $\mathcal{R}$ which do not depend on $\sigma, \sigma_{\ast}$ 
such that $E[\mathcal{R}^{-q}]<\infty$ for any $q\geq 1$ and
\begin{equation*}
\mathcal{Y}(\sigma;\sigma_{\ast})\leq -\mathcal{R}\int^T_0\left\{(a_0\wedge c_0)\{(B^1_t-B^2_t)^2+(B^1_tB^2_t-1)^2\} +a_1(\rho_t-\rho_{t,\ast})^2\right\}dt
\end{equation*}
for $\sigma,\sigma_{\ast}\in \Lambda$.
In particular, if $E[({\rm ess \, inf}_{t\in [0,T]} a_1(t))^{-q}]<\infty$ for any $q>0$, $[H0]$ implies $[H]$.
\end{proposition}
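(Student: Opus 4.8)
The plan is to establish the pointwise inequality for the integrand $y_t(\sigma)=h^\infty_t(\sigma)-h^\infty_t(\sigma_\ast)$ and then integrate, so that $\mathcal{R}$ emerges as a deterministic function of the random quantities entering $C_1$. First I would start from the last two representations of $y_t$ in (\ref{y-rep}): one reads $y_t=-\frac12(a_0+\mathcal{A})(B^1_t-B^2_t)^2+f_1(t,B^1_tB^2_t,\rho_t,\rho_{t,\ast})+\frac{c_0-a_0}{2}F((B^2_t)^2)$, the other is obtained by interchanging the two coordinates (replace $a_0,f_1,(B^2_t)^2,\frac{c_0-a_0}{2}$ by $c_0,f_2,(B^1_t)^2,\frac{a_0-c_0}{2}$). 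Since $F(x)=1-x+\log x\le0$ for every $x>0$ by Lemma \ref{Fx-est}, the residual term is nonpositive provided the representation is chosen according to the sign of $c_0-a_0$: use the first identity on $\{c_0\ge a_0\}$ and the second on $\{a_0>c_0\}$, discarding the $F$-term in both cases. Because $a_0+\mathcal{A}=a(\rho_t)$ and $c_0+\mathcal{A}=c(\rho_t)$ are power series in $\rho_t^2$ with nonnegative coefficients (the diagonal entries of the positive semidefinite matrices $(GG^\star)^p$, whence $a_p,c_p\ge0$ in Proposition \ref{A3eq}), the coefficient of $-\frac12(B^1_t-B^2_t)^2$ is at least $a_0\wedge c_0$ in either case. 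This gives $y_t\le-\frac12(a_0\wedge c_0)(B^1_t-B^2_t)^2+(f_1\vee f_2)(t,B^1_tB^2_t,\rho_t,\rho_{t,\ast})$.

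I would then apply Lemma \ref{sep-lem} to bound $f_1\vee f_2$ by $-C_1\{a_1(\rho_t-\rho_{t,\ast})^2+(a_0\wedge c_0)(B^1_tB^2_t-1)^2\}$ and set $\mathcal{R}=\frac12\wedge C_1$, which does not depend on $\sigma,\sigma_\ast$; integrating over $[0,T]$ yields the claimed inequality for $\mathcal{Y}$. To check $E[\mathcal{R}^{-q}]<\infty$, note $\mathcal{R}^{-1}\le2+12R^8(1-\bar\rho_T^2)^{-2}$, while (\ref{detB-nondeg}) gives $1-\rho_t^2\ge\epsilon|b^1_t|^{-2}|b^2_t|^{-2}$, so $(1-\bar\rho_T^2)^{-1}\le\epsilon^{-1}\sup_t(|b^1_t|^2|b^2_t|^2)$; the growth and moment conditions in $[A1]$ then make every negative moment of $\mathcal{R}$ finite.

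For the final assertion I would compare both $\mathcal{Y}$ and $\mathcal{Y}_0$ with the sampling-independent functional $D(\sigma;\sigma_\ast)=\int_0^T\{(B^1_t-B^2_t)^2+(B^1_tB^2_t-1)^2+(\rho_t-\rho_{t,\ast})^2\}dt$, which depends only on $X,Y$ and not on the sampling design. The key observation is that $a_1\le a_0\wedge c_0$ $dt\times P$-a.e.: indeed $\nu^{1,1}_n\le\nu^{0,1}_n$ and $\nu^{1,2}_n\le\nu^{0,2}_n$ because the diagonal entries of $GG^\star$ and $G^\star G$ lie in $[0,1]$ by Lemma \ref{GGT-lambda}, giving $a_1\le a_0$ and $c_1\le c_0$, and $a_1\equiv c_1$ by Lemma \ref{A3equiv}. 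Writing $\beta=({\rm ess\,inf}_{t}a_1(t))$, the inequality just proved yields $-\mathcal{Y}\ge\mathcal{R}\beta D$, hence $\chi\ge\mathcal{R}\beta\,\Delta$ with $\Delta=\inf_{\sigma\ne\sigma_\ast}D/|\sigma-\sigma_\ast|^2$, whereas Lemma \ref{y0-est} yields $-\mathcal{Y}_0\le\mathcal{R}'D$, hence $\chi_0\le\mathcal{R}'\Delta$. Combining these gives $\chi\ge G\chi_0$ with $G=\mathcal{R}\beta/\mathcal{R}'$.

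Finally, for any $L>0$ and $r>0$ I would use the inclusion $\{G\chi_0\le r^{-1}\}\subset\{G\le r^{-1/2}\}\cup\{\chi_0\le r^{-1/2}\}$: the first event has probability at most $E[G^{-2L}]r^{-L}$ by Chebyshev, and $E[G^{-2L}]<\infty$ by H\"older since $\mathcal{R}^{-1}$, $\beta^{-1}=({\rm ess\,inf}_t a_1(t))^{-1}$ and $\mathcal{R}'$ all have finite moments of every order (the middle one by the standing hypothesis); the second has probability at most $c_{2L}r^{-L}$ by $[H0]$ applied with $2L$ in place of $L$ and $r^{1/2}$ in place of $r$. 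This proves $P[\chi\le r^{-1}]\le c_Lr^{-L}$, i.e. $[H]$. The hard part will be the first step: arranging the two forms of (\ref{y-rep}) so that the residual $F$-term is always nonpositive while keeping the $(B^1_t-B^2_t)^2$-coefficient bounded below by $a_0\wedge c_0$. Once the simple but crucial inequality $a_1\le a_0\wedge c_0$ is in hand, the passage from $[H0]$ to $[H]$ reduces to the single integrability hypothesis on ${\rm ess\,inf}_t a_1$.
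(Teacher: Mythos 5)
Your proposal is correct and follows essentially the same route as the paper's own proof: selecting between the two representations in (\ref{y-rep}) according to the sign of $c_0-a_0$ and discarding the nonpositive $F$-term, invoking Lemma \ref{sep-lem} to get $\mathcal{R}=C_1$ (your $\tfrac12\wedge C_1$ coincides with $C_1$ since $C_1\leq 1/12$), then deducing $[H]$ from $[H0]$ via $a_1\leq a_0\wedge c_0$ (from $\nu^{1,i}_n\leq\nu^{0,i}_n$), Lemma \ref{y0-est}, and the split $P[\chi\leq r^{-1}]\leq P[\chi_0\leq r^{-1/2}]+P[\mathcal{R}\beta/\mathcal{R}'\leq r^{-1/2}]$ with $[H0]$ at level $2L$. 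Your explicit intermediate functional $D$ and $\Delta$, and the explicit moment checks for $\mathcal{R}^{-1}$, are only presentational differences from the paper's chained inequalities.
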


\begin{proof}
In the equation (\ref{y-rep}), by using the second representation if $a_0\leq c_0$ 
and using the third representation if $a_0> c_0$, we obtain
\begin{equation*}
y_t(\sigma)\leq -\frac{1}{2}(a_0\wedge c_0+\mathcal{A})(B^1_t-B^2_t)^2+(f_1\vee f_2)(t,B^1_tB^2_t,\rho_t,\rho_{t,\ast}).
\end{equation*}
By Lemma \ref{sep-lem}, we have
\begin{equation*}
y_t(\sigma)\leq -\frac{1}{2}(a_0\wedge c_0)(B^1_t-B^2_t)^2-C_1a_1(\rho_t-\rho_{t,\ast})^2-C_1(a_0\wedge c_0)(B^1_tB^2_t-1)^2
\end{equation*}
for $dt\times P$-a.e. $(t,\omega)$, where $C_1=(1-\bar{\rho}_T^2)^2/(12R^8)$.
Therefore by integrating with respect to $t$,
\begin{equation*}
\mathcal{Y}(\sigma;\sigma_{\ast})\leq -\mathcal{R}\int^T_0\left((a_0\wedge c_0)\{(B^1_t-B^2_t)^2+(B^1_tB^2_t-1)^2\} +a_1(\rho_t-\rho_{t,\ast})^2\right)dt,
\end{equation*}
where $\mathcal{R}=C_1$. 
In particular, let $E[({\rm ess \, inf}_{t\in [0,T]} a_1(t))^{-q}]<\infty$ for any $q>0$ and $[H0]$ hold.
We can see $a_0\wedge c_0 \geq a_1$ for $dt\times P$-a.e. $(t,\omega)$ since $\nu^{1,i}([0,t))\leq \nu^{0,i}([0,t))$ for any $t\in (0,T]$ and $i=1,2$.
Therefore by Lemma \ref{y0-est} we have
\begin{eqnarray}
\mathcal{Y}(\sigma;\sigma_{\ast})&\leq & -\mathcal{R}{\rm ess \, inf}_t a_1(t)\int^T_0\left((B^1_t-B^2_t)^2+(B^1_tB^2_t-1)^2+(\rho_t-\rho_{t,\ast})^2\right)dt \nonumber \\
&\leq &\mathcal{R}(\mathcal{R}')^{-1}{\rm ess \, inf}_t a_1(t)\mathcal{Y}_0(\sigma;\sigma_{\ast}) \nonumber
\end{eqnarray}
almost surely, where $\mathcal{R}'$ is defined in Lemma \ref{y0-est}.
Hence $\chi\geq \mathcal{R}(\mathcal{R}')^{-1}{\rm ess \, inf}_t a_1(t)\chi_0$ a.s. and for any $L>0$, there exists a constant $c_L>0$ such that
\begin{eqnarray}
P[\chi\leq r^{-1}]&\leq & P[\chi_0\leq r^{-1/2}]+P[\mathcal{R}(\mathcal{R}')^{-1}{\rm ess \,inf}_t a_1(t)\leq r^{-1/2}] \nonumber \\
&\leq & \frac{c_{2L,0}}{r^L}+\frac{1}{r^L}E\left[\left(\mathcal{R}'\mathcal{R}^{-1}({\rm ess \, inf}_t a_1(t))^{-1}\right)^{2L}\right]\leq \frac{c_L}{r^L}, \nonumber
\end{eqnarray}
where $c_{2L,0}$ denotes the coefficient of $r^{-2L}$ in $[H0]$. This gives $[H]$.
\end{proof}

\begin{remark}
In the case of nonsynchronous observation, under $[A1]$ and $[A3]$, we can prove an inequality
\begin{equation*}
\mathcal{Y}(\sigma;\sigma_{\ast})\geq -\mathcal{R}'\int^T_0\left((a_0\vee c_0)\{(B^1_t-B^2_t)^2+(B^1_tB^2_t-1)^2\} +a_1(\rho_t-\rho_{t,\ast})^2\right)dt,
\end{equation*}
which corresponds to Lemma \ref{y0-est}.
\end{remark}

\begin{remark}\label{H0-suff}
By Proposition \ref{sep-cond}, it follows that
\begin{equation*}
\mathcal{Y}_0(\sigma;\sigma_{\ast})\leq -\mathcal{R}\int^T_0\{(B^1_t-B^2_t)^2+(B^1_tB^2_t-1)^2+(\rho_t-\rho_{t,\ast})^2\}dt.
\end{equation*}
On the other hand, we can see that there exists a positive random variable $\tilde{\mathcal{R}}$ such that $E[\tilde{\mathcal{R}}^q]<\infty$ for any $q>0$ and
\begin{equation*}
| (bb^{\star})_t-(bb^{\star})_{t,\ast}|^2 \leq \tilde{\mathcal{R}}\{(B^1_t-B^2_t)^2+(B^1_tB^2_t-1)^2+(\rho_t-\rho_{t,\ast})^2\}
\end{equation*}
for any $t\in[0,T],\sigma,\sigma_{\ast}\in\Lambda$ by using the inequality $(x-1)^2+(y-1)^2\leq (x-y)^2+2(xy-1)^2 \ (x,y\geq 0)$.
Therefore $[H0]$ holds if there exists  a constant $\epsilon>0$ such that 
\begin{equation*}
|(bb^{\star})(x,\sigma_1)-(bb^{\star})(x,\sigma_2)| \geq \epsilon |\sigma_1-\sigma_2|
\end{equation*}
for any $x\in\mathbb{R}^{n_2},\sigma_1,\sigma_2\in \Lambda$.
Weaker sufficient conditions for $[H0]$ can be found in Uchida and Yoshida \cite{uch-yos}.
\end{remark}

\section{Asymptotic properties of the quasi-maximum-likelihood estimator and the Bayes type estimator}
In this section, we investigate consistency and asymptotic mixed normality of the quasi-maximum-likelihood estimator 
and the Bayes type estimator as main results. 

Let the quasi-maximum likelihood estimator $\hat{\sigma}_n$ of the parameter $\sigma$ be $\sigma\in\bar{\Lambda}$ which maximize $H_n$. 
If maximizing points are not unique, we select so that $\hat{\sigma}_n$ is measurable.
\begin{theorem}\label{thm-cons}
Assume $[A1]-[A3],[H']$. 
Then $\hat{\sigma}_n\to^p\sigma_{\ast}$ as $n\to \infty$.
\end{theorem}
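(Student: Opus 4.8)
The plan is to deduce consistency from the uniform convergence of the normalized quasi-log-likelihood established in Proposition \ref{Hn-lim} together with the separation condition $[H']$. Write $\mathcal{Y}_n(\sigma;\sigma_\ast)=b_n^{-1}(H_n(\sigma)-H_n(\sigma_\ast))$ and recall its probability limit $\mathcal{Y}(\sigma;\sigma_\ast)=\int_0^T(h_t^\infty(\sigma)-h_t^\infty(\sigma_\ast))\,dt$. Applying Proposition \ref{Hn-lim} with $v=0$ gives
\begin{equation*}
\sup_{\sigma\in\Lambda}\bigl|\mathcal{Y}_n(\sigma;\sigma_\ast)-\mathcal{Y}(\sigma;\sigma_\ast)\bigr|\to^p 0
\end{equation*}
as $n\to\infty$, since the difference is controlled by $2\sup_{\sigma}|b_n^{-1}H_n(\sigma)-\int_0^T h_t^\infty(\sigma)\,dt|$. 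The limit map $\sigma\mapsto\mathcal{Y}(\sigma;\sigma_\ast)$ is continuous on $\bar\Lambda$, and by Proposition \ref{sep-cond} it is maximized uniquely at $\sigma=\sigma_\ast$ (where it vanishes); indeed $[H']$ states $\chi>0$ a.s., so $\mathcal{Y}(\sigma;\sigma_\ast)<0$ for every $\sigma\neq\sigma_\ast$.

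The key step is the standard argmax argument. Fix $\epsilon>0$ and the closed set $\bar\Lambda\setminus B(\sigma_\ast,\epsilon)$, which is compact. On the event that $\chi>0$, continuity and compactness give
\begin{equation*}
\delta:=-\sup_{|\sigma-\sigma_\ast|\geq\epsilon}\mathcal{Y}(\sigma;\sigma_\ast)>0
\end{equation*}
almost surely. Since $\hat\sigma_n$ maximizes $H_n$, we have $\mathcal{Y}_n(\hat\sigma_n;\sigma_\ast)\geq\mathcal{Y}_n(\sigma_\ast;\sigma_\ast)=0$. Combining this with the uniform convergence, on the event $\{|\hat\sigma_n-\sigma_\ast|\geq\epsilon\}$ one gets
\begin{equation*}
0\leq\mathcal{Y}_n(\hat\sigma_n;\sigma_\ast)\leq\mathcal{Y}(\hat\sigma_n;\sigma_\ast)+\sup_\sigma|\mathcal{Y}_n-\mathcal{Y}|\leq-\delta+\sup_\sigma|\mathcal{Y}_n-\mathcal{Y}|,
\end{equation*}
so that $\{|\hat\sigma_n-\sigma_\ast|\geq\epsilon\}\subset\{\sup_\sigma|\mathcal{Y}_n-\mathcal{Y}|\geq\delta\}$. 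Because $\delta$ is a.s. positive (a random variable, not a constant) while the right-hand supremum tends to $0$ in probability, I would handle the randomness of $\delta$ by conditioning: for any $\kappa>0$, choose $\delta_0>0$ with $P[\delta\leq\delta_0]<\kappa$, then bound $P[|\hat\sigma_n-\sigma_\ast|\geq\epsilon]\leq P[\delta\leq\delta_0]+P[\sup_\sigma|\mathcal{Y}_n-\mathcal{Y}|\geq\delta_0]$, and let $n\to\infty$ followed by $\kappa\to0$.

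The main obstacle is the interplay between the \emph{randomness} of the separation gap $\delta$ and the \emph{mixed} (stable rather than ordinary) nature of the limit: $\mathcal{Y}$ and hence $\delta$ depend on $\omega$, so one cannot simply invoke a deterministic lower bound for the identification gap. The conditioning device above circumvents this, but it relies crucially on $[H']$ guaranteeing $\chi>0$ a.s., which makes $\delta>0$ a.s. and thus $P[\delta\leq\delta_0]\to0$ as $\delta_0\downarrow0$. A secondary technical point is the measurability of $\hat\sigma_n$, already dealt with by the selection hypothesis preceding the theorem, and the extension of the convergence in Proposition \ref{Hn-lim} from $\Lambda$ to its closure $\bar\Lambda$ via the $C^1$ extension assumed in $[A1]\,1$ and the Sobolev embedding.
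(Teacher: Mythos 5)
Your proposal is correct and follows essentially the same route as the paper: uniform convergence of $\mathcal{Y}_n$ to $\mathcal{Y}$ from Proposition \ref{Hn-lim}, the inequality $\mathcal{Y}_n(\hat{\sigma}_n;\sigma_{\ast})\geq 0$, and $[H']$ to produce an almost surely positive separation gap whose randomness is handled by choosing a deterministic threshold with small exceedance probability (your $P[\delta\leq\delta_0]<\kappa$ is exactly the paper's $P[\chi\leq\eta]\leq\epsilon$, the only cosmetic difference being that the paper uses the quadratic minoration $-\mathcal{Y}(\sigma;\sigma_{\ast})\geq\chi|\sigma-\sigma_{\ast}|^2$ directly in place of your supremum over $\bar{\Lambda}\setminus B(\sigma_{\ast},\epsilon)$).
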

\begin{proof}
By Proposition \ref{Hn-lim}, we have $\sup_{\sigma}|\mathcal{Y}_n(\sigma)-\mathcal{Y}(\sigma)|\to^p 0$ as $n\to \infty$. 
On the other hand, by $[H']$, for any $\epsilon,\delta>0$, 
there exists $\eta>0$ such that $P[\chi\leq \eta]\leq \epsilon$. 
Since $\mathcal{Y}_n(\hat{\sigma}_n)\geq 0$, it follows that
\begin{eqnarray}
P[|\hat{\sigma}_n-\sigma_{\ast}|\geq \delta]\leq P[\chi\leq \eta]+P[\mathcal{Y}(\hat{\sigma}_n)\leq -\eta\delta^2] \leq  \epsilon + P[\sup_{\sigma}|\mathcal{Y}_n(\sigma)-\mathcal{Y}(\sigma)|\geq \eta\delta^2]. \nonumber
\end{eqnarray}
Therefore there exists $n_0\in\mathbb{N}$ such that $P[|\hat{\sigma}_n-\sigma_{\ast}|\geq \delta]\leq 2\epsilon$ if $n\geq n_0$. 
\end{proof}

Let $\{s_n\}_{n\in\mathbb{N}}$ be stochastic processes which satisfy $[S]$, 
\begin{equation*}
\Gamma=-\int^T_0\partial^2_{\sigma}h^{\infty}_t(\sigma_{\ast})dt, 
\end{equation*}
$U_n(\sigma_{\ast})=\{u\in\mathbb{R}^{n_1};\sigma_{\ast}+b_n^{-1/2}u\in\Lambda\}$, 
$V_n(r,\sigma_{\ast})=\{|u|\geq r\}\cap U_n(\sigma_{\ast})$, 
and $\mathcal{Z}_n(u,\sigma_{\ast})=\exp(\hat{H}_n(\sigma_{\ast}+b_n^{-1/2}u;s_n)-\hat{H}_n(\sigma_{\ast};s_n))$
for $u\in U_n(\sigma)$.

\begin{proposition}\label{pld}(polynomial type large deviation inequalities)
Let $L>0$ and $\delta \in (0,1/2)$. Assume for any $q>0$ there exists $q'\in 2\mathbb{N},q'>q$ and $\delta'\geq 1$ 
such that $[A1]$, $[A2]$, $[A3'\mathchar`-q',\delta]$, $[A4\mathchar`-q',\delta']$, $[H]$ 
and $[S\mathchar`-q',2q'\delta]$ hold for $\{s_n\}$.
Then there exists $C_L>0$ such that
\begin{equation*}
P\bigg[\sup_{u\in V_n(r,\sigma_{\ast})}\mathcal{Z}_n(u,\sigma_{\ast})\geq e^{-r/2}\bigg]\leq \frac{C_L}{r^L}
\end{equation*}
for any $n\in\mathbb{N}$ and $r>0$.
\end{proposition}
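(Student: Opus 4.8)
The plan is to recognize $\hat H_n(\,\cdot\,;s_n)$ as a statistical random field and to verify the hypotheses of the abstract polynomial type large deviation inequality of Yoshida \cite{yos05}, applied with local scale $b_n^{-1/2}$ and normalizing order $b_n$. Introduce the normalized score $\Delta_n=b_n^{-1/2}\partial_{\sigma}\hat H_n(\sigma_{\ast};s_n)$, the observed information $\Gamma_n=-b_n^{-1}\partial_{\sigma}^2\hat H_n(\sigma_{\ast};s_n)$, and recall $\Gamma=-\int^T_0\partial^2_{\sigma}h^{\infty}_t(\sigma_{\ast})dt$ together with $\mathcal{Y}(\sigma;\sigma_{\ast})=\int^T_0(h^{\infty}_t(\sigma)-h^{\infty}_t(\sigma_{\ast}))dt$. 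The abstract theorem reduces the claim to four essential inputs: (i) a uniform bound $\sup_nE[|\Delta_n|^q]<\infty$ for every $q$; (ii) $\Gamma_n\to\Gamma$ in $L^q$ with a nondegeneracy of $\Gamma$ governed by the separation index $\chi(\sigma_{\ast})$; (iii) an $L^q$ estimate, with a positive power of $b_n$, for $\sup_{\sigma}|b_n^{-1}(\hat H_n(\sigma;s_n)-\hat H_n(\sigma_{\ast};s_n))-\mathcal{Y}(\sigma;\sigma_{\ast})|$ and for $\sup_{\sigma}b_n^{-1}|\partial^3_{\sigma}\hat H_n(\sigma;s_n)|$; and (iv) the separation condition $[H]$. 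The quantifier ``for every $q$ there is $q'>q$'' in the hypotheses is precisely what allows all moment orders, and hence an arbitrary target exponent $L$, to be reached.

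Inputs (ii) and (iii) are supplied by Proposition \ref{Hn-lim}(2): taking $v=0$ gives the uniform convergence $b_n^{-1}\hat H_n\to\int^T_0h^{\infty}_t dt$ in $L^q$ at rate $b_n^{-\eta'}$, whence the estimate for $\mathcal{Y}_n-\mathcal{Y}$; taking $v=2$ gives $\Gamma_n\to\Gamma$; and taking $v=3$ bounds the cubic Taylor remainder. The nondegeneracy link comes from the separation analysis: by Proposition \ref{sep-cond} the map $\sigma\mapsto\int^T_0h^{\infty}_t(\sigma)dt$ is maximized at the interior point $\sigma_{\ast}$, so the first-order condition $\int^T_0\partial_{\sigma}h^{\infty}_t(\sigma_{\ast})dt=0$ holds, and a second-order expansion of $-\mathcal{Y}$ at $\sigma_{\ast}$, whose Hessian is $\Gamma$, gives $\chi(\sigma_{\ast})\leq\tfrac12\Gamma[e^{\otimes 2}]$ for every unit vector $e$, i.e. $\Gamma[u^{\otimes 2}]\geq 2\chi(\sigma_{\ast})|u|^2$. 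Thus the single random variable whose lower tail must be controlled is $\chi(\sigma_{\ast})$, and $[H]$ provides exactly $P[\chi\leq r^{-1}]\leq c_L r^{-L}$.

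The mechanism inside the abstract theorem is the usual near/far decomposition of $V_n(r,\sigma_{\ast})$ at a radius of order $\sqrt{b_n}$. On the far part, where $\sigma=\sigma_{\ast}+b_n^{-1/2}u$ stays bounded away from $\sigma_{\ast}$, one writes $\log\mathcal{Z}_n(u)=\hat H_n(\sigma;s_n)-\hat H_n(\sigma_{\ast};s_n)$ and combines the global bound $\mathcal{Y}(\sigma;\sigma_{\ast})\leq-\chi|\sigma-\sigma_{\ast}|^2$ with the $v=0$ estimate of (iii) and $[H]$. On the near part one uses the expansion $\log\mathcal{Z}_n(u)=\Delta_n[u]-\tfrac12\Gamma_n[u^{\otimes 2}]+\tfrac16 b_n^{-1/2}(b_n^{-1}\partial^3_{\sigma}\hat H_n(\tilde\sigma))[u^{\otimes 3}]$, so that the excursion $\{\log\mathcal{Z}_n(u)\geq -r/2\}$ on a sphere forces either $\chi$ to be small or $|\Delta_n|$ to be large; Chebyshev with high moments of $\Delta_n$, the tail $[H]$, and summation over the radius then yield $C_L/r^L$. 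I will treat the passage from $H_n$ to $\hat H_n$ as harmless because Proposition \ref{Hn-lim}(2) is already stated for $\hat H_n$.

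The hard part will be input (i). Proposition \ref{Hn-lim}(2) with $v=1$ only gives $b_n^{-1}\partial_{\sigma}\hat H_n(\sigma_{\ast})=O_{L^q}(b_n^{-\eta'})$ with $\eta'\leq\delta<1/2$, which is one half power of $b_n$ short of controlling $\Delta_n$ at central-limit scale, so the score bound must be proved directly. I plan to start from the Neumann representation (\ref{Hn2}): differentiating the quadratic form $Z^{\star}MZ$ and the $\log\det$ terms at $\sigma_{\ast}$ and subtracting conditional expectations produces a sum of martingale-difference bilinear forms in the standardized increments $Z_i$, to which the Burkholder--Davis--Gundy inequality applies; since the sum of squared increments is of order $l_n+m_n=O(b_n)$, the martingale part is of order $b_n^{1/2}$ in $L^q$ and hence $\Delta_n=O_{L^q}(1)$, while the compensator cancels the derivative of the $\log\det$ term up to a bounded deterministic remainder and the drift contribution is controlled by the moment bounds on $\mu$ in $[A1]$. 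The delicate bookkeeping is to keep the operator norm of $M=(\mathcal{E}_{l_n+m_n}+\tilde L)^{-1}$ and of its $\sigma$-derivatives under control through $\|\tilde L\|\leq 1-s_n(T)$ and the negative-moment bound in $[S\mathchar`-q',2q'\delta]$, while the polynomial growth of $X$ and the smoothness of $b$ from $[A1]$, the mesh control $[A2]$, and the count moment bound from $[A3'\mathchar`-q',\delta]$ feed the estimates. This martingale estimate of the score is the technical core of the proof.
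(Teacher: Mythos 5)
Your proposal follows the same route as the paper: the paper also verifies the hypotheses of Yoshida's abstract polynomial-type large deviation theorem (Theorem 2 of \cite{yos05}), supplying the uniform convergence of $\hat{\mathcal{Y}}_n$ to $\mathcal{Y}$, the Hessian convergence, and the third-derivative bound from Proposition \ref{Hn-lim}(2), and linking nondegeneracy to $[H]$ via the same Taylor-expansion inequality $\chi\leq\inf_{u\neq 0}u^{\star}\Gamma u/(2|u|^2)$. Your identification of the score bound as the one input not covered by Proposition \ref{Hn-lim}, and your plan to obtain $b_n^{-1/2}\partial_{\sigma}\hat H_n(\sigma_{\ast};s_n)=O_{L^q}(1)$ by a martingale/BDG argument exploiting cancellation at $\sigma_{\ast}$, is exactly how the paper proceeds — there it is packaged as Lemma \ref{Hn-H2n-diff} combined with $\partial_{\sigma}\tilde H^3_{n,s_n}(T;\sigma_{\ast})\equiv 0$ on $\{\tau(s_n)=T\}$ and $[S\mathchar`-q',2q'\delta]$ to dispose of the exceptional event.
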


Let $\mathcal{N}$ be an $n_1$-dimensional standard normal random variable which is defined on an extension of $(\Omega,\mathcal{F},P)$
and independent of $\mathcal{F}$. 
We use the same notation $E$ for expectations on the extension of $(\Omega,\mathcal{F},P)$.

\begin{theorem}\label{asym-dist}
\begin{enumerate}
\item Assume $[A1]-[A4],[H']$. Then $b_n^{1/2}(\hat{\sigma}_n-\sigma_{\ast})\to^{s\mathchar`-\mathcal{L}}\Gamma^{-1/2}\mathcal{N}$ as $n\to\infty$.
\item Let $\delta \in (0,1/2)$. Assume for any $q>0$, there exists $q'\in 2\mathbb{N},q'>q$ and $\delta'\geq 1$ such that $[A1]$, $[A2\mathchar`-q',\delta]$, $[A3'\mathchar`-q',\delta]$, $[A4\mathchar`-q',\delta']$, $[H]$ hold. 
Then $E[Y'f(b_n^{1/2}(\hat{\sigma}_n-\sigma_{\ast}))]\to E[Y'f(\Gamma^{-1/2}\mathcal{N})]$
as $n\to \infty$ for any continuous function $f$ of at most polynomial growth and any bounded random variable $Y'$ on $(\Omega,\mathcal{F})$.
\end{enumerate}
\end{theorem}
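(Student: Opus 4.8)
The plan is to run the Ibragimov--Has'minskii program in the form refined by Yoshida, organized around the rescaled likelihood ratio random field $\mathcal{Z}_n(u,\sigma_\ast)$ on $U_n(\sigma_\ast)$. The two analytic pillars are a quadratic (locally asymptotically mixed normal) expansion of the quasi-log-likelihood at $\sigma_\ast$ and the stable convergence of the normalized score; the polynomial type large deviation inequality of Proposition \ref{pld} then supplies the tail control needed to upgrade distributional convergence to convergence of moments. Writing $\Delta_n=b_n^{-1/2}\partial_\sigma H_n(\sigma_\ast)$ and $\Gamma=-\int^T_0\partial^2_\sigma h^\infty_t(\sigma_\ast)dt$, I will establish that $\Delta_n\to^{s\mathchar`-\mathcal{L}}\Gamma^{1/2}\mathcal{N}$ and that $b_n^{-1}\partial^2_\sigma H_n\to^p-\Gamma$ uniformly near $\sigma_\ast$; from these the two assertions follow by different final steps.

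For part $1$ I would argue directly. By Theorem \ref{thm-cons} we have $\hat\sigma_n\to^p\sigma_\ast$, and since $\Lambda$ is open, with probability tending to one $\hat\sigma_n$ lies in the interior and satisfies $\partial_\sigma H_n(\hat\sigma_n)=0$. A first order Taylor expansion then gives
\begin{equation*}
b_n^{1/2}(\hat\sigma_n-\sigma_\ast)=-\Big(b_n^{-1}\int^1_0\partial^2_\sigma H_n(\sigma_\ast+s(\hat\sigma_n-\sigma_\ast))\,ds\Big)^{-1}\Delta_n+o_p(1).
\end{equation*}
Proposition \ref{Hn-lim}$(1)$ with $v=2$, together with consistency, shows the bracketed matrix converges in probability to $-\Gamma$; moreover $\Gamma$ is a.s. positive definite because $[H']$ forces $-\mathcal{Y}(\sigma;\sigma_\ast)\geq\chi|\sigma-\sigma_\ast|^2$ with $\chi>0$, and evaluating the Hessian of $\mathcal{Y}(\cdot;\sigma_\ast)$ at its maximizer $\sigma_\ast$ yields $\langle\Gamma v,v\rangle\geq 2\chi|v|^2$. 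Since $\Gamma$ is $\mathcal{F}$-measurable, joint convergence of $(\Delta_n,b_n^{-1}\partial^2_\sigma H_n)$ together with the converging-together property for stable convergence (a stably convergent sequence multiplied by one converging in probability to an $\mathcal{F}$-measurable limit again converges stably) gives $b_n^{1/2}(\hat\sigma_n-\sigma_\ast)\to^{s\mathchar`-\mathcal{L}}\Gamma^{-1}\Gamma^{1/2}\mathcal{N}=\Gamma^{-1/2}\mathcal{N}$.

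The hard part will be the stable convergence $\Delta_n\to^{s\mathchar`-\mathcal{L}}\Gamma^{1/2}\mathcal{N}$. Using $Y^i(I)\approx b^i_{L(I)}\cdot W(I)$, the leading stochastic part of $\partial_\sigma H_n(\sigma_\ast)$ is a centered quadratic form in the Wiener increments $W(I),W(J)$, i.e. a second-chaos functional; after discarding the drift contribution (negligible by $[A1]$) and, on the high-probability event where $\tilde{\rho}_n<1$, the gap $H_n-\hat{H}_n$, I would represent $\Delta_n$ as a sum of martingale differences $\sum_i\zeta_{n,i}$ relative to the filtration generated by successively completed sampling intervals. Stable convergence to the mixed normal $\Gamma^{1/2}\mathcal{N}$ then follows from Jacod's theorem once one verifies: (i) the predictable quadratic variation $\sum_iE[\zeta_{n,i}\zeta_{n,i}^\star\mid\mathcal{G}_{i-1}]\to^p\Gamma$; (ii) a Lyapunov bound $\sum_iE[|\zeta_{n,i}|^4]\to0$; and (iii) asymptotic orthogonality of $\{\zeta_{n,i}\}$ to the driving Wiener processes $W,\hat W$ and to every bounded $(\mathcal{F}_t)$-martingale, guaranteeing the limit is $\mathcal{F}$-conditionally centered Gaussian. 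Condition (i) is the crux: it is exactly where the observation-time hypotheses $[A3]/[A3']$ and $[A4]$ and the Neumann expansion $M=\sum_p(-1)^p\tilde L^p$ of (\ref{Hn2}) enter, since the relevant sums are those governed by the random measures $\nu^{p,i}_n$ and by $\Phi_{p,i},\bar\Phi_{p_1,p_2}$, with Proposition \ref{A3eq} identifying their limits. I expect the identification of $\Gamma$ with $-\int^T_0\partial^2_\sigma h^\infty_t(\sigma_\ast)dt$ to mirror, at the level of second moments, the law-of-large-numbers statement already proved in Proposition \ref{Hn-lim}.

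For part $2$ I would invoke Proposition \ref{pld}. On the event $\{\tau_n=T\}$, which has probability $1-O(b_n^{-\xi})$ under the assumed $[S\mathchar`-q',2q'\delta]$, we have $H_n=\hat H_n$, so $\hat u_n=b_n^{1/2}(\hat\sigma_n-\sigma_\ast)$ is the argmax of $\mathcal{Z}_n(\cdot,\sigma_\ast)$ and $\mathcal{Z}_n(\hat u_n,\sigma_\ast)\geq\mathcal{Z}_n(0,\sigma_\ast)=1$. Hence $\{|\hat u_n|\geq r\}\subset\{\sup_{u\in V_n(r,\sigma_\ast)}\mathcal{Z}_n(u,\sigma_\ast)\geq e^{-r/2}\}$, and the bound $P[\sup_{u\in V_n(r,\sigma_\ast)}\mathcal{Z}_n(u,\sigma_\ast)\geq e^{-r/2}]\leq C_L/r^L$, valid for every $L$, yields a polynomial tail estimate $P[|\hat u_n|\geq r]\leq C_L/r^L$ uniform in $n$. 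This forces $\{|\hat u_n|^M\}_n$ to be uniformly integrable for every $M$, so the stable convergence of part $1$ (whose hypotheses are implied by those of part $2$) upgrades to $E[Y'f(\hat u_n)]\to E[Y'f(\Gamma^{-1/2}\mathcal{N})]$ for bounded $Y'$ on $(\Omega,\mathcal{F})$ and continuous $f$ of at most polynomial growth, which is the assertion. The residual work is bookkeeping: checking that the strengthened versions $[A2\mathchar`-q',\delta]$, $[A3'\mathchar`-q',\delta]$, $[A4\mathchar`-q',\delta']$ and $[H]$ entail all hypotheses of Proposition \ref{pld} and of part $1$ for an appropriate $\{s_n\}$ (e.g. the one from Lemma \ref{Sq-est}).
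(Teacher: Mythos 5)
Your part 1 follows the paper's own route almost exactly: consistency (Theorem \ref{thm-cons}), a first-order Taylor expansion of $\partial_{\sigma}H_n$ around $\sigma_{\ast}$, convergence of the normalized Hessian via Proposition \ref{Hn-lim}, positive definiteness of $\Gamma$ from $[H']$ (the paper extracts the same inequality $\chi\leq\inf_u u^{\star}\Gamma u/(2|u|^2)$ in the proof of Proposition \ref{pld}), and stable convergence of the score by a martingale argument. The paper packages that last step as Proposition \ref{st-conv}, proved by reducing the dominant term to a continuous $\mathcal{F}^{\dagger}_t$-martingale $\tilde{\mathcal{X}}_t$ and verifying exactly your conditions (i) and (iii) ($\langle\tilde{\mathcal{X}}\rangle_t\to^p u^{\star}\Gamma_t u$, $\langle\tilde{\mathcal{X}},W\rangle_t\to^p 0$, $\langle\tilde{\mathcal{X}},N'\rangle_t\to^p 0$) via Jacod's Theorem 2-1; since that martingale is continuous, no Lyapunov-type condition like your (ii) is required, though your discrete martingale-difference framing is an acceptable variant.

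For part 2 your route genuinely differs from the paper's. The paper applies Theorem 5 and Remark 5 of Yoshida \cite{yos05} to the auxiliary maximizer $\sigma'_n$ of $\hat{H}_n(\cdot;s_n)$, which requires, besides the tail bound of Proposition \ref{pld}, stable convergence of the random field $\mathcal{Z}_n(\cdot,\sigma_{\ast})$ in $C(B(R'))$ (finite-dimensional convergence from Proposition \ref{st-conv} plus tightness from Lemmas \ref{Hn-H2n-diff} and \ref{Sq-est}), and then transfers the conclusion to $\hat{\sigma}_n$ using $P[\sigma'_n\neq\hat{\sigma}_n]\leq P[\tau(s_n)<T]=O(b_n^{-\xi})$ together with the boundedness of $\Lambda$. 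You instead combine part 1's stable convergence (whose hypotheses are indeed implied by those of part 2) with uniform integrability obtained from the PLD inequality; this bypasses the functional-convergence machinery and is a legitimate, more elementary argument. One inaccuracy must be repaired: the inclusion $\{|\hat u_n|\geq r\}\subset\{\sup_{u\in V_n(r,\sigma_{\ast})}\mathcal{Z}_n(u,\sigma_{\ast})\geq e^{-r/2}\}$ holds only on $\{\tau_n=T\}$, so your tail bound is really $P[|\hat u_n|\geq r]\leq C_L/r^L+P[\tau_n<T]$, and the second term does not decay in $r$, so strict uniform integrability as you state it fails. The fix uses ingredients you already have: on $\{\tau_n<T\}$ one has the crude bound $|\hat u_n|\leq b_n^{1/2}R''$ with $R''$ the diameter of $\Lambda$, and since $[A2\mathchar`-q',\delta]$ is assumed for arbitrarily large $q'$, Lemma \ref{Sq-est} allows $\xi$ to be taken arbitrarily large, so the bad event contributes $O(b_n^{M/2-\xi})\to 0$ to any fixed moment; with this asymptotic (rather than uniform) integrability the truncation argument for $E[Y'f(\hat u_n)]\to E[Y'f(\Gamma^{-1/2}\mathcal{N})]$ goes through.
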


We also consider the Bayes type estimator $\tilde{\sigma}_n$ for a prior density $\pi : \Lambda \to \mathbb{R}_+$ defined as 
\begin{equation}\label{bayes-def}
\tilde{\sigma}_n=\bigg(\int_{\Lambda}\exp(H_n(\sigma))\pi(\sigma)d\sigma\bigg)^{-1}\int_{\Lambda}\sigma\exp(H_n(\sigma))\pi(\sigma)d\sigma.
\end{equation}

\begin{theorem}\label{bayes-asym-dist}
Let $\delta \in (0,1/2)$. Assume for any $q>0$ there exists $q'\in 2\mathbb{N},q'>q$ and $\delta' \geq 1$ such that $[A1]$, $[A2\mathchar`-q',\delta]$, $[A3'\mathchar`-q',\delta]$, $[A4\mathchar`-q',\delta']$, $[H]$ hold, 
and that the prior density $\pi$ is continuous and $0< \inf_{\sigma}\pi(\sigma)\leq \sup_{\sigma}\pi(\sigma)<\infty$. Then
$b_n^{1/2}(\tilde{\sigma}_n-\sigma_{\ast})\to^{s\mathchar`-\mathcal{L}}\Gamma^{-1/2}\mathcal{N}$ as $n\to \infty$.
Moreover, $E[Y'f(b_n^{1/2}(\tilde{\sigma}_n-\sigma_{\ast}))]\to E[Y'f(\Gamma^{-1/2}\mathcal{N})]$ as $n\to \infty$ for any continuous function $f$ of at most polynomial growth
and any bounded random variable $Y'$ on $(\Omega,\mathcal{F})$.
\end{theorem}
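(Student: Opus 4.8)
The plan is to realize $\tilde\sigma_n$ as a functional of the rescaled quasi-likelihood random field and to run the Ibragimov--Has'minskii program in Yoshida's form (cf. \cite{yos05}), so that both the stable limit and the convergence of moments follow from the polynomial type large deviation inequality (Proposition \ref{pld}) together with the local quadratic behaviour of $H_n$ at $\sigma_{\ast}$.

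First I would change variables $u=b_n^{1/2}(\sigma-\sigma_{\ast})$ in (\ref{bayes-def}) to obtain
\begin{equation*}
b_n^{1/2}(\tilde\sigma_n-\sigma_{\ast})=\frac{\int_{U_n(\sigma_{\ast})}u\,\tilde{\mathcal{Z}}_n(u)\,\pi(\sigma_{\ast}+b_n^{-1/2}u)\,du}{\int_{U_n(\sigma_{\ast})}\tilde{\mathcal{Z}}_n(u)\,\pi(\sigma_{\ast}+b_n^{-1/2}u)\,du},
\end{equation*}
where $\tilde{\mathcal{Z}}_n(u)=\exp(H_n(\sigma_{\ast}+b_n^{-1/2}u)-H_n(\sigma_{\ast}))$. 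Since $[A1]$ and $[A2\mathchar`-q',\delta]$ produce the conditions $[S\mathchar`-q'',\xi]$ for every $q''>0$ by Lemma \ref{Sq-est}, and since then $\sup_{\sigma}|H_n-\hat H_n|\to^p 0$, I would replace $\tilde{\mathcal{Z}}_n$ by $\mathcal{Z}_n(\cdot,\sigma_{\ast})$ with a multiplicative error that tends to $1$ uniformly on compacts. The hypotheses $0<\inf_\sigma\pi\le\sup_\sigma\pi<\infty$ and continuity of $\pi$ give $\pi(\sigma_{\ast}+b_n^{-1/2}u)\to\pi(\sigma_{\ast})$ uniformly on compacts, so the prior factor cancels in the ratio in the limit.

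Next I would verify the two ingredients of the program. For the finite--dimensional convergence, a third--order Taylor expansion gives
\begin{equation*}
\log\mathcal{Z}_n(u,\sigma_{\ast})=u^{\star}\Delta_n-\tfrac{1}{2}u^{\star}\Gamma_n u+R_n(u),
\end{equation*}
with $\Delta_n=b_n^{-1/2}\partial_{\sigma}\hat H_n(\sigma_{\ast})$ and $\Gamma_n=-b_n^{-1}\partial^2_{\sigma}\hat H_n(\sigma_{\ast})$. By Proposition \ref{Hn-lim} one has $\Gamma_n\to^p\Gamma$, while the $v=3$ case bounds the remainder by $Cb_n^{-1/2}|u|^3\to 0$ on compacts. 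The crucial input is the stable central limit theorem $\Delta_n\to^{s\mathchar`-\mathcal{L}}\Gamma^{1/2}\mathcal{N}$, which is precisely the local asymptotic mixed normality underlying Theorem \ref{asym-dist}; granting it, the finite--dimensional laws of $\mathcal{Z}_n$ converge stably to those of $\mathcal{Z}_{\infty}(u)=\exp(u^{\star}\Gamma^{1/2}\mathcal{N}-\tfrac{1}{2}u^{\star}\Gamma u)$. For the tail control I would invoke Proposition \ref{pld}, whose hypotheses are met here because $[A2\mathchar`-q',\delta]$ with Lemma \ref{Sq-est} supplies the required $[S\mathchar`-\,\cdot\,]$ conditions; it yields $P[\sup_{u\in V_n(r,\sigma_{\ast})}\mathcal{Z}_n(u,\sigma_{\ast})\ge e^{-r/2}]\le C_L r^{-L}$ for every $L$.

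Finally I would assemble these via the Ibragimov--Has'minskii--Yoshida limit theorem: the stable finite--dimensional convergence, the large deviation bound, and the uniform boundedness and positivity of $\pi$ give stable convergence of the above ratio, together with all its moments, to $\int u\,\mathcal{Z}_{\infty}(u)\,du\big/\int\mathcal{Z}_{\infty}(u)\,du$. Completing the square, $u^{\star}\Gamma^{1/2}\mathcal{N}-\tfrac{1}{2}u^{\star}\Gamma u=-\tfrac{1}{2}(u-\Gamma^{-1/2}\mathcal{N})^{\star}\Gamma(u-\Gamma^{-1/2}\mathcal{N})+\tfrac{1}{2}|\mathcal{N}|^2$, so this ratio equals $\Gamma^{-1/2}\mathcal{N}$, yielding both $b_n^{1/2}(\tilde\sigma_n-\sigma_{\ast})\to^{s\mathchar`-\mathcal{L}}\Gamma^{-1/2}\mathcal{N}$ and the stated convergence of $E[Y'f(\cdot)]$ for continuous $f$ of polynomial growth and bounded $Y'$. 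The main obstacle is the uniform integrability required for the moment statement: one must bound $\int_{U_n(\sigma_{\ast})}|u|^m\mathcal{Z}_n(u,\sigma_{\ast})\,du$ in $L^p$ uniformly in $n$ by splitting into a neighbourhood of the origin (controlled by the quadratic expansion) and its complement (controlled by the polynomial type large deviation inequality). The stable CLT for $\Delta_n$ is the other delicate point, but it is inherited from Theorem \ref{asym-dist}.
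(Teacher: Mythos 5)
Your overall strategy---rescale to the random field $\mathcal{Z}_n$, invoke the polynomial type large deviation inequality of Proposition \ref{pld} together with the stable convergence of the field (Proposition \ref{st-conv}, i.e.\ the LAMN-type expansion behind Theorem \ref{asym-dist}), and then apply the Ibragimov--Has'minskii--Yoshida machinery---is exactly the paper's route, which runs through Theorem 10 of Yoshida \cite{yos05}. However, there is a genuine gap: you never verify the one hypothesis of that machinery that does \emph{not} follow from the PLDI and the finite-dimensional convergence, namely the inverse moment condition on the denominator of the Bayes ratio,
\begin{equation*}
\sup_{n\geq n'_0}E\bigg[\bigg(\int_{U_n(\sigma_{\ast})}\mathcal{Z}_n(u)\,\pi(\sigma_{\ast}+b_n^{-1/2}u)\,du\bigg)^{-1}\bigg]<\infty .
\end{equation*}
Your uniform integrability discussion treats only the numerator $\int_{U_n(\sigma_{\ast})}|u|^m\mathcal{Z}_n(u,\sigma_{\ast})\,du$, bounding it from above by splitting into a compact set and its complement. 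That cannot control the ratio: the dangerous event for the moment statement is the denominator being close to zero, and neither the PLDI (which is an \emph{upper} bound on $\mathcal{Z}_n$) nor the finite-dimensional stable convergence (which gives only in-probability information) rules this out with the quantitative uniformity in $n$ needed for convergence of moments. Dividing by the denominator requires, via H\"older's inequality, precisely an inverse moment of it.

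The paper closes this hole by a separate argument: using Proposition \ref{Hn-lim} and Lemmas \ref{Hn-H2n-diff} and \ref{Sq-est} it establishes, for suitable $p\in 2\mathbb{N}$, $p>n_1\vee 2$, the increment bound $\sup_{n\geq n_0'}E[|\hat{H}_n(\sigma_{\ast}+b_n^{-1/2}u)-\hat{H}_n(\sigma_{\ast})|^p]\leq C_0|u|^p$ for $u$ in a small cube around the origin, and then applies Lemma 2 of Yoshida \cite{yos05}, which converts exactly such an $L^p$ modulus-of-continuity estimate of the log-field at $\sigma_{\ast}$ into the inverse-moment bound displayed above. Your ``quadratic expansion near the origin'' is the right intuition, but to implement it you need this $L^p$ bound on increments of $\hat{H}_n$, uniform in $n$, not just the Taylor expansion in probability. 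A further minor point: the replacement of $H_n$ by $\hat{H}_n(\cdot;s_n)$ should be justified by $P[H_n\equiv \hat{H}_n(\cdot;s_n)]=1-O(b_n^{-\xi})$ for every $\xi>0$ (with $s_n(t)=(1-\bar{\rho}_t)/2$, via Lemma \ref{Sq-est}), which also covers the moment statement because the parameter space is bounded; a ``multiplicative error tending to $1$ uniformly on compacts'' is not quite sufficient for the moments.
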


\section{Sufficient conditions for the conditions about the observation times}
In this section, we will introduce tractable sufficient conditions for $[A2\mathchar`-q,\delta]$, $[A3'\mathchar`-q,\eta]$, $[A4\mathchar`-q,\delta]$,
and the estimate with respect to ${\rm ess \, inf}_t a_1$ in Proposition \ref{sep-cond}.

Let $q>0$. We consider the following conditions for point processes 
$\{N^i_t\}_{0\leq t\leq T, 1\leq i\leq n_2+2}$ which generate observations.
\begin{description}
\item{[$B1\mathchar`-q$]} There exists $n_0\in\mathbb{N}$ such that
\begin{equation*}
\sup_{n\geq n_0}\max_{1\leq i\leq n_2+2}\sup_{0\leq t \leq T-b_n^{-1}}E\left[(N^i_{t+b_n^{-1}}-N^i_t)^q\right]<\infty.
\end{equation*}
\item{[$B2\mathchar`-q$]} There exists $n_0\in\mathbb{N}$ such that
\begin{equation*}
\limsup_{u\to\infty}\sup_{n\geq n_0}\max_{1\leq i\leq n_2+2}\sup_{0\leq t\leq T-ub_n^{-1}}u^qP[N^i_{t+ub_n^{-1}}-N^i_t=0]<\infty.
\end{equation*}
\end{description}

For example, let $X\equiv Y$, $\{b_n\}$ is a positive integer valued sequence, $\{\bar{N}^1_t\},\{\bar{N}^2_t\}$ be two independent homogeneous Poisson processes with positive intensities $\lambda_1,\lambda_2$ respectively,
and stochastic processes $\{N^1_t\},\{N^2_t\}$ satisfy $N^i_t=\bar{N}^i_{b_nt}, \ (i=1,2)$.
Then $[B1\mathchar`-q]$ obviously holds for any $q>0$.
Moreover, $[B2\mathchar`-q]$ holds for any $q>0$ since 
\begin{eqnarray}
\lim_{u \to \infty}u^q\sup_{i=1,2}\sup_{n\in \mathbb{N}}\sup_{0\leq t \leq T-ub_n^{-1}}P[N^i_{t+ub_n^{-1}}-N^i_t=0]=\lim_{u \to \infty}u^qe^{-(\lambda_1\wedge \lambda_2)u}=0. \nonumber
\end{eqnarray}

We will investigate sufficient conditions of $[A3'\mathchar`-q,\eta]$. 
First, we denote $t_k=Tk/[b_n] \ (0\leq k\leq [b_n])$, 
$\mathcal{G}^n_{j,k}=\sigma(N^i_t-N^i_s;t_j\leq s<t\leq t_k,i=1,2) \ (0\leq j<k\leq [b_n])$, $\alpha^n_0=1/4$ and
\begin{equation}\label{alpha-def}
\alpha^n_k=0\vee \sup_{1\leq j_1,j_2\leq [b_n]-1,j_2-j_1\geq k}\sup_{A\in\mathcal{G}^n_{0,j_1}}\sup_{B\in\mathcal{G}^n_{j_2,[b_n]}}|P(A\cap B)-P(A)P(B)|
\end{equation}
for $k\in\mathbb{N}$.

Let $\zeta^{p,i}_n$ be measures which satisfy $\zeta^{p,i}_n([s,t))=E[\nu^{p,i}_n([s,t))]$.
Moreover, for $n_0\in \mathbb{N}$, $\epsilon'>0$, a Lebesgue integrable function $g:[0,T]\mapsto \mathbb{R}$, 
and a continuous function $f:[0,T]\mapsto \mathbb{R}$, we define
\begin{equation*}
\bar{\Psi}^{p,i}_{n_0,\epsilon'}(f;g)=\sup_{n\geq n_0}b_n^{\epsilon'}\bigg|\int^T_0f_t\zeta^{p,i}_n(dt)-\int^T_0f_tg_tdt\bigg|.
\end{equation*}

\begin{proposition}\label{p-lim}
Let $q\in 2\mathbb{N}$, $q > 2$, $\epsilon \in (0,1)$, $\delta >0$ and $\beta\in (0,1/2-1/q)$. Assume that $[B1\mathchar`-(q(1+\delta))]$, $[B2\mathchar`-(q\epsilon)]$ hold,
$E[(N^1_T+N^2_T)^q]<\infty$ for $n\in\mathbb{N}$ and there exists $n_0\in \mathbb{N}$ such that
\begin{equation}\label{alpha-coeff-est}
\mathcal{S}=\sup_{n\geq n_0}\sum_{k=0}^{\infty}(k+1)^{q-2+(q-1)/\delta}\alpha^n_k<\infty.
\end{equation}
Moreover, assume that there exist $\epsilon'>0$, $C>0$, and left-continuous deterministic functions $a_0(t),c_0(t),a_p(t) \ (p\in \mathbb{N})$
such that $\int^T_0a_p(t)dt < \infty \ (p\in \mathbb{Z}_+)$, $\int^T_0c_0(t)dt < \infty$ and  
\begin{equation}\label{a2-lim}
\bar{\Psi}^{0,1}_{n_0,\epsilon'}(f;a_0)\vee \bar{\Psi}^{0,2}_{n_0,\epsilon'}(f;c_0)\vee \max_{i=1,2} \sup_{p\in\mathbb{N}}\frac{\bar{\Psi}^{p,i}_{n_0,\epsilon'}(f;a_p)}{(p+1)^C}\leq C(\sup_t|f_t|+\omega_{\beta}(f))
\end{equation}
for any $\beta$-H${\rm \ddot{o}}$lder continuous function $f:[0,T]\to \mathbb{R}$.
Then $[A3'\mathchar`-q,\eta]$ holds for $\eta=\epsilon'\wedge \beta \wedge (\delta\epsilon/(2(1+\delta-\delta\epsilon)))$ with $\alpha$ in $[A3'\mathchar`-q,\eta]$ equals $\beta$.
\end{proposition}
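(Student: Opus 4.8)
The plan is to establish $[A3'\mathchar`-q,\eta]$ term by term, taking the H\"older exponent $\alpha$ in $[A3'\mathchar`-q,\eta]$ to be $\beta$ and the left-continuous processes $a_0,c_0,\{a_p\}$ to be the deterministic functions supplied in the hypothesis. The integrability requirement $\int^T_0(c_0\vee a_p)(t)\,dt\in L^q(\Omega)$ is immediate since these integrals are deterministic and finite, and $E[(l_n+m_n)^q]<\infty$ follows from $l_n\le N^1_T+1$, $m_n\le N^2_T+1$ together with $E[(N^1_T+N^2_T)^q]<\infty$. Thus the real content is the two moment bounds on $\Psi^{p,i}(f,a_p)$. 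Fixing $i=1$ (the case $i=2$ being symmetric), I would center around the expected measure $\zeta^{p,1}_n$ and write
\[
\Psi^{p,1}(f,a_p)=\Big(\int^T_0f\,d\nu^{p,1}_n-\int^T_0f\,d\zeta^{p,1}_n\Big)+\Big(\int^T_0f\,d\zeta^{p,1}_n-\int^T_0f\,a_p\,dt\Big),
\]
where the identity $\int^T_0f\,d\zeta^{p,1}_n=E\big[\int^T_0f\,d\nu^{p,1}_n\big]$ for deterministic $f$ makes the first bracket centered and the second bracket deterministic.

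The deterministic bracket is exactly $b_n^{-\epsilon'}$ times the quantity estimated in (\ref{a2-lim}), so it is bounded by $Cb_n^{-\epsilon'}(1+p)^C(\sup_t|f_t|+\omega_\beta(f))$; since $\eta\le\epsilon'$ this piece contributes the correct rate after raising to the power $q$. For the centered bracket I would first pass from the random evaluation points $L(I)$ to the grid: using $((GG^\star)^p)_{II}\le1$ (a diagonal entry of a positive semidefinite matrix whose eigenvalues lie in $[0,1]$ by Lemma \ref{GGT-lambda}), replacing each $f(L(I))$ by $f$ at the nearest grid point $t_k$ costs at most $\omega_\beta(f)(T/[b_n])^\beta\,\nu^{p,1}_n([0,T))$, whose $L^q$ norm is $O(b_n^{-\beta})$ because $\nu^{p,1}_n([0,T))\le b_n^{-1}l_n$ has bounded moments. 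This is the source of the $\wedge\beta$ in $\eta$.

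The heart of the argument is then the moment bound for the grid-discretized centered sum $b_n^{-1}\sum_k f(t_k)\tilde\chi^p_{n,k}$, where $\tilde\chi^p_{n,k}$ is the centered sum of $((GG^\star)^p)_{II}$ over intervals $I$ with left endpoint in $[t_{k-1},t_k)$. Since $q\in2\mathbb{N}$, I would expand the $q$-th moment into a sum of $q$-fold products $E[\tilde\chi^p_{n,k_1}\cdots\tilde\chi^p_{n,k_q}]$ and control these by mixing. The crucial structural fact is that $((GG^\star)^p)_{II}$ depends only on chains of at most $2p$ pairwise-overlapping intervals, so on the event $\{r_n\le u_nb_n^{-1}\}$ each $\tilde\chi^p_{n,k}$ agrees with a version measurable with respect to $\mathcal{G}^n_{k-w_n,k+w_n}$ for a window $w_n=O(pu_n)$; two such localized blocks with $|k-k'|>2w_n$ are then decorrelated at rate governed by $\alpha^n_{|k-k'|-2w_n}$. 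Plugging in a covariance (Davydov-type) estimate, the higher integrability from $[B1\mathchar`-(q(1+\delta))]$ supplies the $L^{q(1+\delta)}$ bounds on the blocks, while the weighted summability (\ref{alpha-coeff-est}) with exponent $q-2+(q-1)/\delta$ is precisely what sums the resulting mixing series over index configurations; the growth of $w_n$ in $p$ produces only polynomial factors, absorbed by the $(1+p)^C$ allowance. On the complementary event $\{r_n>u_nb_n^{-1}\}$, whose probability is $O(b_nu_n^{-q\epsilon})$ by $[B2\mathchar`-(q\epsilon)]$, I would bound the contribution via H\"older, using the $q(1+\delta)$-moments from $[B1\mathchar`-(q(1+\delta))]$ against a power of $P[r_n>u_nb_n^{-1}]$.

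Finally, choosing the truncation level $u_n=b_n^{\gamma}$ so as to balance the good-event fluctuation bound against the bad-event probability yields the stated exponent $\eta=\epsilon'\wedge\beta\wedge(\delta\epsilon/(2(1+\delta-\delta\epsilon)))$. I expect the main obstacle to be exactly this centered moment bound: localizing the random, $p$-dependent dependence window so that the $\alpha$-mixing moment inequality applies with constants uniform in $n$ and only polynomial in $p$, and simultaneously extracting the sharp rate from the interplay of the mixing decay (\ref{alpha-coeff-est}), the integrability $[B1\mathchar`-(q(1+\delta))]$, and the gap control $[B2\mathchar`-(q\epsilon)]$.
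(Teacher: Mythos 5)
Most of your plan coincides with the paper's proof: the same centering around the expected measures $\zeta^{p,i}_n$, the same use of hypothesis (\ref{a2-lim}) for the deterministic bias (the source of $\epsilon'$ in $\eta$), the same grid-discretization estimate for $\beta$-H\"older $f$ (the source of $\beta$), and, for the centered part, the same combination of inputs: $[B1\mathchar`-(q(1+\delta))]$ for the block moments, (\ref{alpha-coeff-est}) for summing the mixing series, $[B2\mathchar`-(q\epsilon)]$ for the truncation remainder. Your hand-expanded $q$-fold product controlled by Davydov-type covariance bounds is essentially an unpacking of the Doukhan--Louhichi Rosenthal-type inequality that the paper invokes as Theorem \ref{rosenthal}, so that part is a cosmetic difference.

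The genuine gap is in the localization step, which you yourself flag as the heart of the argument. The paper does not truncate on the global event $\{r_n\le u_nb_n^{-1}\}$: it truncates the $k$-th block on the \emph{local} event $A^p_{k,h}$ (with $h=b_n^{\delta'}$, $\delta'=(1+\delta)/(2(1+\delta-\epsilon\delta))$), which demands a jump of each point process in every one of the $2p+1$ sub-windows of length $h[b_n]^{-1}T$ adjacent to $[t_{k-1},t_k)$ on either side. This matters twice. First, measurability: $\nu^{p,i}_n([t_{k-1},t_k))\,1_{A^p_{k,h}}$ is genuinely $\mathcal{G}^n_{(k-2-[(2p+1)h])\vee 0,\,(k+[(2p+1)h]+1)\wedge[b_n]}$-measurable, which is what any $\alpha$-mixing moment inequality requires; multiplying by the indicator of your global event does not yield locally measurable summands, and your ``localized versions'' agree with the true blocks only on per-window events, i.e.\ on the $A^p_{k,h}$ themselves, so you cannot actually dispense with them. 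Second, and decisively, rates: $P[(A^p_{k,h})^c]\le C(p+1)b_n^{-q\epsilon\delta'}$ carries no factor $b_n$, whereas your $P[r_n>u_nb_n^{-1}]=O(b_nu_n^{-q\epsilon})$ does (union over $\sim b_n$ windows). After the H\"older step with exponents $1+\delta$ and $(1+\delta)/\delta$, your bad-event contribution is of order $(b_nu_n^{-q\epsilon})^{\delta/(1+\delta)}$; writing $u_n=b_n^{\gamma}$ and aiming at the target rate $b_n^{-q\eta_3}$ with $\eta_3=\delta\epsilon/(2(1+\delta-\delta\epsilon))$ (note $\eta_3=\delta'-1/2$), this forces $\gamma\ge\delta'+1/(q\epsilon)$, while the good-event fluctuation bound, which is of order $b_n^{-\frac{q}{2}(1-\gamma)}$ up to powers of $p+1$ because the variance of the block sum is genuinely of order $b_nw$ with window $w\asymp(p+1)b_n^{\gamma}$, forces $\gamma\le 1-2\eta_3=2(1-\delta')$. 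These constraints are incompatible in general: for $q=4$, $\epsilon=1/2$, $\delta=1$ one gets $\delta'=2/3$, $\eta_3=1/6$, hence $\gamma\ge 7/6$ against $\gamma\le 2/3$; even the sharper union bound $P[r_n>u_nb_n^{-1}]=O(b_nu_n^{-1-q\epsilon})$ only relaxes the first constraint to $\gamma\ge 7/9$, still incompatible. So the balancing you describe delivers a strictly smaller exponent than the claimed $\eta$, and the proposition as stated is out of reach by your route; the repair is exactly the paper's per-block truncation events $A^p_{k,h}$, which remove the spurious factor $b_n$ and restore local measurability at the same time.
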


For example, let $\{\bar{N}^i_t\}_{t\geq 0}$ be a point process where the distribution of $(\bar{N}^i_{t+t_k}-\bar{N}^i_{t+t_{k-1}})_{k=1}^M$
does not depend on $t\geq 0$ for $i=1,2$, $M\in\mathbb{N}$ and $0\leq t_0<t_1<\cdots <t_M$.
Moreover, let $\{N^i_t\}_t$ satisfy $N^i_t=\bar{N}^i_{[b_n]t}$ for $t\in [0,T]$, $i=1,2$ and $n\in\mathbb{N}$.
Then the relation (\ref{a2-lim}) holds 
if $[B1\mathchar`-(2)]$ and $[B2\mathchar`-\epsilon]$ hold for some $\epsilon \in (0,2]$. 
In this case, we obtain $a_p=T^{-1}\lim_{n\to \infty}E[\nu^{p,1}_n([0,T))]$, $c_0=T^{-1}\lim_{n\to \infty}E[\nu^{0,2}_n([0,T))]$, and $\epsilon'=(\epsilon/4)\wedge \beta$.
In particular, $\{a_p\}_{p\in\mathbb{Z}_+},c_0$ are constants.

For general $\{N^i_t\}$, the following propositions are sufficient conditions 
for $[A4\mathchar`-q,\delta],[A2\mathchar`-q,\delta]$ and the estimate of ${\rm ess \, inf}_t a_1(t)$ in Proposition \ref{sep-cond}. 
\begin{proposition}\label{A4q-suff}
Let $q\in 2\mathbb{N}$, $q>2$, $p'_1,p'_2>1$, $1/p'_1+1/p'_2=1$.
Assume $[B1\mathchar`-(p'_1q)]$ and $[B2\mathchar`-(p'_2(q+2))]$.
\begin{enumerate}
\item Then there exists $n_0\in\mathbb{N}$ such that $\sup_{n\geq n_0}E[(\Phi_p)^q]\leq C(p+1)^{q+1}$ for any $p\in\mathbb{Z}_+$.
In particular, $[A4\mathchar`-q',(1+3/q')] \ 1$ holds for any $q'\in [2,q)$. 
\item Then there exists $n_0\in\mathbb{N}$ such that
\begin{equation*}
\sup_{n\geq n_0}E[(\bar{\Phi}_{p_1,p_2})^{\frac{q}{2}}]\leq C(p_1+1)^{\frac{q}{2}+1}(p_2+1)^{\frac{q}{2}+1}
\end{equation*}
for $p_1,p_2\in\mathbb{Z}_+$.
In particular, $[A4\mathchar`-q,3] \ 2$ holds.
\end{enumerate}
\end{proposition}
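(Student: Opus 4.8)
The plan is to reduce the whole proposition to a single pointwise moment bound for the ``overlap count'' field $M_p(t)=\#\{k:t\in\theta_{p,k}\}=\sum_k{\bf 1}_{\theta_{p,k}}(t)$. Indeed, Fubini gives $\Phi_p=\Phi_{p,1}=\sum_k|\theta_{p,k}|=\int^T_0M_p(t)\,dt$ and, since $\theta_{p_1,k_1}\cap\theta_{p_2,k_2}$ is the intersection of the two unions, $\bar\Phi_{p_1,p_2}=\int^T_0M_{p_1}(t)M_{p_2}(t)\,dt$. By Jensen's inequality $E[(\Phi_{p,1})^q]\le T^{q-1}\int^T_0E[M_p(t)^q]\,dt$, and by Jensen together with Cauchy--Schwarz $E[(\bar\Phi_{p_1,p_2})^{q/2}]\le T^{q/2-1}\int^T_0E[M_{p_1}(t)^q]^{1/2}E[M_{p_2}(t)^q]^{1/2}\,dt$. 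Hence both (1) and (2) follow once I prove the key estimate $\sup_{n\ge n_0}\sup_{0\le t\le T}E[M_p(t)^q]\le C(p+1)^{q+1}$ for $p\in\mathbb{Z}_+$, the point being that the target exponent $q/2+1$ in (2) dominates the $(q+1)/2$ that this estimate produces.

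To prove the key estimate I exploit that the reach of a chain of $2p$ overlapping intervals is governed locally. Fix $t$. Every base interval $\theta_{0,k}$ within graph distance $2p$ of $t$ lies in $[u^-_{2p},u^+_{2p}]$, where the rightward reach is defined greedily by $u^+_0=t$ and $u^+_{d+1}=\max_{1\le i\le n_2+2}\inf\{s>u^+_d;\ s\ \text{is a point of}\ N^i\}$ (symmetrically to the left); greediness ensures this dominates every chain. Thus $M_p(t)\le C+C\sum_{d=1}^{4p}\beta_d$, where $\beta_d$ is the number of $N^1$- and $N^2$-points in the $d$-th reach increment, whose length $g_d$ is a single maximal ``next point'' gap and hence satisfies $g_d\le r_n$. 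By Minkowski's inequality it suffices to show $\sup_d\|\beta_d\|_q\le C$ uniformly in $n,t$, giving $\|\sum_{d=1}^{4p}\beta_d\|_q\le 4pC$ and the key estimate with the generous exponent $(p+1)^q\le(p+1)^{q+1}$. To bound $\beta_d$, cover its window by $\kappa_d=\lceil g_db_n\rceil$ blocks of length $b_n^{-1}$, write $\beta_d\le\sum_{m=1}^{\kappa_d}\zeta_{d,m}$ for the per-block counts $\zeta_{d,m}$, and use $(\sum_{m\le\kappa_d}\zeta_{d,m})^q\le\kappa_d^{q-1}\sum_{m\le\kappa_d}\zeta_{d,m}^q$. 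Hölder's inequality with the conjugate exponents $p'_1,p'_2$ then separates the two mechanisms:
\[
E[\beta_d^q]\le\sum_{m\ge1}E[\zeta_{d,m}^{qp'_1}]^{1/p'_1}\,E[\kappa_d^{(q-1)p'_2}{\bf 1}_{\{\kappa_d\ge m\}}]^{1/p'_2}.
\]
Here $E[\zeta_{d,m}^{qp'_1}]\le C$ uniformly by $[B1\mathchar`-(p'_1q)]$, while $\kappa_d$ is controlled by one gap, so $P[\kappa_d\ge m]\le(n_2+2)Cm^{-p'_2(q+2)}$ uniformly in $n,t$ by $[B2\mathchar`-(p'_2(q+2))]$; the exponent $q+2$ is exactly what makes the second factor decay like $m^{-3}$, so the sum over $m$ converges to a constant independent of $d,n,t$.

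With the key estimate in hand the two concrete conclusions are routine. For (1), set $X_n=\sum_{p\ge0}(\Phi_{2p+2,1})^{q'}/(p+1)^{q'\delta}$ with $\delta=1+3/q'$ and $q'\in[2,q)$; Minkowski in $L^{q/q'}$ together with $E[(\Phi_{2p+2,1})^{q}]\le C(p+1)^{q+1}$ gives $\|X_n\|_{q/q'}\le C\sum_p(p+1)^{q'/q-3}<\infty$. Since $b_n^{-q'/2}\to0$ and, by the covering bound $P[r_n>ub_n^{-1}]\le CTb_nu^{-p'_2(q+2)-1}$ deduced from $[B2\mathchar`-(p'_2(q+2))]$, one has $E[r_n^s]\to0$ for every fixed $s>0$, a last application of Hölder yields $E[(b_n^{-q'/2}\vee r_n^{q'})X_n]\to0$, i.e. $[A4\mathchar`-q',(1+3/q')]\,1$. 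For (2), Minkowski in $L^{q/2}$ and $E[(\bar\Phi_{2p_1+3,2p_2+3})^{q/2}]\le C(p_1+1)^{q/2+1}(p_2+1)^{q/2+1}$ give, with $\delta=3$,
\[
\Big\|b_n^{-1}\sum_{p_1,p_2\ge0}\frac{\bar\Phi_{2p_1+3,2p_2+3}}{(p_1+1)^3(p_2+1)^3}\Big\|_{q/2}\le Cb_n^{-1}\Big(\sum_{p\ge0}(p+1)^{2/q-2}\Big)^2,
\]
and the series converges because $q>2$; raising to the power $q/2$ gives a bound of order $b_n^{-q/2}\to0$, which is $[A4\mathchar`-q,3]\,2$.

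The crux, and the step I expect to be hardest, is the uniform-in-$n$ and only-polynomial-in-$p$ control of the random reach window carried out through the local gap tails of $[B2]$ rather than the global maximal spacing $r_n$ (using $r_n$ directly would cost a factor $(l_n+m_n)r_n\sim\log b_n$ and destroy uniformity). Two points need care: first, the greedy reach endpoints $u^\pm_d$ are random locations, so applying $[B2\mathchar`-(p'_2(q+2))]$ at them requires conditioning on the configuration up to $u^\pm_d$ and invoking the defining supremum over deterministic starting times, or a union bound over a $b_n^{-1}$-grid of starting positions; second, one must keep the Hölder bookkeeping so that $[B1]$ enters exactly at order $p'_1q$ and $[B2]$ exactly at order $p'_2(q+2)$, the conjugacy $1/p'_1+1/p'_2=1$ guaranteeing that the product of the two norms has total order $q$. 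Everything else---the passage from $\Phi_{p,1},\bar\Phi_{p_1,p_2}$ to $M_p$, the Minkowski and Cauchy--Schwarz steps, and the summation of the $p$-series---is routine.
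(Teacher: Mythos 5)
Your reduction of both parts to the single pointwise estimate $\sup_{n\ge n_0}\sup_{0\le t\le T}E[M_p(t)^q]\le C(p+1)^{q+1}$ is correct and is a genuinely different organization from the paper's proof (the paper works with $\Phi_{p,1}$ directly, grouping the intervals $\theta_{0,k}$ according to which deterministic grid block $(t_{i-1},t_i]$ contains $R(\theta_{0,k})$, so that the count factor $\Delta N_i$ and the length factor $|\theta_{p,k}|$ decouple); your Fubini/Jensen/Cauchy--Schwarz steps and the two final ``in particular'' deductions are all fine. The genuine gap is in your proof of the key estimate itself. For $d\ge 2$, every one of the bounds $E[\zeta_{d,m}^{qp_1'}]\le C$ and $P[\kappa_d\ge m]\le Cm^{-p_2'(q+2)}$ applies $[B1]$ or $[B2]$ to windows whose left endpoint is the random greedy-reach point $u^{\pm}_{d-1}$, and neither of your suggested fixes works. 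Conditioning is unavailable: $[B1\mathchar`-q]$ and $[B2\mathchar`-q]$ are purely marginal conditions at deterministic times for otherwise arbitrary simple point processes --- no Markov, independent-increments or mixing structure is assumed in this proposition --- so no conditional version of them at the random time $u^{\pm}_{d-1}$ follows. A union bound over a $b_n^{-1}$-grid of possible positions of $u^{\pm}_{d-1}$ costs a factor of order $Tb_n$ (exactly the factor visible in your own covering bound for $r_n$), which turns the per-increment estimate into $\|\beta_d\|_q\le Cb_n^{1/(p_2'q)}$; this is not uniform in $n$, and the key estimate collapses.

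The repair --- and it is essentially the paper's mechanism transplanted to your $M_p(t)$ --- is to abandon the increment-by-increment decomposition and introduce a \emph{single} scale parameter $j$ for the whole reach window, through nested events anchored at the deterministic point $t$: let $\hat{A}_j(t)$ be the event that each of the $n_2+2$ processes has at least one point in every one of the $2(2p+1)$ consecutive windows of length $jb_n^{-1}$ to the right and to the left of $t$. On $\hat{A}_j(t)$ each greedy step advances by at most one window, so the entire reach $[u^-_{2p},u^+_{2p}]$ lies in a \emph{deterministic} window of length $C(p+1)jb_n^{-1}$; hence $M_p(t)\le C+\zeta_j$ with $\zeta_j$ a count over $O((p+1)j)$ deterministic $b_n^{-1}$-blocks, so $\|\zeta_j\|_{qp_1'}\le C(p+1)j$ by $[B1\mathchar`-(p_1'q)]$ and Minkowski, while $P[(\hat{A}_{j-1}(t))^c]\le C(p+1)(j-1)^{-p_2'(q+2)}$ by $[B2\mathchar`-(p_2'(q+2))]$ with a union bound over only $O(p)$ deterministic windows, not $O(b_n)$. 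H\"older with the conjugate pair $(p_1',p_2')$ on the disjoint events $\hat{A}_j(t)\setminus\bigcup_{j'<j}\hat{A}_{j'}(t)$ then yields
\begin{equation*}
E[M_p(t)^q]\le C(p+1)^q+C\sum_{j}\{(p+1)j\}^q\big\{(p+1)(j-1)^{-p_2'(q+2)}\big\}^{1/p_2'}\le C(p+1)^{q+1/p_2'}\le C(p+1)^{q+1}.
\end{equation*}
Getting the exponent $q+1$ (or at worst $q+2$) here is not cosmetic: salvaging your per-increment scheme by localizing each random gap separately leads to bounds of order $(p+1)^{q+3}$, and with that exponent the series $\sum_p(p+1)^{3q'/q-3}$ arising in your part (1) argument diverges for $q'\in[2q/3,q)$, so the claimed conclusion $[A4\mathchar`-q',(1+3/q')]$ for all $q'\in[2,q)$ would be lost.
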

\begin{proposition}\label{A2q-suff}
Let $q\in\mathbb{N}$ and we assume $[B2\mathchar`-(q+1)]$. Then there exists $n_0\in\mathbb{N}$ such that
$\sup_{n\geq n_0}E\left[b_n^{q-1}r_n^q\right]<\infty$.
\end{proposition}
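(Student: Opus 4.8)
The plan is to combine the layer-cake (tail-integral) representation of the moment with a deterministic grid/covering tail estimate for $r_n$ that is fed by $[B2\mathchar`-(q+1)]$. Since $0\le r_n\le T$, I would start from
\[
E[b_n^{q-1}r_n^q]=b_n^{q-1}\int_0^T qa^{q-1}P[r_n>a]\,da,
\]
so that the whole statement reduces to a tail bound on $P[r_n>a]$ with the right power of $b_n$, uniform in $n\ge n_0$.

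For the tail bound, the point is that $\{r_n>a\}$ forces one of the $n_2+2$ point processes $N^i$ to leave an interval of length exceeding $a$ without any observation. I would fix the deterministic mesh of width $b_n^{-1}$ on $[0,T]$ and observe that an empty open interval of length $>a$ must contain at least $\lfloor ab_n\rfloor-1$ consecutive empty mesh cells, hence a mesh-aligned interval of length $\asymp a$ on which $N^i$ has no jump. A union bound over the $O(b_n)$ mesh positions and over the $n_2+2$ processes, followed by $[B2\mathchar`-(q+1)]$ applied to each empty-interval probability (with $u\asymp ab_n$), yields, for $n\ge n_0$ and $a$ above a threshold of order $b_n^{-1}$,
\[
P[r_n>a]\le C\,b_n\,(ab_n)^{-(q+1)}=C\,b_n^{-q}\,a^{-(q+1)}.
\]
The endpoint restriction $t\le T-ub_n^{-1}$ built into $[B2]$ is harmless, since the mesh-aligned candidate intervals lie inside $[0,T]$ by construction.

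Inserting this into the layer-cake integral and splitting the range at $a\asymp b_n^{-1}$ finishes the computation. On the small-$a$ part I use $P[r_n>a]\le 1$, which contributes $b_n^{q-1}\cdot O(b_n^{-q})=O(b_n^{-1})$; on the large-$a$ part the tail bound gives
\[
b_n^{q-1}\int C b_n^{-q}a^{q-1}a^{-(q+1)}\,da=Cb_n^{-1}\int_{\asymp b_n^{-1}}^{T}a^{-2}\,da=O(b_n^{-1}\cdot b_n)=O(1),
\]
uniformly in $n\ge n_0$. Summing the two pieces bounds $\sup_{n\ge n_0}E[b_n^{q-1}r_n^q]$, as required.

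The delicate step is the tail estimate, and specifically the matching of the two competing powers of $b_n$. The fixed mesh forces $O(b_n)$ candidate intervals, so the per-interval decay $(ab_n)^{-(q+1)}$ supplied by $[B2\mathchar`-(q+1)]$ is exactly what makes the large-$a$ integrand behave like $a^{-2}$ and keeps $b_n^{-1}\int a^{-2}$ bounded; the weaker $[B2\mathchar`-q]$ would give only $a^{-1}$ and a spurious $\log b_n$ factor. I expect the bookkeeping between the continuous variable $a$ and the integer mesh count, together with the endpoint constraint in $[B2]$, to be the only real friction, the remaining estimates being routine.
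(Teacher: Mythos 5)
You are correct, and your argument runs on the same engine as the paper's proof: a deterministic grid, a union bound over grid positions, and $[B2\mathchar`-(q+1)]$ supplying the per-window decay; what differs is the packaging. The paper uses a whole family of grids: for each $j\in\mathbb{N}$ it forms the event $A'_j$ that every cell of width $jb_n^{-1}$ contains a jump of every process (so that $r_n\le 2jb_n^{-1}$ on $A'_j$), partitions $\Omega$ according to the first $j$ for which $A'_j$ occurs, and union-bounds $P[(A'_{j-1})^c]$ over the roughly $b_nj^{-1}$ \emph{non-overlapping} cells, obtaining $Cb_nj^{-(q+2)}$ and hence $E[r_n^q]\le Cb_n^{1-q}\sum_j j^{-2}$. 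You instead fix the single mesh of width $b_n^{-1}$, convert $\{r_n>a\}$ into the existence of a run of about $ab_n$ consecutive empty cells, and union-bound over all $O(b_n)$ \emph{overlapping} starting positions; at $a=jb_n^{-1}$ your tail bound $Cb_n^{-q}a^{-(q+1)}=Cb_nj^{-(q+1)}$ is therefore a factor $ab_n$ weaker than the paper's cell count. That loss is exactly recouped by the layer-cake weight $qa^{q-1}\,da$ (as opposed to the paper's weight $(2jb_n^{-1})^q$ against probabilities of disjoint events): both computations land on the same critical exponent, your integrand $a^{-2}$ versus the paper's summand $j^{-2}$, so both need precisely $[B2\mathchar`-(q+1)]$, and, as you observe, $[B2\mathchar`-q]$ would leave a spurious $\log b_n$ in either version. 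Your route makes the role of the exponent $q+1$ and the sharpness of the hypothesis more visible; the paper's first-scale decomposition keeps everything discrete and avoids tail integration, but the two proofs are otherwise interchangeable.
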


\begin{proposition}\label{inf-a1-suff}
Assume there exists $n_1\in\mathbb{N}$ and $q>0$ such that $[A3']$ and $[B2\mathchar`-q]$ hold, $a_1(t)$ does not depend on $t$,
$\{N^i_{t+[b_n]^{-1}T}-N^i_t\}_{0\leq t\leq T-[b_n]^{-1}T, n\geq n_1, i=1,2}$ is tight and
$\alpha$-mixing coefficients $\{\alpha^n_k\}_k$ defined by $(\ref{alpha-def})$ satisfy
$\sup_{n\geq n_1}\sum_{k=1}^{\infty}k\alpha^n_k<\infty$. 
Then there exists a constant $\delta>0$ such that $a_1\geq \delta$ almost surely.
\end{proposition}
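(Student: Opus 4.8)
The plan is to treat $\nu^{1,1}_n([0,T))=b_n^{-1}\mathrm{tr}(GG^{\star})=b_n^{-1}\sum_I(GG^{\star})_{II}$, which is a functional of the observation times alone, and to read off from $[A3']$ (together with the hypothesis that $a_1$ does not depend on $t$) the convergence $\nu^{1,1}_n([0,T))\to^p a_1T$. Since $\nu^{1,1}_n([0,T))\ge 0$, it suffices to produce a deterministic $\beta>0$ with $P[\nu^{1,1}_n([0,T))\ge\beta]\to 1$: by the portmanteau theorem for the open set $(-\infty,\beta)$ this forces $P[a_1T<\beta]\le\liminf_nP[\nu^{1,1}_n([0,T))<\beta]=0$, so that $a_1\ge\delta:=\beta/T$ almost surely. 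I would obtain this uniform lower bound from a weak law of large numbers for the number of ``regular'' time blocks.

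The first ingredient is a deterministic pointwise estimate. Since $(GG^{\star})_{II}=\sum_J|I\cap J|^2/(|I||J|)$, Cauchy--Schwarz together with $\sum_J|I\cap J|=|I|$ gives
\begin{equation*}
(GG^{\star})_{II}\ \ge\ \frac{|I|}{\sum_{J:\,I\cap J\neq\emptyset}|J|}\ \ge\ \frac{|I|}{|I|+2\max_{J:\,J\cap I\neq\emptyset}|J|}.
\end{equation*}
I would fix the grid $t_k=Tk/[b_n]$ on which the $\alpha^n_k$ are defined, a large integer $U'$, and super-blocks $\hat B_m=[t_{(m-1)U'},t_{mU'})$ of length $\approx U'Tb_n^{-1}$. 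Call $\hat B_m$ regular if $N^1$ and $N^2$ each have at least one and at most $M$ points in every one of $\hat B_{m-1},\dots,\hat B_{m+3}$, and in addition $N^1$ has a point in the left half of $\hat B_m$. On a regular $\hat B_m$ every $I$ with $L(I)\in\hat B_m$ satisfies $|I|\le CU'Tb_n^{-1}$, every $J$ meeting such an $I$ satisfies $|J|\le CU'Tb_n^{-1}$, and the intervals starting in $\hat B_m$ cover at least the right half of $\hat B_m$ (this is exactly what the point in the left half guarantees). Summing the displayed bound over those intervals then yields a constant $\delta_0>0$, independent of $n,m$, with $\sum_{I:\,L(I)\in\hat B_m}(GG^{\star})_{II}\ge \delta_0$. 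Hence $\nu^{1,1}_n([0,T))\ge\delta_0 b_n^{-1}R_n$, where $R_n$ is the number of regular super-blocks.

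The second ingredient is a law of large numbers for $R_n=\sum_m\mathbf 1_{\{\hat B_m\ \mathrm{regular}\}}$. Each indicator is a $\{0,1\}$-valued functional of the increments of $N^1,N^2$ over the fixed window $\hat B_{m-1}\cup\dots\cup\hat B_{m+3}$, hence $\mathcal G^n_{(m-2)U',(m+3)U'}$-measurable. For the expectation, tightness of $\{N^i_{t+[b_n]^{-1}T}-N^i_t\}$ makes $P[N^i(\hat B_j)>M]$ uniformly small for large $M$, while $[B2\mathchar`-q]$ gives $P[N^i(\hat B_j)=0]\le C(U'T)^{-q}$, uniformly small for large $U'$; choosing $M,U'$ large gives $P[\hat B_m\ \mathrm{regular}]\ge\tfrac12$, whence $E[b_n^{-1}R_n]\ge c>0$. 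For the variance, the covariance inequality for $\alpha$-mixing events gives $|\mathrm{Cov}(\mathbf 1_{\{\hat B_m\,\mathrm{reg}\}},\mathbf 1_{\{\hat B_{m'}\,\mathrm{reg}\}})|\le\alpha^n_{(|m-m'|-5)U'}$ once $|m-m'|>5$, so that
\begin{equation*}
\mathrm{Var}(b_n^{-1}R_n)\ \le\ Cb_n^{-2}\,\frac{[b_n]}{U'}\Big(1+\sum_{j\ge1}\alpha^n_j\Big)\ \le\ \frac{C}{b_n}\ \to\ 0,
\end{equation*}
using $\sum_j\alpha^n_j\le\sum_j j\alpha^n_j<\infty$. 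Chebyshev's inequality then yields $P[b_n^{-1}R_n\ge c/2]\to 1$, hence $P[\nu^{1,1}_n([0,T))\ge\delta_0 c/2]\to 1$, and the reduction of the first paragraph finishes the proof with $\beta=\delta_0 c/2$ and $\delta=\beta/T$.

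I expect the main obstacle to be the design of the event ``regular'', which must simultaneously (i) force, through the Cauchy--Schwarz/covering geometry, a uniform $n$-independent lower bound $\delta_0$ on the block sum of the diagonal entries, and (ii) remain a bounded functional of the increments over a window of \emph{fixed} block-width so that the mixing covariance bound applies with a summable coefficient sum. Pinning the first $N^1$-point into the left half of $\hat B_m$ is what makes the covering in (i) survive even for isolated regular blocks (where a naive telescoping over runs of blocks would lose all the length), and passing to $[B2\mathchar`-q]$-controlled super-blocks rather than single grid blocks is what keeps the occupation probabilities in (ii) uniformly bounded away from $0$ and $1$ for a fixed, possibly small, horizon $T$.
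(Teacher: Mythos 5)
Your proof is correct, and it follows the paper's overall architecture---identify $a_1T$ as the probability limit of $\nu^{1,1}_n([0,T))=b_n^{-1}\sum_I(GG^{\star})_{II}$ via $[A3']$, partition $[0,T]$ into blocks of width $\asymp U'b_n^{-1}$, force occupancy through $[B2\mathchar`-q]$, and prove a weak law of large numbers for block contributions using the mixing coefficients---but both technical pillars are executed differently, and in a way that buys something. First, the geometry: the paper lower-bounds $\sum_{I,J}|I\cap J|^2$ on a good block by $(Nh)^2/(\Delta N^1_k+\Delta N^2_k+1)$ (the inequality $x_1^2+\cdots+x_r^2\geq u^2/r$), which puts the number of observation times in the denominator; this is exactly where the tightness hypothesis enters, since that count must then be bounded with non-small probability. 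Your weighted Cauchy--Schwarz bound $(GG^{\star})_{II}\geq |I|/(|I|+2\max_{J:J\cap I\neq\emptyset}|J|)$ absorbs the count into the lengths $|J|$, so your per-block constant $\delta_0$ follows from length caps (i.e.\ occupancy) alone; as a consequence, the ``at most $M$ points'' clause in your notion of regular block---and with it the only place you invoke tightness---is redundant and could be deleted, so your argument in fact establishes the proposition without the tightness assumption. Second, the LLN: your summands are uniformly bounded indicators, each measurable with respect to a window of five super-blocks, so the bare covariance estimate $|\mathrm{Cov}(1_{\{\hat B_m\ \mathrm{reg}\}},1_{\{\hat B_{m'}\ \mathrm{reg}\}})|\leq \alpha^n_{(|m-m'|-5)U'}$ together with Chebyshev and $\sup_n\sum_k\alpha^n_k<\infty$ suffices; the paper instead feeds its doubly indexed family $X'_{j,k}$ (carrying the reciprocal count factor) into the Doukhan--Louhichi Rosenthal-type inequality (Theorem \ref{rosenthal}) and sums the resulting $L^2$ bounds over $j$, reusing machinery it already set up for Proposition \ref{p-lim}. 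Net effect: your route is more elementary, self-contained, and slightly stronger in its hypotheses; the paper's is heavier but uniform in style with the other sufficient-condition proofs of Section 6.
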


Finally, we state a corollary of main theorems.

We assume $\{\bar{N}^i_t\}_{t\geq 0}$ is a exponential $\alpha$-mixing point process where $E[(\bar{N}^1_1+\bar{N}^2_1)^q]<\infty$
for $q>0$ and the distribution of $(\bar{N}^i_{t+t_k}-\bar{N}^i_{t+t_{k-1}})_{k=1}^M$
does not depend on $t\geq 0$ for $i=1,2$, $M\in\mathbb{N}$ and $0\leq t_0<t_1<\cdots <t_M$
for $1\leq i\leq 2$.
Let $\{N^i_t\}$ satisfy $N^i_t=\bar{N}^i_{[b_n]t}$ for $t\in [0,T]$, $i=1,2$ and $n\in\mathbb{N}$, $\hat{\sigma}_n$ be the quasi-maximum-likelihood estimator defined by $H_n$, 
$\pi:\Lambda\to\mathbb{R}_+$ be a continuous function
and $\tilde{\sigma}_n$ be defined by (\ref{bayes-def}).

\begin{corollary}\label{cor-main}
Assume $[A1],[H0],[B1\mathchar`-q],[B2\mathchar`-q]$ for any $q>2$. 
\begin{enumerate}
\item Then $\hat{\sigma}_n\to^p \sigma_{\ast}$, 
$b_n^{1/2}(\hat{\sigma}_n-\sigma_{\ast})\to^{s\mathchar`-\mathcal{L}} \Gamma^{-1/2}\mathcal{N}$ 
and $E[Y'f(b_n^{1/2}(\hat{\sigma}_n-\sigma_{\ast}))]\to E[Y'f(\Gamma^{-1/2}\mathcal{N})]$
as $n\to \infty$ for any continuous function $f$ of at most polynomial growth and any bounded random variable $Y'$ on $(\Omega, \mathcal{F})$. 
\item Assume $0<\inf_{\sigma}\pi(\sigma)\leq \sup_{\sigma}\pi(\sigma)<\infty$. Then $\tilde{\sigma}_n\to^p \sigma_{\ast}$, 
$b_n^{1/2}(\tilde{\sigma}_n-\sigma_{\ast})\to^{s\mathchar`-\mathcal{L}} \Gamma^{-1/2}\mathcal{N}$ 
and $E[Y'f(b_n^{1/2}(\tilde{\sigma}_n-\sigma_{\ast}))]\to E[Y'f(\Gamma^{-1/2}\mathcal{N})]$
as $n\to \infty$ for any continuous function $f$ of at most polynomial growth 
and any bounded random variable $Y'$ on $(\Omega,\mathcal{F})$. 
\end{enumerate}
\end{corollary}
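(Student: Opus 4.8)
The plan is to deduce Corollary~\ref{cor-main} from Theorems~\ref{thm-cons}, \ref{asym-dist} and \ref{bayes-asym-dist} by verifying that the concrete scheme $N^i_t=\bar N^i_{[b_n]t}$ satisfies all their abstract hypotheses. Since those hypotheses are quantified as ``for any $q>0$ there exist $q'\in 2\mathbb{N}$, $q'>q$ and $\delta'\ge 1$ such that $\dots$'' against a single preselected $\delta\in(0,1/2)$, I would first fix $\delta\in(0,1/4)$ once and for all and then, given $q$, choose an even $q'>q$ large. The sufficient conditions collected in Section~6 translate everything into the point-process conditions $[B1\mathchar`-q]$, $[B2\mathchar`-q]$ (assumed for all $q>2$), the exponential $\alpha$-mixing and the stationarity of the increment laws of $\bar N^i$, together with $[A1]$ and $[H0]$, which are assumed. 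Once $[A1]$, $[A2\mathchar`-q',\delta]$, $[A3'\mathchar`-q',\delta]$, $[A4\mathchar`-q',\delta']$ and $[H]$ are in hand, the three theorems yield consistency, stable convergence to $\Gamma^{-1/2}\mathcal{N}$ and convergence of moments for both $\hat\sigma_n$ and $\tilde\sigma_n$; the processes $\{s_n\}$ needed in the moment statements are produced from $[A2\mathchar`-q',\delta]$ by Lemma~\ref{Sq-est}, and the uniform moment bound on $b_n^{-1}(l_n+m_n)$ follows from $[B1\mathchar`-q]$ and stationarity.

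For $[A2\mathchar`-q',\delta]$ I would apply Proposition~\ref{A2q-suff}: $[B2\mathchar`-(q'+1)]$ gives $\sup_nE[b_n^{q'-1}r_n^{q'}]<\infty$, i.e.\ $E[r_n^{q'}]=O(b_n^{-(q'-1)})$, which is $O(b_n^{-\delta q'})$ as soon as $q'(1-\delta)\ge 1$, true for large $q'$. For $[A4\mathchar`-q',\delta']$ I would invoke Proposition~\ref{A4q-suff} with a conjugate pair $p_1',p_2'$ and suitable orders of $[B1]$, $[B2]$: its first part yields condition~$1$ of $[A4\mathchar`-q',(1+3/q')]$ and its second part yields condition~$2$ of $[A4\mathchar`-q',3]$, and since enlarging the exponent only strengthens the denominators, both conditions hold with $\delta'=3$. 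Passing to the plain conditions needed in Theorems~\ref{thm-cons} and \ref{asym-dist}(1) is then automatic: $[A4\mathchar`-q',\delta']\Rightarrow[A4]$ by Jensen, $[A3'\mathchar`-q',\delta]\Rightarrow[A3']$, and $[A3']\Leftrightarrow[A3]$ under $[A2]$ by Proposition~\ref{A3eq}; moreover $[H]\Rightarrow[H']$.

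The substantive verification is $[A3'\mathchar`-q',\delta]$, obtained from Proposition~\ref{p-lim}. Here the stationarity of the increment laws of $\bar N^i$ makes the limit condition~(\ref{a2-lim}) hold with constant $a_p,c_0$ (as in the remark following Proposition~\ref{p-lim}), needing only $[B1\mathchar`-2]$ and $[B2\mathchar`-\epsilon]$; the exponential $\alpha$-mixing gives $\alpha^n_k\le Ce^{-cTk}$ via $N^i_{t_k}=\bar N^i_{Tk}$, so the weighted sum in~(\ref{alpha-coeff-est}) is finite for every exponent, and Proposition~\ref{p-lim} returns $[A3'\mathchar`-q',\eta]$ with Hölder index $\beta<1/2-1/q'$ and $\eta=\epsilon'\wedge\beta\wedge(\delta_0\epsilon/(2(1+\delta_0-\delta_0\epsilon)))$. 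Because $b_n\ge 1$, $[A3'\mathchar`-q',\eta]$ is monotone decreasing in $\eta$, so it suffices to arrange $\eta\ge\delta$: taking $\epsilon$ close to $1$ and the proposition's $\delta_0$ large pushes $\eta$ toward $\epsilon'=(\epsilon/4)\wedge\beta$, which stays above our fixed $\delta<1/4$ once $q'$ is large enough that $1/2-1/q'>1/4$. This exponent bookkeeping---fitting the $\eta$, $\beta$, $\epsilon'$ delivered by Proposition~\ref{p-lim} simultaneously against the single $\delta$ shared by $[A2]$ and $[A3']$---is the step I expect to be the most delicate.

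Finally, for $[H]$ I would run Proposition~\ref{sep-cond}, which reduces $[H]$ to $[H0]$ (assumed) together with $E[(\mathrm{ess\,inf}_t a_1(t))^{-q}]<\infty$ for all $q$. In the present model $a_1$ is constant in $t$, and Proposition~\ref{inf-a1-suff} applies: $[A3']$ holds by the previous step, $[B2\mathchar`-q]$ holds, tightness of $\{N^i_{t+[b_n]^{-1}T}-N^i_t\}$ follows from the uniform moment bound $[B1\mathchar`-q]$, and $\sum_k k\alpha^n_k<\infty$ follows from the exponential mixing. Hence $a_1\ge\delta_1>0$ a.s.\ for a constant $\delta_1$, so its negative moments are finite and $[H0]$ upgrades to $[H]$. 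With $[A1]$, $[A2\mathchar`-q',\delta]$, $[A3'\mathchar`-q',\delta]$, $[A4\mathchar`-q',\delta']$ and $[H]$ (hence $[H']$) verified, Theorem~\ref{thm-cons} gives $\hat\sigma_n\to^p\sigma_{\ast}$, Theorem~\ref{asym-dist} gives the stable convergence $b_n^{1/2}(\hat\sigma_n-\sigma_{\ast})\to^{s\mathchar`-\mathcal{L}}\Gamma^{-1/2}\mathcal{N}$ and the moment convergence for $\hat\sigma_n$, and Theorem~\ref{bayes-asym-dist} gives the corresponding statements for $\tilde\sigma_n$ under the stated assumptions on $\pi$, completing the proof.
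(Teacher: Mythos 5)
Your proposal is correct and is essentially the intended proof: the paper gives no separate argument for Corollary \ref{cor-main}, which is meant to follow exactly as you assemble it — Propositions \ref{p-lim}, \ref{A4q-suff}, \ref{A2q-suff}, \ref{inf-a1-suff} (plus the remark after Proposition \ref{p-lim} and Proposition \ref{sep-cond} to upgrade $[H0]$ to $[H]$) verify the hypotheses of Theorems \ref{thm-cons}, \ref{asym-dist} and \ref{bayes-asym-dist}. Your exponent bookkeeping also checks out: with $\delta<1/4$ fixed, choosing the proposition's $\epsilon\in(4\delta,1)$ and its $\delta$-parameter large makes $\eta\geq\delta$ (monotonicity in $\eta$ since $b_n\geq 1$), and applying Proposition \ref{A4q-suff} at an exponent above $q'$ handles the strict inequality in its part 1.
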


\begin{example}
We consider a simple model with deterministic diffusion coefficients:
\begin{equation*}
\left\{
\begin{array}{lll}
dY^1_t&=&\sigma_1dW^1_t \\
dY^2_t&=&\sigma_3dW^1_t+\sigma_2dW^2_t \\
(Y^1_0,Y^2_0)&=&{\colorb (0,0)} \\
\end{array}
\right.
\end{equation*}
where $\epsilon>0$, $R'>\epsilon$ and $\sigma=(\sigma_1,\sigma_2,\sigma_3)\in \Lambda=(\epsilon,R')\times (\epsilon,R')\times (-R',R')$.
Let $\{\bar{N}^1_t\},\{\bar{N}^2_t\}$ be two independent homogeneous Poisson processes with positive intensities $\lambda_1,\lambda_2$ respectively
and point processes $\{N^1_t\},\{N^2_t\}$ which generate observations satisfy $N^{i,n}_t=\bar{N}^i_{nt} \ (i=1,2)$. 

Then we can easily check $[A1],[B1\mathchar`-q],[B2\mathchar`-q]$ hold for any $q>2$.
Since $(x+y)^2\geq x^2/2-y^2$ for $x,y\in\mathbb{R}$, we have
\begin{eqnarray}
|bb^{\star}(\sigma)-bb^{\star}(\bar{\sigma})|^2 
&\geq & (\sigma_1^2-\bar{\sigma}_1^2)^2+\frac{\epsilon^2}{(R')^2}(\sigma_1\sigma_3-\bar{\sigma}_1\bar{\sigma}_3)^2+\frac{\epsilon^4}{16(R')^4}(\sigma_2^2+\sigma_3^2-\bar{\sigma}_2^2-\bar{\sigma}_3^2)^2 \nonumber \\
&\geq & 4\epsilon^2(\sigma_1-\bar{\sigma}_1)^2+\frac{\epsilon^2}{(R')^2}\bigg\{\frac{\bar{\sigma}_1^2(\sigma_3-\bar{\sigma}_3)^2}{2}-\sigma_3^2(\sigma_1-\bar{\sigma}_1)^2\bigg\} \nonumber \\
&&+\frac{\epsilon^4}{16(R')^4}\bigg\{\frac{(\sigma_2^2-\bar{\sigma}_2^2)^2}{2}-(\sigma_3^2-\bar{\sigma}_3^2)^2\bigg\}\geq \frac{\epsilon^6}{8(R')^4}|\sigma-\bar{\sigma}|^2 \nonumber 
\end{eqnarray}
for $\sigma,\bar{\sigma}\in\Lambda$.
Then by Remark \ref{H0-suff}, we obtain $[H0]$.

$H_n$ can be written as 
\begin{eqnarray}
H_n(\sigma)&=&-\frac{1}{2}\bigg(\bigg(\frac{Y^1(I^i)}{\sqrt{|I^i|}}\bigg)_i^{\star},\bigg(\frac{Y^2(J^j)}{\sqrt{|J^j|}}\bigg)_j^{\star}\bigg)\left(
\begin{array}{ll}
\sigma_1^2\mathcal{E}_{l_n} & \sigma_1\sigma_3G \\
\sigma_1\sigma_3G^{\star} & (\sigma_2^2+\sigma_3^2)\mathcal{E}_{m_n} \\
\end{array}
\right)^{-1} \nonumber \\
&& \times \bigg(\bigg(\frac{Y^1(I^i)}{\sqrt{|I^i|}}\bigg)_i^{\star},\bigg(\frac{Y^2(J^j)}{\sqrt{|J^j|}}\bigg)_j^{\star}\bigg)^{\star}
-\frac{1}{2}\log\det\left(
\begin{array}{ll}
\sigma_1^2\mathcal{E}_{l_n} & \sigma_1\sigma_3G \\
\sigma_1\sigma_3G^{\star} & (\sigma_2^2+\sigma_3^2)\mathcal{E}_{m_n} \\
\end{array}
\right). \nonumber
\end{eqnarray}
By calculating $\sigma$ which maximize $H_n$, we obtain the quasi-maximum likelihood estimator $\hat{\sigma}_n=(\hat{\sigma}_{1,n},\hat{\sigma}_{2,n},\hat{\sigma}_{3,n})$.
By Corollary \ref{cor-main}, we have $\hat{\sigma}_n\to^p \sigma_{\ast}$, $\sqrt{n}(\hat{\sigma}_n-\sigma_{\ast})\to^dN(0,\Gamma^{-1})$
as $n\to \infty$, where $\sigma_{\ast}=(\sigma_{1,\ast},\sigma_{2,\ast},\sigma_{3,\ast})$ is the true value.
In this case, $\rho=\rho_t(\sigma_{\ast})$ can be written as $\rho=\sigma_{3,\ast}/\sqrt{\sigma_{2,\ast}^2+\sigma_{3,\ast}^2}$, 
$\{a_p\}_{p=0}^{\infty},\{c_p\}_{p=0}^{\infty}$ in $[A3']$ become constants 
and $a,c$ in $[A3]$ can be written as $a(\rho')-a_0=c(\rho')-c_0=\mathcal{A}(\rho')=\sum_{p=1}^{\infty}a_p\times (\rho')^{2p}$ for $\rho'\in (-1,1)$.
If $\rho\neq 0$, $\Gamma$ and $\Gamma^{-1}$ can be calculated by using Proposition \ref{st-conv} later as 
\begin{equation*}
\Gamma=T\left(
\begin{array}{lll}
\frac{a_0+a}{\sigma_{1,\ast}^2} & 0 & -\frac{\mathcal{A}}{\sigma_{1,\ast}\sigma_{3,\ast}} \\
0 & \frac{2c(1-\rho^2)^2+\partial_{\rho}\mathcal{A}\rho(1-\rho^2)^2}{\sigma_{2,\ast}^2} & \frac{2c\rho^2(1-\rho^2)-\partial_{\rho}\mathcal{A}\rho(1-\rho^2)^2}{\sigma_{2,\ast}\sigma_{3,\ast}} \\
-\frac{\mathcal{A}}{\sigma_{1,\ast}\sigma_{3,\ast}} & \frac{2c\rho^2(1-\rho^2)-\partial_{\rho}\mathcal{A}\rho(1-\rho^2)^2}{\sigma_{2,\ast}\sigma_{3,\ast}} & \frac{-\mathcal{A}+2c\rho^4+\partial_{\rho}\mathcal{A}\rho(1-\rho^2)^2}{\sigma_{3,\ast}^2} \\
\end{array}
\right), \nonumber 
\end{equation*}
\begin{equation*}
\Gamma^{-1}=\frac{1}{T\{4ac\mathcal{A}+2\partial_{\rho}\mathcal{A}\rho(a_0c+c_0a)\}}{\rm diag}(\{\sigma_{1,\ast},\sigma_{2,\ast},\sigma_{3,\ast}\})\mathcal{P}{\rm diag}(\{\sigma_{1,\ast},\sigma_{2,\ast},\sigma_{3,\ast}\}),
\end{equation*}
where 
\begin{equation*}
\mathcal{P}=\left(
\begin{array}{lll}
-2c\mathcal{A}+(c_0+c)\partial_{\rho}\mathcal{A}\rho & \mathcal{A}\{-\frac{2c\rho^2}{1-\rho^2}+\partial_{\rho}\mathcal{A}\rho\} & \mathcal{A}(2c+\partial_{\rho}\mathcal{A}\rho) \\
\mathcal{A}\{-\frac{2c\rho^2}{1-\rho^2}+\partial_{\rho}\mathcal{A}\rho\} & \frac{-2a\mathcal{A}+(a_0+a)\{2c\rho^4+\partial_{\rho}\mathcal{A}\rho(1-\rho^2)^2\}}{(1-\rho^2)^2} & (a_0+a)\{-\frac{2c\rho^2}{1-\rho^2}+\partial_{\rho}\mathcal{A}\rho\} \\
\mathcal{A}(2c+\partial_{\rho}\mathcal{A}\rho) & (a_0+a)\{-\frac{2c\rho^2}{1-\rho^2}+\partial_{\rho}\mathcal{A}\rho\} & (a_0+a)(2c+\partial_{\rho}\mathcal{A}\rho) \\
\end{array}
\right) \nonumber 
\end{equation*}
and $\rho'=\rho$ is substituted for $a,c,\mathcal{A},\partial_{\rho}\mathcal{A}$.

We can see the estimator $\hat{\sigma}_{1,n}\hat{\sigma}_{3,n}T$ for the cross variation 
$\langle Y^1,Y^2\rangle_T=\sigma_{1,\ast}\sigma_{3,\ast}T$ also has consistency and
{\colorb
\begin{equation*}
\sqrt{n}(\hat{\sigma}_{1,n}\hat{\sigma}_{3,n}T-\langle Y^1,Y^2\rangle_T)\to^d N(0,v)
\end{equation*}
as $n\to \infty$ by using the delta method, where
\begin{equation*}
\quad v=T\sigma_{1,\ast}^2\sigma_{3,\ast}^2\frac{2a(\rho)c(\rho)+\partial_{\rho}\mathcal{A}(\rho)\rho(a(\rho)+c(\rho))}{-2a(\rho)c(\rho)\mathcal{A}(\rho)+\partial_{\rho}\mathcal{A}(\rho)\rho(a_0c(\rho)+c_0a(\rho))}.
\end{equation*}
}

{\colorb
By using the result in Hayashi and Yoshida \cite{hay-yos02}, we can calculate the asymptotic variance of estimation error of the Hayashi-Yoshida estimator ${\rm HY}_n$.
In the settings in this example, we obtain
\begin{equation*}
\sqrt{n}({\rm HY}_n-\langle Y^1,Y^2\rangle_T)\to^d N(0,v_0)
\end{equation*}
as $n\to\infty$, where
\begin{equation*}
\quad v_0=T\sigma_{1,\ast}^2\sigma_{3,\ast}^2\bigg\{(1+\rho^{-2})\bigg(\frac{2}{\lambda_1}+\frac{2}{\lambda_2}\bigg)-\frac{2}{\lambda_1+\lambda_2}\bigg\}.
\end{equation*}

We also simulate $\hat{\sigma}_n,\hat{\sigma}_{1,n}\hat{\sigma}_{3,n}T,{\rm HY}_n$ for some values of parameters.
Table $1$ represents the results.
We can see that each estimators work well and sample standard deviation of $\hat{\sigma}_{1,n}\hat{\sigma}_{3,n}T$ is about two-thirds of that of ${\rm HY}_n$.
The lowest two rows represent numerical calculation results of asymptotic standard deviation of estimators
and we can find these values are close to sample standard deviation of estimators.


	\begin{table}
	\caption{Sample means of estimators for $10,000$ independent simulated sample path. $T=1,(\lambda_1,\lambda_2)=(1,1)$. 
	The left table represents the result for $(\sigma_1,\sigma_2,\sigma_3)=(1,1,0.5)$ and the right table represents the result for $(\sigma_1,\sigma_2,\sigma_3)=(0.5,2,1)$.
	Sample standard deviation are given in parentheses.}
	\footnotesize
	\begin{tabular}{|c|c|c|c|c|}
	\hline
	$n$ & $ $ & $50 $ & $100 $ & $500 $ \\ \hline
	 & true value & $ $ & $ $ & $ $ \\ \hline
	$\hat{\sigma}_{1,n}$ & $1 $ & $0.994 $ & $0.998 $ & $0.999 $ \\
	 & $ $ & $(0.102) $ & $(0.070) $ & $(0.031) $ \\ \hline
	$\hat{\sigma}_{2,n}$ & $1 $ & $0.968 $ & $0.983 $ & $0.996 $ \\
	 & $ $ & $(0.129) $ & $(0.091) $ & $(0.040) $ \\ \hline
	$\hat{\sigma}_{3,n}$ & $0.5 $ & $0.499 $ & $0.502 $ & $0.5 $ \\
	 & $ $ & $(0.224) $ & $(0.154) $ & $(0.067) $ \\ \hline
	$\hat{\sigma}_{1,n}\hat{\sigma}_{3,n}T$ & $0.5 $ & $0.5 $ & $0.503 $ & $0.5 $ \\
	 & $ $ & $(0.238) $ & $(0.165) $ & $(0.071) $ \\ \hline
	${\rm HY}_n$ & $0.5 $ & $0.501 $ & $0.504 $ & $0.5 $ \\
	 & $ $ & $(0.336) $ & $(0.236) $ & $(0.106) $ \\ \hline
	$\sqrt{v/n}$ & $ $ & $0.228 $ & $0.161 $ & $0.072 $ \\ \hline
	$\sqrt{v_0/n}$ & $ $ & $0.339 $ & $0.239 $ & $0.107 $ \\ \hline
	\end{tabular} 
	\begin{tabular}{|c|c|c|c|c|}
	\hline
	$n$ & $ $ & $50 $ & $100 $ & $500 $ \\ \hline
	 & true value & $ $ & $ $ & $ $ \\ \hline
	$\hat{\sigma}_{1,n}$ & $0.5 $ & $0.497 $ & $0.499 $ & $0.499 $ \\
	 & $ $ & $(0.050) $ & $(0.035) $ & $(0.015) $ \\ \hline
	$\hat{\sigma}_{2,n}$ & $2 $ & $1.936 $ & $1.968 $ & $1.995 $ \\
	 & $ $ & $(0.259) $ & $(0.181) $ & $(0.079) $ \\ \hline
	$\hat{\sigma}_{3,n}$ & $1 $ & $0.986 $ & $0.996 $ & $0.997 $ \\
	 & $ $ & $(0.449) $ & $(0.307) $ & $(0.135) $ \\ \hline
	$\hat{\sigma}_{1,n}\hat{\sigma}_{3,n}T$ & $0.5 $ & $0.495 $ & $0.499 $ & $0.498 $ \\
	 & $ $ & $(0.239) $ & $(0.164) $ & $(0.072) $ \\ \hline
	${\rm HY}_n$ & $0.5 $ & $0.498 $ & $0.499 $ & $0.498 $ \\
	 & $ $ & $(0.335) $ & $(0.237) $ & $(0.108) $ \\ \hline
	$\sqrt{v/n}$ & $ $ & $0.228 $ & $0.161 $ & $0.072 $ \\ \hline
	$\sqrt{v_0/n}$ & $ $ & $0.339 $ & $0.239 $ & $0.107 $ \\ \hline
	\end{tabular} 
	\end{table}
}
\end{example}

\section{Proofs}
\subsection{Proof of Proposition \ref{A3eq}}
{\bf Proof of Proposition \ref{A3eq}.}

\noindent
$[A3']\Rightarrow [A3]$:

Since $\nu^{p,1}_n([0,t))\leq b_n^{-1}l_n$, 
$\{b_n^{-1}l_n\}_{n\in\mathbb{N}}$ is tight, $\int^t_0a_p(s)ds\leq \int^t_0a_0(s)ds$ and $\nu^{p,1}_n([0,t))$ converges to $\int^t_0a_p(s)ds$ in probability by $[A3']$ for $p\in \mathbb{Z}_+$,
we have $\sum_{p=0}^{\infty}z^{2p}\int^t_0a_p(s)ds<\infty$ and 
\begin{equation*}
b_n^{-1}A_n(\mathcal{E}^1(t),{\bf 0},{\bf 0},z{\bf 1})=\sum_{p=0}^{\infty}z^{2p}\nu^{p,1}_n([0,t)) \to^p \sum_{p=0}^{\infty}z^{2p}\int^t_0a_p(s)ds
\end{equation*}
for any $t\in(0,T]$ and $z\in\mathbb{C},|z|<1$ by Lemma \ref{sum-conv}.
\\

\noindent
$[A3]\Rightarrow [A3']$:

Fix $t\in (0,T]$. Let $\{f_n\}$ be functions on $\{z\in \mathbb{C};|z|<1\}$ satisfying
\begin{eqnarray}
f_n(z)=b_n^{-1}A_n(\mathcal{E}^1(t),{\bf 0},{\bf 0},z{\bf 1}) =\sum_{p=0}^{\infty}z^{2p}\nu_n^{p,1}([0,t)). \nonumber
\end{eqnarray}
Then since $\nu^{p,1}_n([0,t))\leq b_n^{-1} l_n$, the power series in the right-hand side converges absolutely on $\{|z|<1\}$.
Consequently, $f_n$ is the holomorphic function.
Then we have
\begin{equation}\label{GGTp-int}
\nu^{p,1}_n([0,t))=\frac{1}{2\pi i}\int_{|z|=\eta/3}\frac{f_n(z)}{z^{2p+1}}dz.
\end{equation}

Let $f(z)=\int^t_0a(z,s)ds$. 
Since
\begin{equation}\label{fn-ub}
|f_n(z)|\leq \frac{l_n}{b_n}\frac{1}{1-(2/3)^2}=\frac{9l_n}{5b_n}
\end{equation}
 on $\{|z|\leq 2/3\}$, 
$\{\sup_{|z|\leq 2/3} |f_n(z)|\}$ are tight.
Moreover, since $f_n(z)\to^p f(z) \ (n\to \infty)$ for $z\in\mathbb{C},|z|<\eta$, 
$\sup_{|z|\leq \eta/2} |f(z)|\leq (9/5)\int^T_0a_0(t)dt<\infty$, almost surely.
Therefore $\{\sup_{|z|\leq \eta/2} |f_n(z)-f(z)|\}$ are also tight. 

Let $\Gamma: |z|=2/3$. For any $z_1,z_2\in\{|z|<1/2\}$, the Cauchy integral formula gives
\begin{eqnarray}
2\pi|f_n(z_1)-f_n(z_2)|&= &\bigg|\int_{\Gamma}\left(\frac{f_n(\xi)}{\xi-z_1}-\frac{f_n(\xi)}{\xi-z_2}\right)d\xi\bigg|=\bigg|\int_{\Gamma}\frac{f_n(\xi)(z_1-z_2)}{(\xi-z_1)(\xi-z_2)}d\xi\bigg| \nonumber \\
&\leq & |z_1-z_2|\cdot 2\pi\cdot \frac{2}{3}\cdot 6^2\cdot\sup_{|z|\leq 2/3}|f_n(z)| \leq Cb_n^{-1}l_n|z_1-z_2|.\nonumber 
\end{eqnarray}
By the convergence $f_n(z)\to^p f(z) \ (n\to \infty)$ for $z\in\mathbb{C},|z|<\eta$, we obtain
\begin{eqnarray}
|f(z_1)-f(z_2)|&\leq &C |z_1-z_2|\int^T_0a_0(s)ds \quad {\rm a.s.}\nonumber
\end{eqnarray}
for $z_1,z_2\in\{z;|z|\leq \eta/2\}$.
Then for any $\epsilon>0$, tightness of $\{b_n^{-1}l_n\}$ gives
\begin{equation*}
\lim_{\eta'\to 0}\sup_nP\bigg[\sup_{z_1,z_2\in \{|z|\leq \eta/2 \},|z_1-z_2|<\eta'}|(f_n-f)(z_1)-(f_n-f)(z_2)|>\epsilon\bigg]=0.
\end{equation*}
Then by the tightness of $\{\sup_{|z|\leq \eta/2}|f_n(z)-f(z)|\}_n$ and tightness criterion in $C$ space in Billingsley \cite{bil} 
which can be extended to the one in $C(\{|z|\leq \eta/2\})$, $\{f_n-f\}_n$ is tight in $C(\{|z|\leq \eta/2\})$.
Therefore, since $f_n(z)\to^p f(z)$ as $n\to \infty$,
we see that $\{f_n-f\}$ converge in probability to $0$ in $C(\{|z|\leq \eta/2\})$.
Therefore by (\ref{GGTp-int}), we have 
\begin{equation*}
\nu^{p,1}_n([0,t))\to^p \frac{1}{2\pi i}\int_{|z|=\eta/3}\frac{f(z)}{z^{2p+1}}dz
\end{equation*}
as $n\to \infty$ for $p\geq 1$.

By the equation $f(z)=\int^t_0a(z,s)ds$ and Fubini's theorem, there exists $a_p(s)$ such that $\int^T_0a_p(s)ds<\infty$ and
$\nu^{p,1}_n([0,t))\to^p \int^t_0 a_p(s)ds$
as $n\to \infty$. We thus get $[A3']$.

Moreover, under $[A2]$ and $[A3]$, the above proof gives the relations between $a,c$ and $\{a_p\},\{c_p\}$ in the statement. 
The rest of the proof is easy since 
\begin{eqnarray}
&&b_n^{-1}A_n(x^2\mathcal{E}^1(t),y^2\mathcal{E}^2(t),xy\rho_{\ast}\mathcal{E}^1(t)G,\rho{\bf 1}) \nonumber \\
&=&\sum_{p=0}^{\infty}\rho^{2p}\{x^2\nu^{p,1}_n([0,t))+y^2\nu^{p,2}_n([0,t))-2xy\rho_{\ast}\rho\nu^{p+1,1}_n([0,t))\}. \nonumber
\end{eqnarray}
\qed

\subsection{Proof of Proposition \ref{Hn-lim}}
To prove Proposition \ref{Hn-lim}, we use some Lemmas. 
First, let 
\begin{eqnarray}
R&=&\max_{0\leq i\leq 4}\max_{0\leq j\leq 3}\max_{1\leq p\leq 2}\sup_{\sigma\in \Lambda,t_k\in [0,T] \ (1\leq k\leq n_2)}\bigg(| \partial_x^j\partial_{\sigma}^ib^p|\vee \bigg|\partial_x^j\partial_{\sigma}^i\frac{1}{| b^p|}\bigg|(\{X^k_{t_k}\}_{k=1}^{n_2},\sigma)\bigg) \nonumber \\
&&\vee \max_{1\leq i\leq 3}\sup_{t\in [0,T]}|\tilde{b}^i_t|\vee \max_{2\leq i\leq 3,1\leq j\leq 3}\sup_{t\in [0,T]}|\hat{b}^{ij}_t|, \nonumber
\end{eqnarray} 
then $R\geq 1$ and $E[R^q]<\infty$ for any $q>0$ under $[A1]$.


\begin{lemma}\label{q-sum-est}
Let $q\in\mathbb{N}$, $M\in \mathbb{N}\cup \{\infty\}$, $(\Omega',\mathcal{F}',P')$ be a probability space, $\{F_j\}_{j=1}^M$ be random variables, and $\mathcal{G}$ be a sub $\sigma$-field of $\mathcal{F}'$. 
\begin{enumerate}
\item Then $E'\big[\big|\sum_{j=1}^MF_j\big|^q\big|\mathcal{G}\big]\leq \big(\sum_{j=1}^ME'[|F_j|^q|\mathcal{G}]^{\frac{1}{q}}\big)^q$, where $E'$ denotes expectation with respect to $P'$.
\item We assume $q\in 2\mathbb{N}$ and $\{\sum_{j=1}^kF_j\}_{0\leq k\leq M}$ is martingale with respect to some filtration. 
Then there exists a constant $C_q>0$ which depends only $q$, such that 
$E'\big[\big|\sum_{j=1}^MF_j\big|^q\big]\leq C_q\big(\sum_{j=1}^ME'[|F_j|^q]^{\frac{2}{q}}\big)^{\frac{q}{2}}.$
\end{enumerate}
\end{lemma}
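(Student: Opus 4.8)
The plan is to read both statements as classical $L^q$ inequalities that are first established at the level of (conditional) norms and only then raised to the power $q$. Part 1 is nothing but the conditional Minkowski inequality, and part 2 follows by combining a discrete martingale moment bound with part 1 applied at exponent $q/2$.

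For item 1, I would work with the conditional seminorm $N_q(X):=E'[|X|^q\,|\,\mathcal{G}]^{1/q}$ and prove $N_q\big(\sum_j F_j\big)\le\sum_j N_q(F_j)$ pointwise a.s., which gives the claim after taking $q$-th powers. To avoid invoking regular conditional probabilities I would argue directly: for finitely many terms, bound $\big|\sum_j F_j\big|\le\sum_j|F_j|$ and then apply the conditional H\"older inequality with exponents $q$ and $q/(q-1)$ to each summand, obtaining
\[
E'\Big[\big|\textstyle\sum_j F_j\big|^q\,\big|\,\mathcal{G}\Big]\le\Big(\sum_j E'[|F_j|^q|\mathcal{G}]^{1/q}\Big)\,E'\Big[\big|\textstyle\sum_i F_i\big|^q\,\big|\,\mathcal{G}\Big]^{(q-1)/q}.
\]
On the set where the conditional moment is finite and nonzero one divides by $E'[|\sum_i F_i|^q|\mathcal{G}]^{(q-1)/q}$ to reach $N_q$; elsewhere the inequality is trivial (if the right-hand side is infinite there is nothing to prove, and finiteness of all $E'[|F_j|^q|\mathcal{G}]$ forces finiteness of the left side by convexity). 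For $M=\infty$ I would transfer the finite-$M$ bound to the limit by monotone convergence applied to the partial sums inside the conditional expectation.

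For item 2, since $q\in 2\mathbb{N}$ the process $S_k=\sum_{j=1}^k F_j$ is a martingale, so the discrete Burkholder--Davis--Gundy inequality supplies a constant $C_q$ depending only on $q$ with
\[
E'\big[|S_M|^q\big]\le C_q\,E'\Big[\Big(\sum_{j=1}^M F_j^2\Big)^{q/2}\Big].
\]
Because $q/2\in\mathbb{N}$ and $q/2\ge1$, I would then apply item 1 with $\mathcal{G}$ trivial, at exponent $q/2$, to the nonnegative variables $F_j^2$, using $E'[(F_j^2)^{q/2}]^{2/q}=E'[|F_j|^q]^{2/q}$, to get
\[
E'\Big[\Big(\sum_{j=1}^M F_j^2\Big)^{q/2}\Big]\le\Big(\sum_{j=1}^M E'[|F_j|^q]^{2/q}\Big)^{q/2}.
\]
Chaining the two displays yields the asserted bound with the same $C_q$.

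The genuinely routine parts are the H\"older bookkeeping in item 1 and the identification of the quadratic variation $\sum_j F_j^2$ in item 2. The only points needing real care are the passage to $M=\infty$ in the conditional Minkowski step and the precise invocation of the martingale moment inequality in its discrete-time form, which is exactly where the martingale hypothesis of item 2 is used; everything else is bounded by reducing to item 1.
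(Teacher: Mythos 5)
Your proposal is correct, and your part 2 is exactly the paper's argument: the discrete Burkholder--Davis--Gundy inequality followed by an application of part 1 with $\mathcal{G}=\{\emptyset,\Omega'\}$ at exponent $q/2$ to the variables $F_j^2$. Part 1, however, is proved by a genuinely different route. The paper exploits the integrality of $q$ directly: it expands
$\big|\sum_j F_j\big|^q\le\sum_{j_1,\ldots,j_q}|F_{j_1}|\cdots|F_{j_q}|$,
applies the $q$-fold conditional H\"older inequality to each product, and factorizes the resulting sum. You instead prove the conditional Minkowski inequality by the classical normalization trick
($|\sum_jF_j|^q\le\sum_j|F_j|\,|\sum_iF_i|^{q-1}$, conditional H\"older, then division by $E'[|\sum_iF_i|^q\,|\,\mathcal{G}]^{(q-1)/q}$). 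Your route has the advantage of working for any real $q\ge 1$, not just integers, but it is the one that forces the case analysis you correctly flag (zero or infinite conditional moment, all on $\mathcal{G}$-measurable events) and a separate limiting argument for $M=\infty$; the paper's expansion sidesteps both issues, since every term is nonnegative, so the countable case follows from conditional monotone convergence term by term with no division anywhere. One small imprecision in your write-up: for $M=\infty$ you cannot apply monotone convergence to the partial sums $\sum_{j\le N}F_j$ themselves, since the $F_j$ are signed; you should first prove the bound for the absolute values, use conditional monotone convergence on the increasing sums $\sum_{j\le N}|F_j|$, and then conclude via $|\sum_jF_j|\le\sum_j|F_j|$ on the event where the right-hand side is finite (which also guarantees the series converges absolutely a.s. there). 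This is an immediate fix, not a gap in substance.
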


\begin{proof}
We expand the summation and use H${\rm \ddot{o}}$lder's inequality. 
\begin{eqnarray}
E'\big[\big|\sum_{j=1}^MF_j\big|^q\big|\mathcal{G}\big]&\leq &\sum_{j_1,\ldots,j_q=1}^ME'[|F_{j_1}\ldots F_{j_q}||\mathcal{G}]\leq \sum_{j_1,\ldots,j_q=1}^ME'[|F_{j_1}|^q|\mathcal{G}]^{\frac{1}{q}}\ldots E'[|F_{j_q}|^q|\mathcal{G}]^{\frac{1}{q}} \nonumber \\
&\leq & \Big(\sum_{j=1}^ME'[|F_j|^q|\mathcal{G}]^{\frac{1}{q}}\Big)^q. \nonumber
\end{eqnarray}
For $2$., we use the Burkholder-Devis-Gundy inequality and apply $1$. for $\mathcal{G}=\{\emptyset, \Omega'\}$.
\end{proof}

\begin{lemma}\label{Fp-est}
Let $\{G_p\}_{p\in\mathbb{Z}_+}$ be a sequence of positive numbers,
$a\in\mathbb{N}, b,r,s\in\mathbb{Z}_+$ and $\rho\in[0,1)$.
Then there exists a constant $C>0$ which depends only on $a,b,r,s$ such that
\begin{equation*}
\sum_{p=0}^{\infty}\rho^{a(p-b)\vee 0}(p+1)^sG_p\leq C(1-\rho)^{-(s+\frac{r+1}{2})}\bigg(\sum_{p=0}^{\infty}\frac{G_p^2}{(p+1)^r}\bigg)^{\frac{1}{2}}.
\end{equation*} 
\end{lemma}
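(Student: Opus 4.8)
The plan is to reduce everything to an elementary generating-function estimate by a single application of the Cauchy--Schwarz inequality. First I would factor the summand as
\[
\rho^{a(p-b)\vee 0}(p+1)^sG_p = \Big(\rho^{a(p-b)\vee 0}(p+1)^{s+r/2}\Big)\cdot\frac{G_p}{(p+1)^{r/2}},
\]
and apply Cauchy--Schwarz over $p\in\mathbb{Z}_+$. The second factor yields exactly $\big(\sum_p G_p^2/(p+1)^r\big)^{1/2}$, so the whole problem collapses to establishing the deterministic bound
\[
W:=\sum_{p=0}^{\infty}\rho^{2(a(p-b)\vee 0)}(p+1)^{2s+r}\leq C(1-\rho)^{-(2s+r+1)},
\]
with $C$ depending only on $a,b,r,s$. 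Taking square roots then produces the claimed exponent, since $(2s+r+1)/2 = s+\tfrac{r+1}{2}$.

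Next I would split $W$ according to whether $p\le b$ or $p>b$. For $0\le p\le b$ the exponent $(p-b)\vee 0$ vanishes, so these terms sum to $\sum_{p=0}^{b}(p+1)^{2s+r}$, a finite constant absorbed into $C$ (harmless since $(1-\rho)^{-(2s+r+1)}\ge 1$). For $p>b$ I would substitute $m=p-b\ge 1$ and use $(p+1)^{2s+r}=(m+b+1)^{2s+r}\le C(m+1)^{2s+r}$ to reduce to bounding $\sum_{m\ge 1}t^m(m+1)^{2s+r}$ with $t=\rho^{2a}\in[0,1)$.

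Finally I would invoke the standard power-series estimate: writing $k=2s+r$ and using $(m+1)^k\le k!\binom{m+k}{k}$ together with $\sum_{m\ge0}\binom{m+k}{k}t^m=(1-t)^{-(k+1)}$ gives $\sum_{m\ge0}(m+1)^k t^m\le k!\,(1-t)^{-(k+1)}$. The last point is to translate $(1-t)^{-1}$ back into $(1-\rho)^{-1}$: since $a\ge 1$ and $\rho\in[0,1)$ we have $\rho^{2a}\le\rho$, hence $1-t=1-\rho^{2a}\ge 1-\rho$ and therefore $(1-t)^{-(k+1)}\le(1-\rho)^{-(k+1)}=(1-\rho)^{-(2s+r+1)}$. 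This yields $W\le C(1-\rho)^{-(2s+r+1)}$ and closes the argument. There is no serious obstacle here; the only things to watch are the bookkeeping of exponents through the Cauchy--Schwarz split and the elementary inequality $1-\rho^{2a}\ge 1-\rho$ that converts the spectral gap at $t$ into the gap at $\rho$.
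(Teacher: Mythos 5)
Your proposal is correct and follows essentially the same route as the paper: a single Cauchy--Schwarz application with the same factorization, followed by splitting the auxiliary sum at $p=b$ and bounding $\sum_p \rho^{2ap}(p+1)^{2s+r}$ by $C(2s+r)!\,(1-\rho^{2a})^{-(2s+r+1)}$, then using $1-\rho^{2a}\ge 1-\rho$. Your write-up merely makes explicit the steps (the shift $m=p-b$, the binomial-series bound, and the conversion from $\rho^{2a}$ to $\rho$) that the paper leaves implicit.
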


\begin{proof}
By the Cauchy-Schwarz inequality, we have
\begin{equation*}
\sum_{p=0}^{\infty}\rho^{a(p-b)\vee 0}(p+1)^sG_p\leq \bigg(\sum_{p=0}^{\infty}\rho^{2a(p-b)\vee 0}(p+1)^{2s+r}\bigg)^{\frac{1}{2}}\bigg(\sum_{p=0}^{\infty}\frac{G^2_p}{(p+1)^r}\bigg)^{\frac{1}{2}}.
\end{equation*}
Then the conclusion follows since 
\begin{equation*}
\sum_{p=0}^{\infty}\rho^{2a(p-b)\vee 0}(p+1)^{2s+r}\leq C+C\sum_{p=0}^{\infty}\rho^{2ap}(p+1)^{2s+r}\leq C+C\frac{(2s+r)!}{(1-\rho^{2a})^{2s+r+1}}.
\end{equation*}
\end{proof}

\begin{lemma}\label{prob-conv}
Let $(\Omega',\mathcal{F}',P')$ be probability space and $\{\mathcal{G}_n\}_{n\in\mathbb{N}}\subset \mathcal{F}'$ be sub $\sigma$-fields.
\begin{enumerate}
\item Let $\{X'_n\}_{n\in\mathbb{N}}\subset L^1(\Omega')$. 
Assume $E'[|X'_n||\mathcal{G}_n]\to^p 0 \ (n\to \infty)$. 
Then $X'_n\to^p 0 \ ( n\to \infty)$. 
\item Let $d_1,d_2\in\mathbb{N}$, $p>d_1$, $\Lambda'\subset \mathbb{R}^{d_1}$ be a bounded open set and $X'_n : \Omega' \to C^1(\Lambda';\mathbb{R}^{d_2})$ be random variables $(n\in\mathbb{N})$.
Assume that $\Lambda'$ satisfies Sobolev's inequality, $\{\sup_{\sigma\in\Lambda'}|X'_n(\sigma)|^p\vee |\partial_{\sigma}X'_n(\sigma)|^p\}_{n\in\mathbb{N}}\subset L^1(\Omega')$ and 
$\sup_{\sigma\in\mathbb{Q}^{d_1}\cap\Lambda'}E'[|\partial_{\sigma}X'_n(\sigma)|^p\vee |X'_n(\sigma)|^p|\mathcal{G}_n]\to^p 0$ as $n\to \infty$.
Then $\sup_{\sigma\in\Lambda'}|X'_n(\sigma)|\to^p 0$ as $n\to \infty$.
\end{enumerate}
\end{lemma}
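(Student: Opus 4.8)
The plan is to prove the two parts in order, deriving part 2 from part 1.

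For part 1, I would combine the conditional Markov inequality with a truncation on the $\mathcal{G}_n$-measurable quantity $Z_n:=E'[|X'_n|\mid\mathcal{G}_n]$. Fix $\epsilon,\delta>0$. Since $\{Z_n\le\delta\}\in\mathcal{G}_n$,
\begin{equation*}
P'[|X'_n|>\epsilon,\ Z_n\le\delta]=E'\big[1_{\{Z_n\le\delta\}}E'[1_{\{|X'_n|>\epsilon\}}\mid\mathcal{G}_n]\big]\le\epsilon^{-1}E'[1_{\{Z_n\le\delta\}}Z_n]\le\epsilon^{-1}\delta.
\end{equation*}
Hence $P'[|X'_n|>\epsilon]\le\epsilon^{-1}\delta+P'[Z_n>\delta]$; the assumption $Z_n\to^p 0$ kills the last term as $n\to\infty$, and letting $\delta\downarrow0$ gives $\limsup_nP'[|X'_n|>\epsilon]=0$, which is the claim.

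For part 2, the first step is to apply the Sobolev inequality assumed for $\Lambda'$ to the a.s.\ $C^1$ map $X'_n$, giving
\begin{equation*}
\sup_{\sigma\in\Lambda'}|X'_n(\sigma)|\le C\big(\|X'_n\|_{L^p(\Lambda')}+\|\partial_\sigma X'_n\|_{L^p(\Lambda')}\big)\le C\,W_n^{1/p},\quad W_n:=\int_{\Lambda'}\big(|X'_n(\sigma)|^p+|\partial_\sigma X'_n(\sigma)|^p\big)\,d\sigma,
\end{equation*}
so it suffices to prove $W_n\to^p 0$. The domination hypothesis gives $W_n\le2|\Lambda'|\sup_{\sigma\in\Lambda'}(|X'_n(\sigma)|^p\vee|\partial_\sigma X'_n(\sigma)|^p)\in L^1(\Omega')$, so I would apply part 1 to $W_n$: it is enough that $E'[W_n\mid\mathcal{G}_n]\to^p 0$. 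By a conditional Tonelli argument and joint measurability of $(\omega,\sigma)\mapsto X'_n(\sigma)(\omega)$,
\begin{equation*}
E'[W_n\mid\mathcal{G}_n]=\int_{\Lambda'}E'\big[|X'_n(\sigma)|^p+|\partial_\sigma X'_n(\sigma)|^p\mid\mathcal{G}_n\big]\,d\sigma\le2|\Lambda'|\sup_{\sigma\in\Lambda'}E'\big[|X'_n(\sigma)|^p\vee|\partial_\sigma X'_n(\sigma)|^p\mid\mathcal{G}_n\big],
\end{equation*}
and the remaining task is to replace the supremum over $\Lambda'$ by the supremum over $\mathbb{Q}^{d_1}\cap\Lambda'$, which vanishes in probability by hypothesis.

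The main obstacle is exactly this last replacement, which requires a version of $\sigma\mapsto\psi_n(\sigma):=E'[|X'_n(\sigma)|^p\vee|\partial_\sigma X'_n(\sigma)|^p\mid\mathcal{G}_n]$ that is a.s.\ continuous in $\sigma$, for then its supremum over $\Lambda'$ agrees with that over the dense set $\mathbb{Q}^{d_1}\cap\Lambda'$. Such a version exists because $X'_n\in C^1(\Lambda')$ makes $\sigma\mapsto|X'_n(\sigma)|^p\vee|\partial_\sigma X'_n(\sigma)|^p$ a.s.\ continuous and bounded by the integrable majorant $G_n:=\sup_\tau(|X'_n(\tau)|^p\vee|\partial_\sigma X'_n(\tau)|^p)$, so conditional dominated convergence controls the oscillation of $\psi_n$. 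For the $|X'_n|^p$-part I would make this quantitative via the mean value theorem and $\big|\,|a|^p-|b|^p\,\big|\le p(|a|\vee|b|)^{p-1}|a-b|$, which together with $a^{p-1}b\le a^p\vee b^p$ yields a Lipschitz bound of the form $p\,E'[G_n\mid\mathcal{G}_n]\,|\sigma-\sigma'|$ with an a.s.\ finite random constant; the $|\partial_\sigma X'_n|^p$-part is handled by the uniform continuity of $\partial_\sigma X'_n$ on compact subsets of $\Lambda'$. Extending $\psi_n$ continuously from $\mathbb{Q}^{d_1}\cap\Lambda'$ then produces the required continuous version, and combining the three displays with part 1 closes the argument.
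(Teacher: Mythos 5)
Your proof is correct, and its skeleton --- conditional Markov inequality for part 1, then pathwise Sobolev embedding plus part 1 plus density of $\mathbb{Q}^{d_1}\cap\Lambda'$ for part 2 --- matches the paper's. Part 1 is essentially identical to the paper's argument. The one place where you genuinely diverge is the justification of the key exchange
\begin{equation*}
E'\Big[\int_{\Lambda'}\big(|X'_n(\sigma)|^p+|\partial_\sigma X'_n(\sigma)|^p\big)\,d\sigma\,\Big|\,\mathcal{G}_n\Big]\;\leq\; 2|\Lambda'|\sup_{\sigma\in\mathbb{Q}^{d_1}\cap\Lambda'}E'\big[|X'_n(\sigma)|^p\vee|\partial_\sigma X'_n(\sigma)|^p\,\big|\,\mathcal{G}_n\big]\quad {\rm a.s.}
\end{equation*}
You get it from conditional Tonelli followed by constructing an a.s.\ continuous version of $\sigma\mapsto E'[\,\cdot\,|\mathcal{G}_n]$, with a mean-value/Lipschitz bound for the $|X'_n|^p$ part and conditional dominated convergence for the $|\partial_\sigma X'_n|^p$ part. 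The paper avoids constructing versions altogether: it integrates against an arbitrary test set $A\in\mathcal{G}_n$, applies ordinary Fubini, bounds the integrand for each \emph{fixed} $\sigma$ by the rational supremum (this is where continuity in $\sigma$ and the integrable majorant enter, exactly as in your conditional-DCT step), and then reads off the conditional inequality from the arbitrariness of $A$; it then applies part 1 to $(\sup_\sigma|X'_n|)^p$ rather than to your $W_n$, an immaterial reordering. Both routes are valid, but the test-set device is leaner: it only needs the a.s.\ bound pointwise in $\sigma$, with one null set per $\sigma$ absorbed by the $d\sigma$-integration, whereas your continuous-version construction needs a single null set valid for all $\sigma$ simultaneously --- which your quantitative modulus estimates do deliver, provided you apply the mean-value bound only locally, on balls contained in $\Lambda'$, since $\Lambda'$ is not assumed convex.
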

\begin{proof}
$1$. For any $\epsilon, \delta >0$, there exists $N\in\mathbb{N}$ such that $P'[E'[|X'_n||\mathcal{G}_n]\geq \epsilon\delta/2]<\epsilon/2$ for $n\geq N$.
Therefore, for $n\geq N$, we have
\begin{eqnarray}
P'[|X'_n|\geq \delta]&\leq & P'[E'[|X'_n||\mathcal{G}_n]\geq \epsilon \delta/2]+P'[|X'_n|\geq \delta, E'[|X'_n||\mathcal{G}_n]<\epsilon\delta/2] \nonumber \\
&\leq & \frac{\epsilon}{2}+\frac{1}{\delta}E'[|X'_n|,E'[|X'_n||\mathcal{G}_n]<\epsilon\delta/2] \nonumber \\
&=& \frac{\epsilon}{2}+\frac{1}{\delta}E'[E'[|X'_n||\mathcal{G}_n],E'[|X'_n||\mathcal{G}_n]<\epsilon\delta/2]\leq \epsilon. \nonumber 
\end{eqnarray}
\\
$2$. First, by Sobolev's inequality, we have
\begin{eqnarray}
E'\bigg[\bigg(\sup_{\sigma\in\Lambda'}|X'_n|\bigg)^p\bigg|\mathcal{G}_n\bigg]\leq CE'\left[\int_{\Lambda'}|\partial_{\sigma}X'_n|^pd\sigma \bigg|\mathcal{G}_n\right]+CE'\left[\int_{\Lambda'}|X'_n|^pd\sigma \bigg| \mathcal{G}_n\right]. \nonumber
\end{eqnarray}
Moreover, for $v=0,1$ and $A\in \mathcal{G}_n$, it follows that
\begin{eqnarray}
&&E'\bigg[\int_{\Lambda'}|\partial^v_{\sigma}X'_n|^pd\sigma,A\bigg]=\int_{\Lambda'}E'[E'[|\partial^v_{\sigma}X'_n|^p|\mathcal{G}_n],A]d\sigma \nonumber \\
&\leq &\int_{\Lambda'}E'\bigg[\sup_{\sigma\in\mathbb{Q}^{d_1}\cap \Lambda'}E'[|\partial^v_{\sigma}X'_n|^p|\mathcal{G}_n],A\bigg]d\sigma \leq E'\bigg[\sup_{\sigma\in\mathbb{Q}^{d_1}\cap \Lambda'}E'[|\partial^v_{\sigma}X'_n|^p|\mathcal{G}_n]\cdot |\Lambda'|,A\bigg], \nonumber
\end{eqnarray}
where $|\Lambda'|$ denotes volume of $\Lambda'$.
Since $A\in\mathcal{G}_n$ is arbitrary, we have
\begin{equation*}
E'\bigg[\int_{\Lambda'}|\partial^v_{\sigma}X'_n|^pd\sigma \bigg| \mathcal{G}_n\bigg]\leq \sup_{\sigma\in \mathbb{Q}^{d_1}\cap \Lambda'}E'[|\partial^v_{\sigma}X'_n|^p|\mathcal{G}_n]\cdot |\Lambda'| \quad {\rm a.s.}
\end{equation*}
Therefore we obtain
\begin{equation*}
E'\bigg[\bigg(\sup_{\sigma}|X'_n|\bigg)^p\bigg|\mathcal{G}_n\bigg]\leq C|\Lambda'|\sum_{v=0}^1\sup_{\sigma\in\mathbb{Q}^{d_1}\cap \Lambda'}E'[|\partial^v_{\sigma}X'_n|^p|\mathcal{G}_n]\to^p 0
\end{equation*}
as $n\to \infty$. Then the proof is completed by $1$.
\end{proof}

\begin{lemma}\label{Burk3}
Let $(\Omega',\mathcal{F}',P')$ be a probability space, $T'>0$, $q\in 2\mathbb{N}$, $\{\mathcal{F}'_t\}_{0\leq t\leq T'}$ be a filtration, 
$M\in \mathbb{N}$, $\{K^i\}_{i=1}^M$ be a deterministic partition of $[0,T']$ where $L(K^i)<L(K^j)$ for $i<j$.
Let $(\tilde{W}^l_t,\mathcal{F}'_t)_{0\leq t\leq T'}$ be standard Brownian motions $(l=1,2,3)$, and 
$F_{i,j,k}$ be $\mathcal{F}'_{L(K^i)\wedge L(K^j)\wedge L(K^k)}$-measurable random variables. 
Assume $\langle \tilde{W}^{p_1},\tilde{W}^{p_2}\rangle$ are deterministic for $1\leq p_1<p_2\leq 3 $.  
Then for $\Delta \tilde{W}^l_i=\tilde{W}^l(K^i)$, 
$F^1_{i,j}=F_{i,j,j}, F^2_{i,j}=F_{j,i,j}$ and $F^3_{i,j}=F_{j,j,i}$,
there exists a constant $C_q>0$ which depends only on $q$ such that
\begin{eqnarray}
E'\bigg[\bigg|\sum_{i,j,k}\Delta \tilde{W}^1_i\Delta \tilde{W}^2_j\Delta \tilde{W}^3_kF_{i,j,k}\bigg|^q\bigg] &\leq &C_q\bigg(\sum_{i,j,k}|K^i||K^j||K^k|E'[|F_{i,j,k}|^q]^{\frac{2}{q}}\bigg)^{\frac{q}{2}} \nonumber \\
&&+C_q\bigg(\sum_i|K^i|\bigg(\sum_{j\neq i}|K^j|\sum_{l=1}^3E'[|F^l_{i,j}|^q]^{\frac{1}{q}}\bigg)^2\bigg)^{\frac{q}{2}}.\nonumber 
\end{eqnarray} 
\end{lemma}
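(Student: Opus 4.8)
The plan is to evaluate the triple sum by repeatedly isolating the increment over the latest interval as a martingale differential and applying the two inequalities of Lemma~\ref{q-sum-est}: the conditional Minkowski bound (part 1) for sums carrying no outstanding Brownian factor, and the Burkholder-Davis-Gundy bound (part 2) for genuine martingale transforms. Two elementary facts drive the argument. First, for a single increment $\Delta\tilde{W}^l_i$ with an $\mathcal{F}'_{L(K^i)}$-measurable coefficient $C_i$, the partial sums $\sum_i \Delta\tilde{W}^l_iC_i$ form a martingale, so Lemma~\ref{q-sum-est}.2 together with $E'[|\Delta\tilde{W}^l_i|^q]=c_q|K^i|^{q/2}$ and independence of increments from the past produces one factor $|K^i|$ and reduces the estimate to the $L^q$-norm of $C_i$. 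Second, on a single interval the product $\Delta\tilde{W}^{p_1}_i\Delta\tilde{W}^{p_2}_i$ splits as $\langle\tilde{W}^{p_1},\tilde{W}^{p_2}\rangle(K^i)+N_i$, where the bracket is deterministic with $|\langle\tilde{W}^{p_1},\tilde{W}^{p_2}\rangle(K^i)|\le|K^i|$ and $N_i$ is a martingale increment over $K^i$ with $E'[|N_i|^q]\le C_q|K^i|^q$; moreover the triple product $\Delta\tilde{W}^1_i\Delta\tilde{W}^2_i\Delta\tilde{W}^3_i$ is itself a martingale increment, since the conditional third moment of the jointly Gaussian vector of increments vanishes, and $E'[|\Delta\tilde{W}^1_i\Delta\tilde{W}^2_i\Delta\tilde{W}^3_i|^q]\le C_q|K^i|^{3q/2}$.

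\textbf{Decomposition.} I would split $S=\sum_{i,j,k}\Delta\tilde{W}^1_i\Delta\tilde{W}^2_j\Delta\tilde{W}^3_kF_{i,j,k}$ according to the coincidence and ordering pattern of $(i,j,k)$: whether the maximal index is attained by exactly one, exactly two, or all three coordinates, and, among strictly distinct coordinates, by their relative order. When the three relevant intervals are distinct, the coefficient $F_{i,j,k}$ being measurable at the smallest left endpoint guarantees that after peeling the latest increment (BDG in that index) the surviving coefficient is adapted at the start of the next-latest interval; iterating the peel three times yields a bound by $C_q\big(\sum_{i,j,k}|K^i||K^j||K^k|E'[|F_{i,j,k}|^q]^{2/q}\big)^{q/2}$ restricted to the ordered region, which is dominated by the first term on the right-hand side. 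The triple-coincidence region $i=j=k$ is handled directly by the vanishing-odd-moment fact above and contributes the diagonal part of the first term.

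\textbf{Diagonal pairs.} The remaining patterns are those in which exactly two coordinates share a common value; this is where the deterministic cross-variation enters. Writing the coincident pair as $\langle\tilde{W}^{p_1},\tilde{W}^{p_2}\rangle(K^j)+N_j$, the martingale part $N_j$ leaves two Brownian factors and, peeled together with the remaining increment, again lands in the first term; the bracket part leaves a single outstanding Brownian factor carried by the odd index. Peeling that factor by BDG contributes one power $|K^i|$ of the odd index, while the deterministic weight $|K^j|$ of the doubled index is summed by the conditional Minkowski inequality (Lemma~\ref{q-sum-est}.1) and then squared by the BDG bound, producing exactly $\sum_i|K^i|\big(\sum_{j\neq i}|K^j|E'[|F^l_{i,j}|^q]^{1/q}\big)^2$. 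The three possible coincident pairs account for $l=1,2,3$ and for the appearance of $F^1_{i,j}=F_{i,j,j}$, $F^2_{i,j}=F_{j,i,j}$, $F^3_{i,j}=F_{j,j,i}$, regardless of whether the odd index precedes or follows the doubled one; bounding each contribution by the full inner sum $\sum_{l}E'[|F^l_{i,j}|^q]^{1/q}$ gives the second term.

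\textbf{Conclusion and main difficulty.} Since there are only finitely many patterns and each is dominated by $C_q$ times the first or the second term, summing yields the claim (extending the ordered index ranges to the full ranges only enlarges the nonnegative right-hand sides). The delicate point is not any single estimate but the bookkeeping that forces the correct weights: recognizing that the triple coincidence still has conditional mean zero, that a collapsing correlated pair contributes a deterministic factor rather than an extra martingale, and that in every such collapse it is the odd index that retains a single power $|K^i|$ while the doubled index is squared after combining Minkowski and BDG. Keeping the measurability of $F_{i,j,k}$ at the minimal left endpoint aligned with the order in which the increments are peeled is precisely what makes each peeling step legitimate.
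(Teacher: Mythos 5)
Your proposal is correct and follows essentially the same route as the paper's proof: the paper likewise splits the sum by coincidence pattern (triple diagonal, a pair plus an odd index in either order, all indices distinct), exploits the deterministic brackets so that the triple-diagonal and properly ordered terms are martingale differences in the latest index, compensates the pair terms whose doubled index comes last—your ``bracket part'' is exactly the paper's compensator $E'[\mathcal{H}^2_{j,i}|\mathcal{F}'_{L(K^i)}]=\sum_l g_l(K^i)\Delta\tilde{W}^l_jF^l_{j,i}$—and then combines the BDG and Minkowski inequalities of Lemma \ref{q-sum-est} with the same bookkeeping, the odd index receiving the single power $|K^i|$ and the doubled index being summed and squared. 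No gaps: your handling of both orderings of the collapsed pair and the absorption of the pair's martingale part into the diagonal $j=k$ portion of the first term match the paper's estimates (\ref{HK-est1})--(\ref{HK-est4}).
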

\begin{proof}
In this proof, we set general constants denoted by $C$ depend only on $q$.

Let us denote
\begin{equation*}
\mathcal{H}_{i,j,k}=\Delta \tilde{W}^1_i\Delta \tilde{W}^2_j\Delta \tilde{W}^3_kF_{i,j,k}, \quad \mathcal{H}^2_{i,j}=\mathcal{H}_{i,j,j}+\mathcal{H}_{j,i,j}+\mathcal{H}_{j,j,i},
\end{equation*}
\begin{equation*}
\mathcal{H}^3_{i,j,k}=\mathcal{H}_{i,j,k}+\mathcal{H}_{i,k,j}+\mathcal{H}_{j,i,k}+\mathcal{H}_{j,k,i}+\mathcal{H}_{k,i,j}+\mathcal{H}_{k,j,i},
\end{equation*}
then it follows that
\begin{equation*}
\sum_{i,j,k}\mathcal{H}_{i,j,k}=\sum_i\mathcal{H}_{i,i,i}+\sum_i\sum_{j<i}(\mathcal{H}^2_{i,j}+\mathcal{H}^2_{j,i})+\sum_i\sum_{j<i}\sum_{k<j}\mathcal{H}^3_{i,j,k}.
\end{equation*}
Since $\langle\tilde{W}^{p_1},\tilde{W}^{p_2}\rangle$ is deterministic for $1\leq p_1<p_2\leq 3$, Ito's formula yields
\begin{equation*}
E'[\mathcal{H}_{i,i,i}|\mathcal{F}_{L(K^i)}]=E'[\mathcal{H}^2_{i,j}|\mathcal{F}_{L(K^i)}]=E'[\mathcal{H}^3_{i,j,k}|\mathcal{F}_{L(K^i)}]=0
\end{equation*}
for $k<j<i$. Therefore by Lemma \ref{q-sum-est} we have
\begin{eqnarray}\label{HK-est}
E'\bigg[\bigg|\sum_{i,j,k}\mathcal{H}_{i,j,k}\bigg|^q\bigg] 
&\leq & C\bigg\{\sum_iE'[|\mathcal{H}_{i,i,i}|^q]^{\frac{2}{q}}+\sum_iE'\bigg[\bigg|\sum_{j<i}(\mathcal{H}^2_{i,j}+\mathcal{H}^2_{j,i}-E'[\mathcal{H}^2_{j,i}|\mathcal{F}'_{L(K^i)}])\bigg|^q\bigg]^{\frac{2}{q}} \nonumber \\
&&+\sum_iE'\bigg[\bigg|\sum_{j<i}\sum_{k<j}\mathcal{H}^3_{i,j,k}\bigg|^q\bigg]^{\frac{2}{q}}\bigg\}^{\frac{q}{2}}+CE'\bigg[\bigg|\sum_i\sum_{j<i}E'[\mathcal{H}^2_{j,i}|\mathcal{F}'_{L(K^i)}]\bigg|^q\bigg]. 
\end{eqnarray}
We will estimate each term of the right-hand side of (\ref{HK-est}). First, 
\begin{eqnarray}\label{HK-est1}
\sum_iE'[|\mathcal{H}_{i,i,i}|^q]^{\frac{2}{q}}&=&\sum_iE'[|F_{i,i,i}|^qE'[(\Delta \tilde{W}^1_i\Delta \tilde{W}^2_i\Delta \tilde{W}^3_i)^q|\mathcal{F}'_{L(K^i)}]]^{\frac{2}{q}} 
\leq  C\sum_i|K^i|^3E'[|F_{i,i,i}|^q]^{\frac{2}{q}}.
\end{eqnarray}
Let 
\begin{eqnarray}
\mathcal{H}^{2,1}_{i,j}=\Delta \tilde{W}^2_j\Delta \tilde{W}^3_jF_{i,j,j}, \quad \mathcal{H}^{2,2}_{i,j}=\Delta \tilde{W}^1_j\Delta \tilde{W}^3_jF_{j,i,j}, \quad \mathcal{H}^{2,3}_{i,j}=\Delta \tilde{W}^1_j\Delta \tilde{W}^2_jF_{j,j,i}. \nonumber
\end{eqnarray}
Since
\begin{eqnarray}
E'\bigg[\bigg(\sum_{j<i}\mathcal{H}^{2,l}_{i,j}\bigg)^q\bigg]^{\frac{2}{q}}\leq  C\sum_{j<i}E'\big[(\mathcal{H}^{2,l}_{i,j}-E'[\mathcal{H}^{2,l}_{i,j}|\mathcal{F}'_{L(K^j)}])^q\big]^{\frac{2}{q}} + C\bigg(\sum_{j<i}E'\big[E'[\mathcal{H}^{2,l}_{i,j}|\mathcal{F}'_{L(K^j)}]^q\big]^{\frac{1}{q}}\bigg)^2 \nonumber
\end{eqnarray}
for each $i,l$ by Lemma \ref{q-sum-est}, we obtain 
\begin{eqnarray}\label{HK-est2}
&&\sum_iE'\bigg[\bigg(\sum_{j<i}(\mathcal{H}^2_{i,j}+\mathcal{H}^2_{j,i}-E'[\mathcal{H}^2_{j,i}|\mathcal{F}'_{L(K^i)}])\bigg)^q\bigg]^{\frac{2}{q}} \nonumber \\
&\leq & C\sum_i|K^i|E'\bigg[\sum_{l=1}^3\bigg(\sum_{j<i}\mathcal{H}^{2,l}_{i,j}\bigg)^q\bigg]^{\frac{2}{q}} +C\sum_i|K^i|^2E'\bigg[\sum_{l=1}^3\bigg(\sum_{j<i}\Delta \tilde{W}^l_jF^l_{j,i}\bigg)^q\bigg]^{\frac{2}{q}} \nonumber \\
&\leq &C\sum_i|K^i|\sum_{j<i}|K^j|^2\sum_{l=1}^3E'[(F^l_{i,j})^q]^{\frac{2}{q}}+C\sum_i|K^i|\sum_{l=1}^3\bigg(\sum_{j<i}|K^j|E'[(F^l_{i,j})^q]^{\frac{1}{q}}\bigg)^2 \nonumber \\
&&+C\sum_i|K^i|^2\sum_{j<i}|K^j|\sum_{l=1}^3E'[(F^l_{j,i})^q]^{\frac{2}{q}}.
\end{eqnarray}

Moreover, let 
\begin{eqnarray}
\mathcal{H}^{3,1}_{i,j,k}&=&\Delta \tilde{W}^2_j\Delta \tilde{W}^3_kF_{i,j,k}+\Delta \tilde{W}^2_k\Delta \tilde{W}^3_jF_{i,k,j}, \nonumber \\
\mathcal{H}^{3,2}_{i,j,k}&=&\Delta \tilde{W}^1_j\Delta \tilde{W}^3_kF_{j,i,k}+\Delta \tilde{W}^1_k\Delta \tilde{W}^3_jF_{k,i,j}, \nonumber \\
\mathcal{H}^{3,3}_{i,j,k}&=&\Delta \tilde{W}^1_j\Delta \tilde{W}^2_kF_{j,k,i}+\Delta \tilde{W}^1_k\Delta \tilde{W}^2_jF_{k,j,i}, \nonumber 
\end{eqnarray}
then by Lemma \ref{q-sum-est} we have
\begin{eqnarray}\label{HK-est3}
\sum_iE'\bigg[\bigg(\sum_{j<i}\sum_{k<j}\mathcal{H}^3_{i,j,k}\bigg)^q\bigg]^{\frac{2}{q}} 
&\leq & C\sum_i|K^i|\sum_{l=1}^3E'\bigg[\bigg(\sum_{j<i}\sum_{k<j}\mathcal{H}^{3,l}_{i,j,k}\bigg)^q\bigg]^{\frac{2}{q}} 
 \leq C\sum_i|K^i|\sum_{l=1}^3\sum_{j<i}E'\bigg[\bigg(\sum_{k<j}\mathcal{H}^{3,l}_{i,j,k}\bigg)^q\bigg]^{\frac{2}{q}} \nonumber \\
&\leq & C\sum_i|K^i|\sum_{j<i}|K^j|\sum_{k<j}|K^k|\Big(E'[(F_{i,j,k})^q]^{\frac{2}{q}}+E'[(F_{i,k,j})^q]^{\frac{2}{q}} \nonumber \\
&&+E'[(F_{j,i,k})^q]^{\frac{2}{q}}+E'[(F_{j,k,i})^q]^{\frac{2}{q}}+E'[(F_{k,i,j})^q]^{\frac{2}{q}}+E'[(F_{k,j,i})^q]^{\frac{2}{q}}\Big). 
\end{eqnarray}
Furthermore, let $g_1(K^i)=\langle\tilde{W}^2,\tilde{W}^3\rangle(K^i)$, $g_2(K^i)=\langle\tilde{W}^1,\tilde{W}^3\rangle(K^i)$, $g_3(K^i)=\langle\tilde{W}^1,\tilde{W}^2\rangle(K^i)$, then we obtain
\begin{eqnarray}\label{HK-est4}
E'\bigg[\bigg|\sum_i\sum_{j<i}E'[\mathcal{H}^2_{j,i}|\mathcal{F}'_{L(K^i)}]\bigg|^q\bigg] =E'\bigg[\bigg|\sum_i\sum_{j<i}\sum_{l=1}^3g_l(K^i)\Delta \tilde{W}^l_jF^l_{j,i}\bigg|^q\bigg] 
\leq  3^q\sum_{l=1}^3E'\bigg[\bigg|\sum_j\bigg(\sum_{i>j}g(K^i)F^l_{j,i}\bigg)\Delta \tilde{W}^l_j\bigg|^q\bigg]. \nonumber 
\end{eqnarray}
Hence Lemma \ref{q-sum-est} yields
\begin{eqnarray}
E'\bigg[\bigg|\sum_i\sum_{j<i}E'[\mathcal{H}^2_{j,i}|\mathcal{F}'_{L(K^i)}]\bigg|^q\bigg]&\leq & C\sum_{l=1}^3\bigg(\sum_j|K^j|E'\bigg[\bigg(\sum_{i>j}g_l(K^i)F^l_{j,i}\bigg)^q\bigg]^{\frac{2}{q}}\bigg)^{\frac{q}{2}} \nonumber \\
&\leq & C\bigg(\sum_j|K^j|\bigg(\sum_{i>j}|K^i|\sum_{l=1}^3E'[(F^l_{j,i})^q]^{\frac{1}{q}}\bigg)^2\bigg)^{\frac{q}{2}}. 
\end{eqnarray}
By (\ref{HK-est})-(\ref{HK-est4}), we obtain the conclusion.
\end{proof}

For $p\geq 0$, we denote
\begin{eqnarray}
L_p&=&\{\rho_{L(\theta_{[p/2],i}\cup\theta_{[p/2],j+l_n})\wedge \tau_n}G_{I^i,J^j}\}_{i,j}, \quad \tilde{L}_p=\left(
\begin{array}{ll}
0 & L_p \\
L^{\star}_p & 0 \\
\end{array}
\right),\quad \tilde{M}=\sum_{p=0}^{\infty}(-1)^p\tilde{L}_p^p, \nonumber 
\end{eqnarray}
\begin{equation*}
\tilde{M}_p=\left(
\begin{array}{ll}
(GG^{\star})^p \ \ (GG^{\star})^pG \\
(G^{\star}G)^pG^{\star} \ \ (G^{\star}G)^p \\
\end{array}
\right), \quad
\hat{Z}_{k,t}=\left\{
\begin{array}{ll}
\int_{I^k_t}b^1_{s,\ast}dW_s/(|b^1_{I^k,\tau_n}|\sqrt{|I^k|}) & (k\leq l_n) \\
\int_{J_t^{k-l_n}}b^2_{s,\ast}dW_s/\left(|b^2_{J^{k-l_n},\tau_n}|\sqrt{|J^{k-l_n}|}\right) & (k>l_n) \\
\end{array}
\right.
\end{equation*}
and $D_t={\rm diag}(\{|b^1_{I,\tau_n}|\}_{I\cap [0,t)\neq \emptyset},\{|b^2_{J,\tau_n}|\}_{J\cap [0,t)\neq \emptyset})$.
Though $\{\hat{Z}_{k,t}\}_t$ is not necessarily a local martingale, we denote by $\langle\hat{Z}\rangle_t$ 
the quadratic variation of $\hat{Z}$ regarding $\Pi$ as deterministic functions, that is,  
$\langle\hat{Z}\rangle_t$ be an $(l_n+m_n) \times (l_n+m_n)$ symmetric matrix with
\begin{eqnarray}
(\langle\hat{Z}\rangle_t)_{k,k'}=\left\{
\begin{array}{ll}
\int_{I^k_t}|b^1_{s,\ast}|^2ds\delta_{k,k'}/(|b^1_{I^k,\tau_n}|^2|I^k|) & (k,k'\leq l_n) \\
\int_{J^{k-l_n}_t}|b^2_{s,\ast}|^2ds\delta_{k,k'}/(|b^2_{J^{k-l_n},\tau_n}|^2|J^{k-l_n}|) & (k,k'> l_n) \\
\int_{I^k_t\cap J^{k'-l_n}_t}b^1_{s,\ast}\cdot b^2_{s,\ast}ds/(|b^1_{I^k,\tau_n}||b^2_{J^{k'-l_n},\tau_n}|\sqrt{|I^k||J^{k'-l_n}|} & (k\leq l_n, k'>l_n) 
\end{array}
\right. \nonumber
\end{eqnarray}
Moreover we define  
\begin{eqnarray}
\tilde{H}^1_n(t)&=&\tilde{H}^1_{n,s_n}(t;\sigma)= -\frac{1}{2}\hat{Z}_{\cdot,t}^{\star}\tilde{M}\hat{Z}_{\cdot,t}-\log\det D_t +\frac{1}{2}\sum_{p=1}^{\infty}\frac{(-1)^p}{p}\sum_k(\tilde{L}_p^p)_{k,k}1_{\{\theta_{0,k}\cap [0,t)\neq \emptyset\} }, \nonumber \\
\tilde{H}^2_n(t)&=&\tilde{H}^2_{n,s_n}(t;\sigma)=-\frac{1}{2}{\rm tr}(\tilde{M}\langle\hat{Z}\rangle_t)-\log\det D_t +\frac{1}{2}\sum_{p=1}^{\infty}\frac{(-1)^p}{p}\sum_k(\tilde{L}_p^p)_{k,k}1_{\{\theta_{0,k}\cap [0,t)\neq \emptyset\} }, \nonumber
\end{eqnarray}
\begin{equation*}
\tilde{H}^3_n(t)=\tilde{H}^3_{n,s_n}(t;\sigma)=b_n\sum_{p=0}^{\infty}\sum_{i=1}^2\int^t_0\mathcal{D}^i_p(s\wedge \tau(s_n),s;\sigma)\nu^{p,i}_n(ds),
\end{equation*}
$\hat{Z}_k=\hat{Z}_{k,T}$, and $\tilde{H}^i_n=\tilde{H}^i_n(T) \ ( 1\leq i\leq 3)$, where
\begin{equation*}
\mathcal{D}^i_p(s,t;\sigma)=\left\{
\begin{array}{ll}
-\frac{|b^i_{t,\ast}|^2}{2|b^i_{s}|^2}-\log |b^i_{s}| & (p=0) \\
-\frac{|b^i_{t,\ast}|^2}{2|b^i_{s}|^2}\rho_s^{2p}+\frac{1}{2}\left(\frac{|b^1_{t,\ast}||b^2_{t,\ast}|}{|b^1_s||b^2_s|}+\frac{|b^i_{t,\ast}|}{|b^i_s|}-\frac{|b^{3-i}_{t,\ast}|}{|b^{3-i}_s|}\right)\rho^{2p-1}_s\rho_{t,\ast}+\frac{\rho^{2p}_s}{4p} & (p\geq 1)
\end{array}
\right.
\end{equation*}
for $i=1,2$. Then we have $\partial_{\sigma}\tilde{H}^3_{n,s_n}(t;\sigma_{\ast})\equiv 0$ on $\{\tau(s_n)=T\}$.

Let $q\in 2\mathbb{N}$, $\gamma\in (0,1)$ be defined in $[A1] \ 5.$, 
$\Theta^1_p=\sup_{0\leq t\leq T}E[|\mu_t|^p]$, $\Theta^2_p=\sup_{0\leq s<t\leq T}E[|\mu_t-\mu_s|^p]/|t-s|^{p\gamma}$,
and $\{s_n(t)\}_{0\leq t\leq T,n\in\mathbb{N}}$ be stochastic processes which satisfies $[S]$. 
Moreover, we define $\varphi_q(\{x_p\})=(\sum_{p=0}^{\infty}x_p)^q\vee (\sum_{p=0}^{\infty}x_p^{2q/(2q-1)})^{q-1/2}$ for $\{x_p\}_{p=0}^{\infty}\subset \mathbb{R}_+$ and $q\in 2\mathbb{N}$.

\begin{lemma}\label{H1-diff}
Assume $[A1]$. Let $r\in\mathbb{N}, r\geq 2$. Then there exists a constant $C>0$ which depends only on $q,r,n_2$ and $n_3$ such that
\begin{eqnarray}\label{H1-diff-ineq}
&&E[|\partial^v_{\sigma}\hat{H}_{n}(\sigma;s_n)-\partial^v_{\sigma}\tilde{H}^1_{n,s_n}(T;\sigma)|^q|\Pi] \nonumber \\
&\leq & C(T^{\frac{3}{2}q}\vee 1)(E[R^C]+\Theta^1_C)E[R^Cs_n(T)^{-(2v+2r+7)q}|\Pi]^{\frac{1}{2}} \nonumber \\
&&\times \bigg\{(T\vee 1)^{\frac{q}{2}} +((r_n^{q(\gamma\wedge \frac{1}{2})}(l_n+m_n)^{\frac{q}{2}})\vee 1)\{(\Theta^1_{8q})^{\frac{1}{8}}+(\Theta^2_{8q})^{\frac{1}{8}}\} \nonumber \\
&&+\varphi_q\bigg(\bigg\{\frac{\sqrt{(l_n+m_n)\Phi_{2p+2,2}}\vee \Phi_{2p+2,1}}{(p+1)^r}\bigg\}_p\bigg)+\bigg(\sum_{p_1,p_2=0}^{\infty}\frac{\bar{\Phi}_{2p_1+3,2p_2+3}}{(p_1+1)^r(p_2+1)^r}
\bigg)^{\frac{q}{2}}\bigg\} 
\end{eqnarray}
and
\begin{eqnarray}
&&E[|\partial^v_{\sigma}\tilde{H}^2_{n,s_n}(T;\sigma)-\partial^v_{\sigma}\tilde{H}^3_{n,s_n}(T;\sigma)|^q|\Pi] \nonumber \\
&\leq & CE[R^C(1-\bar{\rho}_T)^{-C}]\bigg\{\bigg(\sum_{p=0}^{\infty}\frac{\Phi_{2p+2,1}}{(p+1)^r}\bigg)^q+\bigg(\sum_{p_1,p_2=0}^{\infty}\frac{\bar{\Phi}_{2p_1+2,2p_2+2}}{(p_1+1)^r(p_2+1)^r}
\bigg)^{\frac{q}{2}}\bigg\} \nonumber
\end{eqnarray}
for $0\leq v\leq 4$ and $\sigma\in\Lambda$.
\end{lemma}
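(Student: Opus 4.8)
The plan is to prove the two inequalities separately, in each case first cancelling the common term $\log\det D=\log\det D_T$ and then splitting the remaining difference into a quadratic-form piece and a trace piece, using the Neumann-series representation (\ref{Hn2}) of $\hat H_n$ together with the analogous series $\tilde M=\sum_{p\ge0}(-1)^p\tilde L_p^p$ built into $\tilde H^1_n$ and $\tilde H^2_n$. Thus for the first bound I would write, using $1_{\{\theta_{0,k}\cap[0,T)\neq\emptyset\}}\equiv1$,
\begin{equation*}
\partial_\sigma^v(\hat H_n-\tilde H^1_n)=-\tfrac12\,\partial_\sigma^v\bigl(Z^\star M Z-\hat Z^\star\tilde M\hat Z\bigr)+\tfrac12\sum_{p\ge1}\frac{(-1)^p}{p}\,\partial_\sigma^v\bigl({\rm tr}(\tilde L^p)-{\rm tr}(\tilde L_p^p)\bigr),
\end{equation*}
and split the quadratic-form piece once more as $(Z^\star MZ-\hat Z^\star M\hat Z)+(\hat Z^\star M\hat Z-\hat Z^\star\tilde M\hat Z)$, isolating the drift error (the $k$-th entry of $Z-\hat Z$ being $\int_{I^k}\mu^1_s\,ds/(|b^1_{I^k,\tau_n}|\sqrt{|I^k|})$) from the correlation-freezing error $M-\tilde M$.

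The core estimate is the $q$-th conditional moment of these quadratic forms ($q\in2\mathbb N$, so Burkholder applies). Conditioning on $\Pi$, I would apply the Cauchy--Schwarz inequality to peel off the $\mathcal F$-measurable coefficient factor: every $\sigma$-derivative of the normalizations $1/|b^i_{\cdot,\tau_n}|$ and of the entries $\rho_{I,J,\tau_n}$ is bounded by a power of $R$, while differentiating the series up to $v$ times produces $O((p+1)^v)$ products of $p$ factors whose operator norm is still governed by $\|\tilde L\|\le1-s_n(T)$, so the accumulated inverse powers of $s_n(T)$ yield the factor $E[R^Cs_n(T)^{-(2v+2r+7)q}\mid\Pi]^{1/2}$. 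The remaining, purely Gaussian, factor is a $q$-th moment of double and triple sums of the It\^o integrals $\hat Z_k$ against deterministic-given-$\Pi$ matrix entries, which I would bound with Lemma \ref{q-sum-est} and the triple-product estimate of Lemma \ref{Burk3}: the off-diagonal martingale terms contribute $\sqrt{(l_n+m_n)\Phi_{2p+2,2}}$, the diagonal bracket terms contribute $\Phi_{2p+2,1}$, and the mixed ${\rm tr}$-type terms contribute $\bar\Phi_{2p_1+3,2p_2+3}$. The combinatorial counts $\Phi,\bar\Phi$ arise because the entries of $\tilde L^p$ and $\tilde L_p^p$ are supported on chains of overlapping intervals, so a difference of frozen correlations $\rho_{I,J,\tau_n}-\rho_{L(\theta)\wedge\tau_n}$ is Lipschitz-bounded (mean value theorem and $[A1]$, controlled by $R$) by the length $|\theta_{[p/2],\cdot}|$ of the connected neighbourhood it spans; summing over $p$ is then handled by Lemma \ref{Fp-est} for the geometric $\rho^{2p}$ decay and packaged by $\varphi_q$, whose two arguments match the direct $(\sum_p x_p)^q$ and H\"older $(\sum_p x_p^{2q/(2q-1)})^{q-1/2}$ ways of summing the per-$p$ bounds.

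The drift error is handled by H\"older's inequality together with $[A1]$\,5: writing $\int_{I^k}\mu^1_s\,ds=\mu^1_{L(I^k)}|I^k|+\int_{I^k}(\mu^1_s-\mu^1_{L(I^k)})\,ds$, the leading part pairs with the stochastic integrals to give a martingale contribution controlled by $\Theta^1$, while the remainder is of order $|I^k|^{1+\gamma}\le r_n^{\gamma}|I^k|$ controlled by $\Theta^2$; combined over the $l_n+m_n$ intervals this produces $(r_n^{q(\gamma\wedge1/2)}(l_n+m_n)^{q/2})\vee1$ times $\{(\Theta^1_{8q})^{1/8}+(\Theta^2_{8q})^{1/8}\}$, the exponent $\gamma\wedge1/2$ reflecting whether the H\"older or the martingale scaling dominates. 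The trace piece ${\rm tr}(\tilde L^p)-{\rm tr}(\tilde L_p^p)$ is $\mathcal F\vee\sigma(\Pi)$-measurable, so no martingale inequality is needed: it is bounded entrywise by the same Lipschitz-in-time control of the frozen correlations, giving directly the $\bar\Phi_{2p_1+3,2p_2+3}$ double sum.

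For the second inequality the situation is simpler, since $\langle\hat Z\rangle$ is already the predictable bracket: here $\tilde H^2_n-\tilde H^3_n$ measures only the error of freezing the integrand from $b^i_{s,\ast}$ to $b^i_{t,\ast}$ and of recognising the interval sums as $\nu^{p,i}_n$-integrals of $\mathcal D^i_p$, so the estimate needs neither a Burkholder inequality nor inverse powers of $s_n(T)$; the Lipschitz bound on $b$ (via $R$) together with (\ref{rho-sup}) then yields the clean prefactor $E[R^C(1-\bar\rho_T)^{-C}]$ times the $\Phi_{2p+2,1}$ and $\bar\Phi_{2p_1+2,2p_2+2}$ sums. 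I expect the genuine obstacle to be the bookkeeping behind the first bound: correctly identifying, for each $p$ and each pair $(k,k')$, which overlapping-interval chains contribute to $(\tilde L^p-\tilde L_p^p)_{k,k'}$, bounding them uniformly in $p$ by neighbourhood lengths, and matching the resulting per-$p$ weights against the $(p+1)^{-r}$ factors so that Lemma \ref{Fp-est} and $\varphi_q$ force the $p$-series to converge --- all while tracking the exact power of $s_n(T)^{-1}$ generated by the $\partial_\sigma^v$ derivatives of the geometric series.
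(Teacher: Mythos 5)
Your skeleton does coincide with the paper's actual proof (the same three-way decomposition into correlation-freezing quadratic form, drift term and trace term; chains $\theta_{p,k}$; Lemma \ref{Burk3} for triple products; Lemma \ref{Fp-est} for the geometric $p$-sums; $\varphi_q$; drift handled through $[A1]$ 5 with $\Theta^1,\Theta^2$ and $r_n^{\gamma\wedge\frac12}$), but two of your load-bearing claims are wrong, and they sit exactly where the work is. First, the entries of $M-\tilde M$ and of $\tilde L^p-\tilde L_p^p$ are \emph{not} ``deterministic given $\Pi$'': they are functionals of $X$, hence driven by the same Wiener process as $Z$, and Lemma \ref{Burk3} requires its coefficients $F_{i,j,k}$ to be $\mathcal{F}'_{L(K^i)\wedge L(K^j)\wedge L(K^k)}$-measurable. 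To satisfy that hypothesis one must (i) represent each entry difference by It\^o's formula (via $[A1]$ 6) as $\int_{\theta_{2p+2,k}}\xi^{k,k'}_{p,t}\cdot d\mathcal{W}_t+\int_{\theta_{2p+2,k}}\eta^{k,k'}_{p,t}dt$, and (ii) replace $\xi$ and the normalized increments $Z_k$ by versions frozen at the left endpoints of the chains ($\tilde\xi$, $\tilde Z_{k,p}$ in the paper), which generates two additional error terms (the paper's $R_1$, $R_2$) needing their own estimates; the H\"older branch $(\sum_p x_p^{2q/(2q-1)})^{q-1/2}$ of $\varphi_q$ exists precisely to absorb one of them. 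Your Cauchy--Schwarz ``peeling'' cannot substitute for this: once the coefficient factor is decoupled from the Gaussian factor, the orthogonality that produces the $\Phi_{2p+2,1}$ and $\bar\Phi$ scalings is gone.

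Second, your treatment of the trace term (and of the second inequality) would fail quantitatively. The paths of $X$ are not Lipschitz in time; the mean value theorem bounds $\rho_{I,J,\tau_n}-\rho_{L(\theta)\wedge\tau_n}$ by an increment of $X$, whose conditional $L^q$ norm scales like $|\theta_{2p+2,k}|^{1/2}$, not $|\theta_{2p+2,k}|$. Summing such entrywise bounds over $k$ without any martingale structure gives $\sum_k|\theta_{2p+2,k}|^{1/2}$, which is of order $\sqrt{(l_n+m_n)\Phi_{2p+2,1}}\sim\sqrt{b_n}$, while every term on the right-hand side of (\ref{H1-diff-ineq}) remains bounded as $n\to\infty$; so the ``direct'' entrywise route cannot close. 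Both the trace difference and $\tilde H^2_{n,s_n}-\tilde H^3_{n,s_n}$ must retain their stochastic-integral representation, and the Burkholder--Davis--Gundy inequality is what converts the sum over $k$ into a square function, $\int|\sum_k\xi^{k,k}_{p,t}1_{\theta_{2p+2,k}}(t)|^2dt\le\sup_t|\xi|^2\sum_{k_1,k_2}|\theta_{2p+2,k_1}\cap\theta_{2p+2,k_2}|$, i.e.\ into the $\bar\Phi$ sums raised to the power $q/2$. This is why the paper's proof of the second inequality also goes through an It\^o decomposition ($\check\xi^{k,k'}_{p,t}$, $\check\eta^{k,k'}_{p,t}$) followed by martingale moment bounds, contrary to your assertion that neither a Burkholder-type inequality nor any cancellation is needed there.
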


\begin{proof}
In this proof, we set general constants denoted by $C$ depend only on $q,r,n_2,n_3$.

We first prove (\ref{H1-diff-ineq}). Let 
\begin{eqnarray}
\tilde{\mu}_k=\left\{
\begin{array}{ll}
\int_{I^k}\mu^1_sds/(|b^1_{I^k,\tau_n}|\sqrt{|I^k|}) & (k\leq l_n) \\
\int_{J^{k-l_n}}\mu^2_sds/(|b^2_{J^{k-l_n},\tau_n}|\sqrt{|J^{k-l_n}|}) & (k>l_n) \\
\end{array}
\right. \nonumber
\end{eqnarray} 
for $1\leq k \leq l_n+m_n$. Then $\hat{H}_n(\sigma;s_n)-\tilde{H}_{n,s_n}^1(T;\sigma)$ can be decomposed as 
\begin{equation*}
\hat{H}_n(\sigma;s_n)-\tilde{H}_{n,s_n}^1(T;\sigma)=-\frac{1}{2}Z^{\star}(M-\tilde{M})Z-\tilde{\mu}^{\star}\tilde{M}(\hat{Z}+\frac{\tilde{\mu}}{2})+\frac{1}{2}\sum_{p=1}^{\infty}\frac{(-1)^p}{p}({\rm tr}(\tilde{L}^p)-{\rm tr}(\tilde{L}_p^p)).
\end{equation*}
We will estimate each term of the right-hand side of this equation.

First, we assume that $\Pi$ is deterministic. Let $\mathcal{W}_t=(W_t,\hat{W}_t)$, then we can write
\begin{equation*}
((\tilde{L})^{2p}-(\tilde{L})^{2p+1})_{kk'}-((\tilde{L}_{2p})^{2p}-(\tilde{L}_{2p+1})^{2p+1})_{kk'}=\int_{\theta_{2p+2,k}}\xi^{k,k'}_{p,t}\cdot d\mathcal{W}_t+\int_{\theta_{2p+2,k}}\eta^{k,k'}_{p,t}dt,
\end{equation*}
by Ito's formula, where
\begin{eqnarray}
|\partial^v_{\sigma}\xi^{k,k'}_{p,t}|&\leq & 2\cdot 2\cdot 4^{v+1}(2p+1)^{v+1}\cdot n_2R^{2v+3}(1-s_n(T))^{(2p-v-1)\vee 0}(\tilde{M}_p)_{kk'}1_{\theta_{2p+2,k}}(t), \nonumber \\
|\partial^v_{\sigma}\eta^{k,k'}_{p,t}|&\leq &4\cdot 2\cdot \frac{1}{2}4^{v+2}(2p+1)^{v+2}\cdot n_2^2R^{2v+6}(1-s_n(T))^{(2p-v-2)\vee 0}(\tilde{M}_p)_{kk'}1_{\theta_{2p+2,k}}(t). \nonumber 
\end{eqnarray}

Moreover, let $\tilde{\xi}^{k,k'}_{p,t}$ be the one constructed by $\xi^{k,k'}_{p,t}$ 
substituting $L(\theta_{2p+2,k}) \wedge \tau(s_n)$ for all times of $X$, $\tilde{b}^2$ and $\tilde{b}^3$,
then we can write $\tilde{\xi}_{p,t}^{k,k'}=\sum_K\tilde{\xi}^{k,k'}_{p,K}1_K(t)$ 
for some random variables $\{\tilde{\xi}^{k,k'}_{p,K}\}_K$,
where $\{K\}$ denotes the set of intervals obtained by unifying partitions $\{S^i\}$,$\{T^j\}$ and $\{\mathcal{T}^j_k\}$. 
Furthermore, let 
\begin{equation}\label{xi-diff-decom}
\xi^{kk'}_{p,t}-\tilde{\xi}^{kk'}_{p,t}=\int^t_0\hat{\xi}^{k,k'}_{p,s}\cdot d\mathcal{W}_s+\int^t_0 \hat{\eta}^{k,k'}_{p,s}ds,
\end{equation}
then we have
\begin{eqnarray}
|\partial^v_{\sigma}\hat{\xi}^{k,k'}_{p,s}|&\leq & 2\cdot 4\cdot 4^{v+1}(2p+1)^{v+1}\cdot 4(2p+2)\cdot n_2^2R^{2v+6}(1-s_n(T))^{(2p-v-2)\vee 0}(\tilde{M}_p)_{k,k'}1_{\theta_{2p+2,k}}(s), \nonumber \\
|\partial^v_{\sigma}\hat{\eta}^{k,k'}_{p,s}|&\leq &2\cdot 8\cdot 4^{v+1}(2p+1)^{v+1}\cdot \frac{1}{2}4^2(2p+2)^2\cdot n_2^3R^{2v+9}(1-s_n(T))^{(2p-v-3)\vee 0}(\tilde{M}_p)_{k,k'}1_{\theta_{2p+2,k}}(s). \nonumber
\end{eqnarray}
Therefore we can write
\begin{eqnarray}
-\frac{1}{2}Z^{\star}(M-\tilde{M})Z 
&=& -\frac{1}{2}\sum_{p=0}^{\infty}\sum_{k,k'}\left(\int\xi^{k,k'}_{p,t}\cdot d\mathcal{W}_t+\int \eta^{k,k'}_{p,t}dt\right)Z_kZ_{k'} \nonumber \\
&=&-\frac{1}{2}\sum_{p=0}^{\infty}\sum_{k,k'}\int\tilde{\xi}^{k,k'}_{p,t}\cdot d\mathcal{W}_t\tilde{Z}_{k,p}\tilde{Z}_{k',p}-\frac{1}{2}\sum_{p=0}^{\infty}\sum_{k,k'}\int(\xi^{k,k'}_{p,t}-\tilde{\xi}^{k,k'}_{p,t})\cdot d\mathcal{W}_tZ_kZ_{k'} \nonumber \\
&&-\frac{1}{2}\sum_{p=0}^{\infty}\sum_{k,k'}\int\tilde{\xi}^{k,k'}_{p,t}\cdot d\mathcal{W}_t(Z_kZ_{k'}-\tilde{Z}_{k,p}\tilde{Z}_{k',p})-\frac{1}{2}\sum_{p=0}^{\infty}\sum_{k,k'}\int\eta^{k,k'}_{p,t}dtZ_kZ_{k'} \nonumber \\
&=& \mathcal{X}+R_1+R_2+R_3, \nonumber
\end{eqnarray}
where 
\begin{equation*}
(\tilde{Z}_{k,p})_{k=1}^{l_n+m_n}=\bigg(\bigg(\frac{b^1_{L(\theta_{2p+2,i}),\ast}\cdot W(I^i)}{|b^1_{L(\theta_{2p+2,i})\wedge \tau_n}|\sqrt{|I^i|}}\bigg)_{i=1}^{l_n}, \bigg(\frac{b^2_{L(\theta_{2p+2,j+l_n}),\ast}\cdot W(J^j)}{|b^2_{L(\theta_{2p+2,j+l_n})\wedge \tau_n}|\sqrt{|J^j|}}\bigg)_{j=1}^{m_n}\bigg).
\end{equation*} 

Let $k^1(K)$ denotes $k\leq l_n$ which satisfies $K\subset I^{k}$, $k^2(K)$ denotes $k>l_n$ which satisfies $K\subset J^{k-l_n}$ and
\begin{equation*}
\tilde{B}_{k,p}=\left\{
\begin{array}{ll}
b^1_{L(\theta_{2p+2,k}),\ast}/(|b^1_{L(\theta_{2p+2,k})\wedge \tau_n}|\sqrt{|I^k|}) & (k\leq l_n) \\
b^2_{L(\theta_{2p+2,k}),\ast}/(|b^2_{L(\theta_{2p+2,k})\wedge \tau_n}|\sqrt{|J^{k-l_n}|}) & (l_n < k \leq l_n + m_n). \\
\end{array}
\right.
\end{equation*} 
Then $\mathcal{X}$ can be rewritten as  
\begin{eqnarray}
\mathcal{X}&=&-\frac{1}{2}\sum_{p=0}^{\infty}\sum_{k,k'}\sum_{K''}\tilde{\xi}^{k,k'}_{p,K''}\cdot \mathcal{W}(K'')\tilde{Z}_{k,p}\tilde{Z}_{k',p} \nonumber \\
&=&-\frac{1}{2}\sum_{K,K',K''}\sum_{i,j=1}^{2}\sum_{p=0}^{\infty}\tilde{\xi}^{k^i(K),k^j(K')}_{p,K''}\cdot \mathcal{W}(K'')\tilde{B}_{k^i(K),p}\cdot W(K)\tilde{B}_{k^j(K),p}\cdot W(K'). \nonumber 
\end{eqnarray}
Let 
\begin{equation*}
F^{i,j,v}_{K,K',K''}=\frac{1}{2}\sum_{v_1,v_2,v_3\geq 0, v_1+v_2+v_3=v}\frac{v!}{v_1!v_2!v_3!}\sum_{p=0}^{\infty}|\partial^{v_1}_{\sigma}\tilde{\xi}^{k^i(K),k^j(K')}_{p,K''}||\partial^{v_2}_{\sigma}\tilde{B}_{k^i(K),p}||\partial^{v_3}_{\sigma}\tilde{B}_{k^j(K'),p}|
\end{equation*}
for $1\leq i,j\leq 2$, then $F^{i,j,v}_{K,K',K''}=F^{j,i,v}_{K',K,K''}$. Hence for general $\Pi$ and any $q\in 2\mathbb{N}$, we have  
\begin{eqnarray}
&&E[|\partial^{v}_{\sigma}\mathcal{X}|^q|\Pi] \nonumber \\
&\leq &C\sum_{i,j=1}^2\bigg\{\bigg(\sum_{K,K',K''}|K||K'||K''|E[(F_{K,K',K''}^{i,j,v})^q|\Pi]^{\frac{2}{q}}\bigg)^{\frac{q}{2}} \nonumber \\
&&+\bigg(\sum_K|K|\bigg(\sum_{K'}|K'| (2E[(F^{i,j,v}_{K,K',K'})^q|\Pi]^{\frac{1}{q}}+E[(F^{i,j,v}_{K',K',K})^q|\Pi]^{\frac{1}{q}}\bigg)^2\bigg)^{\frac{q}{2}}\bigg\} \nonumber \\
&\leq &C\bigg(\sum_{k,k',K''}|K''| E\bigg[\bigg(\sum_{p=0}^{\infty}R^C(p+1)^{v+1}(1-s_n(T))^{(2p-5)\vee 0}(\tilde{M}_p)_{k,k'}1_{\theta_{2p+2,k}}(K'')\bigg)^q\bigg|\Pi\bigg]^{\frac{2}{q}}\bigg)^{\frac{q}{2}} \nonumber \\
&&+C\bigg\{\sum_k|\theta_{0,k}|\bigg(\sum_{k'}|\theta_{0,k'}|\bigg\{E\bigg[\bigg(\sum_{p=0}^{\infty}R^C(p+1)^{v+1}\frac{(1-s_n(T))^{(2p-5)\vee 0}}{\sqrt{|\theta_{0,k}|}\sqrt{|\theta_{0,k'}|}}(\tilde{M}_p)_{k,k'}\bigg)^q\bigg|\Pi\bigg]^{\frac{1}{q}} \nonumber \\
&&\quad +E\bigg[\bigg(\sum_{p=0}^{\infty}R^C(p+1)^{v+1}\frac{(1-s_n(T))^{(2p-5)\vee 0}}{|\theta_{0,k'}|}1_{\theta_{2p+2,k}\cap \theta_{0,k'}\neq \emptyset}\bigg)^q\bigg|\Pi\bigg]^{\frac{1}{q}}\bigg\}\bigg)^2\bigg\}^{\frac{q}{2}} \nonumber
\end{eqnarray}
by Lemma \ref{Burk3}. 

Moreover, by Lemma \ref{Fp-est}, we have
\begin{eqnarray}
E[|\partial^{v}_{\sigma}\mathcal{X}|^q|\Pi]&\leq & CE[R^Cs_n(T)^{-(v+\frac{2r+3}{2})q}|\Pi]\bigg\{\bigg(\sum_{k,k',K''}|K''|\sum_{p=0}^{\infty}\frac{(\tilde{M}_p)_{k,k'}1_{\theta_{2p+2,k}(K'')}}{(p+1)^{2r}}\bigg)^{\frac{q}{2}} \nonumber \\
&&+\bigg(\sum_k\bigg(\sum_{k'}\sqrt{|\theta_{0,k'}|}\bigg(\sum_{p=0}^{\infty}\bigg(\frac{(\tilde{M}_p)_{k,k'}}{(p+1)^r}\bigg)^2\bigg)^{\frac{1}{2}}\bigg)^2\bigg)^{\frac{q}{2}} \nonumber \\
&&+\bigg(\sum_k|\theta_{0,k}|\bigg(\sum_{k'}\bigg(\sum_{p=0}^{\infty}\frac{1_{\theta_{2p+2,k}\cap \theta_{0,k'}\neq \emptyset}}{(p+1)^{2r}}\bigg)^{\frac{1}{2}}\bigg)^2\bigg)^{\frac{q}{2}}\bigg\}.\nonumber
\end{eqnarray}
Since $(\sum_{p\in\mathbb{N}}\alpha_p)^{1/q'}\leq \sum_{p\in\mathbb{N}}\alpha_p^{1/q'}$ for $\alpha_p\geq 0 \ (p\in\mathbb{N})$ and $q'>1$, we have
\begin{eqnarray}
E[|\partial^{v}_{\sigma}\mathcal{X}|^q|\Pi] &\leq & CE[R^Cs_n(T)^{-(v+\frac{2r+3}{2})q}|\Pi]\bigg\{\bigg(\sum_{k,k'}\sum_{p=0}^{\infty}\frac{(\tilde{M}_p)_{k,k'}|\theta_{2p+2,k}|}{(p+1)^{2r}}\bigg)^{\frac{q}{2}} \nonumber \\
&&+\bigg(\sum_k\sum_{k'_1,k'_2}\sqrt{|\theta_{0,k'_1}|}\sqrt{|\theta_{0,k'_2}|}\sum_{p_1,p_2=0}^{\infty}\frac{(\tilde{M}_{p_1})_{k,k'_1}(\tilde{M}_{p_2})_{k,k'_2}}{(p_1+1)^r(p_2+1)^r}\bigg)^{\frac{q}{2}} \nonumber \\
&&+\bigg(\sum_k|\theta_{0,k}|\sum_{k'_1,k'_2}\sum_{p_1,p_2=0}^{\infty}\frac{1_{\theta_{2p_1+2,k'_1}\cap \theta_{0,k}\neq \emptyset}1_{\theta_{2p_2+2,k'_2}\cap \theta_{0,k}\neq \emptyset}}{(p_1+1)^r(p_2+1)^r}\bigg)^{\frac{q}{2}}\bigg\} \nonumber
\end{eqnarray}
Therefore, $E[|\partial^{v}_{\sigma}\mathcal{X}|^q|\Pi]$ is less than the right-hand side of (\ref{H1-diff-ineq}) since $\parallel \tilde{M}_p\parallel \leq 2$,
\begin{equation*}
\sum_k|\theta_{0,k}|1_{\theta_{2p_1+2,k'_1}\cap \theta_{0,k}\neq \emptyset}1_{\theta_{2p_2+2,k'_2}\cap \theta_{0,k}\neq \emptyset}\leq |\theta_{2p_1+3,k_1'}\cap \theta_{2p_2+3,k_2'}|,
\end{equation*}
\begin{eqnarray}
\sum_{k,k'}(\tilde{M}_p)_{k,k'}|\theta_{2p+2,k}|\leq \parallel \tilde{M}_p\parallel (l_n+m_n)^{\frac{1}{2}}\Phi_{2p+2,2}^{\frac{1}{2}}\leq 2(l_n+m_n)^{\frac{1}{2}}\Phi_{2p+2,2}^{\frac{1}{2}}, \nonumber
\end{eqnarray} 
and
\begin{equation*}
\sum_k\sum_{k_1',k_2'}\sqrt{|\theta_{0,k'_1}|}\sqrt{|\theta_{0,k'_2}|}(\tilde{M}_{p_1})_{k,k'_1}(\tilde{M}_{p_2})_{k,k'_2} \leq \parallel \tilde{M}_{p_1}^{\star} \tilde{M}_{p_2}\parallel \sum_k|\theta_{0,k}|\leq 8T. 
\end{equation*}

We will estimate $E[|R_2|^q|\Pi]$ in the next step. Since 
\begin{equation*}
E[(\partial^{v}_{\sigma}(Z_kZ_{k'}-\tilde{Z}_{k,p}\tilde{Z}_{k',p}))^{2q}|\Pi]^{\frac{1}{2q}}\leq C_q(T\vee 1)(E[R^C]+\Theta^1_C)(|\theta_{2p+2,k}|^{\frac{1}{2}}+|\theta_{2p+2,k'}|^{\frac{1}{2}}),
\end{equation*}
Lemma \ref{q-sum-est} and the Cauchy-Schwarz inequality yield
\begin{eqnarray}
E[|\partial^{v}_{\sigma}R_2|^q|\Pi] 
&\leq & \frac{1}{2^q}\bigg(\sum_{k,k'}\sum_{p=0}^{\infty}\sum_{v_1,v_2\geq 0,v_1+v_2=v}\frac{v!}{v_1!v_2!}E\bigg[\bigg(\int_{\theta_{2p+2,k}}\partial^{v_1}_{\sigma}\tilde{\xi}^{k,k'}_{p,t}\cdot d\mathcal{W}_t\bigg)^{2q}\bigg|\Pi\bigg]^{\frac{1}{2q}} \nonumber \\
&&\times E\bigg[(\partial^{v_2}_{\sigma}(Z_kZ_{k'}-\tilde{Z}_{k,p}\tilde{Z}_{k',p}))^{2q}\bigg|\Pi\bigg]^{\frac{1}{2q}}\bigg)^q \nonumber \\
&\leq & C\bigg(\sum_{k,k'}\sum_{p=0}^{\infty}(p+1)^{v+1}|\theta_{2p+2,k}|^{\frac{1}{2}}E[R^C(1-s_n(T))^{2q(2p-5)\vee 0}|\Pi]^{\frac{1}{2q}}(\tilde{M}_p)_{k,k'} \nonumber \\
&&\times (E[R^C]+\Theta^1_C)(T\vee 1)(|\theta_{2p+2,k}|^{\frac{1}{2}}+|\theta_{2p+2,k'}|^{\frac{1}{2}})\bigg)^q. \nonumber 
\end{eqnarray}
Hence by the H${\rm \ddot{o}}$lder inequality, we have
\begin{eqnarray}
E[|\partial^{v}_{\sigma}R_2|^q|\Pi]
&\leq & C(T^q\vee 1)(E[R^C]+\Theta^1_C)E[R^Cs_n(T)^{-(2v+2r+3)q}|\Pi]^{\frac{1}{2}} \nonumber \\
&&\times \bigg(\sum_{p=0}^{\infty}\bigg(\sum_{k,k'}\frac{(\tilde{M}_p)_{k,k'}(|\theta_{2p+2,k}|+|\theta_{2p+2,k}|^{1/2}|\theta_{2p+2,k'}|^{1/2})}{(p+1)^r}\bigg)^{\frac{2q}{2q-1}}\bigg)^{q-\frac{1}{2}} \nonumber \\
&\leq & C(T^q\vee 1)(E[R^C]+\Theta^1_C)E[R^Cs_n(T)^{-(2v+2r+3)q}|\Pi]^{\frac{1}{2}} \nonumber \\
&&\times\bigg\{\sum_{p=0}^{\infty}\bigg(\frac{((l_n+m_n)\Phi_{2p+2,2})^{1/2}\vee \Phi_{2p+2,1}}{(p+1)^r}\bigg)^{\frac{2q}{2q-1}}\bigg\}^{q-\frac{1}{2}}. \nonumber 
\end{eqnarray}
Then $E[|R_2|^q|\Pi]$ is less than the right-hand side of (\ref{H1-diff-ineq}). 

Furthermore, by Lemma \ref{q-sum-est} and the Cauchy-Schwarz inequality, we have
\begin{eqnarray}\label{R1-est}
E[|\partial^{v}_{\sigma}R_1|^q|\Pi] &\leq &C\bigg(\sum_{k,k'}E\bigg[\bigg|\partial^{v}_{\sigma}\bigg(\int\sum_{p=0}^{\infty}(\xi^{k,k'}_{p,t}-\tilde{\xi}^{k,k'}_{p,t})\cdot d\mathcal{W}_tZ_kZ_{k'}\bigg)\bigg|^q\bigg|\Pi\bigg]^{\frac{1}{q}}\bigg)^q \nonumber \\
&\leq & C(E[R^C]+T^q(\Theta^1_{8q})^{1/4}) \bigg(\sum_{k,k'}\sum_{0\leq v_1\leq v}E\bigg[\bigg|\int\bigg(\sum_{p=0}^{\infty}\partial^{v_1}_{\sigma}(\xi^{k,k'}_{p,t}-\tilde{\xi}^{k,k'}_{p,t})\bigg)^2dt\bigg|^q\bigg|\Pi\bigg]^{\frac{1}{2q}}\bigg)^q. 
\end{eqnarray}
Since $\Pi$ is independent of $\{(X_t,(\tilde{b}^i_t)_i)\}_t$, we can choose conditional expectation for which
$t\mapsto E[(\sum_{p=0}^{\infty}\partial^{v_1}_{\sigma}(\xi^{k,k'}_{p,t}-\tilde{\xi}^{k,k'}_{p,t}))^{2q}|\Pi]$ is Lebesgue integrable almost surely for $0\leq v_1\leq v$.
Therefore, by (\ref{R1-est}) and similar argument to the proof of Lemma \ref{q-sum-est}, we have
\begin{eqnarray}\label{R1-est2}
E[|\partial^{v}_{\sigma}R_1|^q|\Pi]\leq  C(E[R^C]+T^q(\Theta^1_{8q})^{1/4}) \bigg(\sum_{k,k'}\sum_{0\leq v_1\leq v}\bigg(\int E\bigg[\bigg(\sum_{p=0}^{\infty}\partial^{v_1}_{\sigma}(\xi^{k,k'}_{p,t}-\tilde{\xi}^{k,k'}_{p,t})\bigg)^{2q}|\Pi\bigg]^{\frac{1}{q}}dt\bigg)^{\frac{1}{2}}\bigg)^q. 
\end{eqnarray}
By Lemma \ref{Fp-est}, (\ref{R1-est2}), (\ref{xi-diff-decom}) and the estimates after that, we have 
\begin{eqnarray}
&&E[|\partial^{v}_{\sigma}R_1|^q|\Pi] \nonumber \\
&\leq & C(E[R^C]+T^q(\Theta^1_{8q})^{\frac{1}{4}})\bigg(\sum_{k,k'}\bigg(\int E\bigg[\bigg(\sum_{p=0}^{\infty}(p+1)^{v+3}R^{2v+9} \nonumber \\
&&\times (1-s_n(T))^{(2p-7)\vee 0} (\tilde{M}_p)_{k,k'}|\theta_{2p+2,k}|^{\frac{1}{2}}(T^{\frac{1}{2}}\vee 1)1_{\theta_{2p+2,k'}}(t)\bigg)^{2q}\bigg|\Pi\bigg]^{\frac{1}{q}}dt\bigg)^{\frac{1}{2}}\bigg)^q \nonumber \\
&\leq &C(E[R^C]+T^q(\Theta^1_{8q})^{\frac{1}{4}})(T^{\frac{q}{2}}\vee 1)E[R^Cs_n(T)^{-(2v+2r+7)q}|\Pi]^{\frac{1}{2}} 
 \bigg(\sum_{k,k'}\bigg(\sum_{p=0}^{\infty}\frac{((\tilde{M_p})_{k,k'})^2|\theta_{2p+2,k}||\theta_{2p+2,k'}|}{(p+1)^{2r}}\bigg)^{\frac{1}{2}}\bigg)^q \nonumber \\
&\leq & C(E[R^C]+T^q(\Theta^1_{8q})^{\frac{1}{4}})(T^{\frac{q}{2}}\vee 1)E[R^Cs_n(T)^{-(2v+2r+7)q}|\Pi]^{\frac{1}{2}}\bigg(\sum_{p=0}^{\infty}\frac{\Phi_{2p+2,1}}{(p+1)^r}\bigg)^q. \nonumber
\end{eqnarray}
Then $E[|\partial^{v}_{\sigma}R_1|^q|\Pi]$ is less than the right-hand side of (\ref{H1-diff-ineq}). 
Similarly, we can see
\begin{equation*}
E[|\partial^{v}_{\sigma}R_3|^q] \leq CE[R^Cs_n(T)^{-(2v+2r+5)q}|\Pi]^{\frac{1}{2}}(E[R^C]+T^q(\Theta^1_{8q})^{\frac{1}{4}})\bigg(\sum_{p=0}^{\infty}\frac{\sqrt{(l_n+m_n)\Phi_{2p+2,2}}}{(p+1)^r}\bigg)^q.
\end{equation*}
Hence $E[|\partial^{v}_{\sigma}(Z^{\star}(M-\tilde{M})Z/2)|^q|\Pi]$ is less than the right-hand side of (\ref{H1-diff-ineq}). 

Similaly, we can see 
\begin{equation*}
E\bigg[\bigg|\partial^{v}_{\sigma}\bigg(\frac{1}{2}\sum_{p=1}^{\infty}\frac{(-1)^p}{p}({\rm tr}(\tilde{L}^p)-{\rm tr}(\tilde{L}_p^p))\bigg)\bigg|^q\bigg|\Pi\bigg]
\end{equation*}
is less than the right-hand side of (\ref{H1-diff-ineq}). 

Moreover, 
\begin{eqnarray}
E[|\partial_{\sigma}^v(\tilde{\mu}^{\star}\tilde{M}\tilde{\mu}/2)|^q|\Pi] \nonumber \leq  CE\bigg[\bigg(\sum_{v_1=0}^v|\partial_{\sigma}^{v_1}\tilde{\mu}|\bigg)^{2q}\bigg(\sum_{v_2=0}^v\parallel \partial_{\sigma}^{v_2}\tilde{M} \parallel\bigg)^{q} \bigg|\Pi\bigg] 
\leq CE[(R^Cs_n(T)^{-(v+1)q})^2|\Pi]^{\frac{1}{2}}(2T)^q(\Theta^1_{4q})^{\frac{1}{2}} \nonumber
\end{eqnarray}
since $\sum_{v_2=0}^v\parallel \partial_{\sigma}^{v_2}\tilde{M} \parallel \leq CR^{2v}\sum_{p=0}^{\infty}((2p)^v\vee 1)\bar{\rho}_{\tau_n}^{(2p-v)\vee 0}\leq CR^{2v}s_n(T)^{-(v+1)}$.

We will estimate $E[|\partial_{\sigma}^v(\tilde{\mu}^{\star}\tilde{M}\hat{Z})|^q|\Pi]$ at last. 
Let $\mathcal{L}_{p,k,k'}=((\tilde{L}_{2p})^{2p}-(\tilde{L}_{2p+1})^{2p+1})_{k,k'}$.
$\tilde{\mu}^{\star}\tilde{M}\hat{Z}$ can be decomposed as 
\begin{eqnarray}
\tilde{\mu}^{\star}\tilde{M}\hat{Z}&=&\sum_{k,k'}\sum_{p=0}^{\infty}\mathcal{L}_{p,k,k'}\tilde{\mu}_{p,k}\hat{Z}_{k'} +\sum_{k,k'}\sum_{p=0}^{\infty}\mathcal{L}_{p,k,k'}(\tilde{\mu}_k-\tilde{\mu}_{p,k})\hat{Z}_{k'} =\Xi_1+\Xi_2, \nonumber
\end{eqnarray}
where 
\begin{eqnarray}
\tilde{\mu}_{p,k}=\left\{
\begin{array}{ll}
\mu^1_{L(\theta_{p+1,k})}\sqrt{|I^k|}/|b^1_{L(\theta_{p+1,k})\wedge\tau_n}| & (k\leq l_n) \\
\mu^2_{L(\theta_{p+1,k})}\sqrt{|J^{k-l_n}|}/|b^2_{L(\theta_{p+1,k})\wedge\tau_n}| & (k>l_n) \\
\end{array}
\right. \nonumber
\end{eqnarray} 
Then by the Burkholder-Devis-Gundy inequality, we obtain
\begin{eqnarray}
E[|\partial_{\sigma}^v\Xi_1|^q|\Pi]&\leq &C\sum_{v_1+v_2=v}E\bigg[\bigg(\sum_{k'}\bigg(\sum_k\sum_{p=0}^{\infty}\partial_{\sigma}^{v_1}(\mathcal{L}_{p,k,k'}\tilde{\mu}_{p,k})\bigg)^2(\partial_{\sigma}^{v_2}\hat{Z}_{k'})^2\bigg)^{\frac{q}{2}}\bigg|\Pi\bigg] \nonumber \\
&\leq & C\sum_{v_1+v_2=v}\bigg(\sum_{k'}E\bigg[\bigg(\sum_k\sum_{p=0}^{\infty}\partial_{\sigma}^{v_1}(\mathcal{L}_{p,k,k'}\tilde{\mu}_{p,k})\bigg)^q(\partial_{\sigma}^{v_2}\hat{Z}_{k'})^q\bigg|\Pi\bigg]^{\frac{2}{q}}\bigg)^{\frac{q}{2}} \nonumber \\
&\leq & CE[R^{4q}]^{\frac{1}{2}}\sum_{v_1=0}^v\bigg(\sum_{k'}\bigg(\sum_k\sum_{p=0}^{\infty}E[(\partial_{\sigma}^{v_1}(\mathcal{L}_{p,k,k'}\tilde{\mu}_{p,k}))^{2q}|\Pi]^{\frac{1}{2q}}\bigg)^2\bigg)^{\frac{q}{2}}. \nonumber
\end{eqnarray}
Since
\begin{equation*}
E[(\partial_{\sigma}^{v_1}(\mathcal{L}_{p,k,k'}\tilde{\mu}_{p,k}))^{2q}|\Pi]^{\frac{1}{2q}}\leq C(2p+1)^vE[R^C\bar{\rho}_{\tau_n}^{4q(2p-5)}|\Pi]^{\frac{1}{4q}}(\tilde{M}_p)_{k,k'}(\Theta^1_{4q})^{\frac{1}{4q}}\sqrt{|\theta_{0,k}|},
\end{equation*}
we obtain
\begin{eqnarray}
E[|\partial_{\sigma}^v\Xi_1|^q|\Pi]&\leq &CE[R^{4q}]^{\frac{1}{2}}(\Theta^1_{4q})^{\frac{1}{4}}\bigg(\sum_{k'}\sum_{k_1,k_2}\sum_{p_1,p_2=0}^{\infty}E[R^C\bar{\rho}_{\tau_n}^{4q(2p_1-5)}|\Pi]^{\frac{1}{4q}}(p_1+1)^v(p_2+1)^v \nonumber \\
&&\times E[R^C\bar{\rho}_{\tau_n}^{4q(2p_2-5)}|\Pi]^{\frac{1}{4q}}(\tilde{M}_{p_1})_{k_1,k'}(\tilde{M}_{p_2})_{k_2,k'}\sqrt{|\theta_{0,k_1}|}\sqrt{|\theta_{0,k_2}|}\bigg)^{\frac{q}{2}} \nonumber \\
&\leq &CT^{q/2}E[R^{4q}]^{\frac{1}{2}}(\Theta^1_{4q})^{\frac{1}{4}}\bigg(\sum_{p=0}^{\infty}	E[R^C\bar{\rho}_{\tau_n}^{4q(2p-5)}|\Pi]^{\frac{1}{4q}}(p+1)^v\bigg)^q. \nonumber
\end{eqnarray}
Then $E[|\partial_{\sigma}^v\Xi_1|^q|\Pi]$ is less than the right-hand side of (\ref{H1-diff-ineq}) since
\begin{eqnarray}
\sum_{p=0}^{\infty}E[R^C\bar{\rho}_{\tau_n}^{4q(2p-5)}|\Pi]^{\frac{1}{4q}}(p+1)^v&\leq & \bigg(\sum_{p=0}^{\infty}E[R^C\bar{\rho}_{\tau_n}^{4q(2p-5)}|\Pi](p+1)^{q(4v+\frac{9}{2})}\bigg)^{\frac{1}{4q}}\bigg(\sum_{p=0}^{\infty}(p+1)^{-\frac{9}{8}}\bigg)^{1-\frac{1}{4q}} \nonumber \\
&\leq & CE[R^Cs_n(T)^{-q(4v+5)}]^{1/4q}. \nonumber 
\end{eqnarray}

On the other hand, Lemma \ref{q-sum-est} yields
\begin{eqnarray}
&&E[|\partial_{\sigma}^v\Xi_2|^q|\Pi] \nonumber \\
&\leq &C\bigg(\sum_{p=0}^{\infty}\sum_{k,k'}E\bigg[\bigg\{R^C\bar{\rho}_{\tau_n}^{(2p-v)\vee 0}(2p+1)^v(\tilde{M}_p)_{k,k'}\bigg(\sum_{v_1=0}^v|\partial_{\sigma}^{v_1}(\tilde{\mu}_k-\tilde{\mu}_{p,k})|\bigg)\bigg(\sum_{v_2=0}^v|\partial_{\sigma}^{v_2}\hat{Z}_k|\bigg)\bigg\}^q\bigg|\Pi\bigg]^{\frac{1}{q}}\bigg)^q \nonumber \\
&\leq & CE[R^C]\bigg\{\sum_{p=0}\sum_{k,k'}(\tilde{M}_p)_{k,k'}E[R^C\bar{\rho}_{\tau_n}^{2q(2p-v)\vee 0}(2p+1)^{2qv}|\Pi]^{\frac{1}{2q}} \nonumber \\
&&\times \left(E[R^C]^{\frac{1}{8q}}\sqrt{|\theta_{0,k}|} (\Theta^2_{8q})^{\frac{1}{8q}}|\theta_{p+1,k}|^{\gamma}+E[R^C]^{\frac{1}{8q}}\sqrt{|\theta_{0,k}|}(\Theta^1_{8q})^{\frac{1}{8q}}(T^{\frac{1}{2}}\vee 1)\sqrt{|\theta_{p+1,k}|}\right)\bigg\}^q \nonumber \\
&\leq & CE[R^C](T^{\frac{q}{2}}\vee 1)((\Theta^1_{8q})^{\frac{1}{8}}+(\Theta^2_{8q})^{\frac{1}{8}})\bigg(\sum_{p=0}^{\infty}E[R^C\bar{\rho}_{\tau_n}^{2q(2p-4)\vee 0}|\Pi](p+1)^{(2v+6)q}\bigg)^{\frac{1}{2}} \nonumber \\
&&\times \bigg\{\sum_{p=0}^{\infty}\frac{1}{(p+1)^2}\bigg(\sum_{k,k'}(\tilde{M}_p)_{k,k'}\sqrt{|\theta_{0,k}|}\frac{((4p+1)r_n)^{\gamma}\vee ((4p+1)r_n)^{1/2}}{p+1}\bigg)^{\frac{2q}{2q-1}}\bigg\}^{\frac{2q-1}{2}} \nonumber \\
&\leq & CE[R^C]r_n^{q(\gamma\wedge \frac{1}{2})}(T^{\frac{3}{2}q}\vee 1)(l_n+m_n)^{\frac{q}{2}}((\Theta^1_{8q})^{\frac{1}{8}}+(\Theta^2_{8q})^{\frac{1}{8}})E[R^Cs_n(T)^{-(2v+7)q}|\Pi]^{\frac{1}{2}}, \nonumber
\end{eqnarray}
where we use the fact $r_n^{\gamma}\vee r_n^{\frac{1}{2}}= T^{\gamma}(r_nT^{-1})^{\gamma}\vee T^{\frac{1}{2}}(r_nT^{-1})^{\frac{1}{2}}\leq (\sqrt{T}\vee 1)r_n^{\gamma\wedge \frac{1}{2}}$.
This complete the proof of (\ref{H1-diff-ineq}).

We next estimate $E[|\partial_{\sigma}^v\tilde{H}^2_{n,s_n}(T;\sigma)-\partial_{\sigma}^v\tilde{H}^3_{n,s_n}(T;\sigma)|^q|\Pi]$.
Let $\mathcal{J}(k)=1 \ (1\leq k\leq l_n)$, $\mathcal{J}(k)=2 \ (l_n<k\leq l_n+m_n)$ 
and $\check{B}^i_k=|b^i_{L(\theta_{0,k}),\ast}|/|b^i_{L(\theta_{0,k})\wedge \tau_n}|$ for $1\leq k\leq l_n+m_n, i=1,2$.
For $p\in\mathbb{Z}_+$ and $1\leq k,k'\leq l_n+m_n$, we define $\{\check{\xi}^{k,k'}_{p,t}\},\{\check{\eta}^{k,k'}_{p,t}\}$ as if follows.
\begin{enumerate}
\item the case $k=k'$:
\begin{eqnarray}
&&-\frac{1}{2}((\tilde{L}_{2p})^{2p})_{k,k}(\langle\hat{Z}\rangle_T)_{k,k}+\frac{1}{2}(\check{B}^{\mathcal{J}(k)}_k)^2\rho_{L(\theta_{0,k})}^{2p}(\tilde{M}_p)_{k,k} \nonumber \\
&&-(\log|b^{\mathcal{J}(k)}_{\theta_{0,k},\tau_n}| -\log |b^{\mathcal{J}(k)}_{L(\theta_{0,k})\wedge \tau_n}|)1_{\{p=0\} }+\frac{1}{4p}(((\tilde{L}_{2p})^{2p})_{k,k}-\rho^{2p}_{L(\theta_{0,k})\wedge \tau_n}(\tilde{M}_p)_{k,k})1_{\{p\geq 1\} } \nonumber \\
&&=\int\check{\xi}^{k,k}_{p,t}\cdot d\mathcal{W}_t+\int\check{\eta}^{k,k}_{p,t}dt. \nonumber
\end{eqnarray}
\item the case ($k\leq l_n$ and $k'>l_n$) or ($k>l_n$ and $k'\leq l_n$):
\begin{eqnarray}
&&-\frac{1}{2}\big\{(\check{B}^1_k\check{B}^2_k+\check{B}^{\mathcal{J}(k)}_k)\rho^{2p+1}_{L(\theta_{0,k})\wedge \tau_n}\rho_{L(\theta_{0,k}),\ast}-\check{B}^{\mathcal{J}(k)}_{k'}\rho^{2p+1}_{L(\theta_{0,k'})\wedge \tau_n}\rho_{L(\theta_{0,k'}),\ast}\big\} \nonumber \\
&&\times (\tilde{M}_p)_{k,k'}(\tilde{M}_0)_{k,k'}+ \frac{1}{2}((\tilde{L}_{2p+1})^{2p+1})_{k,k'}(\langle Z\rangle_T)_{k,k'}=\int\check{\xi}^{k,k'}_{p,t}\cdot d\mathcal{W}_t+\int\check{\eta}^{k,k'}_{p,t}dt. \nonumber 
\end{eqnarray}
\item other case : We set $\check{\xi}^{k,k'}_{p,t}\equiv 0$ and $\check{\eta}^{k,k'}_{p,t}\equiv 0$.
\end{enumerate}
Then by Ito's formula, we obtain
\begin{eqnarray}
|\sum_{k,k'}\partial_{\sigma}^v\check{\xi}^{k,k'}_{p,t}|&\leq &CR^C(p+1)^{v+1}\bar{\rho}_T^{(2p-v-1)\vee 0}\sum_k\{(\tilde{M}_p)_{k,k}+(\tilde{M}_{p+1})_{k,k}\}1_{\theta_{2p+2,k}}(t), \nonumber \\
|\sum_{k,k'}\partial_{\sigma}^v\check{\eta}^{k,k'}_{p,t}|&\leq &CR^C(p+1)^{v+2}\bar{\rho}_T^{(2p-v-2)\vee 0}\sum_k\{(\tilde{M}_p)_{k,k}+(\tilde{M}_{p+1})_{k,k}\}1_{\theta_{2p+2,k}}(t). \nonumber
\end{eqnarray}
Moreover, we have 
\begin{equation*}
\tilde{H}^2_{n,s_n}(T;\sigma)-\tilde{H}^3_{n,s_n}(T;\sigma)=\int\sum_{k,k'}\sum_{p=0}^{\infty}\check{\xi}^{k,k'}_{p,t}\cdot d\mathcal{W}_t+\int\sum_{k,k'}\sum_{p=0}^{\infty}\check{\eta}^{k,k'}_{p,t}dt. \nonumber
\end{equation*}
Therefore we obtain the conclusion by Lemma \ref{Fp-est}.
\end{proof}

\begin{lemma}\label{Hn-H2n-diff}
\begin{enumerate}
\item Assume that $[A1],[A2]$ hold and $\{b_n^{-1}(l_n+m_n)\}_n$ be tight. Then 
\begin{equation*}
\sup_{\sigma}b_n^{-1}|\partial_{\sigma}^vH_n(\sigma)-\partial_{\sigma}^v\tilde{H}^3_{n,s_n}(T;\sigma)|\to^p 0
\end{equation*}
as $n\to\infty$ for $0\leq v\leq 3$, where $s_n=r_n^{1/42}\wedge ((1-|\rho_0|)/2)$.
\item Let $0\leq v\leq 3$, $q\in 2\mathbb{N},q>n_1$, $\delta\geq 1$, and $\{s_n\}_{n\in\mathbb{N}}$ be stochastic processes which satisfy $[S]$.
Assume that $[A1],[A2],[A4\mathchar`-(2q),\delta]$ hold,
\begin{equation}\label{sn-est}
\limsup_{n\to \infty}E[s_n(T)^{-(2v+2[\delta]+12)q}]<\infty \quad {\rm and} \quad \limsup_{n\to \infty}E[b_n^{-2q}(l_n+m_n)^{2q}]<\infty. 
\end{equation}
Then there exists $n_0\in\mathbb{N}$ such that
\begin{equation*}
\sup_{n\geq n_0} E\left[\left(\sup_{\sigma}b_n^{-1/2}|\partial_{\sigma}^v\hat{H}_n(\sigma;s_n)-\partial_{\sigma}^v\tilde{H}^3_{n,s_n}(T;\sigma)|\right)^q\right] <\infty.
\end{equation*}
\end{enumerate}
\end{lemma}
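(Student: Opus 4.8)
The plan is to derive both parts from the single algebraic decomposition
\begin{equation*}
\hat{H}_n(\sigma;s_n)-\tilde{H}^3_{n,s_n}(T;\sigma)=\big(\hat{H}_n-\tilde{H}^1_{n,s_n}(T)\big)+\big(\tilde{H}^1_{n,s_n}(T)-\tilde{H}^2_{n,s_n}(T)\big)+\big(\tilde{H}^2_{n,s_n}(T)-\tilde{H}^3_{n,s_n}(T)\big),
\end{equation*}
supplemented, for part $1$, by the term $H_n-\hat{H}_n$. The first and third brackets are exactly the conditional-on-$\Pi$ quantities estimated in Lemma \ref{H1-diff}, so the genuinely new work lies in (i) the middle bracket, which Lemma \ref{H1-diff} does not cover, and (ii) converting the pointwise-in-$\sigma$ conditional moment bounds into a bound for the supremum over $\sigma\in\Lambda$.

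For the middle bracket I would use that, by the definitions of $\tilde{H}^1_{n,s_n}$ and $\tilde{H}^2_{n,s_n}$, it equals $-\tfrac12(\hat{Z}^{\star}\tilde{M}\hat{Z}-{\rm tr}(\tilde{M}\langle\hat{Z}\rangle_T))=-\tfrac12\sum_{k,k'}\tilde{M}_{k,k'}(\hat{Z}_k\hat{Z}_{k'}-(\langle\hat{Z}\rangle_T)_{k,k'})$. Integration by parts writes each $\hat{Z}_k\hat{Z}_{k'}-(\langle\hat{Z}\rangle_T)_{k,k'}$ as a sum of two stochastic integrals against $\{W_t\}$ with predictable integrands (here $\tilde{M}$ enters only through the predictable values $\rho_{L(\theta)\wedge\tau_n}$), so the middle bracket is a genuine martingale term. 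I would bound its conditional $q$-th moment by a Burkholder--Davis--Gundy argument of the same shape as Lemma \ref{Burk3} together with Lemma \ref{q-sum-est}, obtaining a bound of the type already appearing in Lemma \ref{H1-diff}: in terms of $\Phi_{2p+2,1}$, $\bar{\Phi}_{2p_1+3,2p_2+3}$, a factor $E[R^Cs_n(T)^{-Cq}\mid\Pi]^{1/2}$ and powers of $(l_n+m_n)$, with the same weights $(p+1)^{-r}$. Because this bracket is the stochastic-integral part, its size is of order $b_n^{1/2}$, which is precisely why the correct normalization in part $2$ is $b_n^{-1/2}$ rather than $b_n^{-1}$.

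To control $\sup_\sigma$ I would apply Lemma \ref{prob-conv}: all the conditional estimates above are available for every derivative order $0\le v\le 4$, so bounding $\partial^v_\sigma$ and $\partial^{v+1}_\sigma$ for $v\le 3$ and invoking Sobolev's inequality on $\Lambda$ (legitimate since $q>n_1$) yields a bound for $E[(\sup_\sigma|\cdot|)^q\mid\Pi]$. Taking $r=[\delta]+1$, the largest negative power of $s_n(T)$ produced is $(2v'+2r+7)q$ with $v'\le4$, which is dominated by $s_n(T)^{-(2v+2[\delta]+12)q}$; this explains the exponent in the hypothesis on $s_n$. For part $2$ I would then take the expectation over $\Pi$, apply Cauchy--Schwarz to split off $E[R^Cs_n(T)^{-Cq}]^{1/2}$ (finite by the assumed moment bound on $s_n(T)^{-1}$ and $E[R^{Cq}]<\infty$), and pull out the normalization $b_n^{-q/2}$; the remaining $\Phi$-- and $\bar{\Phi}$--terms, after multiplication by $b_n^{-q/2}$ and weighting by $(p+1)^{-r}$, are exactly the quantities whose $\Pi$-expectations are bounded (indeed tend to $0$) under $[A4\mathchar`-(2q),\delta]$, passing from the $\ell^1$-weighted sums of Lemma \ref{H1-diff} to the $(p+1)^{-2q\delta}$-weighted sums of $[A4\mathchar`-(2q),\delta]$ by Jensen's inequality, and absorbing the $\sqrt{(l_n+m_n)\Phi_{2p+2,2}}$-- and $\varphi_q$--contributions with the help of $\sup_nE[b_n^{-2q}(l_n+m_n)^{2q}]<\infty$.

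For part $1$ the argument is the in-probability analogue. First, with $s_n=r_n^{1/42}\wedge((1-|\rho_0|)/2)$ one has $P[\tau_n<T]\to0$ under $[A2]$ (since $\sup_t|\tilde{\rho}_n(t)-\bar{\rho}_t|\to^p0$, $r_n\to^p0$ and $1-\bar{\rho}_T>0$ a.s.), hence $\hat{H}_n=H_n$ on an event of probability tending to $1$ and $\sup_\sigma b_n^{-1}|\partial^v_\sigma(H_n-\hat{H}_n)|\to^p0$. For the three remaining brackets I would use the first part of Lemma \ref{prob-conv} with $\mathcal{G}_n=\sigma(\Pi)$, reducing the claim to $b_n^{-1}E[|\partial^v_\sigma(\cdot)|\mid\Pi]\to^p0$; here the dangerous factor $E[R^Cs_n(T)^{-Cq}\mid\Pi]^{1/2}$, which is of order $r_n^{-Cq/42}$ up to almost surely finite $\mathcal{F}$-factors, is beaten by the $\Phi$--terms, which are of order $(l_n+m_n)r_n\sim b_nr_n$ (up to the $p$-weighted sums) by tightness of $\{b_n^{-1}(l_n+m_n)\}$, so that after the $b_n^{-1}$ normalization the net power of $r_n$ stays strictly positive --- this is the role of the exponent $42$, chosen to dominate the largest negative power of $s_n(T)$. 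The main obstacle is the middle bracket: establishing the martingale Burkholder estimate in a form compatible with both the later integration against $\{N^i_t\}$ (via $\Phi,\bar{\Phi}$) and the Sobolev passage to $\sup_\sigma$, while keeping the negative powers of $s_n(T)$ balanced against the $\Phi$--terms and $r_n$.
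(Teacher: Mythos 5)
Your proposal follows essentially the same route as the paper: the identical three-bracket decomposition $(\hat{H}_n-\tilde{H}^1_n)+(\tilde{H}^1_n-\tilde{H}^2_n)+(\tilde{H}^2_n-\tilde{H}^3_n)$ with Lemma \ref{H1-diff} handling the outer brackets, a martingale/Burkholder argument for the middle bracket, Sobolev's inequality (via Lemma \ref{prob-conv}) for the supremum in $\sigma$, and for part $1$ exactly the paper's trade-off in which the $\Phi$-terms of size $\lesssim r_n(l_n+m_n)$ beat the factor $s_n(T)^{-Cq/2}\sim r_n^{-q/4}$ coming from the choice $s_n=r_n^{1/42}$. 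Two small refinements relative to your plan: the paper's treatment of the middle bracket is simpler than you anticipate --- writing $\tilde{H}^1_n(t)-\tilde{H}^2_n(t)=-\sum_{k,k'}\tilde{M}_{k,k'}\int_0^t\hat{Z}_{k,s}d\hat{Z}_{k',s}$, it bounds the quadratic variation by operator norms, $\parallel\partial_\sigma^j\tilde{M}\parallel\leq CR^{2j}(1-\bar{\rho}_T)^{-j-1}$ and $\sum_k\sup_t|\partial_\sigma^j\hat{Z}_{k,t}|^2$, so no $\Phi$- or $\bar{\Phi}$-terms and no $s_n(T)$ factor appear there, only $(l_n+m_n)^{q/2}$ and $(1-\bar{\rho}_T)^{-C}$, absorbed by $E[b_n^{-2q}(l_n+m_n)^{2q}]<\infty$. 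Also, your choice $r=[\delta]+1$ is risky: passing from the $(p+1)^{-r}$-weighted $\ell^1$ sums of Lemma \ref{H1-diff} to the $(p+1)^{-q\delta}$-weighted sums of $[A4\mathchar`-(2q),\delta]$ by H\"{o}lder requires $(r-\delta)q/(q-1)>1$, which can fail when $\delta$ is close to $[\delta]+1$; the paper's choice $r=[\delta]+2$ guarantees $r-\delta\geq 1$ and makes this step safe.
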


\begin{proof}
We first prove $2$. Since $\{r_n\}_n$ is bounded and $r_n\to^p0$ as $n\to \infty$, we have $\lim_{n\to\infty}E[r_n^{q'}]=0$ for any $q'>0$.
Then by $[A1],[A4\mathchar`-(2q),\delta],(\ref{sn-est})$, Lemma \ref{H1-diff} with $r=[\delta]+2$, Cauchy-Schwarz inequalities, Jensen's inequality
and the estimate $\Phi_{2p+2,2}\leq r_n(8p+9)\Phi_{2p+2,1}$,
we have $\lim_{n\to \infty}\sup_{\sigma}E[b_n^{-q/2}|\partial_{\sigma}^v(\hat{H}_n-\tilde{H}_n^1)|^q]=0$ for $0\leq v\leq 4$.
Hence $\lim_{n\to \infty}E[b_n^{-q/2}\sup_{\sigma}|\partial_{\sigma}^v(\hat{H}_n-\tilde{H}_n^1)|^q]=0$ for $0\leq v\leq 3$ by Sobolev's inequality.
Similarly, we have $\lim_{n\to \infty}E[b_n^{-q/2}\sup_{\sigma}|\partial_{\sigma}^v(\tilde{H}_n^2-\tilde{H}_n^3)|^q]=0$ for $0\leq v\leq 3$.

We estimate $\tilde{H}^1_n-\tilde{H}^2_n$ in the next step. Let $0\leq v\leq 4$ and $\Pi$ is deterministic. By Ito's formula and symmetry of $\tilde{M}$, we have
\begin{eqnarray}
\tilde{H}^1_n(t)-\tilde{H}^2_n(t)=-\frac{1}{2}\sum_{k,k'}\tilde{M}_{k,k'}\big\{\hat{Z}_{k,t}\hat{Z}_{k',t}-(\langle\hat{Z}\rangle_t)_{k,k'}\big\} =-\sum_{k,k'}\tilde{M}_{k,k'}\int^t_0\hat{Z}_{k,s}d\hat{Z}_{k',s}. \nonumber
\end{eqnarray}
Therefore, $\{\partial_{\sigma}^v(\tilde{H}^1_n(t)-\tilde{H}_n^2(t))\}_{0\leq t\leq T}$ is the martingale. By the Burkholder-Devis-Gundy inequality, we obtain
\begin{equation*}
E[|\partial_{\sigma}^v(\tilde{H}^1_n-\tilde{H}_n^2)|^q]\leq CE[\langle\partial_{\sigma}^v(\tilde{H}^1_n-\tilde{H}_n^2)\rangle_T^{q/2}] \quad (0\leq v\leq 4).
\end{equation*}
Moreover, 
\begin{eqnarray}
\langle\partial_{\sigma}^v(\tilde{H}^1_n-\tilde{H}^2_n)\rangle_T \leq  CR^4\parallel \tilde{M}_0\parallel\bigg(\sum_{0\leq j_1+j_2\leq v}|(\partial_{\sigma}^{j_1}\hat{Z})^{\ast}|\times \parallel \{|\partial_{\sigma}^{j_2}\tilde{M}_{k,k'}|\}_{k,k'}\parallel \bigg)^2, \nonumber
\end{eqnarray}
where $|(\partial_{\sigma}^{j_1}\hat{Z})^{\ast}|^2=\sum_k\sup_t|\partial^{j_1}_{\sigma}\hat{Z}_{k,t}|^2$.
Since $E[|(\partial_{\sigma}^j\hat{Z})^{\ast}|^{2q}]\leq CE[R^{4q}](l_n+m_n)^q, \parallel \tilde{M}_0\parallel\leq 2$ and $\parallel \{|\partial_{\sigma}^j\tilde{M}_{k,k'}|\}_{k,k'}\parallel \leq CR^{2j}(1-\bar{\rho}_T)^{-j-1}$ for $0\leq j\leq 4$, we have
\begin{eqnarray}
b_n^{-\frac{q}{2}}E[|\partial_{\sigma}^v(\tilde{H}^1_n-\tilde{H}_n^2)|^q]\leq CE[R^C]E[(b_n^{-1}(l_n+m_n))^{q/2}]E[R^C(1-\bar{\rho}_T)^{-2(v+1)q}]^{1/2} \nonumber
\end{eqnarray}
for general $\Pi$.
Then by Sobolev's inequality, there exists $n_0\in\mathbb{N}$ such that
\begin{equation*}
\sup_{n\geq n_0}E\left[\left(\sup_{\sigma}b_n^{-\frac{1}{2}}|\partial_{\sigma}^v(\tilde{H}^1_n-\tilde{H}^2_n)|\right)^q\right]<\infty
\end{equation*}
for $0\leq v\leq 3$. This completes the proof of $2$.

Finally, we prove $1$. Since $|\Phi_{2p+2,i}|\leq C(p+1)^ir_n^i(l_n+m_n)$ and $|\Phi_{2p_1+3,2p_2+3}|\leq C(p_1+1)(p_2+1)(l_n+m_n)^2r_n$ 
for $p_1,p_2\in\mathbb{Z}_+$ and $i=1,2$, by Lemma \ref{H1-diff} with $r=3$, we have
\begin{equation*}
\sup_{\sigma\in\mathbb{Q}^{n_1}\cap \Lambda}E[|\partial_{\sigma}^v(\hat{H}_n(\sigma;s_n)-\tilde{H}_{n,s_n}^1(T;\sigma))|^q|\Pi]\leq C(1+r_n^{-\frac{q}{4}})(1+r_n^{q(\gamma \wedge \frac{1}{2})}(l_n+m_n)^{\frac{q}{2}}+r_n^{\frac{q}{2}}(l_n+m_n)^q)
\end{equation*}
for $q\in2\mathbb{N}$, $q>n_1$ and $0\leq v \leq 4$. Therefore, by Lemma \ref{prob-conv} $2$., the assumptions and the inequality $T=\sum_I|I|\leq r_nl_n$, 
we obtain $\{b_n^{-1}r_n^{-1}\}_n$ is tight and
\begin{equation*}
\sup_{\sigma}b_n^{-1}|\partial_{\sigma}^v(\hat{H}_n(\sigma;s_n)-\tilde{H}_{n,s_n}^1(T;\sigma))|\to^p 0
\end{equation*}
as $n\to \infty$ for $0\leq v\leq 3$. 
Similarly, we obtain $\sup_{\sigma}b_n^{-1}|\partial_{\sigma}^v(\tilde{H}_{n,s_n}^2(T;\sigma)-\tilde{H}_{n,s_n}^3(T;\sigma)|\to^p 0$
as $n\to \infty$ for $0\leq v\leq 3$.

Moreover, similarly to the proof of $2$., we have
\begin{equation*}
E[|b_n^{-1}\partial_{\sigma}^v(\tilde{H}^1_n-\tilde{H}^2_n)|^q|\Pi]\leq CE[R^C]b_n^{-q/2}(b_n^{-1}(l_n+m_n))^{q/2}E[R^C(1-\bar{\rho}_T)^{-C}]
\end{equation*}
for $q>n_1$ and $0\leq v\leq 4$.
Hence by Lemma \ref{prob-conv} $2$., we have
$b_n^{-1}\sup_{\sigma}|\partial_{\sigma}^v(\tilde{H}^1_n-\tilde{H}_n^2)|\to^p 0$ as $n\to \infty$ for $0\leq v\leq 3$. 

Moreover, since $P[\tau(s_n)<T]\to 0$ as $n\to \infty$, 
$b_n^{-1}\sup_{\sigma}|\partial_{\sigma}^v(H_n(\sigma)-\hat{H}_n(\sigma;s_n))|\to^p 0$ as $n\to \infty$,
which completes the proof.
\end{proof}

\begin{lemma}\label{A3''}
Assume $[A3']$. Then 
\begin{equation*}
\sup_{\sigma\in\Lambda}|\Psi^{p,1}(f(\cdot,\sigma),a_p)|\to^p 0 \quad {\rm and} \quad
\sup_{\sigma\in\Lambda}|\Psi^{p,2}(f(\cdot,\sigma),c_p)|\to^p 0
\end{equation*}
as $n\to \infty$ for $p\in\mathbb{Z}_+$ and $f(t,\sigma)$ : random variable defined on $[0,T]\times \bar{\Lambda}$ such that
$f$ is continuous with respect to $(t,\sigma)$.
\end{lemma}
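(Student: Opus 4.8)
The plan is to exploit the linearity of $f\mapsto\Psi^{p,i}(f,\cdot)$ in its first argument together with the tightness of the total masses $\nu^{p,i}_n([0,T))$, so as to reduce the claim for a general continuous random field to the case of step functions, for which $[A3']$ applies directly. First I would record three uniform bounds. By Lemma \ref{GGT-lambda} every diagonal entry of $(GG^\star)^p$ and $(G^\star G)^p$ lies in $[0,1]$, so $\nu^{p,1}_n([0,T))\le b_n^{-1}l_n$ and $\nu^{p,2}_n([0,T))\le b_n^{-1}m_n$; under $[A3']$, hence under (\ref{a0}), the families $\{b_n^{-1}l_n\}_n$ and $\{b_n^{-1}m_n\}_n$ are tight. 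Moreover $\int_0^T a_p\,dt\le\int_0^T a_0\,dt<\infty$ and $\int_0^T c_p\,dt\le\int_0^T c_0\,dt<\infty$ almost surely. Since $f$ is continuous on the compact set $[0,T]\times\bar\Lambda$, it is a.s. uniformly continuous there and $\sup_{t,\sigma}|f(t,\sigma)|<\infty$ a.s.; I write $\omega_\epsilon(f)=\sup\{|f(t,\sigma)-f(s,\sigma)|;\,|t-s|\le\epsilon,\ \sigma\in\bar\Lambda\}$, so that $\omega_\epsilon(f)\downarrow 0$ a.s. as $\epsilon\downarrow 0$.

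Next I would fix a deterministic partition $0=t_0<\cdots<t_N=T$ of mesh at most $\epsilon$ and set $f^\epsilon(t,\sigma)=\sum_{k=1}^N f(t_{k-1},\sigma)1_{[t_{k-1},t_k)}(t)$. By linearity,
$$\Psi^{p,1}(f^\epsilon(\cdot,\sigma),a_p)=\sum_{k=1}^N f(t_{k-1},\sigma)\,\delta^n_{p,k},\qquad \delta^n_{p,k}:=\Psi^{p,1}(1_{[0,t_k)},a_p)-\Psi^{p,1}(1_{[0,t_{k-1})},a_p),$$
and $[A3']$ gives $\delta^n_{p,k}\to^p 0$ for each of the finitely many $k$. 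Hence $\sup_\sigma|\Psi^{p,1}(f^\epsilon(\cdot,\sigma),a_p)|\le(\sup_{t,\sigma}|f|)\sum_{k=1}^N|\delta^n_{p,k}|\to^p 0$ for each fixed $\epsilon$. For the approximation error I would estimate
$$\sup_\sigma|\Psi^{p,1}(f(\cdot,\sigma)-f^\epsilon(\cdot,\sigma),a_p)|\le \omega_\epsilon(f)\,\bigl(\nu^{p,1}_n([0,T))+\textstyle\int_0^T a_p\,dt\bigr)\le \omega_\epsilon(f)\,\bigl(b_n^{-1}l_n+\textstyle\int_0^T a_0\,dt\bigr).$$

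Finally I would combine the two pieces through a two-parameter limit. Given $\eta,\zeta>0$, tightness supplies a constant $K$ with $\sup_nP[b_n^{-1}l_n+\int_0^T a_0\,dt>K]<\zeta/2$, and since $\omega_\epsilon(f)\to 0$ a.s. there is a deterministic $\epsilon$ with $P[\omega_\epsilon(f)>\eta/(2K)]<\zeta/2$; together these force the approximation error below $\eta/2$ off an event of probability $<\zeta$, uniformly in $n$. With $\epsilon$ now fixed, the step-function convergence yields $P[\sup_\sigma|\Psi^{p,1}(f^\epsilon(\cdot,\sigma),a_p)|\ge\eta/2]<\zeta$ for all large $n$, and the triangle inequality gives $\limsup_n P[\sup_\sigma|\Psi^{p,1}(f(\cdot,\sigma),a_p)|\ge\eta]\le 2\zeta$, whence the first convergence. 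The argument for $\nu^{p,2}_n$ and $c_p$ is identical. The only genuine subtlety is interchanging the mesh limit $\epsilon\downarrow 0$ with $n\to\infty$ while the modulus of continuity of the \emph{random} field $f$ is itself random; the tightness of the total masses $b_n^{-1}l_n$ and $b_n^{-1}m_n$ is precisely what renders this control uniform in $n$.
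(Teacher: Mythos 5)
Your argument is correct and is essentially the paper's own proof, only written out in more detail: the paper likewise approximates $f$ by step functions in $t$ with $\sup_{t,\sigma}$-error control, applies $[A3']$ to the resulting finitely many indicator terms, and uses tightness of the total masses $\nu^{p,i}_n([0,T))$ (bounded by $b_n^{-1}l_n$, $b_n^{-1}m_n$) to make the approximation error uniform in $n$ before combining via the triangle inequality. Your explicit construction of $f^\epsilon$, the modulus $\omega_\epsilon(f)$, and the two-parameter limit merely make precise the steps the paper leaves implicit.
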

\begin{proof}
Let $\{f^k\}_k$ be step functions such that $\sup_{t,\sigma}|f(t,\sigma)-f^k(t,\sigma)|\to^p 0$ as $k\to \infty$.
By $[A3']$, we obtain $\sup_{\sigma}|\int^T_0f^k(t,\sigma)\nu^{p,1}_n(dt)-\int^T_0f^k(t,\sigma)a_p(t)dt|\to^p 0$ as $n\to \infty$ for any $k\in\mathbb{N}$.

Since $\{\nu^{0,1}_n([0,T))\}_n$ is tight, for any $\epsilon, \delta>0$, there exists $K\in \mathbb{N}$ such that
\begin{equation*}
P\bigg[\sup_{\sigma}\bigg|\int^T_0(f-f^k)\nu^{p,1}_n(dt)\bigg|\vee \sup_{\sigma}\bigg|\int^T_0(f-f^k)a_p(t)dt\bigg|>\delta\bigg]<\epsilon \ (k\geq K, n\in \mathbb{N}).
\end{equation*}
Then there exists $N\in\mathbb{N}$ such that
$P[\sup_{\sigma}|\Psi^{p,1}(f(\cdot,\sigma),a_p)|> 3\delta ]<2\epsilon$
for $n\geq N$. Similarly, we have $\sup_{\sigma}|\Psi^{p,2}(f(\cdot,\sigma),c_p)|\to^p 0$ as $n\to \infty$.
\end{proof}
\noindent
{\bf Proof of Proposition \ref{Hn-lim}.}

We first prove $1$. By Lemma \ref{Hn-H2n-diff}, it is sufficient to show 
$\sup_{\sigma}|b_n^{-1}\partial_{\sigma}^v\tilde{H}^3_n(T;\sigma)-\int^T_0\partial_{\sigma}^vh^{\infty}_t(\sigma)dt|\to^p 0$ 
as $n\to \infty$ for $0\leq v\leq 3$, where $s_n=r_n^{1/42}\wedge ((1-|\rho_0|)/2)$.

Since $P[\tau(s_n)<T]\to 0$ as $n\to \infty$, 
\begin{equation*}
\sum_{p=0}^{\infty}\sum_{i=1}^2\sup_{\sigma}\bigg|\int^T_0(\partial_{\sigma}^v\mathcal{D}^i_p(t\wedge \tau(s_n),t;\sigma)-\partial_{\sigma}^v\mathcal{D}^i_p(t,t;\sigma))\nu^{p,i}_n(dt)\bigg|\to^p 0
\end{equation*}
as $n\to \infty$ for $0\leq v\leq 3$. 

Moreover, by Lemma \ref{A3''}, 
$\sup_{\sigma}|\Psi^{p,1}(\partial_{\sigma}^v\mathcal{D}^1_p(\cdot,\cdot;\sigma),a_p)| \to^p 0$
as $n\to \infty$ for $p\in\mathbb{Z}_+$ and $0\leq v\leq 3$.

Then by Lemma \ref{sum-conv}, the tightness of $\{\nu^{0,1}_n([0,T))\}_n$, and the estimates $\nu^{p,1}_n([0,T))\leq \nu^{0,1}_n([0,T))$ 
and $|\partial_{\sigma}^v\mathcal{D}^1_p(t,t,\sigma)|\leq CR^C\bar{\rho}_T^{(2p-v)\vee 0}(p+1)^v$, we have
\begin{equation*}
\sum_{p=0}^{\infty}\sup_{\sigma}|\Psi^{p,1}(\partial_{\sigma}^v\mathcal{D}^1_p(\cdot,\cdot;\sigma),a_p)| \to^p 0
\end{equation*}
as $n\to \infty$ for $0\leq v\leq 3$. Similarly, we obtain
\begin{equation*}
\sum_{p=0}^{\infty}\sup_{\sigma}|\Psi^{p,2}(\partial_{\sigma}^v\mathcal{D}^2_p(\cdot,\cdot;\sigma),c_p)| \to^p 0
\end{equation*}
as $n\to \infty$ for $0\leq v\leq 3$. 
Since 
$h^{\infty}_t(\sigma)=\sum_{p=0}^{\infty}(\mathcal{D}^1_p(t,t;\sigma) a_p(t)+\mathcal{D}^2_p(t,t;\sigma) c_p(t))$,
we obtain $1$.

We next prove $2$.
First, $[S\mathchar`-((2v+2[\delta]+12)q),\xi]$ and the estimate $\nu^{p,i}_n([0,T))\leq \nu^{0,i}([0,T))\leq b_n^{-1}(l_n+m_n) \ (p\in\mathbb{Z}_+)$ 
yield 
\begin{equation*}
\sup_nE\bigg[\sup_{\sigma}\bigg|b_n^{\frac{\xi}{2q}}\sum_{p=0}^{\infty}\sum_{i=1}^2\int^T_0\big\{\partial_{\sigma}^v\mathcal{D}^i_p(t\wedge \tau(s_n),t;\sigma)-\partial_{\sigma}^v\mathcal{D}^i_p(t,t;\sigma)\big\}\nu^{p,i}_n(dt)\bigg|^q\bigg]<\infty
\end{equation*}
for $0\leq v\leq 3$.

Then by Lemma \ref{Hn-H2n-diff}, it is sufficient to show that there exists $n_0\in\mathbb{N}$ such that
\begin{equation*}
\sup_{n\geq n_0}E\bigg[\sup_{\sigma}\bigg|b_n^{\eta}\sum_{p=0}^{\infty}\big\{\Psi^{p,1}(\partial_{\sigma}^v\mathcal{D}^1_p(\cdot,\cdot;\sigma),a_p)+\Psi^{p,2}(\partial_{\sigma}^v\mathcal{D}^2_p(\cdot,\cdot;\sigma),c_p)\big\}\bigg|^q\bigg]<\infty.
\end{equation*}

By $[A3'\mathchar`-q,\eta]$ and independence of $\{\Pi_n\}_n$ and $X$, we have
\begin{eqnarray}\label{psi-sum-est}
&&\sup_{n\geq n_0}E\bigg[\bigg|b_n^{\eta}\sum_{p=0}^{\infty}\Psi^{p,1}(\partial_{\sigma}^v\mathcal{D}^1_p(\cdot,\cdot;\sigma),a_p)\bigg|^q\bigg] \nonumber \\
&\leq & C\sum_{p=0}^{\infty}\frac{1}{(p+1)^2}\sup_{n\geq n_0}E[(b_n^{\eta}(p+1)^2|\Psi^{p,1}(\partial_{\sigma}^v\mathcal{D}^1_p(\cdot,\cdot;\sigma),a_p)|)^q] \nonumber \\
&\leq &C\sum_{p=0}^{\infty}(p+1)^{C}E\left[\sup_t|\partial_{\sigma}^v\mathcal{D}^1_p(t,t;\sigma)|^q+\omega_{\alpha}(\partial_{\sigma}^v\mathcal{D}^1_p(\cdot,\cdot;\sigma))^q\right] \nonumber \\
\end{eqnarray}
for $0\leq v\leq 4$, $\alpha$ in $[A3'\mathchar`-q,\eta]$ and $n_0$ which is renewed if necessary.

By Ito's formula, we obtain
\begin{equation*}
E[|\partial_{\sigma}^v\mathcal{D}^1_p(t,t;\sigma)-\partial_{\sigma}^v\mathcal{D}^1_p(s,s;\sigma)|^q]\leq CE[((p+1)^{v+2}\bar{\rho}_T^{(2p-v-2)\vee 0}R^C)^q]|t-s|^{q/2}
\end{equation*}
for $s<t$.

Hence by {\colorg Kolmogorov's} criterion(\cite{Rev-Yor} Theorem (2.1)) and its proof, we have
\begin{equation}\label{omega-est}
E[\omega_{\alpha}(\partial_{\sigma}^v\mathcal{D}^1_p(\cdot,\cdot;\sigma))^q]\leq CE[((p+1)^{v+2}\bar{\rho}_T^{(2p-v-2)\vee 0}R^C)^q].
\end{equation}
(\ref{psi-sum-est}),(\ref{omega-est}) yield $\sup_{\sigma}\sup_{n\geq n_0}E[|b_n^{\eta}\sum_{p=0}^{\infty}\Psi^{p,1}(\partial_{\sigma}^v\mathcal{D}^1_p(\cdot,\cdot;\sigma),a_p)|^q]<\infty$.
Then by Sobolev's inequality, we have 
$\sup_{n\geq n_0}E[\sup_{\sigma}|b_n^{\eta}\sum_{p=0}^{\infty}\Psi^{p,1}(\partial_{\sigma}^v\mathcal{D}^1_p(\cdot,\cdot;\sigma),a_p)|^q]<\infty$
for $0\leq v\leq 3$. Similarly, there exists $n_1\in\mathbb{N}$ such that\\
$\sup_{n\geq n_1}E[\sup_{\sigma}|b_n^{\eta}\sum_{p=0}^{\infty}\Psi^{p,2}(\partial_{\sigma}^v\mathcal{D}^2_p(\cdot,\cdot;\sigma),c_p)|^q]<\infty$
for $0\leq v\leq 3$.
\qed

\subsection{Proof of Lemmas \ref{sep-lem} and \ref{y0-est}}

{\bf Proof of Lemma \ref{sep-lem}.}

Let $G_{[s,t)}=\{G_{I,J}\}_{L(I),L(J)\in [s,t)}$ for $0\leq s<t\leq T$,
$\{\lambda'_i\}_{i=1}^{l'}$ be the eigenvalues of $G_{[s,t)}G^{\star}_{[s,t)}$
and $f^{(s)}_1(t)=f_1(t,B^1_sB^2_s,\rho_s,\rho_{s,\ast})$.
Since $|b_n^{-1}{\rm tr}((G_{[s,t)}G_{[s,t)}^{\star})^p)-\nu^{p,1}_n([s,t))|\to^p 0$ as $n\to \infty$
by a similar argument to the proof of Lemma \ref{A3equiv}, we have
\begin{equation*}
\int^t_sa_p(u)du=\underset{n\to \infty}{\rm P\mathchar`-\lim} \ b_n^{-1}\sum_{i=1}^{l'}(\lambda'_i)^p,
\end{equation*}
where $\rm P\mathchar`-\lim$ denotes the limit in probability.
Moreover, similarly to the proof of Lemma \ref{GGT-lambda}, we have $\sup_i|\lambda'_i|\leq 1$.

Let $g_i=g_i(\rho_s)=\sqrt{1-\lambda'_i\rho_s^2}$, $g_{i,\ast}=g_i(\rho_{s,\ast})$.
Then since 
\begin{eqnarray}
\mathcal{A}(\rho)\frac{\rho_{\ast}}{\rho}-\mathcal{A}(\rho)-a_0&=&\sum_{p=1}^{\infty}a_p\rho^{2p}\left(\frac{\rho_{\ast}}{\rho}-1\right)-a_0=\sum_{p=0}^{\infty}a_{p+1}\rho^{2p+1}\rho_{\ast}-\sum_{p=0}^{\infty}a_p\rho^{2p}, \nonumber \\
\int^{\rho}_{\rho_{\ast}}\frac{\mathcal{A}(\rho')}{\rho'}d\rho'&=&\frac{1}{2}\sum_{p=1}^{\infty}\frac{a_p}{p}(\rho^{2p}-\rho_{\ast}^{2p}) \nonumber 
\end{eqnarray}
for $\rho,\rho_{\ast}\in (-1,1)$,
we have
\begin{eqnarray}\label{y-est}
&&\int^t_s f^{(s)}_1(u)du \nonumber \\
&=&\underset{n\to \infty}{\rm P\mathchar`-\lim} \ b_n^{-1}\sum_{i=1}^{l'}\bigg\{1+\log (B^1_sB^2_s)+B^1_sB^2_s\sum_{p=0}^{\infty}\left((\lambda'_i)^{p+1}\rho_s^{2p+1}\rho_{s,\ast}-(\lambda'_i)^p\rho^{2p}_s\right) 
+\frac{1}{2}\sum_{p=1}^{\infty}\frac{(\lambda'_i)^p}{p}(\rho^{2p}_s-\rho^{2p}_{s,\ast}) \bigg\} \nonumber \\
&=&\underset{n\to \infty}{\rm P\mathchar`-\lim} \ b_n^{-1}\sum_{i=1}^{l'}\left\{1+B^1_sB^2_sg_i^{-2}(\lambda'_i\rho_s\rho_{s,\ast}-1)+\log(B^1_sB^2_sg_{i,\ast}g_i^{-1})\right\} \nonumber \\
&=&\underset{n\to \infty}{\rm P\mathchar`-\lim} \ b_n^{-1}\sum_{i=1}^{l'}\left\{B^1_sB^2_sg_i^{-2}(\lambda'_i\rho_s\rho_{s,\ast}-1)+B^1_sB^2_sg_{i,\ast}g_i^{-1}+F(B^1_sB^2_sg_{i,\ast}g_i^{-1}) \right\} 
\end{eqnarray}
by Lemma \ref{sum-conv}.

Since
\begin{eqnarray}\label{y-second-est}
g_i^{-2}(\lambda'_i\rho_s\rho_{s,\ast}-1)+g_{i,\ast}g_i^{-1} &=&-\frac{(\lambda'_i\rho_s\rho_{s,\ast}-1)^2-g_{i,\ast}^2g_i^2}{g_i^2(1-\lambda'_i\rho_s\rho_{s,\ast}+g_{i,\ast}g_i)} 
=-\frac{\lambda'_i(\rho_s-\rho_{s,\ast})^2}{g_i^2(1-\lambda'_i\rho_s\rho_{s,\ast}+g_{i,\ast}g_i)} \leq -\lambda'_i(\rho_s-\rho_{s,\ast})^2/3 \nonumber \\
\end{eqnarray}
and $B^1_sB^2_sg_{i,\ast}g_i^{-1}-1\leq R^4/\sqrt{1-\bar{\rho}_T^2}$, it follows that 
\begin{eqnarray}
\int^t_s f^{(s)}_1(u)du&\leq &\underset{n\to \infty}{\rm P\mathchar`-\lim} \ b_n^{-1}\sum_{i=1}^{l'}\left\{-\frac{B^1_sB^2_s\lambda'_i}{3}(\rho_s-\rho_{s,\ast})^2-\frac{1-\bar{\rho}_T^2}{4R^8}(B^1_sB^2_sg_{i,\ast}g_i^{-1}-1)^2\right\} \nonumber
\end{eqnarray}
{\colorg from} (\ref{y-est}),(\ref{y-second-est}) and Lemma \ref{Fx-est}.

Moreover, since
\begin{eqnarray}
(B^1_sB^2_sg_{i,\ast}g_i^{-1}-1)^2 &\geq & (B^1_sB^2_sg_{i,\ast}-g_i)^2 \geq g_{i,\ast}^2(B^1_sB^2_s-1)^2/2-(g_i-g_{i,\ast})^2 \nonumber \\
&=& g_{i,\ast}^2(B^1_sB^2_s-1)^2/2-(\lambda'_i)^2(\rho_s-\rho_{s,\ast})^2(\rho_s+\rho_{s,\ast})^2/(g_i+g_{i,\ast})^2 \nonumber \\
&\geq & (1-\bar{\rho}_T^2)(B^1_sB^2_s-1)^2/2-\lambda'_i(\rho_s-\rho_{s,\ast})^2/(1-\bar{\rho}_T^2), \nonumber
\end{eqnarray}
we obtain 
\begin{eqnarray}
&&\int^t_sf_1^{(s)}(u)du \nonumber \\
&\leq &-\frac{B^1_sB^2_s(\rho_s-\rho_{s,\ast})^2}{3}\int^t_sa_1(u)du -\frac{1-\bar{\rho}_T^2}{4R^8} 
\left\{\frac{(1-\bar{\rho}_T^2)(B^1_sB^2_s-1)^2}{2}\int^t_sa_0(u)du-\frac{(\rho_s-\rho_{s,\ast})^2}{1-\bar{\rho}_T^2}\int^t_sa_1(u)du\right\} \nonumber \\
&= & -\left(\frac{B^1_sB^2_s}{3}-\frac{1}{4R^8}\right)(\rho_s-\rho_{s,\ast})^2\int^t_sa_1(u)du
-\frac{(1-\bar{\rho}_T^2)^2}{8R^8}(B^1_sB^2_s-1)^2\int^t_sa_0(u)du \nonumber \\
&\leq & -C_1\int^t_s\left\{a_1(u)(\rho_s-\rho_{s,\ast})^2+a_0(u)(B^1_sB^2_s-1)^2\right\}du. \nonumber
\end{eqnarray}
Since $s<t$ is arbitrary, we obtain
\begin{equation*}
\int^t_sf_1(u,B^1_uB^2_u,\rho_u,\rho_{u,\ast})du\leq -C_1\int^t_s\big\{a_1(u)(\rho_u-\rho_{u,\ast})^2+a_0(u)(B^1_uB^2_u-1)^2\big\}du.
\end{equation*}
Then we have
\begin{equation*}
f_1(t,B^1_tB^2_t,\rho_t,\rho_{t,\ast})\leq  -C_1\big\{a_1(t)(\rho_t-\rho_{t,\ast})^2+a_0(t)(B^1_tB^2_t-1)^2\big\} \quad dt\times P\mathchar`- \ {\rm a.e.} \ (t,\omega).
\end{equation*}
Similar argument using the eigenvalues of $G^{\star}_{[s,t)}G_{[s,t)}$ instead of that of $G_{[s,t)}G^{\star}_{[s,t)}$ yields
\begin{equation*}
f_2(t,B^1_tB^2_t,\rho_t,\rho_{t,\ast})\leq  -C_1\big\{a_1(t)(\rho_t-\rho_{t,\ast})^2+c_0(t)(B^1_tB^2_t-1)^2\big\} \quad dt\times P\mathchar`- \ {\rm a.e.} \ (t,\omega).
\end{equation*}
\qed
\\
\\
{\bf Proof of Lemma \ref{y0-est}.}

For the case that observation intervals $\{I\},\{J\}$ are synchronous and equi-spaced : 
$|I|=|J|=T/[b_n]$, we obtain $a_0\equiv c_0\equiv a_1\equiv 1$, $\mathcal{A}(\rho)=\rho^2/(1-\rho^2)$. 
Let us denote $y_t$ by $y_{t,0}$ for the synchronous and equi-spaced sampling case, then by (\ref{y-rep}) we have
\begin{eqnarray}
y_{t,0}&=&-\frac{(B^1_t-B^2_t)^2}{2(1-\rho^2_t)}+1+\log B^1_tB^2_t+\frac{1}{2}\log\frac{1-\rho_{t,\ast}^2}{1-\rho_t^2}+B^1_tB^2_t\frac{\rho_t\rho_{t,\ast}-1}{1-\rho^2_t} \nonumber \\
&=&-\frac{(B^1_t-B^2_t)^2}{2(1-\rho^2_t)}+F\left(B^1_tB^2_t\sqrt{\frac{1-\rho_{t,\ast}^2}{1-\rho_t^2}}\right)+B^1_tB^2_t\left(\frac{\rho_t\rho_{t,\ast}-1}{1-\rho^2_t}+\sqrt{\frac{1-\rho_{t,\ast}^2}{1-\rho_t^2}}\right). \nonumber
\end{eqnarray}
Since $B^1_tB^2_t\sqrt{1-\rho_{t,\ast}^2}/\sqrt{1-\rho_t^2}\geq R^{-4}\sqrt{1-\bar{\rho}^2_T}$,
by Lemma \ref{Fx-est} and similar argument to (\ref{y-second-est}), it follows that
\begin{eqnarray}
y_{t,0}&\geq &-\frac{(B^1_t-B^2_t)^2}{2(1-\bar{\rho}^2_T)}-\left(\log\frac{R^4}{\sqrt{1-\bar{\rho}^2_T}}\vee 1\right)\left(B^1_tB^2_t\sqrt{\frac{1-\rho_{t,\ast}^2}{1-\rho_t^2}}-1\right)^2-R^4\frac{(\rho_t-\rho_{t,\ast})^2}{(1-\bar{\rho}^2_T)^2}. \nonumber
\end{eqnarray}
Since
\begin{eqnarray}
(B^1_tB^2_t\sqrt{1-\rho_{t,\ast}^2}/\sqrt{1-\rho_t^2}-1)^2&\leq &\frac{2(B^1_tB^2_t-1)^2+2(\sqrt{1-\rho_{t,\ast}^2}-\sqrt{1-\rho_t^2})^2}{1-\bar{\rho}^2_T} \nonumber \\
&\leq & \frac{2(B^1_tB^2_t-1)^2}{1-\bar{\rho}^2_T}+\frac{(\rho_{t,\ast}-\rho_t)^2(\rho_{t,\ast}+\rho_t)^2}{2(1-\bar{\rho}^2_T)^2}, \nonumber
\end{eqnarray}
there exists a positive random variable $\mathcal{R}'$ which does not depend on $\sigma,\sigma_{\ast},t$
such that $E[(\mathcal{R}')^q]<\infty$ for any $q>0$ and
\begin{equation*}
y_{t,0}\geq -\mathcal{R}'\left\{(B^1_t-B^2_t)^2+(B^1_tB^2_t-1)^2+(\rho_t-\rho_{t,\ast})^2\right\}.
\end{equation*}
By integrating with respect to $t$, we have the desired conclusion.
\qed

\subsection{Proof of Proposition \ref{pld} and Theorem \ref{asym-dist}}
{ \bf Proof of Proposition \ref{pld}.}

We use Theorem $2$ in Yoshida \cite{yos05}. 

Let $\beta_1=\delta$, $\beta_2=1/2-\delta$, $0<\rho'_2<\delta, 0<\alpha<1\wedge (\rho_2'/2)$, $\beta=\alpha/(1-\alpha)$ 
and $0<\rho'_1<1\wedge \beta \wedge (2\beta_1/(1-\alpha))$. 
Let 
\begin{equation*}
\hat{\mathcal{Y}}_n(\sigma;\sigma_{\ast})=b_n^{-1}(\hat{H}_n(\sigma)-\hat{H}_n(\sigma_{\ast})), \quad \hat{\Gamma}_n(\sigma)=-b_n^{-1}\partial_{\sigma}^2\hat{H}_n(\sigma),
\end{equation*}
then it is sufficient to prove the following five conditions for any $L>0$. 
\begin{enumerate}
\item There exists $c_L>0$ such that for any $r>0$, we have $P[\chi\leq r^{-(\rho'_2-2\alpha)}]\leq c_L/r^L$ and
$P[\{r^{-\rho'_1}|u|^2\leq u^{\star}\Gamma u/4 \ {\rm for \ any } \ u\in\mathbb{R}^{n_1}\}^c]\leq c_L/r^L$.
\item For $M_1=L(1-\rho'_1)^{-1}$, $\sup_nE[(b_n^{-1/2}|\partial_{\sigma}\hat{H}_n(\sigma_{\ast})|)^{M_1}]<\infty$.
\item For $M_2=L(1-2\beta_2-\rho'_2)^{-1}$, 
\begin{equation*}
\sup_nE\bigg[\bigg(\sup_{\sigma}b_n^{\frac{1}{2}-\beta_2}|\hat{\mathcal{Y}}_n(\sigma;\sigma_{\ast})-\mathcal{Y}(\sigma;\sigma_{\ast})|\bigg)^{M_2}\bigg]<\infty.
\end{equation*}
\item For $M_3=L(\beta-\rho'_1)^{-1}$, $\sup_nE[(b_n^{-1}\sup_{\sigma}|\partial_{\sigma}^3\hat{H}_n(\sigma)|)^{M_3}]<\infty$.
\item For $M_4=L(2\beta_1/(1-\alpha)-\rho'_1)^{-1}$, $\sup_nE[(b_n^{\beta_1}|\hat{\Gamma}_n(\sigma_{\ast})-\Gamma|)^{M_4}]<\infty$.
\end{enumerate}

By using Taylor's formula for $h^{\infty}_t(\sigma)-h^{\infty}_t(\sigma_{\ast})$, we obtain $\chi\leq \inf_{u\in\mathbb{R}^{n_1}\setminus \{0\}}u^{\star}\Gamma u/(2|u|^2)$.
Then $[H]$ yields $1$. 
Moreover, $3.$ and $5.$ obviously hold by Proposition \ref{Hn-lim} $2$.
By Proposition \ref{Hn-lim} and the estimate $E[(\sup_{\sigma}|\int^T_0\partial_{\sigma}^3h^{\infty}_t(\sigma)dt|)^{M_3}]<\infty$, $4.$ also holds.
Finally, Lemma \ref{Hn-H2n-diff}, $[S\mathchar`-q',2q'\delta]$ for some sufficiently large $q'$ and the estimate 
$\partial_{\sigma}\tilde{H}^3_n(T;\sigma_{\ast})\equiv 0$ on $\{\tau(s_n)=T\}$ show $2$.
\qed

\begin{proposition}\label{st-conv}
Assume $[A1]-[A4]$.
Then $(\mathcal{V}_n(u_1),\cdots, \mathcal{V}_n(u_k))\to^{s\mathchar`-\mathcal{L}} (\mathcal{V}(u_1),\cdots, \mathcal{V}(u_k))$
as $n\to \infty$ for $k\in\mathbb{N}$, $u_1,\cdots,u_k\in\mathbb{R}^{n_1}$, 
where $\mathcal{V}_n(u)=b_n^{-1/2}\partial_{\sigma}H_n(\sigma_{\ast})u+b_n^{-1}u^{\star}\partial_{\sigma}^2H_n(\sigma_{\ast})u/2$, 
$\mathcal{V}(u)=u^{\star}\Gamma^{1/2} \mathcal{N}-u^{\star}\Gamma u/2$ and
$\mathcal{N}$ is defined before the statement of Theorem \ref{asym-dist}.
Moreover, 
\begin{eqnarray}
\partial_{\sigma}^2h^{\infty}_t(\sigma_{\ast})&=&\mathcal{A}(\rho_{t,\ast})\left(\frac{\partial_{\sigma}\rho_{t,\ast}}{\rho_{t,\ast}}-\partial_{\sigma} B^1_{t,\ast}-\partial_{\sigma} B^2_{t,\ast}\right)^2-\partial_{\rho}\mathcal{A}(\rho_{t,\ast})\frac{(\partial_{\sigma} \rho_{t,\ast})^2}{\rho_{t,\ast}} \nonumber \\
&&-2(a_0(t)+\mathcal{A}(\rho_{t,\ast}))(\partial_{\sigma} B^1_{t,\ast})^2-2(c_0(t)+\mathcal{A}(\rho_{t,\ast}))(\partial_{\sigma} B^2_{t,\ast})^2. \nonumber
\end{eqnarray}
\end{proposition}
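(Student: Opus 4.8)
The plan is to split $\mathcal{V}_n(u)$ into its score part and its Hessian part and treat them separately. The Hessian part is already available: applying Proposition \ref{Hn-lim} $1$ with $v=2$ gives $b_n^{-1}u^{\star}\partial_{\sigma}^2H_n(\sigma_{\ast})u\to^p u^{\star}\big(\int^T_0\partial_{\sigma}^2h^{\infty}_t(\sigma_{\ast})dt\big)u=-u^{\star}\Gamma u$, so the second summand of $\mathcal{V}_n(u)$ converges in probability to the $\mathcal{F}$-measurable limit $-u^{\star}\Gamma u/2$. Since convergence in probability to an $\mathcal{F}$-measurable limit is automatically joint with any stable convergence, the whole statement reduces to establishing the stable central limit theorem for the score vector, namely $b_n^{-1/2}\partial_{\sigma}H_n(\sigma_{\ast})\to^{s\mathchar`-\mathcal{L}}\Gamma^{1/2}\mathcal{N}$, and then reading off the finite-dimensional limits $\mathcal{V}(u_j)=u_j^{\star}\Gamma^{1/2}\mathcal{N}-u_j^{\star}\Gamma u_j/2$.

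First I would reduce the score to a continuous martingale. Fix $\{s_n\}$ satisfying $[S\mathchar`-q',\xi]$ for a sufficiently large $q'$; on $\{\tau(s_n)=T\}$, whose complement is negligible, $H_n$ may be replaced by $\hat{H}_n$. The proof of Lemma \ref{Hn-H2n-diff} $2$ shows that the pieces $\hat{H}_n-\tilde{H}^1_n$ and $\tilde{H}^2_n-\tilde{H}^3_n$ are $o_p(b_n^{1/2})$ in the relevant $\sigma$-uniform sense, while $\partial_{\sigma}\tilde{H}^3_n(T;\sigma_{\ast})\equiv 0$ on $\{\tau(s_n)=T\}$. Hence $b_n^{-1/2}\partial_{\sigma}H_n(\sigma_{\ast})=b_n^{-1/2}\partial_{\sigma}(\tilde{H}^1_n-\tilde{H}^2_n)(\sigma_{\ast})+o_p(1)$, and by the identity $\tilde{H}^1_n(t)-\tilde{H}^2_n(t)=-\sum_{k,k'}\tilde{M}_{k,k'}\int^t_0\hat{Z}_{k,s}d\hat{Z}_{k',s}$ established there, the retained term is the terminal value of a continuous martingale $\{u^{\star}\mathcal{M}_{n,t}\}_{t}$, which after expressing each $\hat{Z}$ as an Itô integral against $W$ takes the form $u^{\star}\mathcal{M}_{n,T}=\int^T_0\Xi_{n,s}dW_s$ for an explicit predictable integrand $\Xi_{n,s}$ built from $\partial_{\sigma}\tilde{M}$, the coefficients $\{b^i_{s,\ast}\}$ and the normalizations $|b^i_{I,\tau_n}|$.

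Next I would invoke the stable central limit theorem for continuous local martingales (Jacod \cite{jacod}) for $N_n:=b_n^{-1/2}u^{\star}\mathcal{M}_{n,T}=\int^T_0\phi_{n,s}\cdot dW_s$. Two conditions are needed. The bracket condition is $\langle N_n\rangle_T=\int^T_0|\phi_{n,s}|^2ds\to^p u^{\star}\Gamma u$; I would rewrite this bracket as a random-partition weighted sum to which Proposition \ref{A3eq} applies, so that the limit is an integral $\int^T_0(\cdots)ds$ whose integrand I then identify with $-u^{\star}\partial_{\sigma}^2h^{\infty}_t(\sigma_{\ast})u$. The orthogonality (nesting) condition is $\langle N_n,W^{(j)}\rangle_T=\int^T_0\phi^{(j)}_{n,s}ds\to^p 0$ for each component; since this cross bracket is a Lebesgue time-integral of the normalized martingale increments $\hat{Z}$ rather than a sum of their squared increments, it carries an extra smallness factor per observation cell and is negligible, which forces the limit to be conditionally Gaussian and independent of $\mathcal{F}$, i.e. $\Gamma^{1/2}\mathcal{N}$. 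Combining with the Hessian limit yields the asserted joint stable convergence. The main obstacle is precisely the bracket computation together with the algebraic identity matching its limit to $u^{\star}\Gamma u$, an information-type identity; the orthogonality condition, though conceptually the reason mixed normality holds, is comparatively routine once the order estimates of Lemma \ref{H1-diff} are in hand.

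Finally, the closed form for $\partial_{\sigma}^2h^{\infty}_t(\sigma_{\ast})$ is a direct computation: I differentiate the representation (\ref{h-inf-A}) of $h^{\infty}_t$ twice in $\sigma$ and evaluate at $\sigma_{\ast}$, where $B^1_{t,\ast}=B^2_{t,\ast}=1$, $\partial_{\sigma}(\rho_t)|_{\sigma_{\ast}}=\partial_{\sigma}\rho_{t,\ast}$, and the first-order contributions cancel. Collecting the surviving terms in $\mathcal{A}(\rho_{t,\ast})$, $\partial_{\rho}\mathcal{A}(\rho_{t,\ast})$, $a_0(t)$, $c_0(t)$ and the derivatives $\partial_{\sigma}B^1_{t,\ast}$, $\partial_{\sigma}B^2_{t,\ast}$, $\partial_{\sigma}\rho_{t,\ast}$ produces the displayed expression, which simultaneously confirms the integrand identification in the bracket limit of the preceding paragraph.
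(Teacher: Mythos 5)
Your proposal is correct and follows essentially the same route as the paper: the Hessian term is disposed of by Proposition \ref{Hn-lim}, the score is reduced (via the $\tilde{H}^1_n,\tilde{H}^2_n,\tilde{H}^3_n$ decomposition, with $\partial_{\sigma}\tilde{H}^3_{n,s_n}(T;\sigma_{\ast})\equiv 0$ on $\{\tau(s_n)=T\}$) to the continuous martingale $\tilde{\mathcal{X}}_{t,n}(u)=b_n^{-1/2}\partial_{\sigma}(\tilde{H}^1_{n,s_n}-\tilde{H}^2_{n,s_n})(t;\sigma_{\ast})u$, and Jacod's stable limit theorem is applied by checking the bracket limit $\langle\tilde{\mathcal{X}}\rangle_t\to^p u^{\star}\Gamma_t u$ together with $\langle\tilde{\mathcal{X}},W\rangle_t\to^p 0$ (and, trivially, orthogonality to $N'\in\mathcal{M}_b(W^{\perp})$), exactly as in the paper, working under the enlarged filtration $\mathcal{F}^{\dagger}_t$ that adjoins $\sigma(\{\Pi_n\}_n)$ so that the $\Pi_n$-dependent integrands are adapted. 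The closed form for $\partial_{\sigma}^2h^{\infty}_t(\sigma_{\ast})$ is obtained by the same direct differentiation of (\ref{h-inf-A}) at $\sigma_{\ast}$ using $B^1_{t,\ast}=B^2_{t,\ast}=1$.
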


\begin{proof}
By (\ref{h-inf-A}) we have
\begin{eqnarray}
\partial_{\sigma} h^{\infty}_t&=& -\partial_{\sigma} B^1_t B^1_t(a_0+\mathcal{A}(\rho_t))-\frac{1}{2}(B^1_t)^2\partial_{\sigma} (\mathcal{A}(\rho_t))-\partial_{\sigma} B^2_t B^2_t(c_0+\mathcal{A}(\rho_t)) -\frac{1}{2}(B^2_t)^2\partial_{\sigma} (\mathcal{A}(\rho_t)) \nonumber \\
&&+(\partial_{\sigma} B^1_t B^2_t+B^1_t\partial_{\sigma} B^2_t)\mathcal{A}\frac{\rho_{t,\ast}}{\rho_t}+B^1_tB^2_t\rho_{t,\ast}\partial_{\sigma}\left(\frac{\mathcal{A}(\rho_t)}{\rho_t}\right) +a_0\frac{\partial_{\sigma} B^1_t}{B^1_t}+c_0\frac{\partial_{\sigma} B^2_t}{B^2_t}+\frac{\mathcal{A}(\rho_t)}{\rho_t}\partial_{\sigma} \rho_t \nonumber \\
&=& \partial_{\sigma} B^1_t \mathcal{A}(\rho_t)\left(B^2_t\frac{\rho_{t,\ast}}{\rho_t}-B^1_t\right)+\partial_{\sigma} B^2_t \mathcal{A}(\rho_t)\left(B^1_t\frac{\rho_{t,\ast}}{\rho_t}-B^2_t\right) +a_0\partial_{\sigma} B^1_t \left(\frac{1}{B^1_t}-B^1_t\right) \nonumber \\
&&+c_0\partial_{\sigma} B^2_t\left(\frac{1}{B^2_t}-B^2_t\right) +\partial_{\sigma} (\mathcal{A}(\rho_t))\left(B^1_tB^2_t\frac{\rho_{t,\ast}}{\rho_t}-\frac{(B^1_t)^2}{2}-\frac{(B^2_t)^2}{2}\right)+\mathcal{A}\frac{\partial_{\sigma} \rho_t}{\rho_t}\left(1-B^1_tB^2_t\frac{\rho_{t,\ast}}{\rho_t}\right).\nonumber 
\end{eqnarray}
Since $B^1_{t,\ast}=B^2_{t,\ast}=1$ and 
each term of the right-hand side of the previous equation has a factor which equals $0$ if we substitute $\sigma=\sigma_{\ast}$,
it follows that
\begin{eqnarray}
\partial_{\sigma}^2 h^{\infty}_t(\sigma_{\ast})&=&(\partial_{\sigma} B^1_{t,\ast}\partial_{\sigma} B^2_{t,\ast}+\partial_{\sigma} B^2_{t,\ast}\partial_{\sigma} B^1_{t,\ast})\mathcal{A}_{\ast}-((\partial_{\sigma} B^1_{t,\ast})^2+(\partial_{\sigma} B^2_{t,\ast})^2)\mathcal{A}_{\ast} -(\partial_{\sigma} B^1_{t,\ast}+\partial_{\sigma} B^2_{t,\ast})\mathcal{A}_{\ast}\frac{\partial_{\sigma} \rho_{t,\ast}}{\rho_{t,\ast}} \nonumber \\
&&-2a_0(\partial_{\sigma} B^1_{t,\ast})^2 -2c_0(\partial_{\sigma} B^2_{t,\ast})^2 -\partial_{\rho}\mathcal{A}(\rho_{t,\ast}) \frac{(\partial_{\sigma}\rho_{t,\ast})^2}{\rho_{t,\ast}}+\mathcal{A}_{\ast}\frac{\partial_{\sigma} \rho_{t,\ast}}{\rho_{t,\ast}}\left(\frac{\partial_{\sigma} \rho_{t,\ast}}{\rho_{t,\ast}}-\partial_{\sigma} B^1_{t,\ast}-\partial_{\sigma} B^2_{t,\ast}\right) \nonumber \\
&=&\mathcal{A}_{\ast}\left(\frac{\partial_{\sigma}\rho_{t,\ast}}{\rho_{t,\ast}}-\partial_{\sigma} B^1_{t,\ast}-\partial_{\sigma} B^2_{t,\ast}\right)^2-\partial_{\rho}\mathcal{A}(\rho_{t,\ast})\frac{(\partial_{\sigma} \rho_{t,\ast})^2}{\rho_{t,\ast}} -2(a_0+\mathcal{A}_{\ast})(\partial_{\sigma} B^1_{t,\ast})^2-2(c_0+\mathcal{A}_{\ast})(\partial_{\sigma} B^2_{t,\ast})^2, \nonumber
\end{eqnarray}
where $\mathcal{A}_{\ast}=\mathcal{A}(\rho_{t,\ast})$.

On the other hand, for $u\in\mathbb{R}^{n_1}$, let $s_n(t)=(1-\bar{\rho}_t)/2$, $\Upsilon_1=b_n^{-1/2}(\partial_{\sigma}H(\sigma_{\ast})-\partial_{\sigma}\hat{H}_n(\sigma_{\ast};s_n))u$,
$\Upsilon_2=b_n^{-1/2}(\partial_{\sigma}\hat{H}_n(\sigma_{\ast};s_n)+\sum_{i=1}^3(-1)^i\partial_{\sigma}\tilde{H}^i_{n,s_n}(T;\sigma_{\ast}))u$, 
$\Upsilon_3=b_n^{-1}u^{\star}\partial^2_{\sigma}H_n(\sigma_{\ast})u/2+u^{\star}\Gamma u/2$, 
$\Upsilon_4=b_n^{-1/2}\partial_{\sigma}\tilde{H}^3_{n,s_n}(T;\sigma_{\ast})$
and $\tilde{\mathcal{X}}_t=\tilde{\mathcal{X}}_{t,n}(u)=b_n^{-1/2}(\partial_{\sigma}\tilde{H}^1_{n,s_n}(t;\sigma_{\ast})-\partial_{\sigma}\tilde{H}^2_{n,s_n}(t;\sigma_{\ast}))u$.
Then 
\begin{eqnarray}
\mathcal{V}_n(u) =\tilde{\mathcal{X}}_{T,n}(u)-\frac{1}{2}u^{\star}\Gamma u+\sum_{j=1}^4\Upsilon_j. \nonumber
\end{eqnarray}

As $n\to \infty$, since $P[\tau(s_n)<T]\to 0$, we have $\Upsilon_1\to^p 0$.
By $[A1]-[A4]$ and Lemmas \ref{prob-conv} and \ref{H1-diff} with $q=2$, we have $\Upsilon_2\to^p 0$.
Furthermore, we obtain $\Upsilon_3\to^p 0$ by Proposition \ref{Hn-lim}.
Moreover, $\Upsilon_4\to^p 0$ since $P[\tau(s_n)<T]\to 0$ 
and $\partial_{\sigma}\tilde{H}^3_{n,s_n}(T;\sigma_{\ast})\equiv 0$ on $\{\tau(s_n)=T\}$.

Then it is sufficient to show 
\begin{equation*}
\sum_{i=1}^kv_i(\tilde{\mathcal{X}}_{T,n}(u_i)-\frac{1}{2}u_i^{\star}\Gamma u_i)\to^{s\mathchar`-\mathcal{L}}\sum_{i=1}^kv_i\mathcal{V}(u_i)
\end{equation*}
as $n\to \infty$ for any $v_1,\cdots, v_k\in\mathbb{R}$ and $u_1,\cdots, u_k\in\mathbb{R}^{n_1}$.

Let $\mathcal{F}^{\dagger}_t=\cap_{t'>t}\{\mathcal{F}_{t'}\bigvee \sigma(\{\Pi_n\}_n)\}$ for $t\in [0,T)$ and $\mathcal{F}^{\dagger}_T=\mathcal{F}_T\bigvee \sigma(\{\Pi_n\}_n)$.
Then $\{W_t,\mathcal{F}^{\dagger}_t\}_{0\leq t\leq T}$ is also a Wiener process and $\{\tilde{\mathcal{X}}_t(u),\mathcal{F}^{\dagger}_t\}_t$ is a martingale for $u\in\mathbb{R}^{n_1}$.
By Theorem 2-1 of Jacod \cite{jacod}, it is sufficient to show that
\begin{equation*}
\langle\tilde{\mathcal{X}}_{\cdot,n}(u)\rangle_t\to^p u^{\star}\Gamma_t u, \ \ \langle\tilde{\mathcal{X}}_{\cdot,n}(u),W\rangle_t\to^p 0, 
\ \ \langle\tilde{\mathcal{X}}_{\cdot,n}(u),N'\rangle_t\to^p 0
\end{equation*}
as $n\to \infty$ for any $t\in[0,T]$, $u\in\mathbb{R}^{n_1}$ and $N'\in\mathcal{M}_b(W^{\perp})$,
where $\Gamma_t=-\int^t_0\partial_{\sigma}^2h^{\infty}_s(\sigma_{\ast})ds$ and $\mathcal{M}_b(W^{\perp})$ is the class of all bounded $\mathcal{F}^{\dagger}_t$-martingales 
which are orthogonal to $W$.

By Ito's formula and symmetry of $\tilde{M}$, we obtain
\begin{eqnarray}
\tilde{\mathcal{X}}_t=-b_n^{-\frac{1}{2}}\sum_{k_1,k_2}\partial_{\sigma}\bigg\{\tilde{M}_{k_1,k_2}\int^t_0\hat{Z}_{k_1,s} d\hat{Z}_{k_2,s}\bigg\}\bigg|_{\sigma=\sigma_{\ast}}u. \nonumber
\end{eqnarray}
Hence it is obvious that
$\langle\tilde{\mathcal{X}},N'\rangle_t= 0$ for all $N'\in\mathcal{M}_b(W^{\perp})$.

Moreover, 
\begin{eqnarray}
\langle\tilde{\mathcal{X}},W^i\rangle_t &=&-b_n^{-\frac{1}{2}}\sum_{k_1,k_2}\sum_{v_1+v_2+v_3=1}\partial_{\sigma}^{v_1}\tilde{M}_{k_1,k_2}\int^t_0\partial_{\sigma}^{v_2}\hat{Z}_{k_1,s}d\langle\partial_{\sigma}^{v_3}\hat{Z}_{k_2},W^i\rangle_su \nonumber \\
&=&-b_n^{-\frac{1}{2}}\sum_k\int^t_0\frac{\int_{(\theta_{0,k})_s}b^{\mathcal{J}(k)}_{v,\ast}dW_v}{\sqrt{|\theta_{0,k}|}} \mathcal{B}_{k,s}^ids+o_p(1) \nonumber
\end{eqnarray}
for $i=1,2$, where $\mathcal{J}(k)=1 \ (1\leq k\leq l_n)$, $\mathcal{J}(k)=2 \ (l_n< k\leq l_n+m_n)$ and
\begin{equation*}
\mathcal{B}^i_{k,s}=\sum_{k_2}\sum_{v_1+v_2+v_3=1}\partial_{\sigma}^{v_1}\tilde{M}_{k,k_2}\partial_{\sigma}^{v_2}\left(|b^{\mathcal{J}(k)}_{\theta_{0,k},\tau(s_n)}|^{-1}\right)b^{\mathcal{J}(k_2),i}_{L(\theta_{0,k}),\ast}\partial_{\sigma}^{v_3}\left(|b^{\mathcal{J}(k_2)}_{L(\theta_{0,k})\wedge \tau(s_n)}|^{-1}\right)\frac{1_{\theta_{0,k_2}}(s)}{\sqrt{|\theta_{0,k_2}|}}u.
\end{equation*}
On the other hand, we have
\begin{eqnarray}
&&E\bigg[\bigg|-b_n^{-\frac{1}{2}}\sum_k\int^t_0\frac{\int_{(\theta_{0,k})_s}b^{\mathcal{J}(k)}_{v,\ast}dW_v}{\sqrt{|\theta_{0,k}|}} \mathcal{B}_{k,s}^ids\bigg|^2\bigg] \nonumber \\
&=&b_n^{-1}E\bigg[\sum_{k,k'}\int^t_0\int^t_0\frac{\int_{(\theta_{0,k})_{s_1}\cap (\theta_{0,k'})_{s_2}}b^{\mathcal{J}(k)}_{v,\ast}\cdot b^{\mathcal{J}(k')}_{v,\ast}dv}{\sqrt{|\theta_{0,k}|}\sqrt{|\theta_{0,k'}|}}\mathcal{B}^i_{k,s_1}\mathcal{B}^i_{k',s_2}ds_1ds_2\bigg] \nonumber \\
&\leq & b_n^{-1}E\bigg[R^C\sum_{k,k'}(\tilde{M}_0)_{k,k'}\int^t_0\int^t_0|\mathcal{B}^i_{k,s_1}||\mathcal{B}^i_{k',s_2}|ds_1ds_2\bigg] 
\leq  b_n^{-1}E\bigg[R^C\sum_{k}\bigg(\int^t_0|\mathcal{B}^i_{k,s}|ds\bigg)^2\bigg]\to 0 \nonumber
\end{eqnarray}
as $n\to \infty$ since $|\partial_{\sigma}^v(\tilde{M})_{k,k'}|\leq CR^{2v}(1-\bar{\rho}_T)^{-(v+5/2)}M'_{k,k'}$, where $M'_{k,k'}=\sum_{p=0}^{\infty}(\tilde{M}_p)_{k,k'}/(p+1)^2$. 
Hence we have $\langle\tilde{\mathcal{X}},W\rangle_t\to^p 0$ as $n\to \infty$ for any $t\in [0,T]$.

Then it is sufficient to show $\langle\tilde{\mathcal{X}}(u)\rangle_t\to^p u^{\star}\Gamma_tu$ as $n\to \infty$ for any $t\in[0,T]$ and $u\in\mathbb{R}^{n_1}$.
\begin{eqnarray}\label{X-est1}
&&\langle\tilde{\mathcal{X}}\rangle_t \nonumber \\
&=&b_n^{-1}u^{\star}\sum_{k_1,k_2,k_3,k_4}\int^t_0\partial_{\sigma}\bigg(\tilde{M}_{k_1,k_2}\hat{Z}_{k_1,s}\frac{b^{\mathcal{J}(k_2)}_{s,\ast}}{|b^{\mathcal{J}(k_2)}_{\theta_{0,k_2},\tau(s_n)}|}\bigg) \partial_{\sigma}\bigg(\tilde{M}_{k_3,k_4}\hat{Z}_{k_3,s}\frac{b^{\mathcal{J}(k_4)}_{s,\ast}}{|b^{\mathcal{J}(k_4)}_{\theta_{0,k_4},\tau(s_n)}|}\bigg)\frac{1_{\theta_{0,k_2}\cap\theta_{0,k_4}}(s)}{\sqrt{|\theta_{0,k_2}|}\sqrt{|\theta_{0,k_4}|}}ds\bigg|_{\sigma=\sigma_{\ast}}u \nonumber \\
&=&b_n^{-1}\sum_{k_1,k_2,k_3,k_4}\int^t_0\tilde{\mathcal{B}}_{k_1,k_2}\cdot \tilde{\mathcal{B}}_{k_3,k_4}\frac{Z'_{k_1,s}Z'_{k_3,s}}{\sqrt{|\theta_{0,k_1}|}\sqrt{|\theta_{0,k_3}|}} \frac{1_{\theta_{0,k_2}\cap\theta_{0,k_4}}(s)}{\sqrt{|\theta_{0,k_2}|}\sqrt{|\theta_{0,k_4}|}}ds +o_p(1), 
\end{eqnarray}
where $\tilde{\mathcal{B}}_{k_1,k_2}=\partial_{\sigma}\big(\tilde{M}_{k_1,k_2}|b^{\mathcal{J}(k_2)}_{L(\theta_{0,k_1})\wedge\tau(s_n)}|^{-1}|b^{\mathcal{J}(k_1)}_{\theta_{0,k_1},\tau(s_n)}|^{-1}\big)|_{\sigma=\sigma_{\ast}}b^{\mathcal{J}(k_2)}_{L(\theta_{0,k_1}),\ast}u$ and $Z'_{k,s}=\int_{(\theta_{0,k})_s}b^{\mathcal{J}(k)}_{v,\ast}dW_v$.

Ito's formula yields
\begin{equation}\label{X-est2}
Z'_{k_1,s}Z'_{k_3,s}=\int^s_0Z'_{k_1,v}dZ'_{k_3,v}+\int^s_0Z'_{k_3,v}dZ'_{k_1,v}+\langle Z'_{k_1},Z'_{k_3}\rangle_s.
\end{equation}
Moreover, let 
\begin{equation*}
F_k(v,s)=\sum_{k_1,k_2,k_4}\tilde{\mathcal{B}}_{k_1,k_2}\cdot \tilde{\mathcal{B}}_{k,k_4}\frac{Z'_{k_1,v}}{\sqrt{|\theta_{0,k_1}|}\sqrt{|\theta_{0,k}|}} \frac{1_{\theta_{0,k_2}\cap\theta_{0,k_4}}(s)}{\sqrt{|\theta_{0,k_2}|}\sqrt{|\theta_{0,k_4}|}},
\end{equation*}
then we have
\begin{equation*}
\sup_v|F_k(v,s)|\leq CR^{10}(1-\bar{\rho}_T)^{-7}\sum_{k_1,k_2}\sum_{k_4}\frac{M'_{k_1,k_2}M'_{k,k_4}1_{\theta_{0,k_2}\cap\theta_{0,k_4}}(s)}{\sqrt{|\theta_{0,k_1}|}\sqrt{|\theta_{0,k_2}|}\sqrt{|\theta_{0,k}|}\sqrt{|\theta_{0,k_4}|}}\sup_v|Z'_{k_1,v}||u|^2
\end{equation*}
and therefore
\begin{equation*}
\int^t_0E[\sup_v|F_k(v,s)|^4|\Pi_n]^{\frac{1}{4}}ds\leq C|u|^2|\theta_{0,k}|^{-1/2}\sum_{k_1}(M'\tilde{M}_0M')_{k_1,k}.
\end{equation*}
Then we obtain 
\begin{eqnarray}
&&E\bigg[\bigg(\sum_k\int^t_0\int^s_0F_k(v,s)dZ'_{k,v}ds\bigg)^2\bigg|\Pi_n\bigg] \nonumber \\
&=&\sum_{k,k'}\int^t_0\int^t_0E\bigg[\int^{s_1\wedge s_2}_0F_k(v,s_1)F_{k'}(v,s_2)d\langle Z'_{k},Z'_{k'}\rangle_v\bigg|\Pi_n\bigg]ds_1ds_2 \nonumber \\
&\leq & E[R^4]^{\frac{1}{2}}\sum_{k,k'}|\theta_{0,k}\cap \theta_{0,k'}|\int^t_0E[\sup_v|F_k(v,s_1)|^4|\Pi_n]^{\frac{1}{4}}ds_1\int^t_0E[\sup_v|F_{k'}(v,s_2)|^4|\Pi_n]^{\frac{1}{4}}ds_2 \nonumber \\
&\leq & C|u|^4\sum_{k,k'}(\tilde{M}_0)_{k,k'}\sum_{k_1,k'_1}(M'\tilde{M}_0M')_{k_1,k}(M'\tilde{M}_0M')_{k'_1,k'}\leq C|u|^4(l_n+m_n)=o_p(b_n^2). \nonumber
\end{eqnarray}
Hence we obtain 
\begin{equation}\label{X-est3}
b_n^{-1}\sum_k\int^t_0\int^s_0F_k(v,s)dZ'_{k,v}ds\to^p 0
\end{equation}
as $n\to \infty$ by Lemma \ref{prob-conv}.

By (\ref{X-est1})-(\ref{X-est3}), we have
\begin{equation*}
\langle\tilde{\mathcal{X}}\rangle_t=b_n^{-1}\sum_{k_1,k_2,k_3,k_4}\int^t_0\tilde{\mathcal{B}}_{k_1,k_2}\cdot \tilde{\mathcal{B}}_{k_3,k_4}\frac{\langle Z'_{k_1},Z'_{k_3}\rangle_s}{\sqrt{|\theta_{0,k_1}|}\sqrt{|\theta_{0,k_3}|}} \frac{1_{\theta_{0,k_2}\cap\theta_{0,k_4}}(s)}{\sqrt{|\theta_{0,k_2}|}\sqrt{|\theta_{0,k_4}|}}ds +o_p(1).
\end{equation*}
Let
\begin{equation*}
\hat{L}_p(\rho_1,\rho_2)=\left(
\begin{array}{ll}
\rho_1 (GG^{\star})^p & -\rho_2 (GG^{\star})^pG \\
-\rho_2 (G^{\star}G)^pG^{\star} & \rho_1 (G^{\star}G)^p \\
\end{array}
\right), \hat{B}(x,y)=\left(
\begin{array}{ll}
x\mathcal{E}_{l_n} & 0 \\
0 & y\mathcal{E}_{m_n} \\
\end{array}
\right),
\end{equation*}
\begin{equation*}
\mathcal{D}'(t)=\sum_{p=0}^{\infty}\partial_{\sigma}\Big\{\hat{B}(B^1_t,B^2_t)\hat{L}_p(\rho^{2p}_t,\rho^{2p+1}_t)\hat{B}(B^1_t,B^2_t)\Big\}\Big|_{\sigma=\sigma_{\ast}}
\end{equation*}
and
\begin{eqnarray}
\hat{\mathcal{D}}(t)&=&\left(
\begin{array}{ll}
\hat{\mathcal{D}}_{11}(t) & \hat{\mathcal{D}}_{12}(t) \\
\hat{\mathcal{D}}_{21}(t) & \hat{\mathcal{D}}_{22}(t) \\
\end{array}
\right) =\mathcal{D}'(t)\hat{L}_0(1,-\rho_{t,\ast}), \nonumber
\end{eqnarray}
where $\mathcal{E}_l$ denotes the unit matrix of size $l$.
Then by $[A2]$ and the estimate $P[\tau(s_n)<T]\to 0$, we obtain 
\begin{eqnarray}
\langle\tilde{\mathcal{X}}\rangle_t&=&b_n^{-1}u^{\star}\sum_{k_1,k_2,k_3,k_4}(\mathcal{D}'(L(\theta_{0,k_1})))_{k_1,k_2}(\mathcal{D}'(L(\theta_{0,k_1})))_{k_3,k_4}\frac{b^{\mathcal{J}(k_1)}_{L(\theta_{0,k_1}),\ast}\cdot b^{\mathcal{J}(k_3)}_{L(\theta_{0,k_1}),\ast}}{|b^{\mathcal{J}(k_1)}_{L(\theta_{0,k_1}),\ast}||b^{\mathcal{J}(k_3)}_{L(\theta_{0,k_1}),\ast}|} \nonumber \\
&&\times \frac{b^{\mathcal{J}(k_2)}_{L(\theta_{0,k_1}),\ast}\cdot b^{\mathcal{J}(k_4)}_{L(\theta_{0,k_1}),\ast}}{|b^{\mathcal{J}(k_2)}_{L(\theta_{0,k_1}),\ast}||b^{\mathcal{J}(k_4)}_{L(\theta_{0,k_1}),\ast}|}\frac{\int^t_0\int^s_01_{\theta_{0,k_1}\cap\theta_{0,k_3}}(v)1_{\theta_{0,k_2}\cap\theta_{0,k_4}}(s)dvds}{\sqrt{|\theta_{0,k_1}|}\sqrt{|\theta_{0,k_2}|}\sqrt{|\theta_{0,k_3}|}\sqrt{|\theta_{0,k_4}|}}u+o_p(1). \nonumber
\end{eqnarray}
Since for intervals $K_1,K_2$, we have 
\begin{equation*}
\int^t_0\int^s_01_{K_1}(v)1_{K_2}(s)dvds+\int^t_0\int^s_01_{K_2}(v)1_{K_1}(s)dvds=|(K_1)_t||(K_2)_t|,
\end{equation*}
then by symmetry of $\mathcal{D}'$, we have 
\begin{eqnarray}
\langle\tilde{\mathcal{X}}\rangle_t&=&\frac{1}{2}b_n^{-1}u^{\star}\sum_{k_1,k_2,k_3,k_4}(\mathcal{D}'(L(\theta_{0,k_1})))_{k_1,k_2}(\mathcal{D}'(L(\theta_{0,k_1})))_{k_3,k_4}\frac{b^{\mathcal{J}(k_1)}_{L(\theta_{0,k_1}),\ast}\cdot b^{\mathcal{J}(k_3)}_{L(\theta_{0,k_1}),\ast}}{|b^{\mathcal{J}(k_1)}_{L(\theta_{0,k_1}),\ast}||b^{\mathcal{J}(k_3)}_{L(\theta_{0,k_1}),\ast}|} \nonumber \\
&&\times \frac{b^{\mathcal{J}(k_2)}_{L(\theta_{0,k_1}),\ast}\cdot b^{\mathcal{J}(k_4)}_{L(\theta_{0,k_1}),\ast}}{|b^{\mathcal{J}(k_2)}_{L(\theta_{0,k_1}),\ast}||b^{\mathcal{J}(k_4)}_{L(\theta_{0,k_1}),\ast}|}\frac{|(\theta_{0,k_1}\cap\theta_{0,k_3})_t||(\theta_{0,k_2}\cap\theta_{0,k_4})_t|}{\sqrt{|\theta_{0,k_1}|}\sqrt{|\theta_{0,k_2}|}\sqrt{|\theta_{0,k_3}|}\sqrt{|\theta_{0,k_4}|}}u+o_p(1) \nonumber \\
&=&\frac{1}{2}b_n^{-1}u^{\star}\sum_{k;L(\theta_{0,k})\in [0,t)}((\hat{\mathcal{D}}(L(\theta_{0,k})))^2)_{k,k}u+o_p(1). \nonumber
\end{eqnarray}
On the other hand, for $p\in\mathbb{Z}_+$, $x,y\in\mathbb{R}$, $\rho_1,\rho_2,\rho_{\ast}\in [-1,1]$, we can write
\begin{eqnarray}
\hat{B}(x,y)\hat{L}_p(\rho_1,\rho_2)\hat{B}(x,y)\hat{L}_0(1,-\rho_{\ast})
=\left(
\begin{array}{ll}
x^2\rho_1(GG^{\star})^p-xy\rho_2\rho_{\ast}(GG^{\star})^{p+1} & (x^2\rho_1\rho_{\ast}-xy\rho_2)(GG^{\star})^pG \\
(y^2\rho_1\rho_{\ast}-xy\rho_2)(G^{\star}G)^pG^{\star} & y^2\rho_1(GG^{\star})^p-xy\rho_2\rho_{\ast}(G^{\star}G)^{p+1} \\
\end{array}
\right). \nonumber
\end{eqnarray}
Then for $\mathcal{Q}^1_t=(\partial_{\sigma} B^1_{t,\ast}-\partial_{\sigma} B^2_{t,\ast})+\partial_{\sigma}\rho_{t,\ast}/\rho_{t,\ast}$,
$\mathcal{Q}^2_t=(\partial_{\sigma} B^2_{t,\ast}-\partial_{\sigma} B^1_{t,\ast})+\partial_{\sigma}\rho_{t,\ast}/\rho_{t,\ast}$,
we have
\begin{eqnarray}
&&\hat{\mathcal{D}}_{11}(t) \nonumber \\
&=&\sum_{p=0}^{\infty}\bigg\{(2\partial_{\sigma} B^1_{t,\ast}\rho_{t,\ast}^{2p}+2p\partial_{\sigma} \rho_{t,\ast}\rho_{t,\ast}^{2p-1})(GG^{\star})^p
-((\partial_{\sigma} B^1_{t,\ast}+\partial_{\sigma} B^2_{t,\ast})\rho_{t,\ast}^{2p+2}+(2p+1)\partial_{\sigma}\rho_{t,\ast}\rho_{t,\ast}^{2p+1})(GG^{\star})^{p+1}\bigg\} \nonumber \\
&=&2\partial_{\sigma} B^1_{t,\ast}\mathcal{E}_{l_n}+\mathcal{Q}^1_t\sum_{p=1}^{\infty}\rho^{2p}_{t,\ast}(GG^{\star})^p. \nonumber
\end{eqnarray}
Similarly, we have
\begin{eqnarray}
\hat{\mathcal{D}}_{22}(t)=2\partial_{\sigma} B^2_{t,\ast}\mathcal{E}_{m_n}+\mathcal{Q}^2_t\sum_{p=1}^{\infty}\rho^{2p}_{t,\ast}(G^{\star}G)^p, \nonumber
\end{eqnarray}
and
\begin{equation*}
\hat{\mathcal{D}}_{12}(t)=-\mathcal{Q}^2_t\sum_{p=0}^{\infty}\rho_{t,\ast}^{2p+1}(GG^{\star})^pG, \quad \hat{\mathcal{D}}_{21}(t)=-\mathcal{Q}^1_t\sum_{p=0}^{\infty}\rho_{t,\ast}^{2p+1}(G^{\star}G)^pG^{\star}.
\end{equation*}
Then by the estimate $a_p\equiv c_p \ (p\geq 1)$, $[A3]$ and Lemma \ref{A3''}, it follows that
\begin{eqnarray}
\langle\tilde{\mathcal{X}}\rangle_t 
&=&\frac{1}{2}b_n^{-1}u^{\star}\bigg\{\sum_{i;L(I^i)\in [0,t)}(\hat{\mathcal{D}}^2_{11} +\hat{\mathcal{D}}_{12}\hat{\mathcal{D}}_{21})_{ii}(L(I^i)) +\sum_{j;L(J^j)\in[0,t)}(\hat{\mathcal{D}}_{22}^2+\hat{\mathcal{D}}_{21}\hat{\mathcal{D}}_{12})_{jj}(L(J^j))\bigg\}u+o_p(1) \nonumber \\
&=&u^{\star}\int^t_0\bigg\{2(\partial_{\sigma} B^1_{s,\ast})^2a_0(s)+2(\partial_{\sigma} B^2_{s,\ast})^2c_0(s)+(\partial_{\sigma} B^1_{s,\ast}\mathcal{Q}^1_s+\mathcal{Q}^1_s\partial_{\sigma} B^1_{s,\ast})\mathcal{A}(\rho_{s,\ast}) \nonumber \\
&&+(\partial_{\sigma} B^2_{s,\ast}\mathcal{Q}^2_s+\mathcal{Q}^2_s\partial_{\sigma} B^2_{s,\ast})\mathcal{A}(\rho_{s,\ast})+\frac{\mathcal{Q}^1_s\mathcal{Q}^2_s+\mathcal{Q}^2_s\mathcal{Q}^1_s}{2}\sum_{p_1=0}^{\infty}\sum_{p_2=0}^{\infty}\rho_{s,\ast}^{2p_1+2p_2+2}a_{p_1+p_2+1}(s) \nonumber \\
&&+\frac{(\mathcal{Q}^1_s)^2+(\mathcal{Q}^2_s)^2}{2}\sum_{p_1=1}^{\infty}\sum_{p_2=1}^{\infty}\rho_{s,\ast}^{2p_1+2p_2}a_{p_1+p_2}(s)\bigg\}dsu+o_p(1). \nonumber 
\end{eqnarray}
Since 
\begin{eqnarray}
\sum_{p_1=0}^{\infty}\sum_{p_2=0}^{\infty}\rho_{s,\ast}^{2p_1+2p_2+2}a_{p_1+p_2+1}(s)=\frac{\partial_{\rho}\mathcal{A}(\rho_{s,\ast})\rho_{s,\ast}}{2}, \quad
\sum_{p_1=1}^{\infty}\sum_{p_2=1}^{\infty}\rho_{s,\ast}^{2p_1+2p_2}a_{p_1+p_2}(s)=\frac{\partial_{\rho}\mathcal{A}(\rho_{s,\ast})\rho_{s,\ast}}{2}-\mathcal{A}(\rho_{s,\ast}), \nonumber
\end{eqnarray}
we have
\begin{eqnarray}
\langle\tilde{\mathcal{X}}\rangle_t &=&u^{\star}\int^t_0\bigg\{2(a_0(s)+\mathcal{A}(\rho_{s,\ast}))(\partial_{\sigma} B^1_{s,\ast})^2+2(c_0(s)+\mathcal{A}(\rho_{s,\ast}))(\partial_{\sigma} B^2_{s,\ast})^2 +\frac{(\mathcal{Q}^1_s+\mathcal{Q}^2_s)^2}{4}\partial_{\rho}\mathcal{A}(\rho_{s,\ast})\rho_{s,\ast}\nonumber \\
&&-\frac{\mathcal{A}(\rho_{s,\ast})}{2}\big\{(\mathcal{Q}^1_s-2\partial_{\sigma}B^1_{s,\ast})^2+(\mathcal{Q}^2_s-2\partial_{\sigma}B^2_{s,\ast})^2\big\} 
\bigg\}dsu+o_p(1) \nonumber \\
&=&u^{\star}\Gamma_t u+o_p(1). \nonumber
\end{eqnarray}
\end{proof}

\noindent
{\bf Proof of Theorem \ref{asym-dist}.}

$1$. Since $\Lambda$ is open, there exists $\epsilon>0$ such that $O(\epsilon,\sigma_{\ast})=\{\sigma;|\sigma-\sigma_{\ast}|<\epsilon\}\subset \Lambda$.
For $\hat{\sigma}_n\in O(\epsilon,\sigma_{\ast})$,we have 
\begin{equation*}
-\partial_{\sigma}H_n(\sigma_{\ast})=\int^1_0\partial_{\sigma}^2H_n(\sigma_{\ast}+u(\hat{\sigma}_n-\sigma_{\ast}))(\hat{\sigma}_n-\sigma_{\ast})du
\end{equation*}
since $\partial_{\sigma}H_n(\hat{\sigma}_n)=0$.
Therefore, for $\tilde{\Gamma}_n=-b_n^{-1}\int^1_0\partial_{\sigma}^2H_n(\sigma_{\ast}+u(\hat{\sigma}_n-\sigma_{\ast}))du$, we obtain
$b_n^{1/2}(\hat{\sigma}_n-\sigma_{\ast})=\tilde{\Gamma}_n^{-1}b_n^{-1/2}\partial_{\sigma}H_n(\sigma_{\ast})$ on $\{\det \tilde{\Gamma}_n\neq 0 \ {\rm and } \ \hat{\sigma}_n\in O(\epsilon,\sigma_{\ast})\}$.
Then since Proposition \ref{Hn-lim} and Theorem \ref{thm-cons} yield $P[\det \tilde{\Gamma}_n=0]\to 0$, 
$P[\hat{\sigma}_n\in O(\epsilon,\sigma_{\ast})^c]\to 0$ and $\tilde{\Gamma}_n^{-1}1_{\{\det \tilde{\Gamma}_n\neq 0\} }\to^p \Gamma^{-1}$,
we have $b_n^{1/2}(\hat{\sigma}_n-\sigma_{\ast})\to^{s\mathchar`-\mathcal{L}}\Gamma^{-1/2}\mathcal{N}$ by Proposition \ref{st-conv}.

$2.$
Let $s_n(t)=(1-\bar{\rho}_t)/2$ for $n\in\mathbb{N}$ and $t\in [0,T]$ and $\{\sigma'_n\}_{n\in\mathbb{N}}$ be random variables where $\sigma'_n$ maximize $\hat{H}_n(\cdot;s_n)$ 
and $\sigma'_n\equiv \hat{\sigma}_n$ on $\{\tau(s_n)= T\}$. 
We first show the statement of Theorem \ref{asym-dist} replacing $\hat{\sigma}_n$ with $\sigma'_n$.

To this end, we extend $\mathcal{Z}_n(\cdot; \sigma_{\ast})$ to a continuous function 
which is defined on $\mathbb{R}^{n_1}$, tend to zero as $|u|\to \infty$, and has the same supremum as $\mathcal{Z}_n(\cdot; \sigma_{\ast})$.
We denote the extension of $\mathcal{Z}_n(\cdot; \sigma_{\ast})$ by the same symbol.

Let $\mathcal{Z}(u,\sigma_{\ast})=\exp(u^{\star}\Gamma^{1/2}\mathcal{N}-u^{\star}\Gamma u/2)$ and $B(R')=\{u;|u|\leq R'\}$ for $R'>0$.
Then it is sufficient to show that
$\limsup_{n\to \infty}E[|b_n^{1/2}(\sigma'_n-\sigma_{\ast})|^p]<\infty$ for any $p>2$ 
and $\mathcal{Z}_n(\cdot, \sigma_{\ast})\to^{s\mathchar`-\mathcal{L}} \mathcal{Z}(\cdot, \sigma_{\ast})$ in $C(B(R'))$ 
as $n\to \infty$ for any $R'>0$, by virtue of Theorem $5$ and Remark $5$ in Yoshida \cite{yos05}. 

By Lemmas \ref{Hn-H2n-diff} and \ref{Sq-est} and Proposition \ref{Hn-lim}, for any $R'>0$, there exists $n_0\in\mathbb{N}$ such that 
\begin{equation*}
\sup_{n\geq n_0}E\left[\sup_{u\in C(B(R'))}|\partial_u\log \mathcal{Z}_n(u;\sigma_{\ast})|\right]<\infty.
\end{equation*}
Then by Propositions \ref{Hn-lim} and \ref{st-conv} and tightness criterion in $C$ space in Billingsley \cite{bil} which can be extended to the one in $C(B(R'))$, it follows that
$\log \mathcal{Z}_n(\cdot, \sigma_{\ast})\to^{s\mathchar`-\mathcal{L}} \log \mathcal{Z}(\cdot, \sigma_{\ast})$ in $C(B(R'))$ 
as $n\to \infty$.

On the other hand, for any $p>2$, let $L>p$, then by Proposition \ref{pld} and Lemma \ref{Sq-est}, we have
\begin{equation*}
P[|b_n^{\frac{1}{2}}(\sigma'_n-\sigma_{\ast})|\geq r] \leq P\left[\sup_{u\in V_n(r,\sigma_{\ast})}\mathcal{Z}_n(u,\sigma_{\ast})\geq 1\right]\leq \frac{C_L}{r^L} \quad (r>0).
\end{equation*}
Therefore we obtain $\sup_nE[|b_n^{1/2}(\sigma'_n-\sigma_{\ast})|^p]<\infty$.
This complete the proof of the statement of Theorem \ref{asym-dist} for $\sigma'_n$.

We will prove the statement for $\hat{\sigma}_n$.
By $[A1],[A2\mathchar`-q,\delta]$ for any $q>2\vee n_1$, and Lemma \ref{Sq-est}, we have $P[\tau(s_n)<T]=O(b_n^{-\xi})$ for any $\xi>0$.
Then it follows that $b_n^{1/2}(\hat{\sigma}_n-\sigma_{\ast})\to^{s\mathchar`-\mathcal{L}} \Gamma^{-1/2}\mathcal{N}$ as $n\to \infty$ by the result for $\sigma'_n$ and the inequality
\begin{equation*}
P[\sigma'_n\neq \hat{\sigma}_n]\leq P[\tau(s_n)<T]=O(b_n^{-\xi})
\end{equation*}
for any $\xi>0$.

Moreover, for any continuous function $f$ of at most polynomial growth, we have 
\begin{equation*}
|E[f(b_n^{1/2}(\hat{\sigma}_n-\sigma_{\ast}))]-E[f(b_n^{1/2}(\sigma'_n-\sigma_{\ast}))]|\leq C(1+b_n^{1/2}R'')^CP[\sigma'_n\neq \hat{\sigma}_n]\to 0
\end{equation*}
as $n\to \infty$, where $R''$ denotes the diameter of the parameter space $\Lambda$.
\qed

\noindent
{\bf Proof of Theorem \ref{bayes-asym-dist}.}
Similarly to the argument in the proof of Theorem \ref{asym-dist}, 
we have $P[H_n\equiv \hat{H}_n(\cdot;s_n)]=1-O(b_n^{-\xi})$ for any $\xi>0$, where $s_n(t)=(1-\bar{\rho}_t)/2$.
Then by virtue of Theorem $10$ in Yoshida \cite{yos05}, it is sufficient to show that there exists $n'_0\in\mathbb{N}$ such that
\begin{equation}\label{bayes-cond}
\sup_{n\geq n'_0}E\bigg[\bigg(\int_{U_n(\sigma_{\ast})}\mathcal{Z}_n(u)\pi(\sigma_{\ast}+b_n^{-1/2}u)du\bigg)^{-1}\bigg]<\infty.
\end{equation}
By Proposition \ref{Hn-lim} and Lemmas \ref{Hn-H2n-diff} and \ref{Sq-est}, for any $\delta >0$, there exists $p\in 2\mathbb{N}, p>n_1\vee 2$, $n_0'\in \mathbb{N}$ and $C_0>0$ such that
\begin{equation*}
\sup_{n\geq n_0'}E[|\hat{H}_n(\sigma_{\ast}+b_n^{-1/2}u)-\hat{H}_n(\sigma_{\ast})|^p]\leq C_0|u|^p
\end{equation*}
for any $u\in U'(\delta)$ where $U'(\delta)=\{u\in \mathbb{R}^{n_1};|u_i|\leq \delta \ (i=1,\ldots, n_1)\}$.
Then we have (\ref{bayes-cond}) by Lemma $2$ in Yoshida \cite{yos05}.
\qed

\subsection{Proof of Propositions \ref{p-lim} - \ref{inf-a1-suff}}

First, we look back Rosenthal-type inequalities in Doukhan and Louhichi \cite{Dou-Lou01} (Theorem $3$ and Lemma $7$).
\begin{theorem}\label{rosenthal}(Rosenthal-type inequalities)
Let $q\geq 2$ and $q\in\mathbb{N}$. Let $\{X'_n\}_{n\in\mathbb{N}}$ be a centered process, $\alpha_0=1/4$ and 
\begin{equation*}
\alpha_k=\sup_{i,j\in\mathbb{N},j-i\geq k}\sup_{A\in\sigma(X'_l;l\leq i)}\sup_{B\in\sigma(X'_m;m\geq j)} |P(A\cap B)-P(A)P(B)|
\end{equation*} 
for $k\in\mathbb{N}$. Suppose $\alpha_k\to 0 \ (k\to \infty)$. Then
\begin{eqnarray}
\bigg|E\bigg[\bigg(\sum_{j=1}^nX'_j\bigg)^q\bigg]\bigg|\leq \frac{2^q(2q-2)!}{(q-1)!}\Bigg\{\left(\sum_{i=1}^n\int^1_0(\alpha^{-1}(u)\wedge n)^{q-1}Q_{X'_i}^q(u)du\right) \vee \left(\sum_{i=1}^n\int^1_0(\alpha^{-1}(u)\wedge n)Q_{X'_i}^2(u)du\right)^{\frac{q}{2}}\Bigg\}, \nonumber
\end{eqnarray}
where $\alpha^{-1}(u)=\sum_{k=0}^{\infty}1_{\{\alpha_k> u\} }$ and $Q_{X'}(s)=\inf\{t>0, P[|X'|>t]\leq s\}$.
\end{theorem}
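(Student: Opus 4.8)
The inequality is quoted from Doukhan and Louhichi, so strictly one may simply invoke their Theorem~3 and Lemma~7; nonetheless the plan is to indicate how such a Rosenthal-type bound is obtained directly for the strongly mixing sequence $\{X'_n\}$. The scheme is an induction on the integer $q$, in which the $q$-th moment of the partial sum $S_n=\sum_{j=1}^n X'_j$ is reduced to moments of order $<q$ together with a covariance-type remainder controlled by the mixing coefficients $\{\alpha_k\}$ expressed through the quantile functions $Q_{X'_i}$. The base case $q=2$ is exactly a variance estimate: writing $E[S_n^2]=\sum_{i,j}\mathrm{Cov}(X'_i,X'_j)$ and applying Rio's covariance inequality $|\mathrm{Cov}(X'_i,X'_j)|\le 2\int_0^1 1_{\{u<\alpha_{|i-j|}\}}Q_{X'_i}(u)Q_{X'_j}(u)\,du$, one sums over $j$ for fixed $i$ and uses the layer-cake identity $\sum_{k}1_{\{\alpha_k>u\}}=\alpha^{-1}(u)$ together with the trivial clipping by the number of summands to arrive at $\sum_i\int_0^1(\alpha^{-1}(u)\wedge n)Q_{X'_i}^2(u)\,du$, which is the second term inside the $\vee$.

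For the inductive step I would exploit the telescoping decomposition $E[S_n^q]=\sum_{k=1}^n\big(E[S_k^q]-E[S_{k-1}^q]\big)$ and expand each increment by Newton's formula, $E[S_k^q]-E[S_{k-1}^q]=\sum_{l=1}^q\binom{q}{l}E[S_{k-1}^{q-l}(X'_k)^l]$. The terms with $l\ge 2$ carry the factor $(X'_k)^l$ and, after H\"older's inequality, feed into lower-order moments to which the induction hypothesis applies; they are responsible for the first term $\sum_i\int_0^1(\alpha^{-1}(u)\wedge n)^{q-1}Q_{X'_i}^q(u)\,du$ inside the $\vee$. The genuinely dependent contribution is the $l=1$ term $q\,E[S_{k-1}^{q-1}X'_k]=q\,\mathrm{Cov}(S_{k-1}^{q-1},X'_k)$, which is a covariance because $E[X'_k]=0$.

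The heart of the argument, and the main obstacle, is bounding this covariance. Here $S_{k-1}^{q-1}$ is a complicated function of many coordinates, so a direct covariance inequality is too crude; the plan is to split $S_{k-1}$ at a lag $k-m$, treat the block close to index $k$ as a lower-order sum governed by the induction hypothesis, and apply a Davydov/Rio covariance inequality to the block far from $k$, for which the coefficient $\alpha_m$ is available and the quantile of the remaining partial sum is dominated by $\sum_i Q_{X'_i}$. Iterating this truncation reduces $\mathrm{Cov}(S_{k-1}^{q-1},X'_k)$ to products of individual quantile functions weighted by $1_{\{\alpha_m>u\}}$, and the double summation over $k$ and the chosen lag $m$ is converted, once more via $\sum_m 1_{\{\alpha_m>u\}}=\alpha^{-1}(u)$ and the bound $\alpha^{-1}(u)\wedge n$, into the integrals displayed in the statement.

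Finally I would collect the two families of contributions into the maximum rather than the sum by the elementary observation that a recursion of the form $M_q\le A_q\vee(B_q\cdot M_{q-2})$ solves to $M_q\le \mathrm{const}\cdot(A_q\vee B_q^{q/2})$; tracking the binomial and factorial factors produced at each level then yields the constant $2^q(2q-2)!/(q-1)!$. The delicate points throughout are the bookkeeping of the truncation levels so that no spurious factor of $n$ is lost, which is exactly why the clipped quantity $\alpha^{-1}(u)\wedge n$ appears, and the verification that the quantile functions compose correctly under repeated use of the covariance inequality; these are precisely the estimates carried out in Lemma~7 of Doukhan and Louhichi.
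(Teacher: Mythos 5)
The paper gives no proof of this theorem at all: it is stated purely as a recalled result, with the citation to Doukhan and Louhichi (Theorem 3 and Lemma 7) serving as the entire justification. Your primary move --- simply invoking that reference --- is therefore exactly the paper's approach, and it suffices. One remark on your optional sketch: the route you outline (telescoping $E[S_k^q]-E[S_{k-1}^q]$, Newton's binomial expansion, and an induction driven by bounding $\mathrm{Cov}(S_{k-1}^{q-1},X'_k)$) is closer in spirit to Rio's method for mixing sequences than to Doukhan--Louhichi's actual argument, which expands $E[S_n^q]$ over ordered index tuples $t_1\leq\cdots\leq t_q$, splits each product $E[X'_{t_1}\cdots X'_{t_q}]$ at the maximal gap into a covariance term plus a product of lower-order expectations, and obtains the $\vee$ of the two integrals from the resulting recursion; both routes lead to bounds of the stated form, so the discrepancy is immaterial for the paper's purposes.
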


\noindent
{\bf Proof of Proposition \ref{p-lim}.}

In this proof, we set general constants denoted by $C$ do not depend on $n,p,f$.

By Lemma \ref{GGT-lambda}, we obtain
\begin{equation*}
((GG^{\star})^p)_{II}\vee ((G^{\star}G)^p)_{JJ}\leq \parallel (GG^{\star})^p\parallel \vee \parallel (G^{\star}G)^p\parallel \leq 1
\end{equation*}
for $p\in\mathbb{Z}_+$.
Hence
$b_n\nu^{p,i}_n([t_{k-1},t_k)) \leq N^i_{t_k}-N^i_{t_{k-1}}+1$
for $1\leq k\leq [b_n]$, $p\in\mathbb{Z}_+$ and $i=1,2$. 
Therefore we obtain
\begin{equation}\label{C1-est}
\sup_{1\leq k\leq [b_n]}E[\max_{p\in\mathbb{Z}_+, i=1,2}|b_n\nu^{p,i}_n([t_{k-1},t_k))|^{q(1+\delta)}]\leq \sup_kE[\max_{i=1,2}(N^i_{t_k}-N^i_{t_{k-1}}+1)^{q(1+\delta)}]\leq C
\end{equation}
for sufficiently large $n$ by $[B1\mathchar`-(q(1+\delta))]$.

For $h>0$ and $k \in \mathbb{N}$, let 
\begin{eqnarray}
A^{p,+}_{h,t}&=&\cap_{i=1,2}\cap_{l\in [1, p\wedge h^{-1}(T-t)]\cap \mathbb{N}}\{\omega ; N^i_{t+lh}-N^i_{t+(l-1)h}>0\}, \nonumber \\
A^{p,-}_{h,t}&=&\cap_{i=1,2}\cap_{l\in [1, p\wedge h^{-1}t]\cap\mathbb{N}}\{\omega ; N^i_{t-(l-1)h}-N^i_{t-lh}>0\}, \nonumber \\
A^p_{k,h}&:=&A^{2p+1,+}_{[b_n]^{-1}hT,t_k}\cap A^{2p+1,-}_{[b_n]^{-1}hT,t_{k-1}}, \nonumber
\end{eqnarray}
where $\cap_{\emptyset}=\Omega$. 

Fix $p\in\mathbb{Z}_+$, $i=1,2$ and a $\beta-$H${\rm \ddot{o}}$lder continous function $f$ on $[0,T]$. 
Then we have
\begin{equation*}
\nu^{p,i}_n([t_{k-1},t_k))1_{A^p_{k,h}}\in \mathcal{G}^n_{(k-2-[(2p+1)h])\vee 0,(k+[(2p+1)h]+1)\wedge [b_n]}.
\end{equation*}
Let $\alpha^{-1}(u)=\sum_{k=0}^{\infty}1_{\{\alpha^n_k>u\} },f^n_k=f_{t^n_{k-1}}$, $\delta'=(1+\delta)/(2(1+\delta-\epsilon\delta))$ and
\begin{equation*}
X'_k=b_nf^n_k\big\{\nu^{p,i}_n([t_{k-1},t_k))1_{A^p_{k,b_n^{\delta'}}}-E[\nu^{p,i}_n([t_{k-1},t_k))1_{A^p_{k,b_n^{\delta'}}}]\big\},
\end{equation*}
then by Rosenthal-type inequalities, we obtain
\begin{eqnarray}
E\bigg[\bigg|b_n^{-1}\sum_{k=1}^{[b_n]}X'_k\bigg|^q\bigg] 
&\leq & b_n^{-q}\frac{2^{q/2}(2q-2)!}{(q-1)!}\bigg\{\bigg(\sum_{k=1}^{[b_n]}\int^{\frac{1}{4}}_0(\alpha^{-1}(u)+2[(2p+1)b_n^{\delta'}]+3)^{q-1}Q^q_{X'_k}(u)du\bigg) \nonumber \\
&& \vee \bigg(\sum_{k=1}^{[b_n]}\int^{\frac{1}{4}}_0(\alpha^{-1}(u)+2[(2p+1)b_n^{\delta'}]+3)Q^2_{X'_k}(u)du\bigg)^{\frac{q}{2}}\bigg\} \nonumber \\
&\leq & Cb_n^{-q}[b_n]^{q/2}\sup_k\int^{\frac{1}{4}}_0(\alpha^{-1}(u)+2[(2p+1)b_n^{\delta'}]+3)^{q-1}Q^q_{X'_k}(u)du \nonumber \\
&\leq & C(p+1)^{q-1}b_n^{q\delta'-\frac{q}{2}}\bigg(\int^{1}_0(\alpha^{-1}(u))^{\frac{(1+\delta)(q-1)}{\delta}}du\bigg)^{\frac{\delta}{1+\delta}} \bigg(\sup_k\int^1_0Q^{q(1+\delta)}_{X'_k}(u)du\bigg)^{\frac{1}{1+\delta}}. \nonumber 
\end{eqnarray}
For sufficiently large $n$, since (\ref{alpha-coeff-est}) and (\ref{C1-est}) hold, $\int^1_0Q^{q(1+\delta)}_{X'_k}(u)du=E[|X'_k|^{q(1+\delta)}]$, $(x+1)^{q'}-x^{q'}\leq q'(x+1)^{q'-1} \ (x\geq 0,q'\geq 1)$ and $\alpha^{-1}(u)=k'$ if $\alpha^n_{k'}\leq u < \alpha^n_{k'-1}$, we have
\begin{equation*}
\int^1_0(\alpha^{-1}(u))^{q'}du=\sum_{k=1}^{\infty}k^{q'}(\alpha^n_{k-1}-\alpha^n_k)\leq q'\sum_{k=0}^{\infty}(k+1)^{q'-1}\alpha^n_k
\end{equation*}
for $q'\geq 1$ and 
\begin{eqnarray}
E\bigg[\bigg|b_n^{-1}\sum_{k=1}^{[b_n]}X'_k\bigg|^q\bigg]\leq C(p+1)^{q-1}b_n^{q\delta'-\frac{q}{2}} \sup_t|f_t|^q.  \nonumber
\end{eqnarray}

On the other hand, 
\begin{eqnarray}
E\bigg[\bigg|\sum_{k=1}^{[b_n]}f^n_k\nu^{p,i}_n([t_{k-1},t_k))1_{(A^p_{k,b_n^{\delta'}})^c}\bigg|^q\bigg]
\leq [b_n]^{-1}\sum_{k=1}^{[b_n]}\sup_kE[|N^i_{t_k}-N^i_{t_{k-1}}+1|^{q(1+\delta)}]^{\frac{1}{1+\delta}}\sup_t|f_t|^qP[(A^p_{k,b_n^{\delta'}})^c]^{\frac{\delta}{1+\delta}}. \nonumber
\end{eqnarray}
Moreover, by $[B2\mathchar`-(q\epsilon)]$, we obtain
\begin{eqnarray}
P[(A^p_{k,b_n^{\delta'}})^c]\leq  4(2p+1)\sup_{i=1,2}\sup_tP[N^i_{t+[b_n]^{-1}b_n^{\delta'}T}-N^i_t=0]
\leq C(p+1)b_n^{-q\epsilon\delta'}. \nonumber
\end{eqnarray}
Hence we have
\begin{eqnarray}
&&E\bigg[\bigg|\sum_{k=1}^{[b_n]}f^n_k\nu^{p,i}_n([t_{k-1},t_k))(1_{A^p_{k,b_n^{\delta'}}}-1)\bigg|^q\bigg]\leq C(p+1)b_n^{-\frac{q\epsilon\delta\delta'}{1+\delta}}\sup_t|f_t|^q. \nonumber
\end{eqnarray}
Therefore we obtain
\begin{equation}\label{Zk-est}
E\bigg[\bigg|\sum_{k=1}^{[b_n]}f^n_k(\nu^{p,i}_n([t_{k-1},t_k))-\zeta^{p,i}_n([t_{k-1},t_k)))\bigg|^q\bigg]\leq C(p+1)^{q-1}b_n^{-q\eta}\sup_t|f_t|^q.
\end{equation}
Furthermore, H${\rm \ddot{o}}$lder continuity of $f$ and (\ref{C1-est}) yield 
\begin{eqnarray}\label{Holder-est1}
E\bigg[\bigg|\sum_{k=1}^{[b_n]}\int^{t^n_k}_{t^n_{k-1}}(f_t-f^n_k)d\nu^{p,i}_n\bigg|^q\bigg]\leq  [b_n]^{q-1}\omega_{\beta}(f)^q\sum_{k=1}^{[b_n]}(T[b_n]^{-1})^{q\beta}E[\nu^{p,i}_n([t_{k-1},t_k))^q] 
\leq Cb_n^{-q\beta}\omega_{\beta}(f)^q.
\end{eqnarray}
By (\ref{Zk-est}) and (\ref{Holder-est1}), we have
\begin{equation}\label{f-est}
E\left[\bigg|\int^T_0f_td\nu^{p,i}_n-\int^T_0f_td\zeta^{p,i}_n\bigg|^q\right]\leq C(p+1)^{q-1}b_n^{-q\eta}\big\{\sup_t|f_t|^q+\omega_{\beta}(f)^q\big\}.
\end{equation}
Since $p\in\mathbb{Z}_+$ and $i=1,2$ are arbitrary, we obtain $[A3'\mathchar`-q,\eta]$ by (\ref{a2-lim}) and (\ref{f-est}).
\qed
\\
\\
{\bf Proof of Proposition \ref{A4q-suff}.}

1. For $h>0$ and $1\leq i\leq [b_n]$, let
\begin{eqnarray} 
\hat{A}^{p,+}_{h,t}&=&\cap_{r=1}^{n_2+2}\cap_{l\in [1,p\wedge h^{-1}(T-t)]\cap \mathbb{N}}\{\omega ; N^r_{t+lh}-N^r_{t+(l-1)h}>0\}, \nonumber \\
\hat{A}^{p,-}_{h,t}&=&\cap_{r=1}^{n_2+2}\cap_{l\in [1,p\wedge h^{-1}t]\cap \mathbb{N}}\{\omega ; N^r_{t-(l-1)h}-N^r_{t-lh}>0\}, \nonumber \\
\hat{A}^p_{i,h}&:=&\hat{A}^{2p+1,+}_{[b_n]^{-1}hT,t_i}\cap \hat{A}^{2p+1,-}_{[b_n]^{-1}hT,t_{i-1}}. \nonumber
\end{eqnarray}
Then $\omega\in \hat{A}^p_{i,j}$ and $t_{i-1}<R(\theta_{0,k})\leq t_i$ 
imply $|\theta_{p,k}|\leq j(4p+2)[b_n]^{-1}T$
for $\omega\in\Omega$, $1\leq k\leq l_n+m_n$, 
$n\in\mathbb{N}$, $0\leq i\leq [b_n]$ and $j\in \mathbb{N}$.   
Moreover, $\hat{A}^p_{i,j}=\Omega$ if $j$ is sufficiently large for each $i$ and $p$.
Therfore, for $\Delta N_i=N^1_{t_i}-N^1_{t_{i-1}}+N^2_{t_i}-N^2_{t_{i-1}}$ 
and $\dot{A}^p_{i,j}=\hat{A}^p_{i,j}\setminus \cup_{j'=0}^{j-1}\hat{A}^p_{i,j'}$, 
we obtain
\begin{eqnarray}\label{A4qH-est}
E[(\Phi_{p,1})^q] 
&=& E\bigg[\bigg(\sum_{i=1}^{[b_n]}\sum_{k; R(\theta_{0,k})\in (t_{i-1},t_i]}|\theta_{p,k}|\sum_{j=1}^{\infty}1_{\dot{A}^p_{i,j}}\bigg)^q\bigg]
\leq E\bigg[\bigg(\sum_{i=1}^{[b_n]}\sum_{j=1}^{\infty}j\cdot (4p+2)[b_n]^{-1}T\Delta N_i1_{\dot{A}^p_{i,j}}\bigg)^q\bigg] \nonumber \\
&\leq & [b_n]^{q-1}\sum_{i=1}^{[b_n]}\sum_{j=1}^{\infty}j^q\cdot (4p+2)^qT^q[b_n]^{-q}E[(\Delta N_i)^q1_{\dot{A}^p_{i,j}}] \nonumber 
\end{eqnarray}
for $p\in\mathbb{Z}_+$, since $\{\dot{A}^p_{i,j}\}_{j\in\mathbb{N}}$ are disjoint.
Then by $[B1\mathchar`-(p'_1q)],[B2\mathchar`-(p'_2(q+2)]$, the H${\rm \ddot{o}}$lder inequality and a similar estimate for $P[(A^p_{k,h})^c]$ in the proof of Proposition \ref{p-lim}, we have
\begin{eqnarray}
E[(\Phi_{p,1})^q]\leq C[b_n]^{-1}\sum_{i=1}^{[b_n]}\sum_{j=1}^{\infty}j^q (4p+2)^qP[(\hat{A}^p_{i,j-1})^c]^{1/p'_2}
\leq C(p+1)^q\sum_{j=1}^{\infty}j^q\{C(p+1)j^{-p'_2(q+2)}\}^{\frac{1}{p'_2}}\leq C(p+1)^{q+1} \nonumber 
\end{eqnarray}
for sufficiently large $n$.

In particular, by the H${\rm \ddot{o}}$lder inequality and Jensen's inequality, we have
\begin{equation*}
E\bigg[r_n^{q'}\sum_{p=0}^{\infty}\frac{(\Phi_{2p+2})^{q'}}{(p+1)^{q'+3}}\bigg]\leq E[r_n^{\frac{qq'}{q-q'}}]^{\frac{q-q'}{q}}E\bigg[\sum_{p=0}^{\infty}\frac{1}{(p+1)^2}\bigg(\frac{(\Phi_{2p+2})^q}{(p+1)^{q+\frac{q}{q'}}}\bigg)\bigg].
\end{equation*}
Therefore $[A4\mathchar`-q',(1+3/q')]$ holds since $r_n\to^p 0$ by the next Proposition \ref{A2q-suff}.
\\
2. The proof is similar to that of 1. For sufficiently large $n$, we have
\begin{eqnarray}
E[(\bar{\Phi}_{p_1,p_2})^{q/2}]
&\leq & [b_n]^{\frac{q}{2}-1}E\bigg[\sum_{i=1}^{[b_n]}\sum_{j=1}^{\infty}\bigg(\sum_{R(\theta_{0,k_1})\in (t_{i-1},t_i]}\sum_{k_2}|\theta_{p_1,k_1}|\wedge |\theta_{p_2,k_2}|1_{\theta_{p_1+p_2,k_1}\cap \theta_{0,k_2}\neq \emptyset}\bigg)^{\frac{q}{2}} 1_{\dot{A}^{p_1+2p_2+1}_{i,j}}\bigg] \nonumber \\
&\leq & [b_n]^{\frac{q}{2}-1}E\bigg[\sum_{i=1}^{[b_n]}\sum_{j=1}^{\infty}\{(4p_1+2)j\wedge (4p_2+2)j\}^{\frac{q}{2}}[b_n]^{-\frac{q}{2}}T^\frac{q}{2}(\Delta N_i)^{\frac{q}{2}} \nonumber \\
&&\times \bigg(\sum_{v=1}^2(N^v_{(t_i+(2p_1+2p_2+1)j[b_n]^{-1}T)\wedge T}-N^v_{(t_{i-1}-(2p_1+2p_2+2)j[b_n]^{-1}T)\vee 0})\bigg)^{\frac{q}{2}}1_{\dot{A}^{p_1+2p_2+1}_{i,j}}\bigg] \nonumber \\
&\leq & C[b_n]^{-1}\sum_{i=1}^{[b_n]}\sum_{j=1}^{\infty}\{(4p_1+2)j\wedge (4p_2+2)j\}^{\frac{q}{2}}\{(4p_1+4p_2+3)j+1\}^{\frac{q}{2}} P[(\hat{A}^{p_1+2p_2+1}_{i,j-1})^c]^{\frac{1}{p'_2}}. \nonumber
\end{eqnarray}
Since $(a\wedge b)(a+b)\leq 2ab \ (a,b\geq 1)$, we obtain
\begin{eqnarray}
E[(\bar{\Phi}_{p_1,p_2})^{q/2}]\leq C\sum_{j=1}^{\infty}(p_1+1)^{\frac{q}{2}}(p_2+1)^{\frac{q}{2}}j^{q}\{C(p_1+p_2+1)j^{-p'_2(q+2)}\}^{\frac{1}{p'_2}}
\leq C(p_1+1)^{q/2+1}(p_2+1)^{q/2+1}. \nonumber
\end{eqnarray}
\qed
\\
\\
\noindent
{\bf Proof of Proposition \ref{A2q-suff}.}

Let $A'_j=\hat{A}^{[b_nj^{-1}T],+}_{jb_n^{-1},0}$ for $j\in\mathbb{N}$. 
Then since $r_n\leq 2jb_n^{-1}$ on $A'_j$, for sufficiently large $n$, we have
\begin{eqnarray}
E[r_n^q]=E\bigg[r_n^q\sum_{j=1}^{\infty}1_{A'_j\setminus \cup_{j'=0}^{j-1}A'_{j'}}\bigg] \leq \sum_{j=1}^{\infty}(2jb_n^{-1})^qP[(A'_{j-1})^c] 
\leq Cb_n^{-q}\sum_{j=1}^{\infty}j^q \cdot [b_nj^{-1}T] \cdot j^{-q-1}\leq Cb_n^{1-q}, \nonumber
\end{eqnarray}
where $A'_0=\emptyset$.

\qed
\\
\\
{\bf Proof of Proposition \ref{inf-a1-suff}.}

By $[B2\mathchar`-q]$, there exists $N\in\mathbb{N}$ such that
\begin{equation}\label{non-sparse2}
\sup_{n\geq n_0}\max_{i=1,2}\sup_{0\leq t\leq T-N[b_n]^{-1}T}P[N^i_{t+N[b_n]^{-1}T}-N^i_t=0]\leq \frac{1}{12}.
\end{equation}
For $M=[b_n/3N]$, $h=[b_n]^{-1}T$ and $s_k=3kNh$, we have
\begin{eqnarray}
a_1=\frac{1}{T}\int^T_0a_1dt=\frac{1}{T}\underset{n\to \infty}{\rm P\mathchar`-\lim} \ b_n^{-1}\sum_{I,J}\frac{|I\cap J|^2}{|I||J|} 
=\frac{1}{T}\underset{n\to \infty}{\rm P\mathchar`-\lim} \ b_n^{-1}\sum_{k=1}^M\sum_{I,J;L(I)\in [s_{k-1},s_k)}\frac{|I\cap J|^2}{|I||J|}. \nonumber
\end{eqnarray} 

Let
\begin{equation*}
\bar{A}^0_k=\emptyset, \quad \bar{A}^j_k=A^{2,+}_{jh,s_k}\cap A^{1,-}_{jh,s_{k-1}}, \quad \ddot{A}^j_k=\bar{A}^j_k\setminus \cup_{j'=0}^{j-1}\bar{A}^{j'}_k, 
\end{equation*}
and
\begin{equation*}
E_k=\cap_{l=1}^3\{N^1_{s_{k-1}+lNh}-N^1_{s_{k-1}+(l-1)Nh}>0\}
\end{equation*}
for $1\leq k\leq M$ and $j\in\mathbb{N}$.
Then for sufficiently large $j$, $\bar{A}^j_k=\Omega$.
Moreover, for sufficiently large $n$, we have $\inf_{1\leq k \leq M}P[E_k]\geq 3/4$ by (\ref{non-sparse2}) and
\begin{eqnarray}
\sum_{I,J;L(I)\in [s_{k-1},s_k)}\frac{|I\cap J|^2}{|I||J|}&\geq &\sum_{j=1}^{\infty}\sum_{I,J;L(I)\in [s_{k-1},s_k)}\frac{|I\cap J|^21_{\ddot{A}^j_k}}{((3N+j)h)((3N+3j)h)} \nonumber \\
&\geq & \sum_{j=1}^{\infty}\frac{(N+j)^{-2}}{9h^2}1_{\ddot{A}^j_k}\sum_{I,J;L(I)\in [s_{k-1},s_k)}|I\cap J|^21_{E_k}. \nonumber
\end{eqnarray} 
For $r\in \mathbb{N}$ and $u>0$,
we have $x_1^2+\ldots+x_r^2\geq u^2/r$ when $x_i\geq 0 \ ( 1\leq i \leq r)$, $x_1+\ldots + x_r\geq u$.
Hence 
\begin{eqnarray}
\sum_{I,J;L(I)\in [s_{k-1},s_k)}\frac{|I\cap J|^2}{|I||J|}&\geq &\sum_{j=1}^{\infty}\frac{(N+j)^{-2}}{9h^2}\frac{(Nh)^21_{\ddot{A}^j_k}1_{E_k}}{\Delta N^1_k+\Delta N^2_k+1}, \nonumber
\end{eqnarray}
where $\Delta N^i_k=N^i_{s_k}-N^i_{s_{k-1}} \ (1\leq k \leq M, i=1,2)$.
Then we obtain
\begin{equation}\label{a1-est1}
\frac{b_n^{-1}}{T}\sum_{I,J}\frac{|I\cap J|^2}{|I||J|}\geq b_n^{-1}\sum_{j=1}^{\infty}\sum_{k=1}^MX'_{j,k} \quad {\rm a.s., \ where} \quad
X'_{j,k}=\frac{N^2}{9Tj(N+j)^2}\frac{1_{\ddot{A}^j_k}1_{E_k}}{\Delta N^1_k+\Delta N^2_k+1}.
\end{equation}
On the other hand, Theorem \ref{rosenthal} and a similar argument to the proof of Proposition \ref{p-lim} yield
\begin{equation*}
E\bigg[\bigg|\sum_{k=1}^M(X'_{j,k}-E[X'_{j,k}])\bigg|^2\bigg]\leq \frac{Cb_n}{j(N+j)^4}
\end{equation*}
for $j\in\mathbb{N}$ and sufficiently large $n$. Therefore 
\begin{equation}\label{a1-est2}
b_n^{-1}\sum_{j=1}^{\infty}\sum_{k=1}^M(X'_{j,k}-E[X'_{j,k}])\to^p 0
\end{equation}
as $n\to \infty$.

(\ref{a1-est1}) and (\ref{a1-est2}) yield
\begin{equation}\label{a1-est3}
a_1\geq \limsup_{n\to\infty}b_n^{-1}\sum_{j=1}^{\infty}\sum_{k=1}^ME[X'_{j,k}].
\end{equation}

Furthermore, since $\{\Delta N^i_k\}_{1\leq k\leq M, i=1,2, n\geq n_1}$ are tight by the assumption, 
there exists $R'>0$ such that
$\sup_{n\geq n_1,k,i}P\left[\Delta N^i_k>R'\right]<1/8$.
Consequently,
\begin{equation}\label{deltaNest}
\sup_{n\geq n_1,k}P\left[(\Delta N^1_k+\Delta N^2_k+1)^{-1}< (2R'+1)^{-1}\right]<1/4.
\end{equation}

On the other hand, by $[B2\mathchar`-q]$, we obtain
\begin{eqnarray}
P\left[\cup_{j=J+1}^{\infty}\ddot{A}^j_k\right]&\leq &P[(\bar{A}^J_k)^c]\leq 6\sup_{n\geq n_0,t,i}P[N^i_{t+Jh}-N^i_t=0] \leq CJ^{-q} \nonumber 
\end{eqnarray}
for $J\in\mathbb{N}$ and $n\geq n_0$.
Thus, there exists $J$ which does not depend on $n,k$ such that
\begin{equation}\label{tildeA-est}
P\left[\cup_{j=1}^J\ddot{A}^j_k\right]= 1- P\left[\cup_{j=J+1}^{\infty}\ddot{A}^j_k\right]\geq \frac{3}{4}.
\end{equation}
Therefore by (\ref{a1-est3}),(\ref{deltaNest}),(\ref{tildeA-est}) and the estimate $\inf_{1\leq k \leq M}P[E_k]\geq 3/4$, we obtain
\begin{eqnarray}
a_1\geq \frac{N^2}{9TJ(N+J)^2}\limsup_{n\to \infty}b_n^{-1}M\frac{1}{2R'+1}\cdot \frac{1}{4} = \frac{N^2(2R'+1)^{-1}}{36TJ(N+J)^2}\frac{1}{3N}. \nonumber
\end{eqnarray}
\qed

\begin{discuss}
\newpage

\section{Appendix}
\subsection{定義のまとめ}

$[a]$:aを超えない最大整数. 
\begin{eqnarray}
S=\left( 
\begin{array}{cc}
{\rm diag}(| b^1_I |^2) & \left\{ b^1_I\cdot b^2_J\frac{|I\cap J|}{\sqrt{|I|}\sqrt{|J|}}\right\}_{IJ} \\
\left\{ b^1_I\cdot b^2_J\frac{|I\cap J|}{\sqrt{|I|}\sqrt{|J|}}\right\}_{JI} & {\rm diag}(| b^2_J |^2)\\
\end{array} 
\right) \nonumber
\end{eqnarray}
\begin{eqnarray}
D&=&{\rm diag}(\{| b^1_I|\}_I,\{| b^2_J|\}_J), \quad L=\left\{\frac{b^1_I\cdot b^2_J}{| b^1_I| | b^2_J| }\frac{|I\cap J|}{\sqrt{|I||J|}}\right\}_{I,J}, \nonumber \\
\tilde{L}&=&\left(
\begin{array}{ll}
0 & L \\
L^{\star} & 0 \\
\end{array}
\right), \quad Z=\left(\left(\frac{X(I)}{| b^1_I| \sqrt{|I|}}\right)_I^{\star}, \left(\frac{Y(J)}{| b^2_J| \sqrt{|J|}}\right)_J^{\star}\right)^{\star} \nonumber
\end{eqnarray}
$M=(I+\tilde{L})^{-1}$

\begin{equation*}
\rho_t=\frac{b^1_t\cdot b^2_t}{| b^1_t| | b^2_t|}, \quad \bar{\rho}=\sup_{\sigma,0\leq t\leq T}|\rho_t|.
\end{equation*}
\begin{equation*}
G=\left\{ \frac{|I\cap J|}{\sqrt{|I||J|}} \right\}_{IJ},\quad \rho_{I,J}=\frac{b^1_I\cdot b^2_J}{| b^1_I| | b^2_J | }, \quad \tilde{\rho}_n=\sup_{\sigma, I,J;I\cap J \neq \emptyset}|\rho_{I,J}|\vee \bar{\rho}
\end{equation*}

\begin{eqnarray}
H_n&=&-\frac{1}{2}\left(\left(\frac{X(I)}{\sqrt{|I|}}\right)_I^{\star},\left(\frac{Y(J)}{\sqrt{|J|}}\right)_J^{\star}\right)S^{-1}\left(\left(\frac{X(I)}{\sqrt{|I|}}\right)_I^{\star},\left(\frac{Y(J)}{\sqrt{|J|}}\right)_J^{\star}\right)^{\star} -\frac{1}{2}\log \det S \nonumber \\
&=&-\frac{1}{2}Z^{\star}MZ-\log\det D+\frac{1}{2}\log\det M \nonumber \\
&=&-\frac{1}{2}Z^{\star}\left(\sum_{p=0}^{\infty}(-1)^p\tilde{L}^p\right)Z-\log\det D+\frac{1}{2}\sum_{p=1}^{\infty}\frac{(-1)^p}{p}{\rm tr}(\tilde{L}^p) \nonumber
\end{eqnarray}

\begin{equation*}
\hat{H}_n=H_n1_{\{ 1-\tilde{\rho}_n> s_n\} }
\end{equation*}
\begin{eqnarray}
G_{(s,t]}&=&\left\{\frac{|I\cap J|}{\sqrt{|I|}\sqrt{|J|}}\right\}_{I,J;I\cap (s,t] \neq \emptyset}, \quad G_{(s,t]}'=\left\{\frac{|I\cap J|}{\sqrt{|I|}\sqrt{|J|}}\right\}_{I,J;J\cap (s,t] \neq \emptyset} \nonumber
\end{eqnarray}
$B^i_t=| b^i_{t,\ast}|/| b^i_t| \ (i=1,2)$

\begin{equation*}
\theta_{p,k}=\left\{
\begin{array}{ll}
\cup\{I';(GG^{\star})^p_{I^kI'}>0\} & (k\leq l_n) \\
\cup\{J';(G^{\star}G)^p_{J^{k-l_n}J'}>0\} & (k > l_n) \\
\end{array}
\right.
\end{equation*}
\begin{equation*}
R=\max_{0\leq i\leq 4,j,k\geq 0,j+k\leq 3,1\leq p\leq 2}\sup_{\sigma,0\leq s,t\leq T}\left(| \partial_x^j\partial_y^k\partial_{\sigma}^ib^p|\vee \frac{1}{| b^p|}(X_t,Y_s,\sigma)\right)\vee 1.
\end{equation*} 

\begin{eqnarray}
L_p&=&\{\rho_{L(\theta_{p,i}\cup\theta_{p,j+l_n})}G_{I^i,J^j}\}_{i,j}, \quad \tilde{L}_p=\left(
\begin{array}{ll}
0 & L_p \\
L^{\star}_p & 0 \\
\end{array}
\right),\quad \tilde{M}=\sum_{p=0}^{\infty}(-1)^p\tilde{L}_p^p \nonumber
\end{eqnarray}
\begin{equation*}
Z_{k,t}=\left\{
\begin{array}{ll}
X(I^k_t)/(|b^1_{I^k}|\sqrt{|I^k|}) & (k\leq l_n) \\
Y(J_t^{k-l_n})/\left(|b^2_{J^{k-l_n}}|\sqrt{|J_t^{k-l_n}|}\right) & (k>l_n) \\
\end{array}
\right.
\end{equation*}
$<Z>_t=(<Z_k,Z_{k'}>_t)_{k,k'}$とする. このとき
$\tilde{H}^1_n,\tilde{H}^2_n$を
\begin{eqnarray}
\tilde{H}^1_n&=& -\frac{1}{2}Z^{\star}\tilde{M}Z-\log\det D +\frac{1}{2}\sum_{p=1}^{\infty}\frac{(-1)^p}{p}{\rm tr}(\tilde{L}_p^p), \nonumber \\
\tilde{H}^2_n&=&-\frac{1}{2}{\rm tr}(\tilde{M}<Z>_T)-\log\det D +\frac{1}{2}\sum_{p=1}^{\infty}\frac{(-1)^p}{p}{\rm tr}(\tilde{L}_p^p) \nonumber
\end{eqnarray}

\begin{eqnarray}
\Psi^{p,1}(f,g)&=&\Psi^{p,1,n}(f,g)=b_n^{-1}\sum_If(L(I))(GG^{\star})^p_{II}-\int^T_0f(s)g(s)ds, \nonumber \\
\Psi^{p,2}(f,g)&=&\Psi^{p,2,n}(f,g)=b_n^{-1}\sum_Jf(L(J))(G^{\star}G)^p_{JJ}-\int^T_0f(s)g(s)ds \nonumber 
\end{eqnarray}
\begin{eqnarray}
&&b_n^{-1}{\rm tr}((I-\rho^2 G_{(0,t]}G^{\star}_{(0,t]})^{-1}) \to^p \int^t_0a(\rho, s)ds,\nonumber \\
&&b_n^{-1}{\rm tr}((I-\rho^2 (G'_{(0,t]})^{\star}G'_{(0,t]})^{-1}) \to^p \int^t_0c(\rho, s)ds,\nonumber \\
&&b_n^{-1}\sum_I(GG^{\star})^p_{II}1_{\{[0,t]\cap I \neq \emptyset\} } \to^p \int^t_0a_p(s)ds,\nonumber \\
&&b_n^{-1}\sum_J(G^{\star}G)^p_{JJ}1_{\{[0,t]\cap J \neq \emptyset\} } \to^p \int^t_0c_p(s)ds, \nonumber
\end{eqnarray}
\begin{equation*}
A(\rho, \rho_{\ast}, t)=\left(
\begin{array}{ll}
A_{11}(\rho,\rho_{\ast}, t) & A_{12}(\rho,\rho_{\ast}, t) \\
A_{21}(\rho,\rho_{\ast}, t) & A_{22}(\rho,\rho_{\ast}, t) \\
\end{array}
\right)
\end{equation*}
\begin{equation*}
\mathcal{A}(\rho,t)=a(\rho,t)-a_0(\rho,t)=c(\rho,t)-c_0(\rho,t)
\end{equation*}
\begin{eqnarray}
A_{11}(\rho,\rho_{\ast}, t)&=&\sum_{p=0}^{\infty}a_p(t)\rho^{2p}=a_0(t)+\mathcal{A}(\rho,t) \nonumber \\
A_{12}(\rho,\rho_{\ast}, t)&=&A_{21}(\rho,t)=\sum_{p=0}^{\infty}a_{p+1}(t)\rho^{2p+1}\rho_{\ast}=\mathcal{A}(\rho,t)\frac{\rho_{\ast}}{\rho} \nonumber \\
A_{22}(\rho,\rho_{\ast}, t)&=&\sum_{p=0}^{\infty}c_p(t)\rho^{2p}=c_0(t)+\mathcal{A}(\rho,t) \nonumber \\
C(\rho,t)&=&\sum_{p=1}^{\infty}\frac{a_p(t)}{p}\rho^{2p}=\sum_{p=1}^{\infty}\frac{c_p(t)}{p}=2\int^{\rho}_0\frac{\mathcal{A}(\rho,t)}{\rho}d\rho \nonumber
\end{eqnarray}
\begin{eqnarray}
h_t^{\infty}&=& -\frac{1}{2}(B^1_t,B^2_t)A(\rho_t,\rho_{t,\ast},t)\left(
\begin{array}{l}
B^1_t \\
B^2_t \\
\end{array}
\right)-a_0\log | b^1_t| -c_0\log | b^2_t| -\frac{1}{2}C(\rho_t,t)
\nonumber \\
&=& -\frac{1}{2}(B^1_t)^2\sum_{p=0}^{\infty}a_p(t)\rho^{2p}_t+B^1_tB^2_t\sum_{p=0}^{\infty}a_{p+1}(t)\rho^{2p+1}_t\rho_{t,\ast}-\frac{1}{2}(B^2_t)^2\sum_{p=0}^{\infty}c_p(t)\rho^{2p}_t \nonumber \\
&&-a_0\log| b^1_t| - c_0\log | b^2_t | + \frac{1}{2}\sum_{p=1}^{\infty}\frac{a_p}{p}\rho^{2p} \nonumber
\end{eqnarray}
\begin{equation*}
y_t=h^{\infty}_t-h^{\infty}_{t,\ast}
\end{equation*}
\begin{eqnarray}
y_t&=&-\frac{1}{2}(B^1_t)^2(a_0+\mathcal{A})-\frac{1}{2}(B^2_t)^2(c_0+\mathcal{A})+B^1_tB^2_t\mathcal{A}\frac{\rho_{t,\ast}}{\rho_t}+\frac{a_0}{2}+\frac{c_0}{2} \nonumber \\
&&+a_0\log B^1_t+c_0\log B^2_t+\int^{\rho_t}_{\rho_{t,\ast}}\frac{\mathcal{A}}{\rho}d\rho \nonumber \\
&=&-\frac{1}{2}(a_0+\mathcal{A})(B^1_t-B^2_t)^2+a_0+a_0\log B^1_tB^2_t + \int^{\rho_t}_{\rho_{t,\ast}}\frac{\mathcal{A}}{\rho}d\rho \nonumber \\
&&+ \frac{c_0-a_0}{2}(1-(B^2_t)^2+\log (B^2_t)^2)+B^1_tB^2_t\left(\mathcal{A}\left(\frac{\rho_{t,\ast}}{\rho_t}-1\right)-a_0\right) \nonumber \\
&=&-\frac{1}{2}(c_0+\mathcal{A})(B^1_t-B^2_t)^2+c_0+c_0\log B^1_tB^2_t + \int^{\rho_t}_{\rho_{t,\ast}}\frac{\mathcal{A}}{\rho}d\rho \nonumber \\
&&+ \frac{a_0-c_0}{2}(1-(B^1_t)^2+\log (B^1_t)^2)+B^1_tB^2_t\left(\mathcal{A}\left(\frac{\rho_{t,\ast}}{\rho_t}-1\right)-c_0\right). \nonumber 
\end{eqnarray}
\begin{equation*}
\hat{Y}_n=b_n^{-1}(\hat{H}_n(\sigma)-\hat{H}_n(\sigma_{\ast})), \quad \hat{\Gamma}_n(\sigma)=-b_n^{-1}\partial_{\sigma}^2\hat{H}_n(\sigma)[u,u], \quad \Gamma=\int^T_0\partial_{\sigma}^2h^{\infty}_t(\sigma_{\ast})dt
\end{equation*}
$U_n(\sigma)=\{u\in\mathbb{R}^d;\sigma_{\ast}+b_n^{-1/2}u\in\Lambda\}$, $V_n(r,\sigma_{\ast})=\{|u|\geq r\}\cap U_n(\sigma_{\ast})$と定め, $u\in U_n(\sigma)$に対し, $\hat{Z}_n(u,\sigma_{\ast})=\exp(\hat{H}_n(\sigma_{\ast}+b_n^{-1/2}u)-H_n(\sigma_{\ast}))$と定める.

$f:[0,T]\to \mathbb{R}$, $\alpha-$H${\rm \ddot{o}}$lder連続に対し, $\omega_{\alpha}(f)=\sup_{t\neq s}|f_t-f_s|/|t-s|^{\alpha}$と定める.
\subsection{HYのinefficiency}
\begin{eqnarray}
\left\{
\begin{array}{lll}
dX_t&=&\sigma_1dW^1_t \\
dY_t&=&\sigma_3dW^1_t+\sigma_2dW^2_t, \\
\end{array}
\right. \nonumber
\end{eqnarray}
$\{I\}=\{J\}$ : deterministic, $l_n=\#\{I\}$とし, 
\begin{equation*}
{\rm HY}=\sum_IX(I)Y(I), \quad M=l_n^{-1}\sum_I\frac{X(I)Y(I)}{|I|}T
\end{equation*}
とおくと, $E[HY]=E[M]=\sigma_1\sigma_3 T$であり, 
\begin{eqnarray}
{\rm Var(HY)}&=&\sum_IE[(X(I)Y(I)-\sigma_1\sigma_3|I|)^2]=\sum_I(E[X(I)^2Y(I)^2]-\sigma_1^2\sigma_3^2|I|^2) \nonumber \\
&=&\sum_I(E[\sigma_1^2W^1(I)^2(\sigma_3W^1(I)+\sigma_2W^2(I))^2]-\sigma_1^2\sigma_3^2|I|^2) \nonumber \\
&=&\sum_I(3\sigma_1^2\sigma_3^2|I|^2+\sigma_1^2\sigma_2^2|I|^2-\sigma_1^2\sigma_3^2|I|^2) \nonumber \\
&=&(2\sigma_1^2\sigma_3^2+\sigma_1^2\sigma_2^2)\sum_I|I|^2. \nonumber
\end{eqnarray}
一方, 
\begin{eqnarray}
{\rm Var(M)}&=&l_n^{-2}T^2E\left[\left(\frac{X(I)Y(I)}{|I|}-\sigma_1\sigma_3\right)^2\right]=l_n^{-2}T^2\sum_I\left(E\left[\frac{X(I)^2Y(I)^2}{|I|^2}\right]-\sigma_1^2\sigma_3^2\right) \nonumber \\
&=&T^2l_n^{-1}(2\sigma_1^2\sigma_3^2+\sigma_1^2\sigma_2^2). \nonumber
\end{eqnarray}
$l_n=3n$, 
\begin{eqnarray}
|I^i|=\left\{
\begin{array}{ll}
T/2n & (i\leq n) \\
T/4n & (i>n) \\
\end{array}
\right. \nonumber
\end{eqnarray}
とおくと
\begin{equation*}
\sum_I|I|^2=\left(\frac{T^2}{4n^2}n+\frac{T^2}{16n^2}2n\right)=\frac{3T^2}{8n}>\frac{T^2}{3n}.
\end{equation*}
よって${\rm Var(HY)>Var(M)}$.

\subsection{その他}
\begin{proposition}\label{sup-lambda}
対称行列$P$の固有値を$\lambda_1,\lambda_2, \ldots, \lambda_n$と置くと, $\parallel P\parallel=\sup_i|\lambda_i|$. 
\end{proposition}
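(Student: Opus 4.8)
The plan is to diagonalize $P$ by an orthogonal matrix and then exploit the fact that the operator norm $\|\cdot\|$ is invariant under orthogonal conjugation. Since $P$ is a real symmetric matrix, the spectral theorem supplies an orthogonal matrix $U$ (that is, $U^{\star}U=UU^{\star}=\mathcal{E}_n$) such that $P=U^{\star}DU$, where $D=\mathrm{diag}(\lambda_1,\ldots,\lambda_n)$ collects the real eigenvalues of $P$. This reduces the claim to the computation of $\|D\|$ for a diagonal matrix, which is immediate.

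First I would record that multiplication by an orthogonal matrix preserves the Euclidean norm: $|Ux|^2=x^{\star}U^{\star}Ux=x^{\star}x=|x|^2$ for every $x\in\mathbb{R}^n$. Consequently, writing $y=Ux$ and using that $x\mapsto Ux$ is a bijection of the unit sphere onto itself,
\[
\|P\|=\sup_{|x|=1}|U^{\star}DUx|=\sup_{|x|=1}|DUx|=\sup_{|y|=1}|Dy|,
\]
so it only remains to evaluate the operator norm of $D$.

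Next I would estimate $\|D\|$ directly. For $|y|=1$ one has $|Dy|^2=\sum_{i=1}^n\lambda_i^2y_i^2\leq(\max_i\lambda_i^2)\sum_{i=1}^ny_i^2=\max_i|\lambda_i|^2$, which gives $\|D\|\leq\max_i|\lambda_i|$. For the reverse inequality, choosing $y$ to be the standard basis vector $e_{i_0}$ with $|\lambda_{i_0}|=\max_i|\lambda_i|$ yields $|De_{i_0}|=|\lambda_{i_0}|=\max_i|\lambda_i|$, so the supremum is attained and $\|D\|=\max_i|\lambda_i|=\sup_i|\lambda_i|$. Combining this with the reduction above gives $\|P\|=\sup_i|\lambda_i|$.

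There is essentially no genuine obstacle here: the entire content is the real spectral theorem, which guarantees both that the eigenvalues are real and that $P$ admits an orthonormal eigenbasis. The only point deserving a moment's care is to confirm that $\|\cdot\|$ in the statement is the operator norm induced by the Euclidean norm on $\mathbb{R}^n$ (as fixed in the notation section), since it is precisely this compatibility that makes the orthogonal invariance $|Ux|=|x|$ available; once that is fixed, the two displayed estimates close the argument.
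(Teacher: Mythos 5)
Your proof is correct and follows essentially the same route as the paper's: both rest on the spectral theorem, orthogonal invariance of the Euclidean operator norm, and the estimate $|Dy|^2=\sum_i\lambda_i^2y_i^2\leq(\max_i|\lambda_i|^2)|y|^2$ for the diagonalized matrix. The only cosmetic difference is that you prove the upper bound directly and realize the lower bound with standard basis vectors, whereas the paper argues the upper bound by contradiction and gets the lower bound from unit eigenvectors of $P$ itself.
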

\begin{proof}
任意の$i \in \{1,2,\ldots, n\}$に対し, 
ある$x\in \mathbb{R}^n$があって, 
$| x | =1$かつ$Px=\lambda_ix$となるので, $\parallel P \parallel\geq \sup_i|\lambda_i|$は明らか. 

$\parallel P \parallel> \sup_i|\lambda_i|$とすると, ある$x\in \mathbb{R}^n$があって, $| x | =1$かつ$| Px|>\sup_i|\lambda_i|$とできる. 一方, ある直行行列$U$に対し, $UPU^{\star}={\rm diag}(\lambda_i)$と書け, $y=Ux$とすると, $U^{\star}y=x,| y | = 1$かつ
\begin{eqnarray}
| UPU^{\star}y| &=& | UPx| =| Px| > \sup_i|\lambda_i|, \nonumber \\
| UPU^{\star}y| &=& \sqrt{\sum_i(\lambda_iy_i)^2} \leq \sup_i|\lambda_i|| y | = \sup_i|\lambda_i|, \nonumber
\end{eqnarray}
となり矛盾. 
\end{proof}

\begin{eqnarray}
\int^t_s\sum_{p=0}^{\infty}a_p(u)\rho^{2p}du&=&P \lim_{n\to \infty}b_n^{-1}\sum_i\sum_{p=1}^{\infty}(\lambda_i)^p\rho^{2p} \nonumber \\
&=&P \lim_{n\to \infty}b_n^{-1}\sum_i\frac{1}{1-\lambda_i\rho^2} \nonumber \\
&=&P \lim_{n\to \infty}b_n^{-1}\frac{1}{2\pi\sqrt{-1}}\int\frac{1}{1-z\rho^2}\left(\sum_i\frac{1}{z-\lambda_i}\right)dz \nonumber \\	
&=&P \lim_{n\to \infty}b_n^{-1}\frac{1}{2\pi\sqrt{-1}}\int\frac{1}{1-z\rho^2}\frac{{\rm tr}((I-z^{-1}G_{(s,t]}G^{\star}_{(s,t]})^{-1})}{z}dz \nonumber \\	
&=&\frac{1}{2\pi\sqrt{-1}}\int\frac{1}{1-z\rho^2}\int^t_s\frac{1}{z}a(1/\sqrt{z},u)dudz. \nonumber 
\end{eqnarray}
ゆえに
\begin{equation*}
\sum_{p=0}^{\infty}a_p(t)\rho^{2p}=\frac{1}{2\pi\sqrt{-1}}\int\frac{1}{1-z\rho^2}\frac{1}{z}a(1/\sqrt{z},t)dz.
\end{equation*}
同様に
\begin{eqnarray}
\sum_{p=0}^{\infty}a_{p+1}(t)\rho^{2p+1}\rho_{\ast}&=&\frac{1}{2\pi\sqrt{-1}}\int\frac{z\rho\rho_{\ast}}{1-z\rho^2}\frac{1}{z}a(1/\sqrt{z},t)dz, \nonumber \\
\sum_{p=0}^{\infty}\frac{a_p(t)}{p}\rho^{2p}&=&-\frac{1}{2\pi\sqrt{-1}}\int\log(1-z\rho^2)\frac{1}{z}a(1/\sqrt{z},t)dz. \nonumber
\end{eqnarray}
よって
\begin{eqnarray}
y_t&=&\frac{1}{2\pi\sqrt{-1}}\int\frac{a(1/\sqrt{z},t)}{z}\bigg\{-\frac{1}{2(1-z\rho^2)}\left\{(B^1_t)^2-2z\rho\rho_{\ast}B^1_tB^2_t+(B^2_t)^2\right\} \nonumber \\
&&+1+\frac{1}{2}\log\left((B^1_t)^2(B^2_t)^2\frac{(1-z\rho_{\ast}^2)}{1-z\rho^2}\right)\bigg\}dz+\frac{c_0-a_0}{2}\left(1-(B^2_t)^2+\log(B^2_t)^2\right). \nonumber
\end{eqnarray}
\end{discuss}

\bibliographystyle{plain}
\bibliography{bunken-hofyos08.bib}

\begin{thebibliography}{99}

\bibitem{adams}
Adams, R. A.: Sobolev spaces. Pure and Applied Mathematics (Vol. 65). London, New York: Academic Press (1975)

\bibitem{adams-fou}
Adams, R. A. and Fournier, J. J. F. : Sobolev spaces. Second edition. Pure and Applied Mathematics (Amsterdam), 140. Elsevier / Academic Press, Amsterdam (2003)

\bibitem{ait-fan}
A${\rm \ddot{\i}}$t-Sahalia, Y., Fan, J., and Xiu, D.:
High-Frequency Covariance Estimates With Noisy and Asynchronous Financial Data.
{\it Journal of the American Statistical Association}, {\bf 105}, 1504-1517. (2010)

{\colorr
\bibitem{bar-etc}
Barndorff-Nielsen, O.E., Hansen, P.R., Lunde, A., and Shephard, N.:
Multivariate realised kernels: Consistent positive semi-definite estimators of the covariation of equity prices with noise and nonsynchronous trading,
{\it Journal of Econometrics}, {\bf 162}, 149-169. (2011)

\bibitem{bib}
Bibinger, M.: Efficient covariance estimation for asynchronous noisy high-frequency data, {\it Scandinavian Journal of Statistics}, {\bf 38}, 23-45. (2011)

\bibitem{bib2}
Bibinger, M.: An estimator for the quadratic covariation of asynchronously observed It${\rm \hat{o}}$ processes with noise: Asymptotic distribution theory, 
{\it Stochastic Processes and their Applications}, {\bf 122}, 2411-2453. (2012)}

\bibitem{bil}
Billingsley, P.: Convergence of Probability Measures. Willey, New York (1968)

{\colorr
\bibitem{chr-etc}
Christensen, K., Kinnebrock, S., Podolskij, M.: Pre-averaging estimators of the ex-post covariance matrix in noisy diffusion models with non-synchronous data,
{\it Journal of Econometrics}, {\bf 159}, 116-133. (2010)

\bibitem{doh}
Dohnal, G.: On estimating the diffusion coefficient, {\it Journal of Applied Probability}, {\bf 24}, 105-114. (1987)

}

\bibitem{Dou-Lou01}
Doukhan, P. and Louhichi, S.: A new weak dependence condition and applications to moment inequalities. {\it Stochastic Processes and their Applications} {\bf 84}, 313-342. (1999)

\bibitem{epp}
Epps, T. W.: Comovements in Stock Prices in the Very Short Run. {\it Journal of the American Statistical Association}, {\bf 74}, 291-298. (1979)

{\colorr 
\bibitem{gen-jac93}
Genon-Catalot, V. and Jacod, J.: On the estimation of the diffusion coefficient for multidimensional diffusion processes, {\it Annales de I'IHP Probabilit$\acute{e}$s et statistiques}, {\bf 29}, 119-151. (1993) 

\bibitem{gen-jac94}
Genon-Catalot, V. and Jacod, J.: Estimation of the diffusion coefficient for diffusion processes: random sampling, {\it Scandinavian Journal of Statistics}, {\bf 21}, 193-221. (1994)

\bibitem{gob}
Gobet, E.: Local asymptotic mixed normality property for elliptic diffusion: a Malliavin calculus approach, {\it Bernoulli}, {\bf 7}, 899-912. (2001)
}

\bibitem{hay-yos01}
Hayashi, T. and Yoshida, N.: On covariance estimation of nonsynchronously observed diffusion processes. {\it Bernoulli}, {\bf 11}, 359-379. (2005)

\bibitem{hay-yos02}
Hayashi, T. and Yoshida, N.: Asymptotic normality of a covariance estimator for nonsynchronously observed diffusion processes. {\it Annals of the Institute of Statistical Mathematics}, {\bf 60}, 367-406. (2008)

\bibitem{hay-yos03}
Hayashi, T. and Yoshida, N.: Nonsynchronous covariation process and limit theorems. {\it Stochastic Processes and their Applications}, {\bf 121}, 2416-2454. (2011)

\bibitem{ibr-has01}
Ibragimov, I.A., Has'minskii, R.Z.: \newblock The asymptotic behavior of certain statistical estimates in the smooth case. I. Investigation of the likelihood ratio. (Russian) \newblock {\em Teorija Verojatnostei i ee Primenenija}, {\bf 17}, 469-486. (1972) 

\bibitem{ibr-has02}
Ibragimov, I.A., Has'minskii, R.Z.: \newblock Asymptotic behavior of certain statistical estimates. II. Limit theorems for a posteriori density and for Bayesian estimates. (Russian) \newblock {\em Teorija Verojatnostei i ee Primenenija}, {\bf 18}, 78-93. (1973)

\bibitem{ibr-has03}
Ibragimov, I.A., Has'minskii, R.Z.: \newblock{\em Statistical estimation: asymptotic theory}. \newblock Springer, New York, 1981

\bibitem{jacod}
Jacod, J.: On continuous conditional Gaussian martingales and stable convergence in law.
Seminaire de Probabilites, XXXI, 232-246, Lecture Notes in Math., 1655, Springer, Berlin (1997)

{\colorr
\bibitem{kes}
Kessler, M.: Estimation of an ergodic diffusion from discrete observations, {\it Scandinavian Journal of Statistics}, {\bf 24}, 211-229. (1997)
}
\bibitem{kut01}
Kutoyants, Yu. A.: \newblock{\em Parameter estimation for stochastic processes}, Translated and edited by B.L.S.Prakasa Rao, \newblock Herdermann, Berlin, 1984 

\bibitem{kut02}
Kutoyants, Yu.: \newblock{\em Identification of dynamical systems with small noise}, \newblock Kluwer, Dordrecht Boston London, 1994 

\bibitem{mal-man}
Malliavin, P. and Mancino, M. E.: Fourier series method for measurement of multivariate volatilities. {\it Finance and Stochastics}, {\bf 6}, 49-61. (2002)

{\colorr

\bibitem{mas}
Masuda, H.: Approximate self-weighted LAD estimation of discretely observed ergodic Ornstein-Uhlenbeck processes, {\it Electronic Journal of Statistics}, {\bf 4}, 525-565. (2010)

\bibitem{ogi-yos}
Ogihara, T. and Yoshida, N.: Quasi-likelihood analysis for the stochastic differential equation with jumps, {\it Statistical Inference for Stochastic Processes}, {\bf 14}, 189-229. (2011)

\bibitem{pra}
Prakasa Rao, B.L.S.: Asymptotic theory for nonlinear learst squares estimator for diffusion processes, {\it Mathematische Operationsforschung Statistik. Series Statistics},
{\bf 14}, 195-209. (1983)

\bibitem{pra2}
Prakasa Rao, B.L.S.: Statistical inference from sampled data for stochastic processes, 
{\it Contemporary Mathematics - American Mathematical Society}, {\bf 80}, 249-284. (1988)
}
\bibitem{Rev-Yor}
Revuz, D., Yor, M.: Continuous Martingales and Brownian Motion. Springer, Berlin (1999)

{\colorr
\bibitem{shi-yos}
Shimizu, Y. and Yoshida, N.: Estimation of parameters for diffusion processes with jumps from discrete observations, {\it Statistical Inference for Stochastic Processes}, {\bf 9}, 227-277. (2006)

\bibitem{sor-uch}
S${\rm \o}$rensen, M. and Uchida, M.: Small diffusion asymptotics for discretely sampled stochastic differential equations, {\it Bernoulli}, {\bf 9}, 1051-1069. (2003)

\bibitem{uch03}
Uchida, M.: Estimation for dynamical systems with small noise from discrete observations, {\it Journal of the Japan Statistical Society}, {\bf 33}, 157-167. (2003)

\bibitem{uch04}
Uchida, M.: Estimation for discretely observed small diffusions based on approximate martingale estimating functions, {\it Scandinavian Journal of Statistics}, {\bf 31}, 553-566. (2004)
}

\bibitem{uch-yos}
Uchida, M. and Yoshida, N.: Nondegeneracy of statistical random field and quasi likelihood analysis for diffusion. 
Research Memorandum 1149, Institute of Statistical Mathematics (2011)

{\colorr
\bibitem{yos92}
Yoshida, N.: Estimation for diffusion processes from discrete observation, {\it Journal of Multivariate Analysis}, {\bf 41}, 220-242. (1992)

\bibitem{yos06}
Yoshida, N.: Polynomial type large deviation inequalities and convergence of statistical random fields, {\it ISM Research Memorandum 1021}, Institute of Statistical Mathematics. (2006)
}
\bibitem{yos05}
Yoshida, N.: Polynomial Type Large Deviation Inequalities and Quasi-Likelihood Analysis for Stochastic Differential Equations. {\it Ann. Inst. Statist. Math} {\bf 63}, 431-479. (2011)
\end{thebibliography}




%
\end{document}